\documentclass[10pt]{article}

\usepackage{hyperref}

\usepackage{titlesec}

\usepackage{amsmath}

\DeclareMathOperator{\erf}{erf}
\DeclareMathOperator{\erfc}{erfc}

\usepackage{epsfig, amssymb, amsfonts, dsfont}
\usepackage{amsthm}
\usepackage{amstext}
\usepackage{amsopn}
\usepackage{mathrsfs}
\usepackage{subfigure}
\usepackage{graphicx}
\usepackage[left=2.3cm,top=2cm,right=2.3cm,bottom=2cm, nohead,foot=1cm]{geometry}
\usepackage[makeroom]{cancel}
\usepackage{color}
\usepackage{enumerate}
\usepackage{comment}
\usepackage{stackrel}
\usepackage{stmaryrd}
\usepackage{mathtools}
\usepackage{amsbsy}

\usepackage{tikz}
\usepackage{tikz-cd}

\SetSymbolFont{stmry}{bold}{U}{stmry}{m}{n}

\def\stackrel#1#2{\mathrel{\mathop{#2}\limits^{#1}}}

\numberwithin{equation}{section}

\newtheorem{theorem}{Theorem}[section]

\newtheorem{lemma}[theorem]{Lemma}

\newtheorem{corollary}[theorem]{Corollary}
\newtheorem{proposition}[theorem]{Proposition}

\theoremstyle{definition}

\newtheorem{remark}[theorem]{Remark}

\newcommand{\R}{{\mathbb R}}

\DeclareFontFamily{U}{mathx}{\hyphenchar\font45}
\DeclareFontShape{U}{mathx}{m}{n}{
      <5> <6> <7> <8> <9> <10>
      <10.95> <12> <14.4> <17.28> <20.74> <24.88>
      mathx10
      }{}
\DeclareSymbolFont{mathx}{U}{mathx}{m}{n}
\DeclareMathSymbol{\bigtimes}{1}{mathx}{"91}

\usepackage{relsize}

\newcommand{\vsm}{{\mathsmaller{\mathsmaller{V}}}}

\newcommand{\Da}{{D}}

\title{A dyadic construction of a three-dimensional attractive point interaction Markov family}

\date{  }

 \author{\textbf{Barkat Mian}\footnote{ {\tt bmian@utk.edu}} \vspace{.1cm}  \\  University of Tennessee Knoxville, Department of Mathematics   }

\begin{document}
\maketitle

\begin{abstract}
We discuss a probabilistic approximation framework for the three-dimensional attractive point interaction on a finite time horizon. By iterating the Doob transforms of the explicit heat kernel associated with the singular Schr\"odinger operator formally given by
\[
\frac12\Delta \,+\, \frac{\beta}{2}\, \delta_0(\cdot),
\qquad \beta>0,
\]
we obtain sub-probability kernels along finite partitions on the punctured domain
\[
E_\varepsilon=\{x\in\mathbb R^3:\ |x|>\varepsilon\},
\]
which yield a limiting sub-probability kernel via refinement along global dyadic partitions, and we extend this limit to a transition probability kernel on an enlarged space obtained by adjoining a cemetery state. These kernels determine a time-inhomogeneous
Markov process on the set of dyadic times, and its step-function interpolations yield c\`adl\`ag processes with consistent finite-dimensional distributions and partial tightness properties. The present work may also be viewed as an alternative direct probabilistic approximation scheme for the three-dimensional zero-range homopolymer measure of~\cite{CranstonKoralovMolchanovVainberg2}, which is constructed as a weak limit of Gibbs measures associated with regularized Schr\"odinger operators.
\end{abstract}

\vspace{.2cm}

\section{Introduction}

A three-dimensional heat equation with a one-point attractive potential at the
origin may formally be written as
\begin{align} \label{3dHeatEqFormal}
\frac{\partial u}{\partial t}(t,x)
\,=\,
\frac{1}{2}\Delta u(t,x)
+
\frac{\beta}{2}\,\delta_0(x)\,u(t,x),
\qquad t>0,\quad x\in\R^3, 
\end{align}
where $\Delta=\sum_{i=1}^3 \frac{\partial^2}{\partial x_i^2}$ is the Laplacian
on $\R^3$, $\delta_0$ denotes the Dirac mass at the origin, and $\beta>0$ is the
strength of the interaction, corresponding to the attractive regime considered
in this work. The associated formal expression
\begin{align}\label{HamiltonianFormal}
\mathcal L_{\textup{form}}^{\beta}
\,=\,
\Delta+\beta\,\delta_0(x)
\end{align}
may be interpreted heuristically as the three-dimensional Schr\"odinger operator
with a zero-range potential at the origin; see~\cite{AGHH} for a survey of
Schr\"odinger operators with potentials supported on discrete sets; in
particular, see Chapter~II.1 for the three-dimensional case. The factor \(1/2\)
in the heat equation corresponds formally to the generator
\(\frac12\mathcal L_{\textup{form}}^{\beta}\). One way to give rigorous meaning to~\eqref{HamiltonianFormal} is through
approximation by Schr\"odinger operators with regularized potentials supported on shrinking balls around the origin. More precisely, let
$\{\delta_\varepsilon\}_{\varepsilon>0}$ be a $\delta$-sequence on $\R^3$, that is, a family of non-negative functions with unit mass converging to $\delta_0$ in the sense of distributions. For instance, one may take
\[
\delta_\varepsilon(x)
\,:=\,
\frac{3}{4\pi \varepsilon^3}\,\mathbf{1}_{B_\varepsilon(0)}(x),
\]
where $B_\varepsilon(y)$ denotes the open ball in $\R^3$ centered at $y$ with
radius $\varepsilon$. Consider then the family of operators
\begin{align}
\Delta^{\beta}_\varepsilon
\,:=\,
\Delta
+
\lambda_\varepsilon^{\beta}\,\delta_\varepsilon(x), \nonumber
\end{align}
where the coupling parameter is tuned according to
\begin{align}
\lambda_\varepsilon^{\beta}
\,:=\,
\frac{4\pi}{3}
\Big[
\Big(k+\tfrac{1}{2}\Big)^2\pi^2\,\varepsilon
-
8\pi^2\beta\,\varepsilon^2
-
\zeta\,\varepsilon^3
\Big],
\qquad \varepsilon>0, \nonumber
\end{align}
with $k\in\mathbb Z$ and $\zeta\in\R$; see~\cite[Eqs.~(H.73)--(H.75)]{AGHH} for the corresponding approximation and renormalization scheme in their notation. Then the family \(\Delta_\varepsilon^{\beta}\) converges, in an appropriate resolvent sense as \(\varepsilon\downarrow0\), to a self-adjoint point interaction operator, which we denote by \(\Delta^{\beta}\). The limiting operator \(\Delta^{\beta}\) is independent of the auxiliary parameters \(k\) and \(\zeta\), and may be regarded as the rigorous operator associated with the formal expression~\eqref{HamiltonianFormal}. The operator \(\Delta^{\beta}\) acts as the free Laplacian away from the
origin, while the point interaction is encoded through a singular boundary
condition at \(x=0\). More precisely, the domain of the self-adjoint operator \(\Delta^{\beta}\) admits the following decomposition involving the Green kernel $G_k(x) := (4\pi|x|)^{-1} e^{ik|x|}$, where $x\in\R^3\setminus\{0\}$ and $\textup{Im } k>0$,
\[
\mathcal D\big(\Delta^{\beta}\big)
=
\left\{
\phi_k
+
(-\beta-ik/4\pi)^{-1}\phi_k(0)\,G_k
\;:\;
\phi_k\in H^{2,2}(\R^3)\,,
k\in\mathbb C\,,
\textup{Im } k>0\,,
k^2\notin\{-16\pi^2\beta^2\}\cup[0,\infty)
\right\},
\]
see~\cite[Thm.~1.1.3]{AGHH} or~\cite[Eq.~(2.1)]{GrummtKolb}. In particular, functions in the domain of \(\Delta^{\beta}\) admit
the asymptotic expansion
\begin{align}\label{PsiAsymptotics}
\psi(x)
\,\stackrel{|x|\downarrow0}{=}\,
\mathbf c_{\psi}
\left(
\frac{1}{|x|}
-
4\pi\beta
\right)
+
o(1)  \,, \hspace{.4cm}\text{where} \,\hspace{.2cm}
\mathbf c_{\psi}
\,:=\,
\lim_{|x|\downarrow0}
|x|\,\psi(x).
\end{align}
See Appendix~\ref{AppendixProofPsiAsymptotics} for the verification of the above identity and the precise expression for the constant \(\mathbf c_{\psi}\in\mathbb C\) depending on \(\psi\). The operators in the semigroup $\{e^{\frac{t}{2}\Delta^{\beta}}\}_{t\in [0,\infty)}$ have nonnegative integral kernels $P_t^\beta(x,y)$, which serve as the fundamental solutions of the heat equation associated with $\frac{1}{2}\Delta^{\beta}$. More precisely, \(P_t^\beta(x,y)\) is the kernel defined through
\begin{align} \nonumber
\big(e^{\frac{t}{2}\Delta^{\beta}}\varphi \big)(x)
\,=\,
\int_{\R^3} P_t^\beta(x,y)\,\varphi(y)\,dy,
\qquad t>0,\;\; x\in\R^{3}\setminus\{0\},
\end{align}
for every bounded measurable function \(\varphi\) with compact support in \(\R^{3}\setminus\{0\}\). Since the family $\{e^{\frac{t}{2}\Delta^{\beta}}\}_{t\in [0,\infty)}$ forms a semigroup, i.e.\ \(e^{\frac{t}{2}\Delta^{\beta}} e^{\frac{s}{2}\Delta^{\beta}} = e^{\frac{t+s}{2}\Delta^{\beta}}\), the associated kernels in the family \(\{P_t^\beta\}_{t\in [0,\infty)}\) satisfy the semigroup property: for every \(s,t>0\) and every \(x,z\in\R^3\setminus\{0\}\),
\begin{align}\label{EqSemigroupKernelReliable}
\int_{\R^3} P_s^\beta(x,y)\,P_t^\beta(y,z)\,dy
\,=\,
P_{s+t}^\beta(x,z).
\end{align}
In dimension three, the attractive point interaction heat kernel admits the explicit representation
\begin{align}\label{DefPointKer3dBeta}
P_t^{\beta}(x,y)
\,:=\,
P_t(x,y)
+
\frac{2t}{|x||y|}\,P_t(|x|+|y|)
+
\frac{8\pi\beta\,t}{|x||y|}
\int_{0}^{\infty} e^{\,4\pi\beta u}\,
P_t\!\big(u+|x|+|y|\big)\,du,
\end{align}
for \(t>0\) and \(x,y\in\R^3\setminus\{0\}\). Here \(P_t(x,y)\) denotes the three-dimensional free heat kernel, and, with a slight abuse of notation, \(P_t(r)\) denotes its radial version, that is
\begin{align}\label{DefFreeHeat3d}
P_t(x,y)
\,:=\,
\frac{1}{(4\pi t)^{\frac{3}{2}}}\,e^{-\frac{|x-y|^{2}}{4t}},
\quad \textup{and} \quad
P_t(r)
\,:=\,
\frac{1}{(4\pi t)^{\frac{3}{2}}}\,e^{-\frac{r^{2}}{4t}},
\end{align}
for \(r\ge0\). The heat kernel representation~\eqref{DefPointKer3dBeta} appears in the
analysis of~\cite{Albeverio}; see, in particular, the formulas in
Section~3 therein for the case of a single point interaction in
dimension three. The explicit expression~\eqref{DefPointKer3dBeta} can also be found in~\cite[Eq.~(37)]{Fleischmann} and~\cite[Eq.~(2.2)]{GrummtKolb};
see also~\cite[Eqs.~(3.1)--(3.4)]{APT} for related propagator
formulas for the three-dimensional point interaction Schr\"odinger equation. In these works, the point interaction is parameterized by \(\alpha\in\R\); the
attractive regime considered in the present work corresponds to the choice \(\alpha=-\beta<0\). See Appendix~\ref{AppendixProofofSemigroupProperty} for a direct proof of the semigroup property~\eqref{EqSemigroupKernelReliable}.

Fix $T>0$. For $t \in [0,T]$, define the function $Q_t^\beta:\R^3 \to [0,\infty]$ by
\begin{align}\label{DefH3d}
Q_t^\beta(x)
\,:=\,
\int_{\R^3} P_t^\beta(x,y)\,dy-1
\,=\,
\frac{2t}{\beta\,|x|}
\int_{0}^{\infty}
\big(e^{4\pi\beta w}-1\big)\,
P_t(|x|+w)\,dw
\end{align}
for \(x\in\R^3\setminus\{0\}\), while \(Q_t^\beta(0):=\infty\). The second equality follows from~\eqref{DefH3d2nd}; see also~\eqref{DefH3dIst} and~\eqref{DefH3d3rd} for equivalent representations of \(Q_t^\beta(x)\). For \(0\le s<t\le T\), define the function $\mathlarger{p}^{\,T,\beta}_{s,t}:
(\R^3\setminus\{0\})\times(\R^3\setminus\{0\})
\to [0,\infty]$ by the Doob transform of the heat kernel \(P_{t-s}^\beta(x,y)\) as follows:
\begin{align}\label{FirstTrans3d}
\mathlarger{p}^{\,T,\beta}_{s,t}(x,y)
\,:=\,
\frac{1+Q^{\beta}_{T-t}(y)}{1+Q^{\beta}_{T-s}(x)}
\, P^{\beta}_{t-s}(x,y),
\qquad x,y\in\R^3\setminus\{0\}.
\end{align}
The proof of the following lemma is in Section~\eqref{SubProofLemTranKern}.
\begin{lemma}\label{LemTranKern}
Fix $T,\beta>0$.  The family $\{\mathlarger{p}_{s,t}^{T,\beta}\}_{0\le s<t\le T}$ forms a family of transition probability densities on $[0,T]$. In particular, for all $0\le r<s<t\le T$ and $x,z\in\R^3 \setminus \{0\}$,
\begin{enumerate}[(i)]
    \item $ \int_{\R^3} \,
\mathlarger{p}_{s,t}^{T,\beta}(x,y)\,dy
 \,= \,1 \,,$

 \item $\int_{\R^3} \, \mathlarger{p}_{r,s}^{T,\beta}(x,y)\,
\mathlarger{p}_{s,t}^{T,\beta}(y,z)\,dy
 \,= \,
\mathlarger{p}_{r,t}^{T,\beta}(x,z) \,.$
\end{enumerate}
\end{lemma}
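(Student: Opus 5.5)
The plan is to reduce both claims to the semigroup property~\eqref{EqSemigroupKernelReliable} together with the definition~\eqref{DefH3d} of $Q^\beta$. The key identity is that $h_u(x):=1+Q^\beta_u(x)=\int_{\R^3}P^\beta_u(x,y)\,dy$ is space-time harmonic for the kernel. That is, for $0\le s<t\le T$ and $x\neq 0$,
\begin{align*}
\int_{\R^3} P^\beta_{t-s}(x,y)\,\big(1+Q^\beta_{T-t}(y)\big)\,dy \,=\, 1+Q^\beta_{T-s}(x).
\end{align*}
When $t<T$, this follows by writing $1+Q^\beta_{T-t}(y)=\int P^\beta_{T-t}(y,z)\,dz$, exchanging the order of integration by Tonelli (everything is nonnegative), and applying~\eqref{EqSemigroupKernelReliable} with the times $t-s$ and $T-t$. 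The result is $\int P^\beta_{T-s}(x,z)\,dz=1+Q^\beta_{T-s}(x)$. When $t=T$, we have $Q^\beta_0$, which should be interpreted as $0$ away from the origin. This is consistent with the second expression in~\eqref{DefH3d}, which carries the factor $t$. The identity then reduces directly to the definition of $Q^\beta_{T-s}(x)$.

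Before dividing, I would check that the Doob transform is well defined, i.e.\ that $0<1+Q^\beta_{T-s}(x)<\infty$ for $x\neq0$. Positivity is immediate since $Q^\beta\ge0$. Finiteness follows from the second representation in~\eqref{DefH3d}: the Gaussian factor $P_t(|x|+w)$ dominates $e^{4\pi\beta w}$, so the $w$-integral converges. The factor $1/|x|$ is finite because $x\neq 0$. The point $y=0$ is a Lebesgue-null set, so it is irrelevant to every integral over $y$.

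For (i), divide the harmonicity identity by $1+Q^\beta_{T-s}(x)$. For (ii), the middle factors $1+Q^\beta_{T-s}(y)$ cancel in the product, giving
\begin{align*}
\mathlarger{p}^{T,\beta}_{r,s}(x,y)\,\mathlarger{p}^{T,\beta}_{s,t}(y,z) \,=\, \frac{1+Q^\beta_{T-t}(z)}{1+Q^\beta_{T-r}(x)}\,P^\beta_{s-r}(x,y)\,P^\beta_{t-s}(y,z).
\end{align*}
Integrating in $y$ and applying~\eqref{EqSemigroupKernelReliable} then yields $\mathlarger{p}^{T,\beta}_{r,t}(x,z)$. The remaining requirements for transition densities are nonnegativity, which is evident, and joint measurability in $(x,y)$, which holds because the explicit formula~\eqref{DefPointKer3dBeta} is continuous off the origin.

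The only delicate point is the harmonicity identity. Specifically, it needs Tonelli together with the semigroup property~\eqref{EqSemigroupKernelReliable}, which is proved in the appendix. It also needs the convention at $t=T$, which must be stated explicitly. Beyond that, the argument is routine bookkeeping.
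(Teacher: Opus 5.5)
Your proposal is correct and follows the paper's proof. Part (i) rests on the space-time harmonicity of $1+Q^\beta$, obtained from $1+Q^\beta_{T-t}(y)=\int P^\beta_{T-t}(y,z)\,dz$ and the semigroup property~\eqref{EqSemigroupKernelReliable}. Part (ii) is the cancellation of the middle factor followed by~\eqref{EqSemigroupKernelReliable}.

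The small differences are in your favor. Your Tonelli argument gives the finiteness of $\int P^\beta_{t-s}(x,y)\bigl(1+Q^\beta_{T-t}(y)\bigr)\,dy$ directly, so you do not need the separate estimate of Lemma~\ref{SubProofLemTranKernTechinal} that the paper cites. You also state explicitly the convention $Q^\beta_0\equiv 0$ off the origin needed when $t=T$, which the paper leaves implicit.
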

The goal of the present work is to construct, in a probabilistic framework, a time-inhomogeneous Markov family $\mathbf{X}=\{\mathbf{X}_t\}_{t \in [0,T]}$ with law $\mathbf{P}_x^{T,\beta}$ associated with the attractive ($\beta>0$) point interaction on a finite time horizon ($T>0$) in dimension three, that is, with transition densities $p^{T,\beta}_{s,t}(x,y)$ given by~\eqref{FirstTrans3d}. A direct construction of such a process is delicate due to the singular nature of the interaction and the absence of a straightforward pathwise description. Instead, we approach this problem in two stages:
\begin{enumerate}
\item[--] For fixed parameters $T,\beta,\varepsilon>0$, we consider the punctured domain $E_\varepsilon$ together with its extension $\overline E_\varepsilon$ given by
\[
E_\varepsilon\,:=\,\{x\in\mathbb{R}^3:\ |x|>\varepsilon\},
\qquad
\overline E_\varepsilon\,:=\,E_\varepsilon\cup\{\Delta\},
\]
where $\Delta\notin\mathbb{R}^3$ denotes a cemetery state\footnote{Recall that the symbol $\Delta$ was used earlier to denote the Laplacian; from this point onward, except in
Sections~\ref{3DimCase}--\ref{2DimCase}, it denotes the cemetery state unless otherwise stated.}. We aim to construct a time-inhomogeneous Markov family with law $\mathbf{P}_x^{T,\beta,\varepsilon}$ on $\overline E_\varepsilon$ whose transition mechanism is governed by the Doob-transformed densities $p^{T,\beta}_{s,t}(x,y)$ and which is killed upon reaching the boundary $\{|x|=\varepsilon\}$. This intermediate object captures both the singular interaction at the origin and the survival constraint over the finite time horizon.

\item[--] To recover the desired law $\mathbf{P}_x^{T,\beta}$ from
$\mathbf{P}_x^{T,\beta,\varepsilon}$, we aim to pass to the limit as
$\varepsilon\downarrow 0$ in a suitable sense, thereby removing the
artificial boundary while retaining the effect of the point interaction.
\end{enumerate}

The contribution of the present work is to carry out the first step in a canonical and fully probabilistic manner; see Section~\ref{ModelFormulation}. More precisely, we construct the family of transition probability kernels associated with $\mathbf{P}_x^{T,\beta,\varepsilon}$, establish a consistent Markovian structure on a dense set of times via a dyadic skeleton, and develop a sequence of c\`adl\`ag approximations through step-function interpolations together with their induced path-space laws. These results provide the groundwork for the construction of the
corresponding time-inhomogeneous c\`adl\`ag Markov process with law
$\mathbf{P}_x^{T,\beta,\varepsilon}$ on the full time interval $[0,T]$, as well as for the analysis of the limiting procedure
$\varepsilon\downarrow 0$, which are left for future work.

\textit{
After the completion of the first draft of the present work, we became aware that the path-space law $\mathbf{P}_x^{T,\beta}$ had already been constructed in~\cite{CranstonKoralovMolchanovVainberg2}. However, their approach is fundamentally different: they use the norm resolvent convergence of regularized Schr\"odinger operators to obtain the weak convergence of the associated regularized Gibbs measures to $\mathbf{P}_x^{T,\beta}$; see Section~\ref{subsecComparisonZeroRange} for a detailed discussion of their approach, where the law is denoted by \(P_{\gamma,T}^x\) in their notation. Thus, the present work provides an alternative approach to the construction of \(P_{\gamma,T}^x\), and therefore the dyadic construction discussed here may also be viewed as a probabilistic approximation scheme for \(P_{\gamma,T}^x\).
}

\subsection{Related models in dimensions \texorpdfstring{$\boldsymbol{d\geq 3}$}{d>=3}}  \label{3DimCase}

In~\cite{CranstonKoralovMolchanovVainberg1}, a homopolymer model is formulated by perturbing Wiener measure with an exponential weight generated by a compactly supported potential. More precisely, if $P_{0,T}$ denotes Wiener measure on $C([0,T],\mathbb R^3)$ corresponding to Brownian motion started at the origin, then the homopolymer measure $P_{\beta,T}$ is defined by the Gibbs density
\[
\frac{dP_{\beta,T}}{dP_{0,T}}(\omega)
=
\frac{1}{Z_{\beta,T}}
\exp\bigg\{
\beta\int_0^T v(\omega(t))\,dt
\bigg\},
\qquad
\omega\in C([0,T],\mathbb R^3),
\]
where $Z_{\beta,T} = \mathbb E_{0,T}\!\big[ \exp\big\{ \beta\int_0^T v(\omega(t))\,dt \big\} \big]$ is the associated partition function, $v$ is a compactly supported
potential, and $\beta$ is the coupling parameter;
see~\cite[Eq.~(3)]{CranstonKoralovMolchanovVainberg1}.  Next, consider the
heat kernel $p_\beta(t,x,y)$ defined as the fundamental solution of the heat
equation
\begin{align}\label{RegularHE}
\frac{\partial p_\beta}{\partial t}(t,y,x)
=
\frac12\Delta_x p_\beta(t,y,x)
+
\beta v(x)p_\beta(t,y,x),
\qquad
p_\beta(0,y,x)=\delta(x-y),
\end{align}see~\cite[Eq.~(5)]{CranstonKoralovMolchanovVainberg1}.  In
\cite[Thm.~3.1]{CranstonKoralovMolchanovVainberg1}, it is shown that the
resulting homopolymer is a time-inhomogeneous Markov process with transition
density
\[
q^T_\beta((s,y),(t,x))
=
p_\beta(t-s,y,x)\,
\frac{Z_{\beta,T-t}(x)}{Z_{\beta,T-s}(y)}.
\]
See~\cite[Eq.~(6)]{CranstonKoralovMolchanovVainberg1}. This is precisely a Doob transform of the heat kernel $p_\beta(t,x,y)$ by the
future partition function $Z_{\beta,T-t}(x)$, which, by the Feynman--Kac
formula, is given by
\[
Z_{\beta,T-t}(x)
=
\int_{\mathbb R^3} p_\beta(T-t,x,y)\,dy.
\]
Thus the above formula is analogous to the transition density
\eqref{FirstTrans3d}, where the point interaction heat kernel
$P_t^\beta(x,y)$ is transformed by the total-mass function
\[
1+Q^\beta_{T-t}(x)
\, \overset{\eqref{DefH3d}}{:=} \,
\int_{\mathbb R^3}P^\beta_{T-t}(x,y)\,dy.
\]
The essential difference, however, is that the potential $v$ in
\cite{CranstonKoralovMolchanovVainberg1} is regular and compactly supported, so $p_\beta(t,x,y)$ is a regular Schr\"odinger heat kernel on $\mathbb R^3$. By contrast, the present work starts directly from the singular three-dimensional point interaction heat kernel $P_t^\beta(x,y)$ associated with the formal equation~\eqref{3dHeatEqFormal}; in this case the origin is a distinguished singular point, and the corresponding mass function satisfies $Q_t^\beta(0)=\infty$. 

The present work is also related to the recent work of
Wang~\cite{WangSHE} on singular stochastic heat equation models in dimensions $d\geq 3$, as well as to the work of Fitzsimmons and Li~\cite{FitzsimmonsLi} on three-dimensional Brownian motion conditioned to hit the origin. However, the model most closely related to the present setting is the three-dimensional zero-range homopolymer model constructed in~\cite{CranstonKoralovMolchanovVainberg2}, which we discuss in more detail below.

\subsubsection{The three-dimensional zero-range homopolymer model~\texorpdfstring{\cite{CranstonKoralovMolchanovVainberg2}}{}}
\label{subsecComparisonZeroRange}

The paper \cite{CranstonKoralovMolchanovVainberg2} is especially close to the
present work, since it studies a continuous homopolymer in $\mathbb R^3$ with an
attractive zero-range interaction at the origin by replacing the regular
compactly supported potential $v$ by a delta mass $\delta_0$ at the
origin. Consequently, the usual Hamiltonian, Gibbs measure, and Schr\"odinger
operator corresponding to a smooth potential are meaningless in this model. In particular, the right-hand side of~\eqref{RegularHE}
formally becomes
\[
\frac12\Delta+\gamma\delta_0 ,
\]
where the parameter \(\gamma\in\mathbb R\) denotes the strength (coupling parameter)
of the point interaction at the origin. A rigorous realization of the corresponding singular Schr\"odinger operator
is obtained through a self-adjoint extension \(L_\gamma\) of
\[
    \frac12\Delta:
    C_0^\infty(\mathbb R^3\setminus\{0\})
    \longrightarrow
    C_0^\infty(\mathbb R^3\setminus\{0\});
\]
see \cite[Sec.~2]{CranstonKoralovMolchanovVainberg2}. The semigroup kernel
\(p_\gamma(t,x,y)\) of \(e^{tL_\gamma}\) is given by the explicit contour
formula
\begin{align}\label{PointInteractionHeatKernel2}
p_\gamma(t,x,y)
\,=\,
\frac{e^{-|x-y|^2/(2t)}}{(2\pi t)^{3/2}}
+
\frac{1}{4\pi^2 i}
\int_{\Gamma(a)}
\frac{
e^{-\sqrt{2\lambda}(|x|+|y|)+\lambda t}
}{
(\sqrt{2\lambda}-\gamma)|x||y|
}
\,d\lambda ,
\end{align}
where \(x,y\neq0\), \(a>\gamma^2/2\), and \(\Gamma(a) := \{a+i\tau: \tau\in\mathbb R\}\) is the vertical contour
passing through \(a\); see~\cite[Eq.~(6)]{CranstonKoralovMolchanovVainberg2}. Since \(p_\gamma(t,x,y)\) is the point interaction heat kernel, the contour representation~\eqref{PointInteractionHeatKernel2} is equivalent to~\eqref{DefPointKer3dBeta}; see
Remark~\ref{RemarkHeatKernelsEq}. Thus \(p_\gamma\) is the fundamental solution of
\[
    \frac{\partial u}{\partial t}
    =
    L_\gamma u .
\]
For \(t>0\) and \(x\neq0\), the corresponding zero-range partition function is
\begin{align} 
    Z_\gamma(t,x)
    :=
    \int_{\mathbb R^3}p_\gamma(t,x,y)\,dy
    =
    1+
    \frac{1}{2\pi i}
    \int_{\Gamma(a)}
    e^{\lambda t}
    \frac{1}{\sqrt{2\lambda}-\gamma}
    \frac{e^{-\sqrt{2\lambda}|x|}}{\lambda |x|}
    \,d\lambda , \nonumber
\end{align}
where the second equality follows from~\cite[Eq.~(7)]{CranstonKoralovMolchanovVainberg2}. The associated zero-range
Gibbs measure \(P_{\gamma,T}^x\) is then defined through its
finite-dimensional distributions by
\begin{align}
& P_{\gamma,T}^x
  \bigl(\omega(t_1)\in A_1,\ldots,\omega(t_k)\in A_k\bigr)
  \notag\\
&\quad =
  Z_\gamma(T,x)^{-1}
  \int_{A_1\times\cdots\times A_k\times\mathbb R^3}
  \left(
  \prod_{i=1}^{k}
  p_\gamma(t_i-t_{i-1},x_{i-1},x_i)
  \right)
  p_\gamma(T-t_k,x_k,y)
  \,
  dy\,dx_k\cdots dx_1 , \nonumber
\end{align}
where \(x_0:=x\), \(t_0:=0\), and \(p_\gamma(0,x,y)=\delta_x(y)\).

Recall the approximation scheme below~\eqref{HamiltonianFormal} that the
zero-range Schr\"odinger operator \(L_\gamma\) may also be obtained as the
norm resolvent limit of regularized Schr\"odinger operators
\(L_\gamma^\varepsilon\) of the form
\[
    L_\gamma^\varepsilon
    =
    \frac12\Delta+v_\gamma^\varepsilon .
\]
Here \(v_\gamma^\varepsilon\) is a suitably tuned approximation of the
zero-range potential; see \cite[Eq.~(5)]{CranstonKoralovMolchanovVainberg2}
for the corresponding asymptotic scaling in their setting. Using this resolvent convergence, the authors prove the convergence of the associated partition functions and finite-dimensional distributions, together with the tightness needed to conclude the weak convergence of the regularized Gibbs measures
\[
    P_{\gamma,T}^{x,\varepsilon}
    \Longrightarrow
    P_{\gamma,T}^{x}
\]
on \(C([0,T],\mathbb R^3)\) as \(\varepsilon\downarrow0\); see
\cite[Thm.~2.3 and Sec.~5]{CranstonKoralovMolchanovVainberg2}.

\begin{remark} \label{RemarkHeatKernelsEq}
The kernel \(p_\gamma(t,x,y)\) in~\eqref{PointInteractionHeatKernel2} coincides
with the point interaction heat kernel \(P_t^\beta(x,y)\) in~\eqref{DefPointKer3dBeta} under the identification
\(\gamma=4\pi\beta\) and the time scaling \(t\mapsto t/2\). To see this, recall from~\eqref{DefFreeHeat3d} that $P_t(x,y) := (4\pi t)^{-\frac{3}{2}}e^{-\frac{|x-y|^{2}}{4t}}$ denotes the three dimensional free heat kernel. Writing \(R:= |x|+|y|\) and using the identity
\[
\frac{1}{\sqrt{2\lambda}-\gamma}
=
\frac{1}{\sqrt{2\lambda}}
+
\frac{\gamma}{\sqrt{2\lambda}(\sqrt{2\lambda}-\gamma)},
\]
we split the contour term in~\eqref{PointInteractionHeatKernel2} into two parts.  
\begin{align}
p_\gamma(t,x,y)
&=
P_{t/2}(x,y)
+
\frac{1}{4\pi^2 i |x||y|}
\int_{\Gamma(a)}
\frac{
e^{-\sqrt{2\lambda}R+\lambda t}
}{
\sqrt{2\lambda}
}
\,d\lambda
+
\frac{\gamma}{4\pi^2 i |x||y|}
\int_{\Gamma(a)}
\frac{
e^{-\sqrt{2\lambda}R+\lambda t}
}{
\sqrt{2\lambda}(\sqrt{2\lambda}-\gamma)
}
\,d\lambda \nonumber
\end{align}
For the second term, observe that $F(\lambda) = \frac{e^{-\sqrt{2\lambda}R}}{\sqrt{2\lambda}}$ is the Laplace transform of $f(t) = \frac{1}{\sqrt{2\pi t}}
e^{-R^2/(2t)}$. Applying the inverse Laplace transform formula $f(t)= \frac{1}{2\pi i} \int_{\Gamma(a)} e^{\lambda t}F(\lambda)\,d\lambda$ therefore yields the first equality below.
\begin{align} \label{LaplaceCOmputation}
\frac{1}{4\pi^2 i |x||y|}
\int_{\Gamma(a)}
\frac{
e^{-\sqrt{2\lambda}R+\lambda t}
}{
\sqrt{2\lambda}
}
\,d\lambda
=
\frac{1}{2\pi |x||y|}
\frac{1}{\sqrt{2\pi t}}
e^{-R^2/(2t)}
=
\frac{t}{|x||y|}P_{t/2}(R)
\end{align}
For the last term, since $\operatorname{Re}\sqrt{2\lambda}>\gamma$ on the chosen contour, we have
\[
\frac{1}{\sqrt{2\lambda}-\gamma}
=
\int_0^\infty e^{-u(\sqrt{2\lambda}-\gamma)}\,du
=
\int_0^\infty e^{\gamma u}e^{-u\sqrt{2\lambda}}\,du .
\]
Substituting this representation into the last contour integral and applying Fubini's theorem, we obtain
\begin{align}
\frac{\gamma}{4\pi^2 i |x||y|}
\int_{\Gamma(a)}
\frac{
e^{-\sqrt{2\lambda}R+\lambda t}
}{
\sqrt{2\lambda}(\sqrt{2\lambda}-\gamma)
}
\,d\lambda 
= &
\frac{\gamma}{|x||y|}
\int_0^\infty e^{\gamma u}
\bigg[
\frac{1}{4\pi^2 i}
\int_{\Gamma(a)}
\frac{
e^{-\sqrt{2\lambda}(R+u)+\lambda t}
}{
\sqrt{2\lambda}
}
\,d\lambda
\bigg]du \nonumber \\
=&
\frac{\gamma t}{|x||y|}
\int_0^\infty
e^{\gamma u}P_{t/2}(R+u)\,du , \nonumber
\end{align}
where the last equality follows from~\eqref{LaplaceCOmputation} with \(R\) replaced by \(R+u\). Combining the above identities and using \(R=|x|+|y|\), we obtain
\[
p_\gamma(t,x,y)
=
P_{t/2}(x,y)
+
\frac{t}{|x||y|}
P_{t/2}(|x|+|y|)
+
\frac{\gamma t}{|x||y|}
\int_0^\infty
e^{\gamma u}
P_{t/2}(u+|x|+|y|)
\,du .
\]
Comparing this expression with~\eqref{DefPointKer3dBeta} and taking $\gamma=4\pi\beta$, we obtain $p_{4\pi\beta}(t,x,y) = P_{t/2}^{\beta}(x,y)$. Equivalently after the time scaling \(t\mapsto t/2\), we have
\[
P_t^\beta(x,y)
=
p_{4\pi\beta}(2t,x,y).
\]
\end{remark}

\subsection{Related two-dimensional model \texorpdfstring{$\boldsymbol{(d=2)}$}{(d=2)}} \label{2DimCase}

The probabilistic framework considered in the present work is motivated by the two-dimensional model studied in~\cite{CM}, where a Doob transform is applied to planar Brownian motion in the presence of a singular point interaction. In that setting, one seeks to describe planar Brownian motion subject to an attractive zero-range potential at the origin. As in the three-dimensional setting discussed earlier, such an interaction corresponds formally to a zero-range Schr\"odinger operator of the form
\begin{align}
\mathcal{L}
\, = \,
-\frac{1}{2}\,\Delta
\,- \,
\vsm\,\delta(x)\, , \nonumber
\end{align}
on $\R^2$, where $\Delta$ is the two-dimensional Laplacian, $\delta(x)$ is the Dirac delta function on $\R^2$, and $\vsm>0$ is a coupling constant. However, this description is also heuristic, since planar Brownian motion almost surely does not hit a fixed point. To give a rigorous meaning to this interaction, fix $\varepsilon>0$ and consider a family of mollified potentials of the form $\delta_\varepsilon(x) = \frac{1}{\varepsilon^2}\,D\!\left(\frac{x}{\varepsilon}\right)$, where $x\in\R^2$ and $D\in C_c(\R^2)$ is a non-negative function with unit integral. Next, for  each parameter value  $\lambda\in (0,\infty)$, consider the family of Schr\"odinger-type operators $\mathcal L_\varepsilon^\lambda := \Delta + 2\,\mathsmaller{V}^{\lambda}_{\varepsilon}\,\delta_\varepsilon(x)$, where, in contrast to the three-dimensional setting, the coupling
constant $\mathsmaller{V}^{\lambda}_{\varepsilon}$ vanishes according to the logarithmic renormalization asymptotics
\begin{align}
\vsm^{\lambda}_{\varepsilon}\,\stackrel{\varepsilon\rightarrow 0}{=}\,\frac{ \pi  }{ \log \frac{1}{\varepsilon} }\bigg(1+ \frac{ \frac{1}{2}\log \frac{\lambda}{2}+\gamma_{\mathsmaller{\textup{EM}}} +I_{\Da} }{ \log \frac{1}{\varepsilon} }\bigg) +\mathit{o}\bigg(\frac{ 1}{ \log^2 \frac{1}{\varepsilon} } \bigg)\,, \nonumber
\end{align}
for $I_{\Da}:=\int_{(\R^2)^2}\log |x-y| \Da(x)\Da(y)dxdy $ and  $\gamma_{\mathsmaller{\textup{EM}}}:=-\int_0^{\infty}e^{-x} \log x\,dx $ being the Euler-Mascheroni constant. This renormalization yields a non-trivial limiting zero-range
Schr\"odinger operator $\mathcal L^\lambda$ in dimension two, in the
norm resolvent sense. The functions $\psi:\R^2\to\mathbb C$ in the domain of the
two-dimensional limiting operator \(\mathcal L^\lambda\) admit the
logarithmic asymptotic expansion
\begin{align}
\psi(x)
\,\stackrel{|x|\downarrow0}{=}\,
\mathbf c_{\psi}
\left(
\log |x|
+
\frac{1}{2}\log\frac{\lambda}{2}
+
\gamma_{\mathsmaller{\textup{EM}}}
\right)
+
o(1)  \,, \hspace{.4cm}\text{where} \,\hspace{.2cm}
\mathbf c_{\psi}
\,:=\,
\lim_{|x|\downarrow0}
\frac{\psi(x)}{\log |x|} \,.
\nonumber
\end{align}
This asymptotic behavior corresponds to the logarithmic singularity of the two-dimensional Green function $G(x):=-\frac{1}{2\pi}\log|x|$ for $x\in\R^2\setminus\{0\}$ of the Laplacian on \(\R^2\); see~\cite[Thm.~5.3]{AGHH} for a precise characterization of the domain of \(\mathcal L^\lambda\). The associated semigroup $\{e^{t\mathcal{L}^{\lambda}}\}_{t\ge0}$ admits a positive integral kernel $f_t^\lambda:\R^2\times\R^2\to[0,\infty]$, given explicitly by
\begin{align}
f_t^{\lambda}(x,y)
\,=\,
g_t(x-y)
+
2\pi\lambda
\int_{0<r<s<t}
g_r(x)\,
\nu'\!\big(\lambda(s-r)\big)\,
g_{t-s}(y)\,ds\,dr, \nonumber
\end{align}
where $g_t(x):=(2\pi t)^{-1}e^{-\frac{|x|^2}{2t}}$ is the two-dimensional heat kernel and $\nu'$ denotes the derivative of the Volterra function $\nu(a):=\int_0^\infty \frac{a^s}{\Gamma(s+1)}\,ds$. This representation for $f^{\lambda}_t(x,y)$ coincides with~\cite[Eq.~(2.7)]{GQT} and has been used in~\cite{Chen1,CSZ5}. It is equivalent, up to a reparametrization of the integration domain, to~\cite[Eq.~(3.11)]{Albeverio}; this equivalence is established in~\cite[Rmk.~2.1]{GQT}. A direct proof of the semigroup property of $\{ f^{\lambda}_t(x,y) \}_{t \in [0,\infty)}$ is given in~\cite[Lem.~1.6.2]{Mian}.

As in the three-dimensional framework, the corresponding singular diffusion is obtained through a Doob transform of the underlying heat kernel. Fix $T,\lambda >0$, and let $0\leq s<t\leq T$. The function $\mathlarger{d}_{s,t}^{T,\lambda} : \R^2\times \R^2 \rightarrow (0,\infty]$, defined by the Doob transform of the integral kernel $f_{t-s}^{\lambda}(x,y)$ as
\begin{align}\label{CMDensity}
\mathlarger{d}_{s,t}^{T,\lambda}(x,y)
\,:=\,
f_{t-s}^{\lambda}(x,y)\,
\frac{
\int_{\R^2} f_{T-t}^{\lambda}(y,z)\,dz
}{
\int_{\R^2} f_{T-s}^{\lambda}(x,z)\,dz
}\,,
\end{align}
is a transition density function; see~\cite[Lem.~2.1]{CM}. The family of densities $\{\mathlarger d_{s,t}^{T,\lambda}\}_{0\le s<t\le T}$ determines a singular diffusion $X=\{X_t\}_{t\in [0,T]}$ with law $\mathbb P_x^{T,\lambda}$ on the space of continuous paths (see~\cite[Prop.~2.2]{CM}), which exhibits several characteristics that are anomalous when compared to planar Brownian motion.
\begin{itemize}
    \item[--] The process $X$ under $\mathbb P_x^{T,\lambda}$ visits the origin over the time interval $(0,T)$ with strictly positive probability,
\[
0 \,<\, \mathbb{P}_{x}^{T,\lambda}\!\big(
X_t = 0 \ \text{for some } t\in[0,T]
\big) \,<\, 1\, .
\]
See~\cite[Thm.~2.4]{CM}. In particular, the path measure $\mathbb{P}_{x}^{T,\lambda}$ is not absolutely continuous with respect to the planar Wiener measure. Furthermore, over the finite time interval $[0,T]$, the origin is neither absorbing nor avoidable, reflecting the critical nature of the two-dimensional point interaction.

    \item[--] An alternative characterization of the process \(X\) is provided by the SDE
\begin{align}
dX_t
\,=\,
dW_t + b_{T-t}^{\lambda}(X_t)\,dt\,, \hspace{1cm}
t\in[0,T]\,, \nonumber
\end{align}
where $\{W_t\}_{t\in[0,T]}$ is a two-dimensional Brownian motion
starting from $x\in\R^2$, and the time-dependent drift
$b_t^\lambda(x)\in\R^2$ is given by the gradient with respect to
$x\in\R^2$ of
$\log \big(\int_{\R^2} f_{t}^{\lambda}(x,z)\,dz \big)$; see
~\cite[Prop.~2.6]{CM}. Without the time dependence in
the drift, the above SDE would correspond to a distorted Brownian
motion~\cite{Fukushima,Streit,Trutnau}, while related multidimensional
time-dependent SDEs with singular drifts have also been studied~\cite{Krylov,Zhang,Jin,Kinzebulatov}, although none of the existing
general weak existence and uniqueness results apply directly to the
above SDE.

    \item[--] For fixed $T,\lambda>0$ and $x\in\R^2$, there exists a continuous process  $L=\{L_t\}_{t\in[0,T]}$ which arises as the limit, as  $\varepsilon\searrow 0$, of the family $\{L_t^\varepsilon\}_{t\in[0,T]}$ in the sup-$L^1\big(\mathbb{P}^{T,\lambda}_{x}\big)$ sense, where
\begin{align}
L_t^{\varepsilon}
\,:=\,
\frac{1}{2\varepsilon^2\log^2 \frac{1}{\varepsilon}}
\,\textup{meas}\Big(\big\{r\in[0,t]: |X_r|\le\varepsilon\big\}\Big), \nonumber
\end{align}
and $\textup{meas}(E)$ denotes the Lebesgue measure of a set $E\subset\R$. 
The limiting process $L$ is referred to as the \textit{local time at the origin}; see~\cite[Thm.~2.10]{CM}.

    \item[--] Almost surely, the \textit{zero set} $\{t\ge 0 :X_t=0\}$ is uncountable, has Hausdorff dimension $0$, and carries the full mass of the local time measure $\vartheta(\omega,\cdot)$ defined by $\vartheta(\omega,[0,t])=L_t(\omega)$; see~\cite[Prop.~2.26]{CM}.
\end{itemize}
The law $\mathbb P_x^{T,\lambda}$ also arises naturally in connection
with the second moment measures associated with the continuum polymer
measures corresponding to the critical two-dimensional stochastic heat flow~\cite{CSZ5}; see~\cite[Sec.~5.3]{CM2}. The Doob transform~\eqref{CMDensity} corresponds to the choice of driving family $h_t^{\lambda}(x):=(f_t^{\lambda}\mathbf{1})(x)$, obtained by integrating the kernel $f_t^{\lambda}(x,y)$ against Lebesgue measure. In~\cite{Mian2}, a general class of driving families is considered, which leads to an axiomatic characterization of planar diffusions with a point interaction at the origin. In particular, the choice $h_t^{\lambda}(x):=e^{\lambda t}\,K_0(\sqrt{2\lambda}\,|x|)$, where $K_0$ denotes the modified Bessel function of the second kind of order zero, formally yields the diffusion constructed in~\cite{Chen2}; see~\cite[Sec.~3]{Mian2}.

\section{Model formulation}\label{ModelFormulation}

The explicit representation~\eqref{DefPointKer3dBeta} of the three-dimensional point interaction heat kernel $P_t^{\beta}(x,y)$, together with the Doob transform~\eqref{FirstTrans3d}, serves as the basis for a probabilistic construction of diffusions subject to a singular interaction at the origin, as in the two-dimensional framework discussed in Section~\ref{2DimCase}. However, the underlying singular structure in dimension three differs substantially from the planar case. In particular, whereas the two-dimensional interaction is characterized by logarithmic singularities associated with the Green function $-\frac{1}{2\pi}\log|x|$, the three-dimensional kernel $P_t^\beta(x,y)$ contains singular terms of order $(|x||y|)^{-1}$ near the origin, reflecting the stronger zero-range interaction encoded by the operator $\Delta^\beta$. In the present work, the Doob-transformed transition structure induced by the singular kernel $P_t^\beta(x,y)$ is further combined with a survival constraint on a punctured domain $E_\varepsilon$, which necessitates a careful construction of the corresponding Markov family.

Fix \(T,\beta,\varepsilon>0\) and recall the punctured domain $E_\varepsilon:=\{x\in\mathbb{R}^3:\ |x|>\varepsilon\}$. Since \(E_\varepsilon\) is an open (hence \(G_\delta\)) subset of the Polish space \(\R^3\),  it is itself Polish; see \cite[Thm.~3.11]{Kechris}. In particular, the metric
\begin{align} \label{dMetric}
d_\varepsilon(x,y)
\,:=\,
|x-y|
+
\bigg|
\frac{1}{|x|-\varepsilon}
-
\frac{1}{|y|-\varepsilon}
\bigg|,
\qquad x,y\in E_\varepsilon,
\end{align}
is complete on \(E_\varepsilon\) and induces the Euclidean topology. 
The metric \(d_\varepsilon\) is related to the complete-metric construction used in the proof of \cite[Thm.~3.11]{Kechris}, in the case of the closed set \(F=\mathbb{R}^3\setminus E_\varepsilon\). 
We equip \(E_\varepsilon\) with the Borel \(\sigma\)--algebra \(\mathcal B(E_\varepsilon)\). With this state space in place, we aim to construct a time-inhomogeneous Markov process whose transition mechanism is given by the Doob-transformed density \(p^{T,\beta}_{s,t}(x,y)\) defined in~\eqref{FirstTrans3d}, and which is killed upon hitting the boundary \(\{|x|=\varepsilon\}\). The main contribution of the present work is to construct the associated transition probability kernels (Section~\ref{TransitionProbabilityKernel}) and to develop a consistent Markovian framework on a dense set of times (Section~\ref{TheAssociatedMarkovProcess}).

\subsection{Transition probability kernel} \label{TransitionProbabilityKernel}

To construct the transition kernel, we first define, for each partition $\pi$ of $[s,t]$, a sub-probability kernel $\widehat{\mathsf K}^{T,\beta,\varepsilon,\pi}_{[s,t]}$ on $E_\varepsilon$. 
We then pass to the limit along the canonical sequence of global dyadic partitions $\{\pi_m[s,t]\}_{m\ge1}$ to obtain a sub-probability kernel $\widehat{\mathsf K}^{T,\beta,\varepsilon}_{[s,t]}$ on $E_\varepsilon$. 
Finally, we enlarge the state space by adjoining a cemetery state $\Delta$ and extend this limit to a transition probability kernel $\mathsf K^{T,\beta,\varepsilon}_{[s,t]}$, which satisfies the Chapman--Kolmogorov property on dyadic times. 
The construction is summarized schematically below and is discussed in detail in the subsequent three steps.
\begin{center}
\begin{tikzpicture}[scale=1]

\tikzset{
box/.style={
draw,
rectangle,
minimum width=2cm,
minimum height=0.9cm,
align=center
}
}

\node[box] (A) at (-0.5,0)
{
$\widehat{\mathsf K}^{T,\beta,\varepsilon,\pi}_{[s,t]}$
};

\draw[->, thick] (1.5,0) -- (2.8,0)
node[midway, above=2pt] {\scriptsize limit along $\pi_m[s,t]$};

\node[box] (B) at (4.8,0)
{
$\widehat{\mathsf K}^{T,\beta,\varepsilon}_{[s,t]}$
};

\draw[->, thick] (6.3,0) -- (7.6,0)
node[midway, above=2pt] {\scriptsize adjoin $\Delta$};

\node[box] (C) at (9.0,0)
{
$\mathsf K^{T,\beta,\varepsilon}_{[s,t]}$
};

\end{tikzpicture}
\end{center}

\vspace{.3cm}

\noindent \textbf{Step 1$^{\boldsymbol{\circ}}$ (Sub-probability kernels along partitions).} For fixed $0\le s<t\le T$ and a partition $\pi=\{t_0=s<t_1<\cdots<t_n=t\}$ of $[s,t]$, define the map $\widehat{\mathsf K}^{T,\beta,\varepsilon,\pi}_{[s,t]}:
E_\varepsilon \times \mathcal B(E_\varepsilon)\to [0,1]$
given, for $x\in E_\varepsilon$ and $A\in\mathcal B(E_\varepsilon)$, by the product
\begin{align}\label{FirstTrans3dSub}
\widehat{\mathsf K}^{T,\beta,\varepsilon,\pi}_{[s,t]}(x,A)
\,:=\,
\int_{(E_\varepsilon)^n}
\mathbf 1_A(y_n)\,
\prod_{i=1}^{n}
p^{\,T,\beta}_{t_{i-1},t_i}(y_{i-1},y_i)\,
dy_1\cdots dy_n \,,
\end{align}
where we set $y_0:=x$ and $\{p_{s,t}^{T,\beta}(x,y)\}_{0\le s<t\le T}$ is the family of transition probability densities given by~\eqref{FirstTrans3d} on $\R^3\setminus\{0\}$. The map $\widehat{\mathsf K}^{T,\beta,\varepsilon,\pi}_{[s,t]}$ defined in~\eqref{FirstTrans3dSub} is a sub-probability kernel on $E_\varepsilon$ by Lemma~\ref{LemMonotoneOnEeps}. \vspace{.3cm}

\noindent \textbf{Step 2$^{\boldsymbol{\circ}}$ (Limiting sub-probability kernel).} 
The sub-probability kernel $\widehat{\mathsf K}^{T,\beta,\varepsilon,\pi}_{[s,t]}$ in~\eqref{FirstTrans3dSub} describes the evolution from time $s$ to $t$ along the discrete time grid specified by $\pi$. The next natural step is to remove this $\pi$-dependence to obtain a sub-probability kernel $\widehat{\mathsf K}^{T,\beta,\varepsilon}_{[s,t]}$ that captures the evolution over the full time interval $[s,t]$. 
We achieve this by passing to the limit along a canonical sequence of global dyadic partitions.

For $m\ge1$, define the global dyadic grid $\mathcal D_m$ in $[0,T]$ and the set of all dyadic times $\mathbb D_T$ by
\begin{align}\label{dyadicGrid}
\mathcal D_m\,:=\,\big\{k2^{-m}T:\ k=0,1,\dots,2^m\big\}\,,
\quad 
\mathbb D_T \,:=\, \bigcup_{m\ge1} \mathcal D_m\,.
\end{align}
For $0\le s<t\le T$, define the induced partition of $[s,t]$ by
\begin{align}\label{DefGlobalDyadics}
\pi_m[s,t] \, := \, \{s,t\}\cup\big(\mathcal D_m\cap(s,t)\big)\,,
\end{align}
listed in increasing order. The partitions $\{\pi_m[s,t]\}_{m\ge1}$ are nested, with
$\pi_{m+1}[s,t]\succeq\pi_m[s,t]$ and $|\pi_m[s,t]|\downarrow0$. See the beginning of Section~\ref{SecLimitingSubProbKernel} for the verification of these facts, together with explicit examples of the first few grids and partitions. Next, consider the map $\widehat{\mathsf K}^{T,\beta,\varepsilon}_{[s,t]}:\;
E_\varepsilon \times \mathcal B(E_\varepsilon)\to [0,1]$
by, for $x\in E_\varepsilon$ and $A\in\mathcal B(E_\varepsilon)$,
\begin{align}\label{EqDefGlobalDyadicLimitKernel_Inf}
\widehat{\mathsf K}^{T,\beta,\varepsilon}_{[s,t]}(x,A)
\,:=\,
\inf_{m\ge1} \, \widehat{\mathsf K}^{T,\beta,\varepsilon,\pi_m[s,t]}_{[s,t]}(x,A)
\,=\,
\lim_{m\to\infty}\, \widehat{\mathsf K}^{T,\beta,\varepsilon,\pi_m[s,t]}_{[s,t]}(x,A) \,.
\end{align}
The map $\widehat{\mathsf K}^{T,\beta,\varepsilon}_{[s,t]}$ is a sub-probability kernel on $E_\varepsilon$ and satisfies the Chapman--Kolmogorov property on dyadic times; see Proposition~\ref{PropExistLimitSubprobKernelGlobalDyadic}. \vspace{.3cm}

\noindent \textbf{Step 3$^{\boldsymbol{\circ}}$ (Extension to the transition probability kernel).} Let $\Delta\notin\mathbb R^3$ be a cemetery state and set $\overline E_\varepsilon:=E_\varepsilon\cup\{\Delta\}$. We equip $\overline E_\varepsilon$ with the bounded metric
\begin{align} \label{rhoMetric}
\rho(x,y)
\,=\,
\bigl(1\wedge d_\varepsilon(x,y)\bigr)\mathbf 1_{\{x,y\in E_\varepsilon\}}
\,+\,
\mathbf 1_{\{x\neq y,\ \Delta\in\{x,y\}\}}\,,
\qquad x,y\in\overline E_\varepsilon \, ,
\end{align}
where $d_{\varepsilon}$ is the complete metric defined in~\eqref{dMetric}, under which $(E_\varepsilon,d_\varepsilon)$ is Polish. Consequently, $(\overline E_\varepsilon,\rho)$ is also a Polish space, and the corresponding Borel $\sigma$--algebra is $\mathcal B(\overline E_\varepsilon) = \sigma\big(\mathcal B(E_\varepsilon),\{\Delta\}\big)$. Define the map $\mathsf K^{T,\beta,\varepsilon}_{[s,t]}:\;
\overline E_\varepsilon \times \mathcal B(\overline E_\varepsilon) \to [0,1]$
by, for $x\in \overline E_\varepsilon$ and $B\in\mathcal B(\overline E_\varepsilon)$,
\begin{align}\label{EqDefMeshLimitFull}
\mathsf K^{T,\beta,\varepsilon}_{[s,t]}(x,B)
\,:=\,
\mathbf 1_{E_\varepsilon}(x)\Big[
\widehat{\mathsf K}^{T,\beta,\varepsilon}_{[s,t]}(x,B\cap E_\varepsilon)
+
\mathbf 1_{\{\Delta\in B\}}
\big(1-\widehat{\mathsf K}^{T,\beta,\varepsilon}_{[s,t]}(x,E_\varepsilon)\big)
\Big]
+\mathbf 1_{\{x=\Delta\}}\mathbf 1_{\{\Delta\in B\}}.
\end{align}
The following proposition, which we prove in Section~\ref{SecTransitionProbKernel}, shows that the family $\bigl\{\mathsf K^{T,\beta,\varepsilon }_{[s,t]}\bigr\}_{\substack{s,t\in\mathbb D_T \\ s<t}}$ consists of time-inhomogeneous Markov transition probability kernels on $\overline E_\varepsilon$.
\begin{proposition} \label{PropCKFullProbKernel}
Fix $T,\beta,\varepsilon>0$ and $0\le r< s<t\le T$.
\begin{enumerate}[(i)]
\item The map $\mathsf K^{T,\beta,\varepsilon}_{[s,t]}
:\; \overline E_\varepsilon\times\mathcal B(\overline E_\varepsilon)
\longrightarrow [0,1]$ defined in~\eqref{EqDefMeshLimitFull} is a probability kernel.

\item If further $r,s,t\in\mathbb D_T$, then the family $\big \{\mathsf K^{T,\beta,\varepsilon}_{[u,v]}\big\}_{\substack{u,v\in\mathbb D_T\\ u<v}}$ satisfies the Chapman-Kolmogorov identity on $\overline E_\varepsilon$, meaning that, for every
$x\in\overline E_\varepsilon$ and every $A\in\mathcal B(\overline E_\varepsilon)$,
\begin{align}\label{EqCKExtendedProbKernel}
\mathsf K^{T,\beta,\varepsilon}_{[r,t]}(x,A)
\,=\,
\int_{\overline E_\varepsilon}
\mathsf K^{T,\beta,\varepsilon}_{[r,s]}(x,dy)\,
\mathsf K^{T,\beta,\varepsilon}_{[s,t]}(y,A)\, .
\end{align}
\end{enumerate}
\end{proposition}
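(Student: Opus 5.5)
We take as given the results stated in Step 2$^\circ$: Proposition~\ref{PropExistLimitSubprobKernelGlobalDyadic} shows that $\widehat{\mathsf K}^{T,\beta,\varepsilon}_{[s,t]}$ is a sub-probability kernel on $E_\varepsilon$ and satisfies Chapman--Kolmogorov on dyadic times, i.e.\ for $r<s<t$ in $\mathbb D_T$, $x\in E_\varepsilon$, $A\in\mathcal B(E_\varepsilon)$,
\[
\widehat{\mathsf K}^{T,\beta,\varepsilon}_{[r,t]}(x,A)=\int_{E_\varepsilon}\widehat{\mathsf K}^{T,\beta,\varepsilon}_{[r,s]}(x,dy)\,\widehat{\mathsf K}^{T,\beta,\varepsilon}_{[s,t]}(y,A).
\]
Given this, the proposition is the standard extension of a sub-Markov family to a Markov family by adjoining a cemetery state. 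Both parts reduce to bookkeeping with the defining formula~\eqref{EqDefMeshLimitFull}.

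For (i), we first treat a fixed $x\in\overline E_\varepsilon$. If $x=\Delta$, then $\mathsf K^{T,\beta,\varepsilon}_{[s,t]}(\Delta,\cdot)=\delta_\Delta$, which is a probability measure. If $x\in E_\varepsilon$, the map $B\mapsto\widehat{\mathsf K}^{T,\beta,\varepsilon}_{[s,t]}(x,B\cap E_\varepsilon)$ is a measure on $\mathcal B(\overline E_\varepsilon)$, because $\mathcal B(\overline E_\varepsilon)=\sigma(\mathcal B(E_\varepsilon),\{\Delta\})$ and therefore $B\cap E_\varepsilon\in\mathcal B(E_\varepsilon)$. We add to it the point mass $c_x\delta_\Delta$ with $c_x:=1-\widehat{\mathsf K}^{T,\beta,\varepsilon}_{[s,t]}(x,E_\varepsilon)\in[0,1]$; nonnegativity of $c_x$ uses the sub-probability property. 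The total mass is then $\widehat{\mathsf K}(x,E_\varepsilon)+c_x=1$.

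Measurability in $x$ for fixed $B$ follows from the same decomposition. The maps $x\mapsto\widehat{\mathsf K}(x,B\cap E_\varepsilon)$ and $x\mapsto\widehat{\mathsf K}(x,E_\varepsilon)$ are $\mathcal B(E_\varepsilon)$-measurable, being limits or infima of the measurable maps~\eqref{FirstTrans3dSub}, which in turn are measurable by Tonelli. The indicators $\mathbf 1_{E_\varepsilon}$ and $\mathbf 1_{\{x=\Delta\}}$ are Borel on $\overline E_\varepsilon$. Hence $\mathsf K^{T,\beta,\varepsilon}_{[s,t]}(\cdot,B)$ is $\mathcal B(\overline E_\varepsilon)$-measurable.

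For (ii), fix $r<s<t$ in $\mathbb D_T$. By the $\pi$–$\lambda$ theorem, or simply by additivity, it suffices to check~\eqref{EqCKExtendedProbKernel} for $A\in\mathcal B(E_\varepsilon)$ and for $A=\{\Delta\}$.

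\emph{Case $x=\Delta$.} Both sides equal $\mathbf 1_{\{\Delta\in A\}}$, since $\Delta$ is absorbing.

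\emph{Case $x\in E_\varepsilon$, $A\subset E_\varepsilon$.} We split the integral over $\overline E_\varepsilon$ into $E_\varepsilon$ and $\{\Delta\}$. The $\Delta$ contribution vanishes, because $\mathsf K_{[s,t]}(\Delta,A)=0$. The $E_\varepsilon$ contribution equals $\int_{E_\varepsilon}\widehat{\mathsf K}_{[r,s]}(x,dy)\widehat{\mathsf K}_{[s,t]}(y,A)$, which is $\widehat{\mathsf K}_{[r,t]}(x,A)=\mathsf K_{[r,t]}(x,A)$ by the dyadic Chapman--Kolmogorov property of Proposition~\ref{PropExistLimitSubprobKernelGlobalDyadic}.

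\emph{Case $x\in E_\varepsilon$, $A=\{\Delta\}$.} Since both sides of~\eqref{EqCKExtendedProbKernel} are probability measures in $A$ by (i), this follows from the previous case applied to $A=E_\varepsilon$ by taking complements. Directly, the right-hand side equals
\[
\int_{E_\varepsilon}\widehat{\mathsf K}_{[r,s]}(x,dy)\big(1-\widehat{\mathsf K}_{[s,t]}(y,E_\varepsilon)\big)+\big(1-\widehat{\mathsf K}_{[r,s]}(x,E_\varepsilon)\big)=1-\widehat{\mathsf K}_{[r,t]}(x,E_\varepsilon),
\]
again using Chapman--Kolmogorov on $E_\varepsilon$.

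The only substantive input, and the main obstacle, is the dyadic Chapman--Kolmogorov identity for $\widehat{\mathsf K}$. Concretely, this means showing that the global dyadic partitions satisfy $\pi_m[r,t]=\pi_m[r,s]\cup\pi_m[s,t]$ once $m$ is large enough that $s\in\mathcal D_m$. That gives the identity at the partition level, and one then passes to the limit $m\to\infty$ under the integral by monotone convergence, using that the partition kernels decrease in $m$. Since this is provided by Proposition~\ref{PropExistLimitSubprobKernelGlobalDyadic}, the remaining argument here is routine.
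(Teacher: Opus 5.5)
Your proof is correct and follows essentially the same route as the paper: write $\mathsf K^{T,\beta,\varepsilon}_{[s,t]}(x,\cdot)$ as $\widehat{\mathsf K}^{T,\beta,\varepsilon}_{[s,t]}(x,\cdot)$ plus the defect mass $1-\widehat{\mathsf K}^{T,\beta,\varepsilon}_{[s,t]}(x,E_\varepsilon)$ at $\Delta$, split the Chapman--Kolmogorov integral over $E_\varepsilon$ and $\{\Delta\}$, and apply the dyadic identity of Proposition~\ref{PropExistLimitSubprobKernelGlobalDyadic} to the sets $A\cap E_\varepsilon$ and $E_\varepsilon$. Your preliminary reduction by additivity to the two cases $A\subset E_\varepsilon$ and $A=\{\Delta\}$ is only a repackaging of the paper's single computation, which carries the factor $\mathbf 1_{\{\Delta\in A\}}$.
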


\subsection{The associated Markov process}\label{TheAssociatedMarkovProcess}

Let $D([0,T];\overline E_\varepsilon)$ denote the space of all c\`adl\`ag paths with
values in $\overline E_\varepsilon$, that is,
\[
D([0,T];\overline E_\varepsilon)
\,:=\,
\big\{
p:[0,T]\to\overline E_\varepsilon \;:\;
p \text{ is right--continuous and has left limits on } [0,T]
\big\}.
\]
Recall that $\overline E_\varepsilon$ is equipped with the metric $\rho$ given by~\eqref{rhoMetric}. Let $\Lambda$ denote the collection of all strictly increasing continuous
bijections $\lambda:[0,T]\to[0,T]$. For $p,q\in D([0,T];\overline E_\varepsilon)$,
define
\[
d_{J_1}(p,q)
\,:=\,
\inf_{\lambda\in\Lambda}
\bigg\{
\sup_{t\in[0,T]} |\lambda(t)-t|
\;\vee\;
\sup_{t\in[0,T]} \rho\big(p(t),q(\lambda(t))\big)
\bigg\}.
\]
This is the usual Skorohod $J_1$ metric, written in an equivalent form;
see \cite[Sec.~12, eqs.~(12.11)--(12.13)]{Billingsley}. We equip
$D([0,T];\overline E_\varepsilon)$ with the Skorohod $J_1$ topology
induced by $d_{J_1}$. Since $\overline E_\varepsilon$ is Polish, the space
$D([0,T];\overline E_\varepsilon)$, equipped with the Skorohod $J_1$
topology, is Polish; see, for instance, \cite[Thm.~12.2]{Billingsley} for the case
$D=D([0,1];\mathbb{R})$, where $D$ is separable under $d_{J_1}$ and complete under
an equivalent metric $d^0$. We denote by
$\mathcal B\big(D([0,T];\overline E_\varepsilon)\big)$ the corresponding
Borel $\sigma$--algebra.

We now turn to the construction of a Markov process corresponding to the
transition probability kernels
$\{\mathsf K^{T,\beta,\varepsilon}_{[s,t]}\}_{0\le s<t\le T}$
given by~\eqref{EqDefMeshLimitFull}. Since the
Chapman--Kolmogorov property~\eqref{EqCKExtendedProbKernel}
holds on the dyadic set \(\mathbb D_T\), the construction proceeds in
three steps.
First, we construct a time-inhomogeneous Markov process $X$ on the dyadic
grid $\mathbb D_T$ whose finite-dimensional distributions are determined
by the kernel $\mathsf K^{T,\beta,\varepsilon}_{[s,t]}$.
Next, for each $m\ge1$, we extend this dyadic skeleton to a step-function
process $X^{(m)}$ by keeping the path constant on each interval
$[k2^{-m}T,(k+1)2^{-m}T)$, thereby obtaining a family of c\`adl\`ag processes.
Finally, we aim to pass to the limit as $m\to\infty$ in the Skorohod space
$D([0,T];\overline E_\varepsilon)$ in order to construct a c\`adl\`ag Markov
process with transition kernel
$\mathsf K^{T,\beta,\varepsilon}_{[s,t]}$\footnote{The implementation of
this step, including tightness and identification of the limiting law,
is deferred to future work.}. A schematic summary of this construction is given below; see also
Figure~\ref{FigDyadicSkeletonApproximation}.
\begin{center}
\begin{tikzpicture}[scale=1]

\tikzset{
box/.style={
draw,
rectangle,
minimum height=0.9cm,
align=center
}
}

\node[box, minimum width=2.4cm] (A) at (-0.5,0)
{
Dyadic\\[-1mm] skeleton
};

\draw[->, thick] (1.2,0) -- (2.6,0)
node[midway, above=2pt] {\scriptsize interpolation};

\node[box, minimum width=4.8cm] (B) at (5.4,0)
{
$X^{(m)} \in D([0,T];\overline E_{\varepsilon})$
};

\draw[->, thick] (8.0,0) -- (9.4,0)
node[midway, above=2pt] {\scriptsize $m\to\infty$};

\node[box, minimum width=2.6cm] (C) at (11.1,0)
{
c\`adl\`ag\\[-1mm] process
};
\end{tikzpicture}
\end{center}
\begin{figure}[ht]
\centering
\includegraphics[width=1\textwidth]{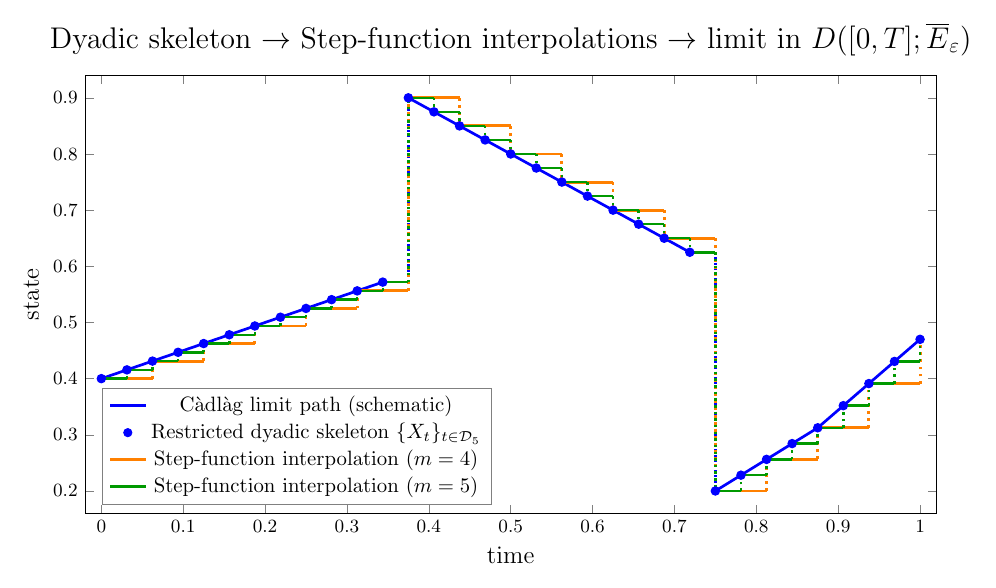}
\caption{Schematic illustration of the passage from the dyadic skeleton to the continuous-time c\`adl\`ag process through step-function interpolations. The figure is intended only to visualize the construction and does not represent an exact simulation of the process. The vertical dotted segments indicate jump locations. Recall that $|\mathcal D_4|=2^4+1$ and $|\mathcal D_5|=2^5+1$. For each \(m\ge1\), the step-function interpolation is constant on every interval \( [k2^{-m}T,(k+1)2^{-m}T) \), where \(k=0,1,\dots,2^m-1\).}
\label{FigDyadicSkeletonApproximation}
\end{figure}

We now describe the three steps in detail. \vspace{.2cm}

\noindent \textbf{Step 1 (Construction of dyadic skeleton).}
We consider the canonical path space
$\Omega^{\mathbb D_T}_\varepsilon$ equipped with the product
$\sigma$--algebra $\mathcal F^{\mathbb D_T}_\varepsilon$, given by
\[
\Omega^{\mathbb D_T}_\varepsilon
\,:=\,
\overline E_\varepsilon^{\mathbb D_T}
\,=\,
\{\omega:\mathbb D_T\to\overline E_\varepsilon\}\,,
\qquad
\mathcal F^{\mathbb D_T}_\varepsilon
\,:=\,
\mathcal B(\overline E_\varepsilon)^{\otimes\mathbb D_T}.
\]
For each $t\in\mathbb D_T$, let
$X_t:\Omega^{\mathbb D_T}_\varepsilon\to\overline E_\varepsilon$
denote the coordinate map defined by $X_t(\omega):=\omega(t)$.
Each $X_t$ is
$\mathcal F^{\mathbb D_T}_\varepsilon/\mathcal B(\overline E_\varepsilon)$--measurable.
We refer to the process $X=\{X_t\}_{t\in\mathbb D_T}$ as the
\textit{dyadic skeleton}.
Let $\{\mathcal F_t^{\mathbb D_T,\varepsilon}\}_{t\in\mathbb D_T}$
be the canonical filtration generated by $X$, that is,
\[
\mathcal F_t^{\mathbb D_T,\varepsilon}
\,:=\,
\sigma\big(X_r:\, r\in\mathbb D_T,\ 0\le r\le t\big),
\qquad t\in\mathbb D_T.
\]
The following proposition constructs a unique probability measure on $\big(\Omega^{\mathbb D_T}_\varepsilon,\mathcal F^{\mathbb D_T}_\varepsilon\big)$ under which the coordinate process $X=\{X_t\}_{t\in\mathbb D_T}$ is a Markov process with transition kernels $\{\mathsf K^{T,\beta,\varepsilon}_{[s,t]}\}$; its proof is given in Section~\ref{TheDyadicSkeleton}.

\begin{proposition} \label{PropDyadicSkeletonProcess}
Fix $T,\beta,\varepsilon>0$ and $x\in \overline E_\varepsilon$. There exists a unique probability measure $\mathbb P_{x}^{T,\beta,\varepsilon}$ on $\big(\Omega^{\mathbb D_T}_\varepsilon,\mathcal F^{\mathbb D_T}_\varepsilon\big)$ such that, under $\mathbb P_{x}^{T,\beta,\varepsilon}$, the coordinate process $\{X_t\}_{t\in\mathbb D_T}$ has initial distribution $\delta_x$ and is a time--inhomogeneous Markov process with transition kernels $\big\{\mathsf K^{T,\beta,\varepsilon}_{[s,t]}\big\}_{\substack{s,t\in\mathbb D_T\\ s<t}}$ with respect to the canonical filtration $\{\mathcal F_t^{\mathbb D_T,\varepsilon}\}_{t\in\mathbb D_T}$, in the sense that for every $s,t\in\mathbb D_T$ with $0\le s<t\le T$ and every bounded Borel function
$f:\overline E_\varepsilon\to\mathbb R$,
\begin{align}\label{EqMarkovPropertyDyadic}
\mathbb E_x^{T,\beta,\varepsilon}\!\left[
f(X_t)\,\big|\,\mathcal F_s^{\mathbb D_T,\varepsilon}
\right]
\,=\,
\int_{\overline E_\varepsilon}
f(y)\,
\mathsf K^{T,\beta,\varepsilon}_{[s,t]}(X_s,dy)
\qquad
\mathbb P_x^{T,\beta,\varepsilon}\textup{-a.s.}
\end{align}
\end{proposition}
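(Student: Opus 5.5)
The plan is to apply the Kolmogorov extension theorem on the Polish space $\overline E_\varepsilon$, using the Chapman--Kolmogorov identity of Proposition~\ref{PropCKFullProbKernel}(ii), and then to read off the Markov property from the product form of the finite-dimensional distributions.

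\textbf{Step 1: finite-dimensional distributions.} For every finite set $J=\{t_1<\cdots<t_n\}\subset\mathbb D_T$, define a probability measure $\mu_J$ on $(\overline E_\varepsilon^{\,n},\mathcal B(\overline E_\varepsilon)^{\otimes n})$ by iterated kernels:
\[
\mu_J(A_1\times\cdots\times A_n)
:=\int \mathsf K^{T,\beta,\varepsilon}_{[t_0,t_1]}(x,dy_1)\mathbf 1_{A_1}(y_1)\cdots
\mathsf K^{T,\beta,\varepsilon}_{[t_{n-1},t_n]}(y_{n-1},dy_n)\mathbf 1_{A_n}(y_n).
\]
Here $t_0:=0$, and $x$ is placed at time $0$. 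If $t_1=0$, the first factor is replaced by $\delta_x$. By Proposition~\ref{PropCKFullProbKernel}(i), each $\mathsf K$ is a probability kernel. Measurability of $y\mapsto\mathsf K_{[u,v]}(y,B)$ is part of being a kernel, so the iterated integral is well defined by the standard kernel--measure composition, and it extends uniquely to a probability measure.

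\textbf{Step 2: consistency.} Deleting a time $t_j$ from $J$ amounts to setting $A_j=\overline E_\varepsilon$. If $j=n$, the last kernel integrates to $1$. If $j<n$, integrating out $y_j$ merges $\mathsf K_{[t_{j-1},t_j]}\mathsf K_{[t_j,t_{j+1}]}$ into $\mathsf K_{[t_{j-1},t_{j+1}]}$. This merge is exactly \eqref{EqCKExtendedProbKernel}, which is valid because all times are dyadic. The case $t_j=0$ is handled by the convention $\mathsf K_{[0,0]}=\delta$. Consistency under permutations is built in by indexing with ordered sets. Since $\overline E_\varepsilon$ is Polish, hence standard Borel, the Kolmogorov extension theorem for the countable index set $\mathbb D_T$ gives a unique $\mathbb P_x^{T,\beta,\varepsilon}$ on $(\Omega^{\mathbb D_T}_\varepsilon,\mathcal F^{\mathbb D_T}_\varepsilon)$ with marginals $\mu_J$. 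Uniqueness holds because cylinder sets form a $\pi$-system generating the product $\sigma$-algebra. The same argument shows that any measure under which $X$ is Markov with these kernels and starts at $\delta_x$ has these marginals, obtained by iterating \eqref{EqMarkovPropertyDyadic}. This gives uniqueness in the sense of the proposition, and $X_0=x$ a.s.

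\textbf{Step 3: Markov property.} Fix $s<t$ in $\mathbb D_T$ and a bounded Borel $f$. The right side of \eqref{EqMarkovPropertyDyadic} is $\mathcal F_s^{\mathbb D_T,\varepsilon}$-measurable because $X_s$ is measurable and the kernel is measurable in its first argument. It then suffices to verify
\[
\mathbb E\Big[f(X_t)\prod_{i}\mathbf 1_{A_i}(X_{r_i})\Big]=\mathbb E\Big[\big(\mathsf K_{[s,t]}f\big)(X_s)\prod_i\mathbf 1_{A_i}(X_{r_i})\Big]
\]
for finitely many dyadic $r_1<\cdots<r_k\le s$, where we may assume $r_k=s$ by inserting $A=\overline E_\varepsilon$. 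Such events form a $\pi$-system generating $\mathcal F_s^{\mathbb D_T,\varepsilon}$, so the Dynkin $\pi$--$\lambda$ theorem concludes the argument once this identity holds. The identity follows directly from the product formula of Step 1 with $J=\{r_1,\dots,r_k,t\}$, first for indicator functions $f$ and then for general $f$ by linearity and monotone class.

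The only step needing care is Step 2. The consistency merge uses Proposition~\ref{PropCKFullProbKernel}(ii), which is available only on dyadic times. This is precisely why the index set is restricted to $\mathbb D_T$; there, the argument is routine.
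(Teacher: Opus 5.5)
Your proposal is correct and follows the paper's proof: you define the finite-dimensional distributions by iterated kernels, obtain consistency from the dyadic Chapman--Kolmogorov identity of Proposition~\ref{PropCKFullProbKernel}(ii), and conclude with the Kolmogorov extension theorem. Beyond the paper, you spell out the handling of time $0$, the $\pi$--$\lambda$ verification of \eqref{EqMarkovPropertyDyadic}, and the argument that the Markov property forces these marginals (which gives uniqueness in the sense of the proposition); the paper only asserts these three points.
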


The dyadic skeleton $X$ is a killed Markov process with absorbing
cemetery state $\Delta$, and at dyadic times,
$X_t=\Delta$ if and only if the process has exited
$E_\varepsilon$ almost surely; see Proposition~\ref{PropDyadicKillingTime}. \vspace{.3cm}

\noindent \textbf{Step 2 (Step-function interpolations of the dyadic skeleton).}
Our goal is to extend the dyadic skeleton
$\{X_t\}_{t\in\mathbb D_T}$ from Proposition~\ref{PropDyadicSkeletonProcess}
to c\`adl\`ag processes on the full time interval $[0,T]$.
For each $m\ge1$, we define a step-function interpolation
$X^{(m)}=\{X_t^{(m)}\}_{t\in[0,T]}$ by setting
\[
X_t^{(m)} := X_{t_k}
\quad \textup{for } t\in [t_k,t_{k+1}),
\quad \textup{where } t_k\in \mathcal D_m.
\]
The interpolated process $X^{(m)}$ has the following properties:
\begin{itemize}
\item[--] \emph{Path regularity and consistency with the dyadic skeleton.}
For every $\omega$, the path $t\mapsto X_t^{(m)}(\omega)$ is piecewise constant and c\`adl\`ag on $[0,T]$ with values in $\overline E_\varepsilon$, and $\omega\mapsto X^{(m)}_{\cdot}(\omega)$ is measurable as a map into
$D([0,T];\overline E_\varepsilon)$. Moreover, the finite-dimensional distributions of $X^{(m)}$ at times belonging to $\mathcal D_m$ agree with those of the skeleton $\{X_t\}_{t\in\mathbb D_T}$ under $\mathbb P_x^{T,\beta,\varepsilon}$; see Proposition~\ref{PropDyadicStepInterpolation}.

\item[--] \emph{Finite-dimensional distributions.} Under $\mathbb P_{x}^{T,\beta,\varepsilon}$, the finite-dimensional distributions of $\{X_t^{(m)}\}_{t\in[0,T]}$ are completely determined by those of the restricted dyadic skeleton $\{X_t\}_{t\in\mathcal D_m}$; see Lemma~\ref{LemKernelRepresentationInterpolatedTimes}.

\item[--] \emph{Interpolated path-space law.}
The process $X^{(m)}$ induces a probability measure
$\mathbb P_x^{T,\beta,\varepsilon,(m)}$ on the Skorokhod space
$D([0,T];\overline E_\varepsilon)$ under which the canonical process on $D([0,T];\overline E_\varepsilon)$ has the same finite-dimensional distributions as $X^{(m)}$; see Proposition~\ref{PropCanonicalProcessInterpolatedLaw}.

\item[--] \emph{Tightness.}
For each fixed $t\in[0,T]$, the family of one-time marginals
$\{\mathbb P_x^{T,\beta,\varepsilon,(m)}(X_t^{(m)}\in\cdot)\}_{m\ge1}$
is tight on $\overline E_\varepsilon$; see Proposition~\ref{PropTightnessOneTimeMarginals}.
\end{itemize}

\vspace{.2cm}

\noindent \textbf{Step 3 (Towards the construction of a limiting c\`adl\`ag process).}
The ultimate goal is to pass to the limit as $m\to\infty$ in the interpolated path-space laws
$\{\mathbb P_{x}^{T,\beta,\varepsilon,(m)}\}_{m\ge1}$, and thereby obtain a probability measure on $D([0,T];\overline E_\varepsilon)$ that extends the dyadic law $\mathbb P_{x}^{T,\beta,\varepsilon}$ from Proposition~\ref{PropDyadicSkeletonProcess} to the full time interval $[0,T]$. More precisely, our aim is to construct a limiting probability measure
\[
\mathbb P_{x}^{T,\beta,\varepsilon,\infty}
\,=\,
\lim_{m\to\infty}
\mathbb P_{x}^{T,\beta,\varepsilon,(m)},
\]
in a suitable sense, under which the associated canonical process on $D([0,T];\overline E_\varepsilon)$ has the same finite-dimensional distributions as those of the dyadic skeleton $\{X_t\}_{t\in\mathbb D_T}$. The construction and analysis of this limit, including tightness and convergence in $D([0,T];\overline E_\varepsilon)$, are deferred to future work.

\subsection{Organization of the paper}
The paper is organized as follows.
\begin{itemize}

\item[--] In Section~\ref{DetailedConstructionOfTheTransitionKernels}, we give the detailed construction of the kernels
$\widehat{\mathsf K}^{T,\beta,\varepsilon,\pi}_{[s,t]}$,
$\widehat{\mathsf K}^{T,\beta,\varepsilon}_{[s,t]}$, and
$\mathsf K^{T,\beta,\varepsilon}_{[s,t]}$ introduced in
Section~\ref{TransitionProbabilityKernel}. We also discuss their main
properties and the corresponding proofs.

\item[--] Section~\ref{DetailedConstructionOfTheMarkovprocess}
focuses on the construction and properties of the dyadic skeleton and
of the corresponding step-function interpolations introduced in
Section~\ref{TheAssociatedMarkovProcess}.

\item[--] Section~\ref{SpecialFunctions} collects basic analytic properties of the functions
$P_T^\beta(x,y)$, $Q_T^\beta(x)$, and
$p_{s,t}^{T,\beta}(x,y)$ used throughout the paper.

\item[--] In Appendix~\ref{AppendixSemigroupProperty}, we provide a direct proof of the semigroup property~\eqref{EqSemigroupKernelReliable} for the attractive point interaction heat kernel $P_t^{\beta}(x,y)$.

\item[--] Finally, Appendix~\ref{AppendixProofPsiAsymptotics} contains the proof of the asymptotic expansion~\eqref{PsiAsymptotics}.

\end{itemize}

\section{Detailed construction of the transition kernels} \label{DetailedConstructionOfTheTransitionKernels}

The goal of the present section is to develop in detail the
construction of the transition kernels introduced in
Section~\ref{TransitionProbabilityKernel} and to provide the
corresponding proofs. In Section~\ref{SecProductSubProbKernel}, we verify that the partition-level functions $\widehat{\mathsf K}^{T,\beta,\varepsilon,\pi}_{[s,t]}$ define sub-probability kernels and establish their monotonicity under refinement together with the associated partition-level concatenation property. In Section~\ref{SecLimitingSubProbKernel}, we study the limiting kernels $\widehat{\mathsf K}^{T,\beta,\varepsilon}_{[s,t]}$, prove their measurability and sub-probability kernel properties, and establish the corresponding Chapman--Kolmogorov identity on dyadic times. Finally, in Section~\ref{SecTransitionProbKernel}, we prove
Proposition~\ref{PropCKFullProbKernel} for the enlarged kernels
$\mathsf K^{T,\beta,\varepsilon}_{[s,t]}$ on $\overline E_\varepsilon$.

\subsection{The partition-level sub-probability kernel \texorpdfstring{$\boldsymbol{\widehat{\mathsf K}^{T,\beta,\varepsilon,\pi}_{[s,t]}}$}{Lg}}\label{SecProductSubProbKernel}

\begin{lemma}\label{LemMonotoneOnEeps}
Fix $T,\beta,\varepsilon>0$ and $0\le s<t\le T$. For each partition $\pi$ of $[s,t]$, the map $\widehat{\mathsf K}^{T,\beta,\varepsilon,\pi}_{[s,t]}$ defined in~\eqref{FirstTrans3dSub} is a sub-probability kernel on $E_\varepsilon$ and satisfies the following:
\begin{enumerate}[(i)]
\item The family $\{\widehat{\mathsf K}^{T,\beta,\varepsilon,\pi}_{[s,t]}\}_\pi$ is monotone under refinement pointwise in $(x,A)$: if $\pi'$ refines $\pi$, then for every $x\in E_\varepsilon$ and $A\in\mathcal B(E_\varepsilon)$,
\begin{align}\label{EqMonotoneOnEeps}
\widehat{\mathsf K}^{T,\beta,\varepsilon,\pi'}_{[s,t]}(x,A)
\;\le\;
\widehat{\mathsf K}^{T,\beta,\varepsilon,\pi}_{[s,t]}(x,A).
\end{align}

\item Fix $0\le r<s<t\le T$ and let $\pi$ be a partition of $[r,t]$ with $s\in\pi$. Then for every $x\in E_\varepsilon$ and $A\in\mathcal B(E_\varepsilon)$,
\begin{align}\label{EqCK_PartitionLevel}
\widehat{\mathsf K}^{T,\beta,\varepsilon,\pi}_{[r,t]}(x,A)
\,=\,
\int_{E_\varepsilon}
\widehat{\mathsf K}^{T,\beta,\varepsilon,\pi|_{[r,s]}}_{[r,s]}(x,dy)\,
\widehat{\mathsf K}^{T,\beta,\varepsilon,\pi|_{[s,t]}}_{[s,t]}(y,A).
\end{align}
Here $\pi|_{[r,s]}$ and $\pi|_{[s,t]}$ denote the restrictions of $\pi$ to $[r,s]$ and $[s,t]$, respectively.
\end{enumerate}
\end{lemma}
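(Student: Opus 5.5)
The plan is to derive everything from Lemma~\ref{LemTranKern}, i.e.\ that the $p^{T,\beta}_{u,v}$ are transition probability densities on $\R^3\setminus\{0\}$ satisfying Chapman--Kolmogorov. Nonnegativity, measurability and the kernel property are all obtained by Tonelli.

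\emph{Sub-probability kernel property.} Fix $\pi=\{t_0=s<\cdots<t_n=t\}$. The integrand in~\eqref{FirstTrans3dSub} is a nonnegative Borel function of $(x,y_1,\dots,y_n)$ on $E_\varepsilon\times(E_\varepsilon)^n$, since $p^{T,\beta}_{u,v}$ is built from continuous functions away from the origin (\eqref{DefPointKer3dBeta}, \eqref{DefH3d}). Two consequences of Tonelli follow. First, $x\mapsto\widehat{\mathsf K}^{T,\beta,\varepsilon,\pi}_{[s,t]}(x,A)$ is Borel for each $A$. Second, $A\mapsto\widehat{\mathsf K}^{T,\beta,\varepsilon,\pi}_{[s,t]}(x,A)$ is countably additive by monotone convergence. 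For the bound, enlarge each domain of integration from $E_\varepsilon$ to $\R^3$ and integrate successively in $y_n,y_{n-1},\dots,y_1$, using Lemma~\ref{LemTranKern}(i) at each step. This gives $\widehat{\mathsf K}^{T,\beta,\varepsilon,\pi}_{[s,t]}(x,A)\le\widehat{\mathsf K}^{T,\beta,\varepsilon,\pi}_{[s,t]}(x,E_\varepsilon)\le 1$. The set $\{0\}$ is Lebesgue-null, so working on $\R^3\setminus\{0\}$ costs nothing.

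\emph{Part (ii).} Since $s\in\pi$, say $s=t_k$, split the product in~\eqref{FirstTrans3dSub} at index $k$. Integrating first over $(y_{k+1},\dots,y_n)$ with $y_k=y$ fixed produces $\widehat{\mathsf K}^{T,\beta,\varepsilon,\pi|_{[s,t]}}_{[s,t]}(y,A)$. What remains is the integral over $(y_1,\dots,y_k)\in(E_\varepsilon)^k$ of $\prod_{i\le k}p_{t_{i-1},t_i}$ times a nonnegative measurable function of $y_k$. By the definition of $\widehat{\mathsf K}^{T,\beta,\varepsilon,\pi|_{[r,s]}}_{[r,s]}(x,dy)$ as the measure with density $\int\prod_{i\le k}p\,dy_1\cdots dy_{k-1}$ restricted to $E_\varepsilon$, this is exactly the integral of that function against the measure. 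Strictly, one passes from indicators to nonnegative measurable functions by the standard simple-function and monotone-convergence argument. The whole of (ii) is therefore Tonelli.

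\emph{Part (i).} It suffices to treat the refinement that inserts a single point $u\in(t_{j-1},t_j)$; the general case follows by induction on the number of added points. In~\eqref{FirstTrans3dSub} for $\pi'$, the only new integration is over the intermediate variable $z\in E_\varepsilon$ in the factor
\[
\int_{E_\varepsilon}p^{T,\beta}_{t_{j-1},u}(y_{j-1},z)\,p^{T,\beta}_{u,t_j}(z,y_j)\,dz
\;\le\;\int_{\R^3}p^{T,\beta}_{t_{j-1},u}(y_{j-1},z)\,p^{T,\beta}_{u,t_j}(z,y_j)\,dz
\;=\;p^{T,\beta}_{t_{j-1},t_j}(y_{j-1},y_j).
\]
The inequality holds by nonnegativity and the equality by Lemma~\ref{LemTranKern}(ii). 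Substituting this pointwise bound back into the remaining nonnegative integrand and applying Tonelli yields~\eqref{EqMonotoneOnEeps}.

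\emph{Main obstacle.} The only delicate point is that Lemma~\ref{LemTranKern}(ii) is stated for $x,z\ne0$ and must hold pointwise rather than a.e. This is satisfied, since $y_{j-1},y_j\in E_\varepsilon$. A secondary point is the joint measurability, and possible value $+\infty$, of $p^{T,\beta}$, which is needed for Tonelli. I would handle it by noting that on $E_\varepsilon\times E_\varepsilon$ the kernel is finite and continuous: $Q^\beta_{T-s}$ is finite off the origin, and $Q^\beta_0=0$ when $t=T$.
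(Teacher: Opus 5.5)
Your proposal is correct and follows essentially the same route as the paper. The kernel property comes from Tonelli plus continuity of $p^{T,\beta}_{u,v}$. Part (i) comes from the single-insertion bound $\int_{E_\varepsilon}p\,p\,dz\le\int_{\R^3}p\,p\,dz=p$ via Lemma~\ref{LemTranKern}(ii), iterated over finitely many insertions. Part (ii) comes from splitting the product at $s=t_k$ and integrating out $(y_{k+1},\dots,y_n)$ first. Your remarks that Chapman--Kolmogorov must hold pointwise for $y_{j-1},y_j\in E_\varepsilon$, and on the endpoint case $t=T$, address details the paper leaves implicit.
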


\begin{proof}
Since each $p^{\,T,\beta}_{u,v}(x,\cdot)$ is a nonnegative probability density, the integrand in~\eqref{FirstTrans3dSub} is nonnegative and measurable. Repeated applications of Tonelli's theorem therefore imply that the integral is well-defined for every $x\in E_\varepsilon$ and $A\in\mathcal B(E_\varepsilon)$, and that
\[
0 \le \widehat{\mathsf K}^{T,\beta,\varepsilon,\pi}_{[s,t]}(x,A) \le 1.
\]
Moreover, for each fixed $x\in E_\varepsilon$, the map
$A\mapsto \widehat{\mathsf K}^{T,\beta,\varepsilon,\pi}_{[s,t]}(x,A)$
is a sub-probability measure on $\mathcal B(E_\varepsilon)$, since Tonelli's theorem yields countable additivity. Furthermore, since the kernel $P_t^\beta(x,y)$ defined in~\eqref{DefPointKer3dBeta} is jointly continuous on
$(\R^3\setminus\{0\})\times(\R^3\setminus\{0\})$ by~\cite[Cor.~7]{Fleischmann}, and $Q_t^\beta(\cdot)$ is continuous on $\R^3\setminus\{0\}$, it follows that $p^{\,T,\beta}_{s,t}(x,y)$ is jointly continuous (hence Borel measurable) on $(\R^3\setminus\{0\})^2$. Consequently, the integrand in~\eqref{FirstTrans3dSub} is jointly Borel measurable in $(x,y_1,\dots,y_n)$, and another application of Tonelli's theorem yields that, for every $A\in\mathcal B(E_\varepsilon)$, the map
$x\mapsto \widehat{\mathsf K}^{T,\beta,\varepsilon,\pi}_{[s,t]}(x,A)$
is Borel measurable. Therefore, $\widehat{\mathsf K}^{T,\beta,\varepsilon,\pi}_{[s,t]}$ is a sub-probability kernel on $E_\varepsilon$. \vspace{.3cm}

\noindent Part (i).  \textit{Monotonicity under a single refinement.} Fix $x \in E_\varepsilon$ and $A\in\mathcal B(E_\varepsilon)$. Let $\pi=\{t_0=s<t_1<\cdots<t_n=t\}$ be a partition of $[s,t]$ and $\pi'=\{t_0,\dots,t_{k-1},u,t_k,\dots,t_n\}$ be a refinement of $\pi$, obtained by inserting a point $u$ into some interval $(t_{k-1},t_k)$. Using~\eqref{FirstTrans3dSub} we obtain the first and last equalities below. 
\begin{align}
\widehat{\mathsf K}^{T,\beta,\varepsilon,\pi'}_{[s,t]}(x,A)
\,=\,&
\int_{(E_\varepsilon)^{n+1}}
\mathbf 1_A(y_n)\,
\bigg(\prod_{i\neq k}p^{\,T,\beta}_{t_{i-1},t_i}(y_{i-1},y_i)\bigg)
p^{\,T,\beta}_{t_{k-1},u}(y_{k-1},z)\,
p^{\,T,\beta}_{u,t_k}(z,y_k)\,
dz\,dy_1\cdots dy_n \nonumber \\
\,\le\,&
\int_{(E_\varepsilon)^{n}}
\mathbf 1_A(y_n)\,
\bigg(\prod_{i\neq k}p^{\,T,\beta}_{t_{i-1},t_i}(y_{i-1},y_i)\bigg)
\bigg(\int_{\R^3 \setminus \{0\}}p^{\,T,\beta}_{t_{k-1},u}(y_{k-1},z)\,
p^{\,T,\beta}_{u,t_k}(z,y_k)\,dz\bigg)\,
dy_1\cdots dy_n \nonumber \\
\,=\,&
\int_{(E_\varepsilon)^{n}}
\mathbf 1_A(y_n)\,
\prod_{i=1}^{n}p^{\,T,\beta}_{t_{i-1},t_i}(y_{i-1},y_i)\,
dy_1\cdots dy_n 
\,=\,
\widehat{\mathsf K}^{T,\beta,\varepsilon,\pi}_{[s,t]}(x,A) \nonumber 
\end{align}
The second equality uses the Chapman-Kolmogorov identity for the (nonnegative) transition density $p^{\,T,\beta}$ on $\R^3\setminus\{0\}$ from Lemma~\ref{LemTranKern}(ii). \vspace{.1cm}

\noindent \textit{General refinements.}
If $\pi'$ refines $\pi$, then $\pi'$ can be obtained from $\pi$ by finitely many single insertions. Iterating the single refinement over these insertions yields~\eqref{EqMonotoneOnEeps} for general refinements. \vspace{.3cm}

\noindent Part (ii). Fix $x\in E_\varepsilon$ and $A\in\mathcal B(E_\varepsilon)$, and write $y_0:=x$.
Since $s\in\pi$, we may write
\[
\pi=\{r=t_0<t_1<\cdots<t_k=s<t_{k+1}<\cdots<t_n=t\}
\]
for some $1\le k\le n-1$. Then
\[
\pi|_{[r,s]}\,=\,\{r=t_0<t_1<\cdots<t_k=s\},
\qquad
\pi|_{[s,t]}\,=\,\{s=t_k<t_{k+1}<\cdots<t_n=t\}.
\]
By splitting the product in~\eqref{FirstTrans3dSub} at the time index corresponding to $s=t_k$ we obtain
\begin{align}\label{EqCK_ProofStart}
\widehat{\mathsf K}^{T,\beta,\varepsilon,\pi}_{[r,t]}(x,A)
&\,=\,
\int_{(E_\varepsilon)^n}
\mathbf 1_A(y_n)\,
\bigg(\prod_{i=1}^k
p^{\,T,\beta}_{t_{i-1},t_i}(y_{i-1},y_i)\bigg)
\bigg(\prod_{i=k+1}^n
p^{\,T,\beta}_{t_{i-1},t_i}(y_{i-1},y_i)\bigg)
dy_1\cdots dy_n .
\end{align}
Since the integrand is nonnegative, Tonelli's theorem allows us to integrate first
with respect to $(y_{k+1},\dots,y_n)$ while keeping $(y_1,\dots,y_k)$ fixed. Hence, the right side equals
\begin{align}
\int_{(E_\varepsilon)^k}
\bigg(\prod_{i=1}^k
p^{\,T,\beta}_{t_{i-1},t_i}(y_{i-1},y_i)\bigg)
\bigg[
\int_{(E_\varepsilon)^{n-k}}
\mathbf 1_A(y_n)\,
\prod_{i=k+1}^n
p^{\,T,\beta}_{t_{i-1},t_i}(y_{i-1},y_i)\,
dy_{k+1}\cdots dy_n
\bigg]
dy_1\cdots dy_k \,. \nonumber 
\end{align}
For fixed $y_k\in E_\varepsilon$, the bracketed inner integral is exactly $\widehat{\mathsf K}^{T,\beta,\varepsilon,\pi|_{[s,t]}}_{[s,t]}(y_k,A)$ by definition~\eqref{FirstTrans3dSub} with the restricted partition
$\pi|_{[s,t]}$. Therefore~\eqref{EqCK_ProofStart} becomes
\begin{align}\label{EqCKPartitionAfterInner}
\widehat{\mathsf K}^{T,\beta,\varepsilon,\pi}_{[r,t]}(x,A)
&\,=\,
\int_{(E_\varepsilon)^k}
\bigg(\prod_{i=1}^k
p^{\,T,\beta}_{t_{i-1},t_i}(y_{i-1},y_i)\bigg)
\widehat{\mathsf K}^{T,\beta,\varepsilon,\pi|_{[s,t]}}_{[s,t]}(y_k,A)\,
dy_1\cdots dy_k \,.
\end{align}
Now, by the iterated-integral representation of
$\widehat{\mathsf K}^{T,\beta,\varepsilon,\pi|_{[r,s]}}_{[r,s]}(x,\cdot)$, we have for every
nonnegative Borel function $f$ on $E_\varepsilon$,
\[
\int_{E_\varepsilon}
f(y)\,\widehat{\mathsf K}^{T,\beta,\varepsilon,\pi|_{[r,s]}}_{[r,s]}(x,dy)
\,=\,
\int_{(E_\varepsilon)^k}
f(y_k)\,
\prod_{i=1}^k
p^{\,T,\beta}_{t_{i-1},t_i}(y_{i-1},y_i)\,
dy_1\cdots dy_k .
\]
Applying this with $f(y):=\widehat{\mathsf K}^{T,\beta,\varepsilon,\pi|_{[s,t]}}_{[s,t]}(y,A)$, which is a nonnegative Borel function on $E_\varepsilon$, the right-hand side of
\eqref{EqCKPartitionAfterInner} becomes
\[
\int_{E_\varepsilon}
\widehat{\mathsf K}^{T,\beta,\varepsilon,\pi|_{[r,s]}}_{[r,s]}(x,dy)\,
\widehat{\mathsf K}^{T,\beta,\varepsilon,\pi|_{[s,t]}}_{[s,t]}(y,A).
\]
This proves~\eqref{EqCK_PartitionLevel}.
\end{proof}

For a fixed partition $\pi$ of $[s,t]$, the product kernel 
$\widehat{\mathsf K}^{T,\beta,\varepsilon,\pi}_{[s,t]}$ defined in~\eqref{FirstTrans3dSub} 
evolves the system through successive one--step transitions and restricts mass to trajectories that remain in $E_\varepsilon$ at the times of~$\pi$. This suggests refining the partition and considering the regime $|\pi|\downarrow 0$, so that these checkpoints become dense in $[s,t]$ and the resulting limit enforces survival throughout the entire interval. The following proposition shows that such a limiting object can be constructed along a suitable refining sequence of partitions. We write 
$\widehat{\mu}^{T,\beta,\varepsilon}_{[s,t]}$ for this limit to emphasize that it encodes survival over the whole interval $[s,t]$.

\begin{proposition} \label{PropExistLimitSubprobKernel}
Fix $T,\beta,\varepsilon>0$ and $0\le s<t\le T$. Let $\Pi_{[s,t]}$ denote the collection of all finite partitions of $[s,t]$, partially ordered by
refinement (so $\pi'\succeq\pi$ means $\pi'$ refines $\pi$). The map $\widehat{\mu}^{T,\beta,\varepsilon}_{[s,t]}:\;
E_\varepsilon \times \mathcal B(E_\varepsilon)\longrightarrow [0,1]$
defined, for $x\in E_\varepsilon$ and $A\in\mathcal B(E_\varepsilon)$, by
\begin{align}\label{EqDefLimitKernelInf}
\widehat{\mu}^{T,\beta,\varepsilon}_{[s,t]}(x,A)
\,:=\,
\inf_{\pi\in\Pi_{[s,t]}}
\widehat{\mathsf K}^{T,\beta,\varepsilon,\pi}_{[s,t]}(x,A)\,
\end{align}
is well-defined and satisfies the following.
\begin{enumerate}[(i)]
\item Fix $x\in E_\varepsilon$ and $A\in\mathcal B(E_\varepsilon)$.
There exists a refining sequence $(\sigma_m)_{m\ge1}\subset\Pi_{[s,t]}$ with
$|\sigma_m|\downarrow0$ as $m\to\infty$ such that
\begin{align}\label{EqLimitAlongApproximatingSequence}
\widehat{\mu}^{T,\beta,\varepsilon}_{[s,t]}(x,A)
\,=\,
\lim_{m\to\infty}
\widehat{\mathsf K}^{T,\beta,\varepsilon,\sigma_m}_{[s,t]}(x,A).
\end{align}

\item For every $x\in E_\varepsilon$, the set function
$A\mapsto \widehat{\mu}^{T,\beta,\varepsilon}_{[s,t]}(x,A)$ is a sub-probability measure on
$(E_\varepsilon,\mathcal B(E_\varepsilon))$.
\end{enumerate}
\end{proposition}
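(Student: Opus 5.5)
The plan is to reduce everything to the monotonicity under refinement from Lemma~\ref{LemMonotoneOnEeps}(i), together with the fact that each partition-level map $\widehat{\mathsf K}^{T,\beta,\varepsilon,\pi}_{[s,t]}(x,\cdot)$ is a genuine (countably additive) sub-probability measure.

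\textbf{Well-definedness.} By Lemma~\ref{LemMonotoneOnEeps}, every value $\widehat{\mathsf K}^{T,\beta,\varepsilon,\pi}_{[s,t]}(x,A)$ lies in $[0,1]$. Hence the infimum in~\eqref{EqDefLimitKernelInf} exists and lies in $[0,1]$.

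\textbf{Part (i).} Fix $x$ and $A$. By the definition of the infimum, choose partitions $\pi_1,\pi_2,\dots\in\Pi_{[s,t]}$ with
\[
\widehat{\mathsf K}^{T,\beta,\varepsilon,\pi_m}_{[s,t]}(x,A)\le \widehat{\mu}^{T,\beta,\varepsilon}_{[s,t]}(x,A)+1/m .
\]
Set
\[
\sigma_m:=\pi_1\cup\cdots\cup\pi_m\cup\{s+k(t-s)/m:\ 0\le k\le m\}\cup\sigma_{m-1}.
\]
Then $(\sigma_m)$ is a refining sequence with $|\sigma_m|\le (t-s)/m\downarrow0$, and $\sigma_m\succeq\pi_m$. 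By~\eqref{EqMonotoneOnEeps},
\[
\widehat{\mu}^{T,\beta,\varepsilon}_{[s,t]}(x,A)\le \widehat{\mathsf K}^{T,\beta,\varepsilon,\sigma_m}_{[s,t]}(x,A)\le \widehat{\mathsf K}^{T,\beta,\varepsilon,\pi_m}_{[s,t]}(x,A)\le \widehat{\mu}^{T,\beta,\varepsilon}_{[s,t]}(x,A)+1/m .
\]
This gives~\eqref{EqLimitAlongApproximatingSequence}. We also record the following consequence of monotonicity: if a refining sequence $(\sigma_m)$ attains the infimum for $(x,A)$, then so does any sequence $(\sigma'_m)$ with $\sigma'_m\succeq\sigma_m$.

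\textbf{Part (ii), finite additivity.} Nonnegativity and $\widehat{\mu}^{T,\beta,\varepsilon}_{[s,t]}(x,\emptyset)=0$ are immediate. The main obstacle is that the approximating sequence in (i) depends on the set $A$, so additivity cannot be read off directly from a single limit. To handle this, take disjoint $A,B$ and let $(\sigma^A_m)$, $(\sigma^B_m)$, $(\sigma^{A\cup B}_m)$ be the sequences given by (i). Form the common refinement $\tau_m:=\sigma^A_m\cup\sigma^B_m\cup\sigma^{A\cup B}_m$. By the remark at the end of (i), the infimum is attained along $(\tau_m)$ simultaneously for $A$, $B$ and $A\cup B$. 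Additivity of $\widehat{\mathsf K}^{T,\beta,\varepsilon,\tau_m}_{[s,t]}(x,\cdot)$ then passes to the limit as $m\to\infty$. Induction extends this to any finite disjoint family.

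\textbf{Part (ii), countable additivity.} Let $(A_j)$ be disjoint and $A=\bigcup_j A_j$. Finite additivity gives
\[
\widehat{\mu}^{T,\beta,\varepsilon}_{[s,t]}(x,A)=\sum_{j\le N}\widehat{\mu}^{T,\beta,\varepsilon}_{[s,t]}(x,A_j)+\widehat{\mu}^{T,\beta,\varepsilon}_{[s,t]}\Big(x,\bigcup_{j>N}A_j\Big).
\]
Fix any single partition $\pi$, for example $\pi=\{s,t\}$. Then
\[
\widehat{\mu}^{T,\beta,\varepsilon}_{[s,t]}\Big(x,\bigcup_{j>N}A_j\Big)\le \widehat{\mathsf K}^{T,\beta,\varepsilon,\pi}_{[s,t]}\Big(x,\bigcup_{j>N}A_j\Big)\longrightarrow 0 \quad\text{as } N\to\infty,
\]
by the countable additivity of the finite measure $\widehat{\mathsf K}^{T,\beta,\varepsilon,\pi}_{[s,t]}(x,\cdot)$ from Lemma~\ref{LemMonotoneOnEeps}. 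Letting $N\to\infty$ yields countable additivity. Combined with $\widehat{\mu}^{T,\beta,\varepsilon}_{[s,t]}(x,E_\varepsilon)\le1$, this shows that $A\mapsto\widehat{\mu}^{T,\beta,\varepsilon}_{[s,t]}(x,A)$ is a sub-probability measure.
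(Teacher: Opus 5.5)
Your proof is correct. Part (i) is the paper's argument; the only difference is that the paper inserts midpoints afterwards to force $|\sigma_m|\downarrow0$, whereas you build a uniform grid into $\sigma_m$ from the start.

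For part (ii) you take a genuinely different route. Write $\widehat{\mathsf K}^{\pi}$ for $\widehat{\mathsf K}^{T,\beta,\varepsilon,\pi}_{[s,t]}$ and $\widehat\mu$ for $\widehat{\mu}^{T,\beta,\varepsilon}_{[s,t]}$.

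\textbf{The paper's route.} It treats countably many sets at once. At stage $m$ it picks near-minimizers for $A$ and for $A_1,\dots,A_m$, and merges them into a single diagonal refining sequence $(\sigma_m)$. Along this sequence the infimum is attained simultaneously for $A$ and for every $A_j$. It then writes $\widehat\mu(x,A)=\lim_m\sum_{j}\widehat{\mathsf K}^{\sigma_m}(x,A_j)$ and interchanges limit and sum by dominated convergence for series, with dominating terms $\widehat{\mathsf K}^{\sigma_1}(x,A_j)$.

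\textbf{Your route.} You only ever merge finitely many sequences, which your observation about refinements of attaining sequences makes legitimate. This gives finite additivity. You then control the tail by the trivial bound $\widehat\mu(x,B_N)\le \widehat{\mathsf K}^{\pi}(x,B_N)\to0$ for one fixed partition $\pi$, which is just continuity from above of a single finite measure.

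\textbf{Comparison.} The underlying domination is the same in both: values along refinements are bounded by the measure of a coarser partition. Your organization is more modular and avoids both the triangular diagonal scheme and the series form of dominated convergence. It is in fact the tail argument the paper itself uses for the reverse inequality in Proposition~\ref{PropExistLimitSubprobKernelGlobalDyadic}. What the paper's version gives in return is one refining sequence realizing the infimum for $A$ and all the $A_j$ simultaneously, though the statement does not need this.
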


Before proceeding to the proof, we describe the refining sequence of partitions
used in the diagonal argument. At each stage $m$, we select one partition
$\pi^{(m)}$ associated with the set $A=\cup_{j\ge1}A_j$, and partitions
$\pi^{(m)}_j$ associated with the individual sets $A_j$ for $1\le j\le m$.
These choices may be organized in a triangular scheme:
\[
\begin{array}{c|ccccc}
 & A & A_1 & A_2 & A_3 & \cdots \\
\hline
m=1 & \pi^{(1)} & \pi^{(1)}_1 &  &  & \cdots \\
m=2 & \pi^{(2)} & \pi^{(2)}_1 & \pi^{(2)}_2 &  & \cdots \\
m=3 & \pi^{(3)} & \pi^{(3)}_1 & \pi^{(3)}_2 & \pi^{(3)}_3 & \cdots \\
m=4 & \pi^{(4)} & \pi^{(4)}_1 & \pi^{(4)}_2 & \pi^{(4)}_3 & \cdots \\
\vdots & \vdots & \vdots & \vdots & \vdots & \ddots
\end{array}
\]
We then construct a sequence $(\sigma_m)_{m\ge1}$ inductively by letting
$\sigma_m$ be a common refinement of $\sigma_{m-1}$, $\pi^{(m)}$, and
$\pi^{(m)}_1,\dots,\pi^{(m)}_m$. In particular, the sequence
$(\sigma_m)_{m \ge 1}$ is increasing under refinement.

\begin{proof}[Proof of Lemma~\ref{PropExistLimitSubprobKernel}]
By Lemma~\eqref{LemMonotoneOnEeps}, for each fixed $x\in E_\varepsilon$ and
$A\in\mathcal B(E_\varepsilon)$ the family
$\{\widehat{\mathsf K}^{T,\beta,\varepsilon,\pi}_{[s,t]}(x,A)\}_{\pi\in\Pi_{[s,t]}}$ is bounded in $[0,1]$ and is decreasing under refinement. Hence the infimum in~\eqref{EqDefLimitKernelInf} exists in $[0,1]$. \vspace{.2cm}

\noindent Part (i). Fix $x\in E_\varepsilon$ and $A\in\mathcal B(E_\varepsilon)$. For each $m\ge1$, by the definition of infimum~\eqref{EqDefLimitKernelInf}, there exists a partition $\pi^{(m)}\in\Pi_{[s,t]}$ such that
\begin{align}\label{EqAlmostMinimizer}
\widehat{\mathsf K}^{T,\beta,\varepsilon,\pi^{(m)}}_{[s,t]}(x,A)
\,\le\,
\widehat{\mu}^{T,\beta,\varepsilon}_{[s,t]}(x,A) \, +\, \frac1m\,.
\end{align}
Define a sequence $(\sigma_m)_{m\ge 1}$ inductively by $\sigma_1:=\pi^{(1)}$ and $\sigma_m:=\sigma_{m-1}\vee \pi^{(m)}$ for $m\ge2$, so that $\sigma_m\succeq \sigma_{m-1}$ for all $m$ (hence $(\sigma_m)_{m \ge 1}$ is refining) and $\sigma_m\succeq \pi^{(m)}$. For each $m\ge1$, using the definition of infimum~\eqref{EqDefLimitKernelInf}, we obtain the first inequality below.
\begin{align}
\widehat{\mu}^{T,\beta,\varepsilon}_{[s,t]}(x,A)
\le
\widehat{\mathsf K}^{T,\beta,\varepsilon,\sigma_m}_{[s,t]}(x,A)
\le
\widehat{\mathsf K}^{T,\beta,\varepsilon,\pi^{(m)}}_{[s,t]}(x,A)
\le
\widehat{\mu}^{T,\beta,\varepsilon}_{[s,t]}(x,A) \, +\, \frac1m \nonumber
\end{align}
The second inequality uses Lemma~\ref{LemMonotoneOnEeps}(i) and the final inequality follows from~\eqref{EqAlmostMinimizer}. Since $(\sigma_m)_{m \ge 1}$ is refining, Lemma~\ref{LemMonotoneOnEeps}(i) implies that
$m\mapsto \widehat{\mathsf K}^{T,\beta,\varepsilon,\sigma_m}_{[s,t]}(x,A)$
is decreasing and bounded below by $0$, hence its limit $\lim_{m\to\infty}\widehat{\mathsf K}^{T,\beta,\varepsilon,\sigma_m}_{[s,t]}(x,A)$ exists. Passing to the limit $m\to\infty$ in the above display yields~\eqref{EqLimitAlongApproximatingSequence}. Finally, notice that each $\sigma_m$ may be refined further by inserting midpoints finitely many times to ensure $|\sigma_m|\downarrow0$ as $m\to\infty$, without changing the limit, since further refinement only decreases the values and preserves the bounds above. \vspace{.2cm}

\noindent Part (ii). Fix $x\in E_\varepsilon$ and define the map $\mu^{T,\beta,\varepsilon,x}_{[s,t]}:\; \mathcal B(E_\varepsilon)\longrightarrow [0,1]$ by
\begin{align}\label{DefmuinProof}
\mu^{T,\beta,\varepsilon,x}_{[s,t]}(A)
\,:=\,
\widehat{\mu}^{T,\beta,\varepsilon}_{[s,t]}(x,A)
\,=\,
\inf_{\pi\in\Pi_{[s,t]}}
\widehat{\mathsf K}^{T,\beta,\varepsilon,\pi}_{[s,t]}(x,A),
\qquad A\in\mathcal B(E_\varepsilon)\,,
\end{align}
where the last equality holds by~\eqref{EqDefLimitKernelInf}. Clearly $\mu^{T,\beta,\varepsilon,x}_{[s,t]}(\varnothing)=0$ and, since $\widehat{\mathsf K}^{T,\beta,\varepsilon,\pi}_{[s,t]}(x,E_\varepsilon)\le 1$
for every $\pi$, we have
\[
0\le 
\mu^{T,\beta,\varepsilon,x}_{[s,t]}(E_\varepsilon)
\,=\,
\inf_{\pi\in\Pi_{[s,t]}}
\widehat{\mathsf K}^{T,\beta,\varepsilon,\pi}_{[s,t]}(x,E_\varepsilon)\le 1\, .
\]
It remains to prove countable additivity. Let $(A_j)_{j\ge1}\subset\mathcal B(E_\varepsilon)$ be pairwise disjoint and set $A:=\bigcup_{j\ge1}A_j$. For each $m\ge1$, by the definition of infimum in~\eqref{DefmuinProof}, we may choose partitions $\pi^{(m)}\in\Pi_{[s,t]}$ and $\pi^{(m)}_j\in\Pi_{[s,t]}$ for $1\le j\le m$ such that
\begin{align}\label{EqAlmostMinimizersForAdditivity}
\widehat{\mathsf K}^{T,\beta,\varepsilon,\pi^{(m)}}_{[s,t]}(x,A)
\,\le \,
\mu^{T,\beta,\varepsilon,x}_{[s,t]}(A) + \frac1m,
\quad \textup{ and } \quad
\widehat{\mathsf K}^{T,\beta,\varepsilon,\pi^{(m)}_j}_{[s,t]}(x,A_j)
\,\le \,
\mu^{T,\beta,\varepsilon,x}_{[s,t]}(A_j) + \frac1m\,,
\quad 1\le j\le m\,.
\end{align}
Define $\sigma_m\in\Pi_{[s,t]}$ inductively by
\begin{align}
\sigma_1
\,:=\,\pi^{(1)}\vee \pi^{(1)}_1\,,
\qquad
\sigma_m
\,:=\,\sigma_{m-1}\vee \pi^{(m)} \vee \pi^{(m)}_1 \vee \cdots \vee \pi^{(m)}_m,
\qquad m\ge2. \nonumber
\end{align}
Then $(\sigma_m)_{m\ge1}$ is refining, and moreover $\sigma_m\succeq \pi^{(m)}$ and
$\sigma_m\succeq \pi^{(m)}_j$ for all $1\le j\le m$. Since $\widehat{\mathsf K}^{T,\beta,\varepsilon,\cdot}_{[s,t]}(x,A)$ is decreasing under refinement (Lemma~\ref{LemMonotoneOnEeps}(i)), we obtain for every $m\ge1$, the second inequality below.
\begin{align}\label{EqDiagonalSandwichUnion}
\mu^{T,\beta,\varepsilon,x}_{[s,t]}(A)
\le
\widehat{\mathsf K}^{T,\beta,\varepsilon,\sigma_m}_{[s,t]}(x,A)
\le
\widehat{\mathsf K}^{T,\beta,\varepsilon,\pi^{(m)}}_{[s,t]}(x,A)
\overset{\eqref{EqAlmostMinimizersForAdditivity}}{\le}
\mu^{T,\beta,\varepsilon,x}_{[s,t]}(A)+\frac1m
\end{align}
The first inequality follows from the definition~\eqref{DefmuinProof} of
$\mu^{T,\beta,\varepsilon,x}_{[s,t]}(A)$ as the infimum of
$\widehat{\mathsf K}^{T,\beta,\varepsilon,\pi}_{[s,t]}(x,A)$ over
$\pi\in\Pi_{[s,t]}$, and the fact that $\sigma_m\in\Pi_{[s,t]}$. Similarly, for each $1\le j\le m$, we have
\begin{align}\label{EqDiagonalSandwichPieces}
\mu^{T,\beta,\varepsilon,x}_{[s,t]}(A_j)
\le
\widehat{\mathsf K}^{T,\beta,\varepsilon,\sigma_m}_{[s,t]}(x,A_j)
\le
\widehat{\mathsf K}^{T,\beta,\varepsilon,\pi^{(m)}_j}_{[s,t]}(x,A_j)
\le
\mu^{T,\beta,\varepsilon,x}_{[s,t]}(A_j)+\frac1m.
\end{align}
Taking $m\to\infty$ in~\eqref{EqDiagonalSandwichUnion} yields the first equality below.
\begin{align}\label{EqLimitUnionAlongSigma}
\mu^{T,\beta,\varepsilon,x}_{[s,t]}(A)
\,=\,
\lim_{m\to\infty}\, 
\widehat{\mathsf K}^{T,\beta,\varepsilon,\sigma_m}_{[s,t]}(x,A)
\,=\,
\lim_{m\to\infty}\, 
\sum_{j\ge1} \widehat{\mathsf K}^{T,\beta,\varepsilon,\sigma_m}_{[s,t]}(x,A_j)
\end{align}
The second equality uses Lemma~\ref{LemMonotoneOnEeps}, which implies that the map $\widehat{\mathsf K}^{T,\beta,\varepsilon,\sigma_m}_{[s,t]}(x,\cdot)$
is a sub-probability measure on $(E_\varepsilon,\mathcal B(E_\varepsilon))$ for each $m\ge1$. Moreover, since $(\sigma_m)$ is refining, we have $\sigma_m\succeq \sigma_1$ for all $m$, hence by monotonicity in Lemma~\ref{LemMonotoneOnEeps}(i), we have
\[
0\le
\widehat{\mathsf K}^{T,\beta,\varepsilon,\sigma_m}_{[s,t]}(x,A_j)
\le
\widehat{\mathsf K}^{T,\beta,\varepsilon,\sigma_1}_{[s,t]}(x,A_j),
\qquad j\ge1,\; m\ge1,
\]
and the dominating series is summable because $\sum_{j\ge1}\widehat{\mathsf K}^{T,\beta,\varepsilon,\sigma_1}_{[s,t]}(x,A_j)
\,=\,
\widehat{\mathsf K}^{T,\beta,\varepsilon,\sigma_1}_{[s,t]}(x,A)
\le 1$. Therefore, by dominated convergence for series we have the first equality below.
\[
\lim_{m\to\infty}
\sum_{j\ge1}
\widehat{\mathsf K}^{T,\beta,\varepsilon,\sigma_m}_{[s,t]}(x,A_j)
\,=\,
\sum_{j\ge1}
\lim_{m\to\infty}
\widehat{\mathsf K}^{T,\beta,\varepsilon,\sigma_m}_{[s,t]}(x,A_j)
\,=\,
\sum_{j\ge1}
\mu^{T,\beta,\varepsilon,x}_{[s,t]}(A_j)
\]
The last equality holds by letting $m\to\infty$ in~\eqref{EqDiagonalSandwichPieces} along $m\ge j$ for each fixed $j\ge1$. Substituting the above in~\eqref{EqLimitUnionAlongSigma} yields countably additivity for $\mu^{T, \beta, \varepsilon, x}_{ [s, t]}$.
\end{proof}

\subsection{The limiting sub-probability kernel \texorpdfstring{$\boldsymbol{\widehat{\mathsf K}^{T,\beta,\varepsilon}_{[s,t]}}$}{Lg}}\label{SecLimitingSubProbKernel}

Proposition~\ref{PropExistLimitSubprobKernel} establishes the existence of a limiting function 
$\widehat{\mu}^{T,\beta,\varepsilon}_{[s,t]}$; however, it does not yield the Borel measurability of the map 
$x \mapsto \widehat{\mu}^{T,\beta,\varepsilon}_{[s,t]}(x,A)$ for fixed $A\in\mathcal B(E_\varepsilon)$. 
In this section, we use the canonical sequence of global dyadic partitions~\eqref{DefGlobalDyadics} to construct a limiting sub-probability kernel that assigns mass only to trajectories that remain in $E_\varepsilon$ throughout the entire interval $[s,t]$. Moreover, it satisfies the Chapman-Kolmogorov property on dyadic times. We begin with the nesting and concatenation properties of the global dyadic partitions used in the main text, along with a few illustrative examples. \vspace{.2cm}

\noindent\textit{Nesting.}
For each $k\in\{0,\dots,2^m\}$, we have $k2^{-m}T=(2k)2^{-(m+1)}T\in\mathcal D_{m+1}$, so $\mathcal D_m\subseteq \mathcal D_{m+1}$. Hence, for every $0\le s<t\le T$, we have 
\[
\pi_m[s,t]
\,:=\,\{s,t\}\cup(\mathcal D_m\cap(s,t))
\subseteq
\{s,t\}\cup(\mathcal D_{m+1}\cap(s,t))
\,=:\,\pi_{m+1}[s,t].
\]

\vspace{.1cm}

\noindent\textit{Concatenation.} For the concatenation property, notice that if $0\le r<s<t\le T$ and $r,s,t\in\mathcal D_m$, then
\begin{align*}
\pi_m[r,t]
&\,:=\,\{r,t\}\cup\big(\mathcal D_m\cap(r,t)\big)\\
&\,=\,\{r,t\}\cup\Big(\mathcal D_m\cap\big((r,s)\cup\{s\}\cup(s,t)\big)\Big)\\
&\,=\,\{r,t\}\cup\Big(\big(\mathcal D_m\cap(r,s)\big)\cup\big(\mathcal D_m\cap\{s\}\big)\cup\big(\mathcal D_m\cap(s,t)\big)\Big)\\
&\,=\,\{r,t\}\cup\Big(\big(\mathcal D_m\cap(r,s)\big)\cup\{s\}\cup\big(\mathcal D_m\cap(s,t)\big)\Big)
\qquad(\text{since } s\in\mathcal D_m)\\[2mm]
&\,=\,\{r,s,t\}\cup(\mathcal D_m\cap(r,s))\cup(\mathcal D_m\cap(s,t))
\qquad(\text{rearranging unions})\\[2mm]
&\,=\,\big(\{r,s\}\cup(\mathcal D_m\cap(r,s))\big)
\ \cup\
\big(\{s,t\}\cup(\mathcal D_m\cap(s,t))\big)
\qquad(\textup{since } r<s<t \textup{ and } r,s,t\in\mathcal D_m )\\
&\,=\,\pi_m[r,s]\ \cup\ \pi_m[s,t]
\,=\,\pi_m[r,s]\ \vee\ \pi_m[s,t].
\end{align*}

\vspace{.1cm}

\noindent\textit{Examples.} If $0\le r<s<t\le T$ and $r,s,t\in\mathcal D_m$, then first few global grids are
\[
\mathcal D_1\,=\,\Big\{0,\ \tfrac{T}{2},\ T\Big\},
\qquad
\mathcal D_2\,=\,\Big\{0,\ \tfrac{T}{4},\ \tfrac{T}{2},\ \tfrac{3T}{4},\ T\Big\},
\qquad
\mathcal D_3\,=\,\Big\{0,\ \tfrac{T}{8},\ \tfrac{2T}{8},\ \tfrac{3T}{8},\ \tfrac{4T}{8},\ \tfrac{5T}{8},\ \tfrac{6T}{8},\ \tfrac{7T}{8},\ T\Big\}.
\]
Accordingly,
\begin{align}\label{IndPartPi1}
\pi_1[s,t]
&\,=\,\{s,t\}\cup\Big(\{0, \tfrac{T}{2},T\}\cap(s,t)\Big)
\,=\,\{s,t\}\cup\Big(\{ \tfrac{T}{2}\}\cap(s,t)\Big)
\,=\,
\begin{cases}
\{s,\tfrac{T}{2},t\}, & \text{if } s<\tfrac{T}{2}<t,\\[2mm]
\{s,t\}, & \text{otherwise.}
\end{cases} \\
\pi_2[s,t]
&\,=\,\{s,t\}\cup\Big(\{\tfrac{T}{4},\tfrac{T}{2},\tfrac{3T}{4}\}\cap(s,t)\Big)
\,=\,
\{s,t\}\cup
\Big(\mathbf 1_{\{s<\frac{T}{4}<t\}}\big\{\tfrac{T}{4}\big\}
\ \cup\
\mathbf 1_{\{s<\frac{T}{2}<t\}}\big\{\tfrac{T}{2}\big\}
\ \cup\
\mathbf 1_{\{s<\frac{3T}{4}<t\}}\big\{\tfrac{3T}{4}\big\}\Big) \,, \nonumber \\
\pi_3[s,t]
&\,=\,\{s,t\}\cup\Big(\{\tfrac{T}{8},\tfrac{2T}{8},\tfrac{3T}{8},\tfrac{4T}{8},\tfrac{5T}{8},\tfrac{6T}{8},\tfrac{7T}{8}\}\cap(s,t)\Big)
\,=\,
\{s,t\}\cup\bigcup_{k=1}^{7}\mathbf 1_{\{s<\frac{kT}{8}<t\}}\Big\{\tfrac{kT}{8}\Big\}. \nonumber
\end{align}

\begin{proposition} \label{PropExistLimitSubprobKernelGlobalDyadic}
Fix $T,\beta,\varepsilon>0$ and $0\le s<t\le T$. The map 
$\widehat{\mathsf K}^{T,\beta,\varepsilon}_{[s,t]}$ given by~\eqref{EqDefGlobalDyadicLimitKernel_Inf} is well defined and satisfies the following:
\begin{enumerate}[(i)]
\item $\widehat{\mathsf K}^{T,\beta,\varepsilon}_{[s,t]}:\;
E_\varepsilon \times \mathcal B(E_\varepsilon)\to [0,1]$ is a sub-probability kernel on $E_\varepsilon$.

\item If $0\le r<s<t\le T$ with $r,s,t\in\mathbb D_T$, then for every $x\in E_\varepsilon$ and every $A\in\mathcal B(E_\varepsilon)$,
\begin{align}\label{EqCKGlobalDyadicLimitKernel}
\widehat{\mathsf K}^{T,\beta,\varepsilon}_{[r,t]}(x,A)
\,=\,
\int_{E_\varepsilon}
\widehat{\mathsf K}^{T,\beta,\varepsilon}_{[r,s]}(x,dy)\,
\widehat{\mathsf K}^{T,\beta,\varepsilon}_{[s,t]}(y,A)\, .
\end{align}
\end{enumerate}
\end{proposition}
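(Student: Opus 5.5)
Well-definedness comes first. Since $\pi_{m+1}[s,t]\succeq\pi_m[s,t]$ by the nesting property, Lemma~\ref{LemMonotoneOnEeps}(i) shows that $m\mapsto \widehat{\mathsf K}^{T,\beta,\varepsilon,\pi_m[s,t]}_{[s,t]}(x,A)$ is nonincreasing in $[0,1]$. Hence the infimum in~\eqref{EqDefGlobalDyadicLimitKernel_Inf} coincides with the limit.

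For part (i), measurability in $x$ is immediate. For fixed $A$, each $x\mapsto \widehat{\mathsf K}^{T,\beta,\varepsilon,\pi_m[s,t]}_{[s,t]}(x,A)$ is Borel by Lemma~\ref{LemMonotoneOnEeps}, and a pointwise limit of Borel functions is Borel. This is exactly the gain over Proposition~\ref{PropExistLimitSubprobKernel}: we take a countable, $x$-independent sequence instead of an infimum over all partitions. For fixed $x$, we have $\widehat{\mathsf K}(x,\varnothing)=0$ and total mass at most $1$. Countable additivity follows as in the last step of Proposition~\ref{PropExistLimitSubprobKernel}(ii), now without any diagonal argument. For disjoint $A_j$, each $\widehat{\mathsf K}^{\pi_m}(x,\cdot)$ is a measure, so $\widehat{\mathsf K}^{\pi_m}(x,\cup A_j)=\sum_j \widehat{\mathsf K}^{\pi_m}(x,A_j)$. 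The terms are dominated by $\widehat{\mathsf K}^{\pi_1}(x,A_j)$, whose sum is at most $1$, so dominated convergence for series lets us pass $m\to\infty$ under the sum.

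For part (ii), fix $r<s<t$ in $\mathbb D_T$ and choose $m_0$ with $r,s,t\in\mathcal D_{m_0}$. Then for all $m\ge m_0$, concatenation gives $\pi_m[r,t]=\pi_m[r,s]\vee\pi_m[s,t]$, with $s\in\pi_m[r,t]$, and the restrictions are $\pi_m[r,s]$ and $\pi_m[s,t]$. By~\eqref{EqCK_PartitionLevel},
\[
\widehat{\mathsf K}^{\pi_m[r,t]}_{[r,t]}(x,A)=\int_{E_\varepsilon}\widehat{\mathsf K}^{\pi_m[r,s]}_{[r,s]}(x,dy)\,f_m(y),\qquad f_m(y):=\widehat{\mathsf K}^{\pi_m[s,t]}_{[s,t]}(y,A).
\]
The left side converges to $\widehat{\mathsf K}_{[r,t]}(x,A)$, so it remains to show that the right side converges to $\int \widehat{\mathsf K}_{[r,s]}(x,dy)f(y)$, where $f:=\lim f_m=\widehat{\mathsf K}_{[s,t]}(\cdot,A)$.

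The main obstacle is this limit: both the measure $\mu_m:=\widehat{\mathsf K}^{\pi_m[r,s]}_{[r,s]}(x,\cdot)$ and the integrand $f_m$ vary with $m$. I would exploit the fact that both are monotone. The measures decrease setwise, $\mu_m(B)\downarrow\mu(B)$ for every $B$, and $0\le f_m\downarrow f$. I would split
\[
\int f_m\,d\mu_m-\int f\,d\mu=\int (f_m-f)\,d\mu_m+\Big(\int f\,d\mu_m-\int f\,d\mu\Big).
\]
For the second term, note that $\mu_m-\mu$ is a nonnegative measure, because setwise monotonicity gives $\mu_m\ge\mu$ on every set. Its total mass $\mu_m(E_\varepsilon)-\mu(E_\varepsilon)$ tends to $0$, and $0\le f\le1$, so the term vanishes. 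For the first term, fix $k\le m$. Then $0\le\int(f_m-f)\,d\mu_m\le\int(f_k-f)\,d\mu_m$, and this converges, as $m\to\infty$, to $\int(f_k-f)\,d\mu$ by the same total-variation argument, since $0\le f_k-f\le 1$. Letting $k\to\infty$ and applying monotone (or dominated) convergence under the fixed measure $\mu$ gives $0$. This yields~\eqref{EqCKGlobalDyadicLimitKernel}.
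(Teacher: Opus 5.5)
Your proof is correct and follows essentially the same route as the paper. It uses monotone limits along the nested dyadic partitions, measurability via a countable limit, the partition-level Chapman--Kolmogorov identity through the concatenation $\pi_m[r,t]=\pi_m[r,s]\vee\pi_m[s,t]$, and the same two-term split of $\int f_m\,d\mu_m-\int f\,d\mu$. Your handling of the two terms differs only cosmetically from the paper's (domination $\mu_m\le\mu_M$ plus simple-function approximation): you use the nonnegative measure $\mu_m-\mu$ of vanishing total mass and a two-index bound, which is valid and slightly shorter.
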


\begin{remark}[Local dyadic refinement case]
For $0\le s<t\le T$, we may consider the local dyadic partitions
\begin{align}
\pi_m(s,t)\,=\,\{s+k2^{-m}(t-s):\ k=0,1,\dots,2^m\}, \qquad m\ge1. \nonumber
\end{align}
The family $\{\pi_m(s,t)\}_{m\ge1}$ is nested with $\pi_{m+1}(s,t)\succeq\pi_m(s,t)$ and $|\pi_m(s,t)|=(t-s)2^{-m}\downarrow0$. By the same argument as in Proposition~\ref{PropExistLimitSubprobKernel}, the limiting object defined by
\begin{align}
\widehat{\mathsf K}^{T,\beta,\varepsilon}_{(s,t)}(x,A)
\,:=\,
\inf_{m\ge1}\,
\widehat{\mathsf K}^{T,\beta,\varepsilon,\pi_m(s,t)}_{[s,t]}(x,A)
\,=\,
\lim_{m\to\infty}
\widehat{\mathsf K}^{T,\beta,\varepsilon,\pi_m(s,t)}_{[s,t]}(x,A), \nonumber
\end{align}
for $x\in E_\varepsilon$ and $A\in\mathcal B(E_\varepsilon)$, defines a sub-probability kernel on $E_\varepsilon$. However, this local refinement is not stable under concatenation of time intervals: if $r<s<t$, the partitions $\pi_m(r,s)$ and $\pi_m(s,t)$ do not combine to form $\pi_m(r,t)$. Consequently, the associated limiting kernel does not satisfy a Chapman--Kolmogorov identity. For this reason, we work with the global dyadic refinement throughout.
\end{remark}

\begin{proof}[Proof of Proposition~\ref{PropExistLimitSubprobKernelGlobalDyadic}]
Fix $x\in E_\varepsilon$ and $A\in\mathcal B(E_\varepsilon)$.  By construction,
$\pi_{m+1}[s,t]\succeq \pi_m[s,t]$ for all $m\ge1$ and $|\pi_m[s,t]|\downarrow0$ as $m\to\infty$.
Therefore, by Lemma~\ref{LemMonotoneOnEeps}(i) the sequence
$\big(\widehat{\mathsf K}^{T,\beta,\varepsilon,\pi_m[s,t]}_{[s,t]}(x,A)\big)_{m\ge1}$ is non-increasing and takes values in $[0,1]$. Hence the infimum and limit in~\eqref{EqDefGlobalDyadicLimitKernel_Inf} exist in $[0,1]$ and are equal. Consequently, the map
$\widehat{\mathsf K}^{T,\beta,\varepsilon}_{[s,t]}$ is well-defined. \vspace{.3cm}

\noindent Part (i). \textit{Sub-probability measure.} Fix $x\in E_\varepsilon$ and define the set function
$\mu^{T,\beta,\varepsilon,x}_{[s,t]}:\mathcal B(E_\varepsilon)\to[0,1]$ by
\begin{align}\label{DefmuProof2}
\mu^{T,\beta,\varepsilon,x}_{[s,t]}(A)
\,:=\,
\widehat{\mathsf K}^{T,\beta,\varepsilon}_{[s,t]}(x,A),
\qquad A\in\mathcal B(E_\varepsilon).
\end{align}
For each $m\ge1$, the map
$A\mapsto \widehat{\mathsf K}^{T,\beta,\varepsilon,\pi_m[s,t]}_{[s,t]}(x,A)$
is a sub-probability measure by Lemma~\ref{LemMonotoneOnEeps}. In particular,
\[
\widehat{\mathsf K}^{T,\beta,\varepsilon,\pi_m[s,t]}_{[s,t]}(x,\varnothing)\,=\,0,
\qquad
0\le
\widehat{\mathsf K}^{T,\beta,\varepsilon,\pi_m[s,t]}_{[s,t]}(x,A)
\le
\widehat{\mathsf K}^{T,\beta,\varepsilon,\pi_m[s,t]}_{[s,t]}(x,E_\varepsilon)
\le 1,
\]
for every $A\in\mathcal B(E_\varepsilon)$. Letting $m\to\infty$ and using~\eqref{EqDefGlobalDyadicLimitKernel_Inf} yields
\[
\mu^{T,\beta,\varepsilon,x}_{[s,t]}(\varnothing)\,=\,0,
\qquad
0\le \mu^{T,\beta,\varepsilon,x}_{[s,t]}(A)\le \mu^{T,\beta,\varepsilon,x}_{[s,t]}(E_\varepsilon)\le 1.
\]
It remains to prove countable additivity. Let $(A_j)_{j\ge1}\subset\mathcal B(E_\varepsilon)$ be
pairwise disjoint and set $A:=\bigcup_{j\ge1}A_j$. Fix $N\in\mathbb N$. Then, using~\eqref{EqDefGlobalDyadicLimitKernel_Inf} and~\eqref{DefmuProof2} we have the first equality below.
\begin{align}
\mu^{T,\beta,\varepsilon,x}_{[s,t]}(A)
\,=\,&
\lim_{m\to\infty}
\widehat{\mathsf K}^{T,\beta,\varepsilon,\pi_m[s,t]}_{[s,t]}(x,A) \nonumber \\
\,=\,&
\lim_{m\to\infty}\,
\sum_{j \ge 1}\,\widehat{\mathsf K}^{T,\beta,\varepsilon,\pi_m[s,t]}_{[s,t]}(x,A_j) \nonumber \\
\,\ge\,&
\lim_{m\to\infty}\,
\sum_{j = 1}^N\,\widehat{\mathsf K}^{T,\beta,\varepsilon,\pi_m[s,t]}_{[s,t]}(x,A_j) 
\,=\,
\sum_{j = 1}^N\,
\lim_{m\to\infty}\, \widehat{\mathsf K}^{T,\beta, \varepsilon, \pi_m[s,t]}_{[s,t]}(x,A_j) 
\,=\,
\sum_{j = 1}^N\,
\mu^{T,\beta,\varepsilon,x}_{[s,t]}(A_j) \nonumber
\end{align}
The second equality uses that the map $\widehat{\mathsf K}^{T,\beta,\varepsilon,\pi_m[s,t]}_{[s,t]}(x,\cdot)$ is a sub-probability measure for each $m\ge1$. Letting $N\to\infty$ and using that the partial sums are non-decreasing and bounded above by
$\mu^{T,\beta,\varepsilon,x}_{[s,t]}(A)\le 1$, we obtain
\begin{align}\label{EqGlobal_mu_ge_sum}
\mu^{T,\beta,\varepsilon,x}_{[s,t]}(A)
\ge
\sum_{j\ge1}\mu^{T,\beta,\varepsilon,x}_{[s,t]}(A_j).
\end{align}

For the reverse inequality, fix $N\ge1$ and set $B_N:=\bigcup_{j\ge N+1}A_j\in\mathcal B(E_\varepsilon)$. 
For each $m\ge1$, since $\widehat{\mathsf K}^{T,\beta,\varepsilon,\pi_m[s,t]}_{[s,t]}(x,\cdot)$ is a finite measure, we have the first equality below.
\begin{align}
    \widehat{\mathsf K}^{T,\beta,\varepsilon,\pi_m[s,t]}_{[s,t]}(x,A)
\,=\,&
\sum_{j=1}^N\,\widehat{\mathsf K}^{T,\beta,\varepsilon,\pi_m[s,t]}_{[s,t]}(x,A_j)
\,+\,
\widehat{\mathsf K}^{T,\beta,\varepsilon,\pi_m[s,t]}_{[s,t]}(x,B_N) \nonumber \\
\,\le\,&
\sum_{j=1}^N\,\widehat{\mathsf K}^{T,\beta,\varepsilon,\pi_m[s,t]}_{[s,t]}(x,A_j)
\,+\,
\widehat{\mathsf K}^{T,\beta,\varepsilon,\pi_1[s,t]}_{[s,t]}(x,B_N) \nonumber
\end{align}
The inequality follows from the monotonicity under refinement in Lemma~\ref{LemMonotoneOnEeps}(i). Letting $m\to\infty$ and using~\eqref{EqDefGlobalDyadicLimitKernel_Inf}, together with the finiteness of the sum over $j=1,\dots,N$, yields
\begin{align}
\mu^{T,\beta,\varepsilon,x}_{[s,t]}(A)
\,\le\,
\sum_{j=1}^N \, \lim_{m\to\infty}\,
\widehat{\mathsf K}^{T,\beta,\varepsilon,\pi_m[s,t]}_{[s,t]}(x,A_j)
\,+\,
\widehat{\mathsf K}^{T,\beta,\varepsilon,\pi_1[s,t]}_{[s,t]}(x,B_N) \nonumber 
\,=\,
\sum_{j=1}^N \mu^{T,\beta,\varepsilon,x}_{[s,t]}(A_j)
\,+\,
\widehat{\mathsf K}^{T,\beta,\varepsilon,\pi_1[s,t]}_{[s,t]}(x,B_N) \,. \nonumber 
\end{align}
Since $\widehat{\mathsf K}^{T,\beta,\varepsilon,\pi_1[s,t]}_{[s,t]}(x,\cdot)$ is a finite measure and $B_N\downarrow\varnothing$, continuity from above implies $\widehat{\mathsf K}^{T,\beta,\varepsilon,\pi_1[s,t]}_{[s,t]}(x,B_N)\downarrow 0$ as $N\to\infty$. Hence, letting $N\to\infty$ and using the definition of the series as the limit of partial sums yields
\begin{align}
\mu^{T,\beta,\varepsilon,x}_{[s,t]}(A)
\le
\sum_{j\ge1}\mu^{T,\beta,\varepsilon,x}_{[s,t]}(A_j)\,, \nonumber
\end{align}
which together with~\eqref{EqGlobal_mu_ge_sum} yields countably additivity for $\mu^{T, \beta, \varepsilon, x}_{ [s,t]}$. \vspace{.2cm}

\noindent \textit{Borel measurability in the initial point.} Fix $A\in\mathcal B(E_\varepsilon)$. For each $m\ge1$, the map
$x\mapsto \widehat{\mathsf K}^{T,\beta,\varepsilon,\pi_m[s,t]}_{[s,t]}(x,A)$
is Borel measurable on $E_\varepsilon$ since
$\widehat{\mathsf K}^{T,\beta,\varepsilon,\pi_m[s,t]}_{[s,t]}$ is a sub-probability kernel by Lemma~\ref{LemMonotoneOnEeps}.
By \eqref{EqDefGlobalDyadicLimitKernel_Inf},
\[
x\longmapsto \widehat{\mathsf K}^{T,\beta,\varepsilon}_{[s,t]}(x,A)
\,=\,
\inf_{m\ge1}\widehat{\mathsf K}^{T,\beta,\varepsilon,\pi_m[s,t]}_{[s,t]}(x,A),
\]
and the infimum is taken over a countable family of Borel measurable functions. Therefore,
$x\mapsto \widehat{\mathsf K}^{T,\beta,\varepsilon}_{[s,t]}(x,A)$ is Borel measurable on $E_\varepsilon$. Thus, $\widehat{\mathsf K}^{T,\beta,\varepsilon}_{[s,t]}$ is a sub-probability kernel on $E_\varepsilon$. \vspace{.3cm}

\noindent Part (ii). Fix $x\in E_\varepsilon$ and $A\in\mathcal B(E_\varepsilon)$. Fix $0\le r<s<t\le T$ with $r,s,t\in\mathbb D_T$. Then there exists $M\ge1$ such that $r,s,t\in\mathcal D_M$. For every $m\ge M$, we have $\pi_m[r,t]=\pi_m[r,s]\ \vee\ \pi_m[s,t]$, and hence by~\eqref{EqCK_PartitionLevel},
\begin{align}\label{EqCK_Levelm}
\widehat{\mathsf K}^{T,\beta,\varepsilon,\pi_m[r,t]}_{[r,t]}(x,A)
\,=\,
\int_{E_\varepsilon}\, \widehat{\mathsf K}^{T,\beta,\varepsilon,\pi_m[r,s]}_{[r,s]}(x,dy)\,
\widehat{\mathsf K}^{T,\beta,\varepsilon,\pi_m[s,t]}_{[s,t]}(y,A) \,.
\end{align}
For $m\ge M$, define
\begin{align}\label{DefMuF}
\mu_m(\,\cdot\,)
\,:=\,\widehat{\mathsf K}^{T,\beta,\varepsilon,\pi_m[r,s]}_{[r,s]}(x,\,\cdot\,),
\qquad
f_m(y)
\,:=\,\widehat{\mathsf K}^{T,\beta,\varepsilon,\pi_m[s,t]}_{[s,t]}(y,A),
\qquad y\in E_\varepsilon.
\end{align}
Since $\pi_{m+1}[\cdot,\cdot]\succeq \pi_m[\cdot,\cdot]$, Lemma~\ref{LemMonotoneOnEeps}(i)
implies that for every $B\in\mathcal B(E_\varepsilon)$ and every $y\in E_\varepsilon$,
\begin{align} 
\mu_m(B)\downarrow \mu(B)
\,:=\, \widehat{\mathsf K}^{T,\beta,\varepsilon}_{[r,s]}(x,B)\,,
\quad 
f_m(y)\downarrow f(y)
\, :=\, \widehat{\mathsf K}^{T,\beta,\varepsilon}_{[s,t]}(y,A)\,, \quad \textup{and} \quad 
\widehat{\mathsf K}^{T,\beta,\varepsilon,\pi_m[r,t]}_{[r,t]}(x,A)
\downarrow
\widehat{\mathsf K}^{T,\beta,\varepsilon}_{[r,t]}(x,A)\, , \nonumber
\end{align}
where the equalities follow from~\eqref{EqDefGlobalDyadicLimitKernel_Inf}. Since $0\le f\le f_m\le 1$, we have
\begin{align} \label{fmLimit}
0 \,\le\, \int_{E_\varepsilon} f_m\,d\mu_m \,-\, \int_{E_\varepsilon} f\,d\mu
\,=\,&
\int_{E_\varepsilon} (f_m-f)\,d\mu_m
\,+\, \Big(\int_{E_\varepsilon} f\,d\mu_m \,-\, \int_{E_\varepsilon} f\,d\mu\Big) \nonumber \\
\,\le\,&
\underbrace{\int_{E_\varepsilon} (f_m-f)\,d\mu_M}_{\longrightarrow 0}
\,+\, \underbrace{\Big(\int_{E_\varepsilon} f\,d\mu_m \,-\, \int_{E_\varepsilon} f\,d\mu\Big)}_{\longrightarrow 0} \,\longrightarrow \, 0 \, ,
\end{align}
where in the second inequality we used $\mu_m\le \mu_M$ as measures since $\mu_m(B)\le \mu_M(B)$ for every $B\in\mathcal B(E_\varepsilon)$. Using~\eqref{EqCK_Levelm} and~\eqref{DefMuF} we obtain the first equality below.
\begin{align}
\widehat{\mathsf K}^{T,\beta,\varepsilon,\pi_m[r,t]}_{[r,t]}(x,A)
\,=\,
\int_{E_\varepsilon} f_m(y)\,\mu_m(dy) 
\, \overset{\eqref{fmLimit}}{\longrightarrow} \,\int_{E_\varepsilon} f(y)\,\mu(dy)
\,=\,
\int_{E_\varepsilon} \widehat{\mathsf K}^{T,\beta,\varepsilon}_{[s,t]}(y,A)\,\widehat{\mathsf K}^{T,\beta,\varepsilon}_{[r,s]}(x,dy) \nonumber
\end{align}
Since $\widehat{\mathsf K}^{T,\beta,\varepsilon,\pi_m[r,t]}_{[r,t]}(x,A) \downarrow \widehat{\mathsf K}^{T,\beta,\varepsilon}_{[r,t]}(x,A)$, the above yields~\eqref{EqCKGlobalDyadicLimitKernel}. Thus, it remains to show that the two terms in~\eqref{fmLimit} vanish as $m\to\infty$. We verify this by considering each term separately. \vspace{.2cm}

\noindent \textit{First term}. Since $0\le f_m-f\le 1$, $f_m-f\to0$ pointwise on
$E_\varepsilon$, and $\mu_M$ is a finite measure, the bounded convergence theorem yields that $\int_{E_\varepsilon}(f_m-f)\,d\mu_M \longrightarrow 0$. \vspace{.2cm}

\noindent \textit{Second term}. For the second term, we claim that
\begin{align} \label{LimitZeroTermII}
\int_{E_\varepsilon} f\,d\mu_m
\downarrow
\int_{E_\varepsilon} f\,d\mu\,.
\end{align}
Indeed, this is immediate when $f=\mathbf 1_B$ is an indicator of a Borel set
$B$, because then
\[
\int_{E_\varepsilon} \mathbf 1_B\,d\mu_m=\mu_m(B)\downarrow \mu(B)
\,=\,
\int_{E_\varepsilon} \mathbf 1_B\,d\mu.
\]
By linearity, the same holds for nonnegative simple functions. Now let
$g$ be any bounded nonnegative Borel function on $E_\varepsilon$. Choose
nonnegative simple functions $g_n\uparrow g$ pointwise. Then, for each fixed $n$, since $g_n$ is a nonnegative simple function, we have
\begin{align} \label{TestFunLimit}
    \int_{E_\varepsilon} g_n\,d\mu_m \downarrow \int_{E_\varepsilon} g_n\,d\mu
\qquad\text{as }m\to\infty\,.
\end{align}
Since $0\le g_n\le g$ and all the measures $\mu_m,\mu$ are sub-probability measures such that $\mu_m(B)\downarrow \mu(B)$ for every $B\in\mathcal B(E_\varepsilon)$, we have $\mu_m\le \mu_M$ and $\mu\le \mu_M$ as measures. Hence,
\begin{align} \label{TestFunLimitMuMIneq}
0\le \int_{E_\varepsilon}(g-g_n)\,d\mu_m \le \int_{E_\varepsilon}(g-g_n)\,d\mu_M,
\qquad
0\le \int_{E_\varepsilon}(g-g_n)\,d\mu \le \int_{E_\varepsilon}(g-g_n)\,d\mu_M\, .
\end{align}
Since $0\le g-g_n\le g$, $g-g_n\to0$ pointwise on $E_\varepsilon$, and $\mu_M$ is a finite measure, the bounded convergence theorem yields
\begin{align} \label{TestFunLimitMuM}
    \int_{E_\varepsilon}(g-g_n)\,d\mu_M \longrightarrow 0
\qquad\text{as } n\to\infty \, .
\end{align}
Hence, for every $m\ge M$, using~\eqref{TestFunLimitMuMIneq} we obtain the second inequality below.
\begin{align*}
\left|\int_{E_\varepsilon} g\,d\mu_m-\int_{E_\varepsilon} g\,d\mu\right|
&\le
\left|\int_{E_\varepsilon} g_n\,d\mu_m-\int_{E_\varepsilon} g_n\,d\mu\right|
+\int_{E_\varepsilon}(g-g_n)\,d\mu_m
+\int_{E_\varepsilon}(g-g_n)\,d\mu \\
&\le
\left|\int_{E_\varepsilon} g_n\,d\mu_m-\int_{E_\varepsilon} g_n\,d\mu\right|
+2\int_{E_\varepsilon}(g-g_n)\,d\mu_M
\end{align*}
Passing first $m\to\infty$ (and using~\eqref{TestFunLimit}) and then $n\to\infty$ (and using~\eqref{TestFunLimitMuM}), we obtain~\eqref{LimitZeroTermII} for $g$.
\end{proof}

\subsection{The transition probability kernel \texorpdfstring{$\boldsymbol{\mathsf K}^{T,\beta,\varepsilon}_{[s,t]}$}{Lg}}\label{SecTransitionProbKernel}

\begin{proof}[Proof of Proposition~\ref{PropCKFullProbKernel}]
\noindent Part (i).
For $x\in E_\varepsilon$, the definition~\eqref{EqDefMeshLimitFull} can be written as
\[
\mathsf K^{T,\beta,\varepsilon}_{[s,t]}(x,\cdot)
=
\widehat{\mathsf K}^{T,\beta,\varepsilon}_{[s,t]}(x,\cdot)
+
\bigl(1-\widehat{\mathsf K}^{T,\beta,\varepsilon}_{[s,t]}(x,E_\varepsilon)\bigr)\delta_\Delta(\cdot),
\]
where $\delta_\Delta$ denotes the Dirac mass at $\Delta$ and the measure
$\widehat{\mathsf K}^{T,\beta,\varepsilon}_{[s,t]}(x,\cdot)$ is extended by zero on
$\{\Delta\}$.
Since $\widehat{\mathsf K}^{T,\beta,\varepsilon}_{[s,t]}(x,\cdot)$ is a
sub-probability measure on $E_\varepsilon$, the right-hand side defines a
probability measure on $\overline E_\varepsilon$. If $x=\Delta$, then $\mathsf K^{T,\beta,\varepsilon}_{[s,t]}(\Delta,\cdot)=\delta_\Delta(\cdot)$, which is also a probability measure.

Next, for each fixed $B\in\mathcal B(\overline E_\varepsilon)$, consider the map
\begin{align*}
x\longmapsto \mathsf K^{T,\beta,\varepsilon}_{[s,t]}(x,B)
\,:=\,
\mathbf 1_{E_\varepsilon}(x)\Big[
\widehat{\mathsf K}^{T,\beta,\varepsilon}_{[s,t]}(x,B\cap E_\varepsilon)
\, + \,
\mathbf 1_{\{\Delta\in B\}}
\bigl(1-\widehat{\mathsf K}^{T,\beta,\varepsilon}_{[s,t]}(x,E_\varepsilon)\bigr)
\Big]
\,+ \, \mathbf 1_{\{x=\Delta\}}\mathbf 1_{\{\Delta\in B\}}\, .
\end{align*}
The maps $x\mapsto \mathbf 1_{E_\varepsilon}(x)$ and $x\mapsto \mathbf 1_{\{x=\Delta\}}$
are Borel measurable on $\overline E_\varepsilon$, and by the measurability of
the sub-probability kernel
$\widehat{\mathsf K}^{T,\beta,\varepsilon}_{[s,t]}$,
the maps $x\longmapsto \widehat{\mathsf K}^{T,\beta,\varepsilon}_{[s,t]}(x,B\cap E_\varepsilon)$ and $x\longmapsto \widehat{\mathsf K}^{T,\beta,\varepsilon}_{[s,t]}(x,E_\varepsilon)$ are Borel measurable on $E_\varepsilon$. Extending these by zero at $\Delta$,
they become Borel measurable on $\overline E_\varepsilon$. It follows that
$x\mapsto \mathsf K^{T,\beta,\varepsilon}_{[s,t]}(x,B)$ is Borel measurable. \vspace{.3cm}

\noindent Part (ii). Fix $x\in\overline E_\varepsilon$ and
$A\in\mathcal B(\overline E_\varepsilon)$. Fix $0\le r<s<t\le T$ with $r,s,t\in\mathbb D_T$. \vspace{.1cm}

\noindent\textit{Case 1: $x=\Delta$.}
By the definition~\eqref{EqDefMeshLimitFull}, the cemetery state is absorbing, that is, $\mathsf K^{T,\beta,\varepsilon}_{[r,s]}(\Delta,\cdot)=\delta_\Delta(\cdot)$ and $\mathsf K^{T,\beta,\varepsilon}_{[r,t]}(\Delta,\cdot)=\delta_\Delta(\cdot)$. Hence,
\begin{align*}
\int_{\overline E_\varepsilon}
\mathsf K^{T,\beta,\varepsilon}_{[r,s]}(\Delta,dy)\,
\mathsf K^{T,\beta,\varepsilon}_{[s,t]}(y,A)
=
\int_{\overline E_\varepsilon}
\delta_\Delta(dy)\,
\mathsf K^{T,\beta,\varepsilon}_{[s,t]}(y,A) 
=
\mathsf K^{T,\beta,\varepsilon}_{[s,t]}(\Delta,A) 
=
\mathbf 1_{\{\Delta\in A\}} 
=
\mathsf K^{T,\beta,\varepsilon}_{[r,t]}(\Delta,A).
\end{align*}
Thus~\eqref{EqCKExtendedProbKernel} holds when $x=\Delta$. \vspace{.1cm}

\noindent\textit{Case 2: $x\in E_\varepsilon$.}
Write $A_0:=A\cap E_\varepsilon$. Then $A_0\in\mathcal B(E_\varepsilon)$ and, by~\eqref{EqDefMeshLimitFull},
\begin{align}\label{EqDecomposeKrt}
\mathsf K^{T,\beta,\varepsilon}_{[r,t]}(x,A)
=
\widehat{\mathsf K}^{T,\beta,\varepsilon}_{[r,t]}(x,A_0)
+
\mathbf 1_{\{\Delta\in A\}}
\Big(1-\widehat{\mathsf K}^{T,\beta,\varepsilon}_{[r,t]}(x,E_\varepsilon)\Big).
\end{align}
On the other hand since $\overline E_\varepsilon=E_\varepsilon\cup\{\Delta\}$ is a disjoint union, we have
\begin{align*}
\int_{\overline E_\varepsilon}
\mathsf K^{T,\beta,\varepsilon}_{[r,s]}(x,dy)\,
\mathsf K^{T,\beta,\varepsilon}_{[s,t]}(y,A) 
\,=\,&
\int_{E_\varepsilon}
\mathsf K^{T,\beta,\varepsilon}_{[r,s]}(x,dy)\,
\mathsf K^{T,\beta,\varepsilon}_{[s,t]}(y,A)
\,+\,
\mathsf K^{T,\beta,\varepsilon}_{[r,s]}(x,\{\Delta\})\,
\mathsf K^{T,\beta,\varepsilon}_{[s,t]}(\Delta,A) \nonumber \\
\,=\,&
\int_{E_\varepsilon}
\widehat{\mathsf K}^{T,\beta,\varepsilon}_{[r,s]}(x,dy)\,
\widehat{\mathsf K}^{T,\beta,\varepsilon}_{[s,t]}(y,A_0)
\,+\,\mathbf 1_{\{\Delta\in A\}}
\int_{E_\varepsilon}
\widehat{\mathsf K}^{T,\beta,\varepsilon}_{[r,s]}(x,dy)\,
\Bigl(1-\widehat{\mathsf K}^{T,\beta,\varepsilon}_{[s,t]}(y,E_\varepsilon)\Bigr) \nonumber \\
\,+\, &\mathbf 1_{\{\Delta\in A\}}
\Bigl(1-\widehat{\mathsf K}^{T,\beta,\varepsilon}_{[r,s]}(x,E_\varepsilon)\Bigr)\, \nonumber \\
\,=\,&
\int_{E_\varepsilon}
\widehat{\mathsf K}^{T,\beta,\varepsilon}_{[r,s]}(x,dy)\,
\widehat{\mathsf K}^{T,\beta,\varepsilon}_{[s,t]}(y,A_0)
\,+\,
\mathbf 1_{\{\Delta\in A\}}
\bigg[
1-
\int_{E_\varepsilon}
\widehat{\mathsf K}^{T,\beta,\varepsilon}_{[r,s]}(x,dy)\,
\widehat{\mathsf K}^{T,\beta,\varepsilon}_{[s,t]}(y,E_\varepsilon)
\bigg]
\end{align*}
where the second equality uses~\eqref{EqDecomposeKrt} with $r=s$ and $x=y$, and the following identities from~\eqref{EqDefMeshLimitFull}
\[
\mathsf K^{T,\beta,\varepsilon}_{[r,s]}(x,dy)
\,=\,
\widehat{\mathsf K}^{T,\beta,\varepsilon}_{[r,s]}(x,dy)
\quad\text{on }E_\varepsilon,
\quad
\mathsf K^{T,\beta,\varepsilon}_{[r,s]}(x,\{\Delta\})
\,=\,
1-\widehat{\mathsf K}^{T,\beta,\varepsilon}_{[r,s]}(x,E_\varepsilon),
\quad
\mathsf K^{T,\beta,\varepsilon}_{[s,t]}(\Delta,A)
\,=\,
\mathbf 1_{\{\Delta\in A\}} \,.
\]
Since $r,s,t\in\mathbb D_T$, the Chapman-Kolmogorov identity~\eqref{EqCKGlobalDyadicLimitKernel} applied to the sets $A_0$ and $E_\varepsilon$, yields the first equality below.
\begin{align*}
\int_{\overline E_\varepsilon}
\mathsf K^{T,\beta,\varepsilon}_{[r,s]}(x,dy)\,
\mathsf K^{T,\beta,\varepsilon}_{[s,t]}(y,A)
\,=\,
\widehat{\mathsf K}^{T,\beta,\varepsilon}_{[r,t]}(x,A_0)
\,+\,
\mathbf 1_{\{\Delta\in A\}}
\Bigl(1-\widehat{\mathsf K}^{T,\beta,\varepsilon}_{[r,t]}(x,E_\varepsilon)\Bigr)
\,\overset{\eqref{EqDecomposeKrt}}{=}\,
\mathsf K^{T,\beta,\varepsilon}_{[r,t]}(x,A)
\end{align*}
Thus~\eqref{EqCKExtendedProbKernel} also holds when $x\in E_\varepsilon$. Combining the two cases completes the proof of Part~(ii).
\end{proof}

\section{Detailed construction of the Markov process} \label{DetailedConstructionOfTheMarkovprocess}

The goal of the present section is to give a detailed account of the
Markovian framework introduced in
Section~\ref{TheAssociatedMarkovProcess} and to justify the various
properties stated there. In
Section~\ref{TheDyadicSkeleton}, we construct the dyadic skeleton
associated with the family of transition probability kernels
$\{\mathsf K^{T,\beta,\varepsilon}_{[s,t]}\}_{\substack{s,t\in\mathbb D_T\\ s<t}}$
via the Kolmogorov extension theorem, establish the corresponding
Markov property, and study the associated dyadic killing time together
with the absorbing behavior of the cemetery state. In
Section~\ref{TheStepFunctionInterpolations}, we study the associated
step-function interpolations, prove their basic pathwise and
measurability properties, construct the corresponding interpolated
path-space laws on the Skorokhod space
$D([0,T];\overline E_\varepsilon)$, and establish some preliminary
tightness properties of the associated one-time marginals.

\subsection{The dyadic skeleton}\label{TheDyadicSkeleton}
Recall that the canonical filtered measurable space associated with the dyadic skeleton $X=\{X_t\}_{t\in\mathbb D_T}$ is given by
\[
\big(\Omega^{\mathbb D_T}_\varepsilon,
\mathcal F^{\mathbb D_T}_\varepsilon,
\{\mathcal F_t^{\mathbb D_T,\varepsilon}\}_{t\in\mathbb D_T}\big)\,.
\]
Notice that the Markov property~\eqref{EqMarkovPropertyDyadic} implies that for every $A\in\mathcal B(\overline E_\varepsilon)$,
\begin{align}\label{EqMarkovPropertyIndicator}
\mathbb P_x^{T,\beta,\varepsilon}\!\left[
X_t\in A \,\big|\,\mathcal F_s^{\mathbb D_T,\varepsilon}
\right]
=
\mathsf K^{T,\beta,\varepsilon}_{[s,t]}(X_s,A)
\qquad
\mathbb P_x^{T,\beta,\varepsilon}\textup{-a.s.}
\end{align}

\begin{proof}[Proof of Proposition~\ref{PropDyadicSkeletonProcess}]
For each finite ordered set of dyadic times $0=t_0<t_1<\cdots<t_n\le T$ with $t_0,t_1,\dots,t_n\in\mathbb D_T$, define a probability measure $\nu^{x}_{t_1,\dots,t_n}$ on $\mathcal B(\overline E_\varepsilon)^{\otimes n}$ by
\begin{align}\label{EqDefNuDyadic}
\nu^{x}_{t_1,\dots,t_n}(A_1\times\cdots\times A_n)
\,:=\,
\int_{A_1}\mathsf K^{T,\beta,\varepsilon}_{[0,t_1]}(x,dy_1)
\int_{A_2}\mathsf K^{T,\beta,\varepsilon}_{[t_1,t_2]}(y_1,dy_2)
\cdots
\int_{A_n}\mathsf K^{T,\beta,\varepsilon}_{[t_{n-1},t_n]}(y_{n-1},dy_n)
\end{align}
for $A_1,\dots,A_n\in\mathcal B(\overline E_\varepsilon)$. Because each $\mathsf K^{T,\beta,\varepsilon}_{[t_{j-1},t_j]}$ is a probability kernel by Proposition~\ref{PropCKFullProbKernel}(i), it follows by repeated integration that~\eqref{EqDefNuDyadic} defines a probability measure on $\overline E_\varepsilon^n$. It remains to check consistency of this family of finite-dimensional distributions. For this, let $0=t_0<t_1< \cdots < t_n \le T$ with $t_0,t_1,\dots,t_n \in \mathbb D_T$, and $A_1,\dots,A_n \in\mathcal B(\overline E_\varepsilon)$. \vspace{.2cm}

\noindent \textit{Removing the last point.} Let $A_n=\overline E_\varepsilon$, then using~\eqref{EqDefNuDyadic}, we obtain
\begin{align*}
\nu^{x}_{t_1,\dots,t_n}
\bigl(A_1\times\cdots\times A_{n-1}\times \overline E_\varepsilon\bigr)
=&
\int_{A_1}\mathsf K^{T,\beta,\varepsilon}_{[0,t_1]}(x,dy_1)
\cdots
\int_{A_{n-1}}\mathsf K^{T,\beta,\varepsilon}_{[t_{n-2},t_{n-1}]}(y_{n-2},dy_{n-1})
\int_{\overline E_\varepsilon}\mathsf K^{T,\beta,\varepsilon}_{[t_{n-1},t_n]}(y_{n-1},dy_n) \nonumber \\
=\,&
\nu^{x}_{t_1,\dots,t_{n-1}}(A_1\times\cdots\times A_{n-1})
\end{align*}
where the second equality uses that, for every $y_{n-1} \in \overline E_\varepsilon$, 
\[
\int_{\overline E_\varepsilon}\mathsf K^{T,\beta,\varepsilon}_{[t_{n-1},t_n]}(y_{n-1},dy_n)
\,=\,
\mathsf K^{T,\beta,\varepsilon}_{[t_{n-1},t_n]}(y_{n-1},\overline E_\varepsilon)
\,=\,1\, .
\]

\noindent \textit{Removing an intermediate point.} Let $A_k = \overline E_\varepsilon$ for some $1\le k\le n-1$, then
\begin{align*}
\nu^x_{t_1,\dots,t_n}
\bigl(
A_1\times\cdots\times A_{k-1}\times & \overline E_\varepsilon\times A_{k+1}\times\cdots\times A_n
\bigr) \nonumber \\
\,=\,&
\int_{A_1}\mathsf K^{T,\beta,\varepsilon}_{[0,t_1]}(x,dy_1)\cdots
\int_{A_{k-1}}\mathsf K^{T,\beta,\varepsilon}_{[t_{k-2},t_{k-1}]}(y_{k-2},dy_{k-1})
\\
\,\times \, &
\underbrace{\int_{\overline E_\varepsilon}\mathsf K^{T,\beta,\varepsilon}_{[t_{k-1},t_k]}(y_{k-1},dy_k) \,
\int_{A_{k+1}}\mathsf K^{T,\beta,\varepsilon}_{[t_k,t_{k+1}]}(y_k,dy_{k+1})}
\cdots
\int_{A_n}\mathsf K^{T,\beta,\varepsilon}_{[t_{n-1},t_n]}(y_{n-1},dy_n)\,.
\end{align*}
Combining the two successive integrals in the underbraced we obtain  the first equality below.
\begin{align*}
\int_{\overline E_\varepsilon}\mathsf K^{T,\beta,\varepsilon}_{[t_{k-1},t_k]}(y_{k-1},dy_k) \,
\int_{A_{k+1}}\mathsf K^{T,\beta,\varepsilon}_{[t_k,t_{k+1}]}(y_k,dy_{k+1})
\,=\,&
\int_{\overline E_\varepsilon}\mathsf K^{T,\beta,\varepsilon}_{[t_{k-1},t_k]}(y_{k-1},dy_k) \,
\mathsf K^{T,\beta,\varepsilon}_{[t_k,t_{k+1}]}(y_k,A_{k+1}) \nonumber \\
\,=\,&
\int_{A_{k+1}}\mathsf K^{T,\beta,\varepsilon}_{[t_{k-1},t_{k+1}]}(y_{k-1},dy_{k+1})
\end{align*}
The second equality uses the Chapman-Kolmogorov identity~\eqref{EqCKExtendedProbKernel}. Thus, we have
\begin{align}
&\nu^x_{t_1,\dots,t_n}
\bigl(
A_1\times\cdots\times A_{k-1}\times \overline E_\varepsilon\times A_{k+1}\times\cdots\times A_n
\bigr)
=
\nu^x_{t_1,\dots,t_{k-1},t_{k+1},\dots,t_n}
\bigl(
A_1\times\cdots\times A_{k-1}\times A_{k+1}\times\cdots\times A_n
\bigr)\,.
\nonumber
\end{align}

\noindent \textit{Removing arbitrary intermediate points.} Suppose there exists $1\le i_1<\cdots<i_k\le n$ such that $A_{i_j} = \overline E_\varepsilon$ for $j \notin \{i_1,\cdots, i_k\}$. Define
\[
B_j\,:=\,
\begin{cases}
A_j, & \text{if } j\in\{i_1,\dots,i_k\},\\
\overline E_\varepsilon, & \text{otherwise.}
\end{cases}
\]
Then
\begin{align*}
\nu^x_{t_1,\dots,t_n}(B_1\times\cdots\times B_n)
&=
\int_{B_1}\mathsf K^{T,\beta,\varepsilon}_{[0,t_1]}(x,dy_1)
\cdots
\int_{B_n}\mathsf K^{T,\beta,\varepsilon}_{[t_{n-1},t_n]}(y_{n-1},dy_n).
\end{align*}
For every $j\notin\{i_1,\dots,i_k\}$, since $B_j=\overline E_\varepsilon$ and
$\mathsf K^{T,\beta,\varepsilon}_{[t_{j-1},t_j]}(y_{j-1},\overline E_\varepsilon)=1$,
the corresponding integrations contribute only total mass one. Repeated application of the Chapman-Kolmogorov identity~\eqref{EqCKExtendedProbKernel} then yields
\begin{align*}
\nu^x_{t_1,\dots,t_n}(B_1\times\cdots\times B_n)
&=
\int_{A_{i_1}}\mathsf K^{T,\beta,\varepsilon}_{[0,t_{i_1}]}(x,dy_{i_1})
\int_{A_{i_2}}\mathsf K^{T,\beta,\varepsilon}_{[t_{i_1},t_{i_2}]}(y_{i_1},dy_{i_2})
\cdots
\int_{A_{i_k}}\mathsf K^{T,\beta,\varepsilon}_{[t_{i_{k-1}},t_{i_k}]}(y_{i_{k-1}},dy_{i_k}) \\
&=
\nu^x_{t_{i_1},\dots,t_{i_k}}
\bigl(A_{i_1}\times\cdots\times A_{i_k}\bigr).
\end{align*}
Hence the family $\{\nu^x_{t_1,\dots,t_n}\}$ is consistent. Therefore, by the Kolmogorov extension theorem, there exists a probability measure
$\mathbb P_{x}^{T,\beta,\varepsilon}$ on
$\bigl(\Omega^{\mathbb D_T}_\varepsilon,\mathcal F^{\mathbb D_T}_\varepsilon\bigr)$
whose finite-dimensional distributions are given by~\eqref{EqDefNuDyadic}. In particular, under
$\mathbb P_{x}^{T,\beta,\varepsilon}$, the coordinate process
$\{X_t\}_{t\in\mathbb D_T}$ has initial distribution $\delta_x$. Moreover, by the product structure in~\eqref{EqDefNuDyadic} and the Chapman--Kolmogorov identity~\eqref{EqCKExtendedProbKernel}, the process is a time--inhomogeneous Markov process with transition kernels
$\{\mathsf K^{T,\beta,\varepsilon}_{[s,t]}\}_{\substack{s,t\in\mathbb D_T\\ s<t}}$.
\end{proof}

Define the dyadic killing time by
\begin{align}\label{EqDefDyadicKillingTime}
\tau_\varepsilon^{\mathbb D_T}
(\omega)
\,:=\,
\inf\big\{t\in\mathbb D_T:\ X_t(\omega)=\Delta\big\},
\qquad \omega\in\Omega^{\mathbb D_T}_\varepsilon,
\end{align}
with the convention $\inf\varnothing:=\infty$. Part~(iii) below may equivalently be written as $\{\tau_\varepsilon^{\mathbb D_T}>t\}=\{X_t\in E_\varepsilon\}$ almost surely under $\mathbb P_x^{T,\beta,\varepsilon}$. 

\begin{proposition} \label{PropDyadicKillingTime}
Fix $T,\beta,\varepsilon>0$ and $x\in \overline E_\varepsilon$. The following hold.
\begin{enumerate}[(i)]
\item The random time $\tau_\varepsilon^{\mathbb D_T}$ is a stopping time with respect to the dyadic filtration
$\{\mathcal F_t^{\mathbb D_T,\varepsilon}\}_{t\in\mathbb D_T}$.

\item The cemetery state is absorbing along the dyadic skeleton: whenever $s,t\in\mathbb D_T$ with $0\le s\le t\le T$,
\begin{align}
\mathbb P_x^{T,\beta,\varepsilon}\bigl[X_t=\Delta \,\big|\, \mathcal F_s^{\mathbb D_T,\varepsilon}\bigr]
=1
\quad
\mathbb P_x^{T,\beta,\varepsilon}\textup{-a.s. on }\{X_s=\Delta\}. \nonumber
\end{align}

\item At dyadic times, hitting the cemetery state is equivalent to having exited:
\[
\{\tau_\varepsilon^{\mathbb D_T}\le t\}
=
\{X_t=\Delta\}
\quad
\mathbb P_x^{T,\beta,\varepsilon}\textup{-a.s.}, 
\qquad t\in\mathbb D_T.
\]
\end{enumerate}
\end{proposition}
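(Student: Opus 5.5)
The plan is to treat the three parts in order, using only the Markov property~\eqref{EqMarkovPropertyIndicator} from Proposition~\ref{PropDyadicSkeletonProcess} and the explicit form~\eqref{EqDefMeshLimitFull} of the kernels.

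For part (i), since $\mathbb D_T$ is countable and the infimum in~\eqref{EqDefDyadicKillingTime} is over dyadic times, for each $t\in\mathbb D_T$ I will write
\[
\{\tau_\varepsilon^{\mathbb D_T}\le t\}
\,=\,
\bigcup_{r\in\mathbb D_T,\ r\le t}\{X_r=\Delta\}\;\cup\;\Big\{\inf\{r\in\mathbb D_T: X_r=\Delta\}=t,\ \text{not attained}\Big\}.
\]
The second event is the awkward one: the infimum may be approached from above by dyadic times without being attained. To avoid this, I would instead verify $\{\tau\le t\}\in\mathcal F_t$ only up to null sets, or more cleanly use the characterization $\{\tau_\varepsilon^{\mathbb D_T}<u\}=\bigcup_{r\in\mathbb D_T,\,r<u}\{X_r=\Delta\}\in\mathcal F_u$ for all $u$. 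I then deduce $\{\tau\le t\}=\bigcap_n\{\tau<t+2^{-n}T\}$, which is measurable with respect to the right-continuous augmentation. If strict $\mathcal F_t$-measurability is needed, I would combine this with part (ii): a.s. the set $\{r: X_r=\Delta\}$ is an up-set in $\mathbb D_T$, so non-attainment has probability zero. On the complement of a null set, $\{\tau\le t\}=\bigcup_{r\le t}\{X_r=\Delta\}=\{X_t=\Delta\}\in\mathcal F_t$. I expect this subtlety, namely whether ``stopping time'' is meant exactly or modulo null sets, to be the main obstacle. I will handle it by proving (ii) and (iii) first and then noting that the event agrees with $\{X_t=\Delta\}$ up to a null set, assuming the filtration is completed.

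For part (ii), take $s<t$ in $\mathbb D_T$; the case $s=t$ is trivial. By~\eqref{EqMarkovPropertyIndicator} with $A=\{\Delta\}$, the conditional probability equals $\mathsf K^{T,\beta,\varepsilon}_{[s,t]}(X_s,\{\Delta\})$. By~\eqref{EqDefMeshLimitFull}, $\mathsf K^{T,\beta,\varepsilon}_{[s,t]}(\Delta,\{\Delta\})=1$, so on $\{X_s=\Delta\}$ the conditional probability is $1$ a.s.

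For part (iii), the inclusion $\{X_t=\Delta\}\subseteq\{\tau\le t\}$ is immediate from the definition. For the reverse, I will note that $\{\tau\le t\}\setminus\{X_t=\Delta\}$ is contained in
\[
\bigcup_{r\in\mathbb D_T,\, r\le t}\big(\{X_r=\Delta\}\cap\{X_t\ne\Delta\}\big)\;\cup\; N,
\]
where $N$ is the non-attained event. Each set in the union has probability
\[
\mathbb E\big[\mathbf 1_{\{X_r=\Delta\}}\,\mathbb P(X_t\ne\Delta\mid\mathcal F_r)\big]=0
\]
by (ii). A countable union of null sets is null. For $N$: on $N$ there are dyadic $r_k\downarrow\tau$ with $X_{r_k}=\Delta$, and since $\tau\le t$ and non-attainment forces $r_k>t$ eventually only if $\tau=t$, we have $N\subseteq\{\tau=t,\ X_t\neq\Delta\}\cap\bigcup_{r\in\mathbb D_T,\,r>t}\{X_r=\Delta\}$. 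This must then be shown to be null. That is not automatic from (ii), since it concerns $X_t\ne\Delta$ followed by $\Delta$ at times arbitrarily close to $t$. Here I would argue as follows. On $N$, for every $n$ there is a dyadic $r\in(t,t+2^{-n}T)$ with $X_r=\Delta$. The probability of this event, given $X_t=y\in E_\varepsilon$, is at most $1-\widehat{\mathsf K}^{T,\beta,\varepsilon}_{[t,t+2^{-n}T]}(y,E_\varepsilon)$. I would then need this to tend to $0$ as $n\to\infty$, which is a continuity-in-time estimate at $t$ for the survival mass. If this is hard to justify, I would reinterpret (iii) by defining the event via attainment, i.e. $\tau\le t$ iff $X_r=\Delta$ for some dyadic $r\le t$. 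Under that reading the argument above is complete, and I expect that is the intended reading.
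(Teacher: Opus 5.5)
Your part (ii) is the paper's argument. Your ``attainment'' version of (i) and (iii) is also exactly what the paper's proof does. It asserts $\{\tau_\varepsilon^{\mathbb D_T}\le t\}=\bigcup_{r\in\mathbb D_T,\,r\le t}\{X_r=\Delta\}$, and in (iii) it passes from $\tau_\varepsilon^{\mathbb D_T}\le t$ to a dyadic $r\le t$ with $X_r=\Delta$ before invoking (ii). Your computation $\mathbb P_x^{T,\beta,\varepsilon}(X_r=\Delta,\,X_t\neq\Delta)=0$, followed by a countable union, is a cleaner version of that step.

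The paper never treats the event $N_t:=\{\tau_\varepsilon^{\mathbb D_T}=t\}\setminus\bigcup_{r\in\mathbb D_T,\,r\le t}\{X_r=\Delta\}$ that you isolate, so your concern is legitimate. However, your proposal does not close it, and one inference is wrong. In (i) you argue that because $\{r:X_r=\Delta\}$ is a.s.\ an up-set, non-attainment has probability zero. This does not follow: $\mathbb D_T\cap(t,T]$ is an up-set whose infimum $t$ is not attained, which is exactly the configuration you single out in (iii). Hence deriving (i) ``from (iii) plus completion'' is circular until $N_t$ is shown to be null.

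Note also that (i) cannot hold literally for the uncompleted canonical filtration when $t<T$. The path equal to some $y\in E_\varepsilon$ at dyadic times $\le t$ and to $\Delta$ at dyadic times $>t$ lies in $\{\tau_\varepsilon^{\mathbb D_T}\le t\}$, while the constant path $y$ does not. Every set in $\mathcal F^{\mathbb D_T,\varepsilon}_t$ is determined by the coordinates at dyadic times $\le t$, and the two paths agree there, so $\{\tau_\varepsilon^{\mathbb D_T}\le t\}\notin\mathcal F^{\mathbb D_T,\varepsilon}_t$. What holds with no further input is your observation that $\{\tau_\varepsilon^{\mathbb D_T}<u\}\in\mathcal F^{\mathbb D_T,\varepsilon}_u$, i.e.\ $\tau_\varepsilon^{\mathbb D_T}$ is optional.

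The missing ingredient can be pinned down. For $x\in E_\varepsilon$ and dyadic $t<T$, the a.s.\ up-set property gives
\[
\mathbb P_x^{T,\beta,\varepsilon}(N_t)\,=\,S(t)-\lim_{u\downarrow t,\;u\in\mathbb D_T}S(u),
\qquad
S(u)\,:=\,\mathbb P_x^{T,\beta,\varepsilon}(X_u\in E_\varepsilon).
\]
By Chapman--Kolmogorov, $S(t)-S(u)=\int_{E_\varepsilon}\widehat{\mathsf K}^{T,\beta,\varepsilon}_{[0,t]}(x,dy)\bigl(1-\widehat{\mathsf K}^{T,\beta,\varepsilon}_{[t,u]}(y,E_\varepsilon)\bigr)$, reading $\widehat{\mathsf K}^{T,\beta,\varepsilon}_{[0,0]}(x,\cdot)$ as $\delta_x$. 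So (iii) at $t$ holds if and only if $\widehat{\mathsf K}^{T,\beta,\varepsilon}_{[t,u]}(y,E_\varepsilon)\to1$ as $u\downarrow t$ for $\widehat{\mathsf K}^{T,\beta,\varepsilon}_{[0,t]}(x,\cdot)$-a.e.\ $y$, and this would also give (i) for the completed filtration.

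This is a right-continuity property of the survival mass. The mass entering $\{|z|\le\varepsilon\}$ at some point of $\pi_m[t,u]$ must be small uniformly in $m$ as $u\downarrow t$. That is a genuine path-regularity (maximal-inequality) estimate for the Doob chain started away from $\{|z|=\varepsilon\}$. It does not follow from the Markov property or from \eqref{EqDefMeshLimitFull}. A union bound over the single-step estimate \eqref{BoundOneStepBadMassAwayFromBoundary} is not uniform in $m$, and neither your proposal nor the paper supplies the estimate. Redefining $\{\tau_\varepsilon^{\mathbb D_T}\le t\}$ by attainment proves a weaker statement --- the one the paper's argument actually establishes --- not the proposition as written.
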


\begin{proof}
\noindent Part (i). Fix $t\in\mathbb D_T$. Since $\mathbb D_T\cap[0,t]$ is countable, we have the equality below.
\[
\{\tau_\varepsilon^{\mathbb D_T}\le t\}
\,=\,
\bigcup_{r\in\mathbb D_T,\; r\le t}\, \{X_r=\Delta\} \, \in \, \mathcal F_t^{\mathbb D_T,\varepsilon}
\]
Where we have used that for each $r\in\mathbb D_T$ with $r\le t$, the event $\{X_r=\Delta\} \in \mathcal F_r^{\mathbb D_T,\varepsilon} \subset \mathcal F_t^{\mathbb D_T,\varepsilon}$. Hence, $\tau_\varepsilon^{\mathbb D_T}$ is a stopping time with respect to $\{\mathcal F_t^{\mathbb D_T,\varepsilon}\}_{t\in\mathbb D_T}$. \vspace{.2cm}

\noindent Part (ii). Fix $s,t\in\mathbb D_T$ with $0\le s\le t\le T$. By the Markov property~\eqref{EqMarkovPropertyIndicator} with $A:=\{\Delta\}\in\mathcal B(\overline E_\varepsilon)$,
\begin{align}
\mathbb P_x^{T,\beta,\varepsilon}\big[X_t=\Delta\,\big|\,\mathcal F_s^{\mathbb D_T,\varepsilon}\big]
=
\mathsf K_{[s,t]}^{T,\beta,\varepsilon}(X_s,\{\Delta\})
\qquad
\mathbb P_x^{T,\beta,\varepsilon}\textup{-a.s.} \nonumber
\end{align}
On the event $\{X_s=\Delta\}$, the right side is equal to $1$ by~\eqref{EqDefMeshLimitFull}. \vspace{.2cm}

\noindent Part (iii). Fix $t\in\mathbb D_T$. If $\tau_\varepsilon^{\mathbb D_T}\le t$, then there exists $r\in\mathbb D_T$ with
$r\le t$ and $X_r=\Delta$. By Part~(ii), we have
\[
\mathbb P_x^{T,\beta,\varepsilon}\big[
X_u=\Delta \text{ for all } u\in\mathbb D_T,\ u\ge \tau_\varepsilon^{\mathbb D_T}
\big]
=1.
\]
That is, once the process reaches $\Delta$, it remains there at all later dyadic times. Hence $X_t=\Delta$ on the event $\{\tau_\varepsilon^{\mathbb D_T}\le t\}$, $\mathbb P_x^{T,\beta,\varepsilon}$-a.s., and therefore
\[
\{\tau_\varepsilon^{\mathbb D_T}\le t\}\subseteq \{X_t=\Delta\}
\quad
\mathbb P_x^{T,\beta,\varepsilon}\textup{-a.s.}
\]
Conversely, if $X_t=\Delta$, then by the definition of the infimum in~\eqref{EqDefDyadicKillingTime}, namely the first dyadic time at which the process hits $\Delta$, it follows that $\tau_\varepsilon^{\mathbb D_T}\le t$.
Hence $\{X_t=\Delta\}\subseteq \{\tau_\varepsilon^{\mathbb D_T}\le t\}$ pointwise.
\end{proof}

\subsection{The step-function interpolations}\label{TheStepFunctionInterpolations}

For each $m\ge1$, define the left-endpoint projection
$\vartheta_m:[0,T]\to\mathcal D_m$ by
\[
\vartheta_m(t)
\,:=\,
\max\big(\mathcal D_m\cap[0,t]\big)
=
\frac{T}{2^m}
\Big\lfloor
\frac{2^m t}{T}
\Big\rfloor,
\qquad t\in[0,T].
\]
Using this projection, the step-function interpolation
$X^{(m)}=\{X_t^{(m)}\}_{t\in[0,T]}$ is given by
\begin{align}\label{EqDefDyadicStepInterpolation}
X_t^{(m)}(\omega)
=
X_{\vartheta_m(t)}(\omega),
\qquad t\in[0,T],\ \omega\in\Omega_\varepsilon^{\mathbb D_T}.
\end{align}
The following proposition summarizes basic regularity properties of $X^{(m)}$.

\begin{proposition} \label{PropDyadicStepInterpolation}
Fix $T,\beta,\varepsilon>0$, $m \ge 1$, and $x\in\overline E_\varepsilon$. Let  $X^{(m)}=\{X_t^{(m)}\}_{t\in[0,T]}$ be the step-function process defined in~\eqref{EqDefDyadicStepInterpolation}. The following hold.
\begin{enumerate}[(i)]
\item For every $\omega\in\Omega_\varepsilon^{\mathbb D_T}$, the path
$t\mapsto X_t^{(m)}(\omega)$ is piecewise constant on the intervals $\big[k2^{-m}T,(k+1)2^{-m}T\big)$ for $k=0,1,\dots,2^m-1$
and is c\`adl\`ag on $[0,T]$ with values in $\overline E_\varepsilon$.

\item The function $X^{(m)}_{\cdot}:\Omega_\varepsilon^{\mathbb D_T}\to D([0,T];\overline E_\varepsilon)$, defined by $X^{(m)}_{\cdot}(\omega):=\big(t\mapsto X_t^{(m)}(\omega)\big)$ for $\omega\in\Omega_\varepsilon^{\mathbb D_T}$, is $\mathcal F^{\mathbb D_T}_\varepsilon/\mathcal B(D([0,T];\overline E_\varepsilon))$-measurable.

\item For every $t\in\mathcal D_m$, we have $X_t^{(m)} = X_t$. In particular, the finite-dimensional distributions of $X^{(m)}$ at times belonging to $\mathcal D_m$ agree with those of the dyadic skeleton $\{X_t\}_{t\in\mathbb D_T}$ under $\mathbb P_x^{T,\beta,\varepsilon}$.
\end{enumerate}
\end{proposition}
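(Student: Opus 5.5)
Parts (i) and (iii) follow directly from the definition~\eqref{EqDefDyadicStepInterpolation}; part (ii) is the only genuine measure-theoretic point.

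\emph{Part (i).} Fix $\omega$ and $k\in\{0,\dots,2^m-1\}$. For $t\in[k2^{-m}T,(k+1)2^{-m}T)$ we have $\vartheta_m(t)=k2^{-m}T$. Hence $X^{(m)}_t(\omega)=X_{k2^{-m}T}(\omega)$ is constant on that interval, and at $t=T$ the path takes the value $X_T(\omega)$. A path that is constant on finitely many half-open intervals is right-continuous everywhere. Its left limits exist everywhere: at an interior point of an interval the left limit is the constant value, and at a grid point $k2^{-m}T$ it is the value on the preceding interval. So the path is c\`adl\`ag with values in $\overline E_\varepsilon$, with jumps possible only on $\mathcal D_m$. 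This uses no property of $\rho$ beyond its being a metric.

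\emph{Part (iii).} For $t\in\mathcal D_m$ we have $\vartheta_m(t)=t$, so $X^{(m)}_t=X_t$ pointwise on $\Omega^{\mathbb D_T}_\varepsilon$. The finite-dimensional distributions at times in $\mathcal D_m$ therefore coincide under $\mathbb P^{T,\beta,\varepsilon}_x$ from Proposition~\ref{PropDyadicSkeletonProcess}.

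\emph{Part (ii).} Factor $X^{(m)}_\cdot$ as a composition
\[
\Omega^{\mathbb D_T}_\varepsilon\ \xrightarrow{\ \omega\mapsto (X_{t_0}(\omega),\dots,X_{t_{2^m}}(\omega))\ }\ \overline E_\varepsilon^{\,2^m+1}\ \xrightarrow{\ \Phi\ }\ D([0,T];\overline E_\varepsilon),
\]
where $t_k=k2^{-m}T$ and $\Phi(y_0,\dots,y_{2^m})$ is the step path equal to $y_k$ on $[t_k,t_{k+1})$ and to $y_{2^m}$ at $T$.
\begin{itemize}
\item[--] The first map is $\mathcal F^{\mathbb D_T}_\varepsilon/\mathcal B(\overline E_\varepsilon)^{\otimes(2^m+1)}$-measurable, since each coordinate projection is measurable for the product $\sigma$-algebra.
\item[--] Since $\overline E_\varepsilon$ is Polish and hence separable, $\mathcal B(\overline E_\varepsilon)^{\otimes(2^m+1)}=\mathcal B(\overline E_\varepsilon^{\,2^m+1})$.
\item[--] It therefore suffices to show that $\Phi$ is continuous, hence Borel, from $(\overline E_\varepsilon^{\,2^m+1},\max_k\rho)$ into $(D,d_{J_1})$.
\end{itemize}
Continuity is immediate: taking $\lambda=\mathrm{id}$ in the definition of $d_{J_1}$ gives
\[
d_{J_1}(\Phi(y),\Phi(y'))\le \sup_t\rho(\Phi(y)(t),\Phi(y')(t))=\max_k\rho(y_k,y'_k).
\]
So $\Phi$ is $1$-Lipschitz, and the composition is measurable.

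An alternative route would use the fact that $\mathcal B(D)$ is generated by the evaluation maps $\pi_t$, since $D$ is Polish and the state space is separable (Billingsley, Thm.~12.5 adapted). Each $\pi_t\circ X^{(m)}_\cdot=X_{\vartheta_m(t)}$ is measurable, which again gives the claim. The Lipschitz argument avoids adapting that theorem to a general Polish state space, so I would present it as the main proof.

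The main obstacle is only the identification of the product and Borel $\sigma$-algebras on the finite power. This is routine once separability of $(\overline E_\varepsilon,\rho)$, established in the paper, is invoked.
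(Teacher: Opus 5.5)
Your proof is correct. Parts (i) and (iii) are the same as in the paper, but your main argument for part (ii) differs from the paper's.

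The paper takes exactly what you call the alternative route. It asserts that $\mathcal B(D([0,T];\overline E_\varepsilon))$ is generated by the evaluation maps $e_t$. It then observes that $e_t\circ X^{(m)}_\cdot=X_{\vartheta_m(t)}$ is a coordinate map on $\Omega^{\mathbb D_T}_\varepsilon$.

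You instead factor $X^{(m)}_\cdot$ through the finite product $\overline E_\varepsilon^{\,2^m+1}$. Choosing $\lambda=\mathrm{id}$ shows that the step-path map $\Phi$ is $1$-Lipschitz into $(D,d_{J_1})$. The only measure-theoretic input is then the identity $\mathcal B(\overline E_\varepsilon)^{\otimes(2^m+1)}=\mathcal B(\overline E_\varepsilon^{\,2^m+1})$ for the separable space $(\overline E_\varepsilon,\rho)$.

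The paper's version is a one-liner. However, it rests on the standard but nontrivial inclusion $\mathcal B(D)\subseteq\sigma(e_t:t\in[0,T])$ for a general separable state space, which it invokes without reference; Billingsley's Theorem~12.5 covers only real-valued paths.

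Your argument is self-contained. As a by-product, it exhibits $\mathbb P_x^{T,\beta,\varepsilon,(m)}$ as the image, under a continuous map, of the finite-dimensional law of $(X_{t_0},\dots,X_{t_{2^m}})$. That representation is convenient for the later passage $m\to\infty$.
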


\begin{proof}
\noindent Part (i). For each $\omega\in\Omega_\varepsilon^{\mathbb D_T}$ and each $k\in\{0,1,\dots,2^m-1\}$, we have $\vartheta_m(t)=k2^{-m}T$ for all $t\in \big[k2^{-m}T,(k+1)2^{-m}T\big)$ and therefore
\[
X_t^{(m)}(\omega)
\,=\,
X_{k2^{-m}T}(\omega)
\qquad\text{for all }t\in \big[k2^{-m}T,(k+1)2^{-m}T \big) \, .
\]
Thus the path $t\mapsto X_t^{(m)}(\omega)$ is piecewise constant on the above dyadic intervals. In particular, it is right-continuous on each interval and has finite left limits at each grid point. Since also $X_T^{(m)}(\omega)=X_T(\omega)$, the path is c\`adl\`ag on $[0,T]$. \vspace{.2cm}

\noindent Part (ii). The given map $X^{(m)}_{\cdot}:\Omega_\varepsilon^{\mathbb D_T}
\to D([0,T];\overline E_\varepsilon)$ is well-defined since for each $\omega \in \Omega_\varepsilon^{\mathbb D_T}$, $X^{(m)}_{\cdot}(\omega)(t):=X_t^{(m)}(\omega)$ for $t\in[0,T]$. Hence, $X^{(m)}_{\cdot}(\omega) \in D([0,T];\overline E_\varepsilon)$ by part~(i).

For each $t\in[0,T]$, let $e_t : D([0,T];\overline E_\varepsilon)\to \overline E_\varepsilon$ be the evaluation (coordinate) map defined by $e_t(p):=p(t)$. Then, for every
$\omega\in\Omega_\varepsilon^{\mathbb D_T}$,
\[
(e_t\circ X^{(m)}_{\cdot})(\omega)
=
e_t\big(X^{(m)}_{\cdot}(\omega)\big)
=
\big(X^{(m)}_{\cdot}(\omega)\big)(t)
=
X_t^{(m)}(\omega)
=
X_{\vartheta_m(t)}(\omega).
\]
Since $\vartheta_m(t)\in\mathcal D_m\subset\mathbb D_T$, the map
$\omega\mapsto X_{\vartheta_m(t)}(\omega)$ is a coordinate map on
$\Omega_\varepsilon^{\mathbb D_T}
=\overline E_\varepsilon^{\mathbb D_T}$ and is therefore
$\mathcal F_\varepsilon^{\mathbb D_T}/\mathcal B(\overline E_\varepsilon)$--measurable.
Thus $e_t\circ X^{(m)}_{\cdot}$ is
$\mathcal F_\varepsilon^{\mathbb D_T}/\mathcal B(\overline E_\varepsilon)$--measurable
for every $t\in[0,T]$.

Finally, since $\overline E_\varepsilon$ is Polish, the space
$D([0,T];\overline E_\varepsilon)$ is equipped with its Borel
$\sigma$--algebra
$\mathcal B(D([0,T];\overline E_\varepsilon))$ for the Skorohod topology, which is generated by
the coordinate maps $\{e_t: t\in[0,T]\}$. Thus,
$X^{(m)}_{\cdot}$ is $\mathcal F_\varepsilon^{\mathbb D_T}/
\mathcal B(D([0,T];\overline E_\varepsilon))
\textup{--measurable}$. \vspace{.3cm}

\noindent Part (iii). If $t\in\mathcal D_m$, then by definition of $\vartheta_m$, we have $\vartheta_m(t)=t$. Hence, $X_t^{(m)}=X_{\vartheta_m(t)}=X_t$. Thus, for $0=t_0<t_1<\cdots<t_n\le T$ with $t_0,t_1,\dots,t_n\in\mathcal D_m$, and let $A_0,\dots,A_n\in\mathcal B(\overline E_\varepsilon)$, we have $X_{t_j}^{(m)}=X_{t_j}$ for all $j=0,1,\dots,n$. Therefore,
\begin{align*}
\mathbb P_x^{T,\beta,\varepsilon}\bigl(
X_{t_0}^{(m)}\in A_0,\dots,X_{t_n}^{(m)}\in A_n
\bigr)
\,=\,&
\mathbb P_x^{T,\beta,\varepsilon}\bigl(
X_{t_0}\in A_0,\dots,X_{t_n}\in A_n
\bigr) \nonumber \\
\,=\,&
\mathbf 1_{A_0}(x)
\int_{A_1}\mathsf K_{[t_0,t_1]}^{T,\beta,\varepsilon}(x,dy_1)
\int_{A_2}\mathsf K_{[t_1,t_2]}^{T,\beta,\varepsilon}(y_1,dy_2)
\cdots
\int_{A_n}\mathsf K_{[t_{n-1},t_n]}^{T,\beta,\varepsilon}(y_{n-1},dy_n),
\end{align*}
where the second equality follows from iterating the Markov property~\eqref{EqMarkovPropertyDyadic}.
\end{proof}

The following lemma shows that, under $\mathbb P_{x}^{T,\beta,\varepsilon}$, the finite-dimensional distributions of the step-function interpolated process
$\{X_t^{(m)}\}_{t\in [0,T]}$ are completely determined by the finite-dimensional
distributions of the dyadic skeleton $\{X_r\}_{r\in\mathcal D_m}$.
\begin{lemma}  \label{LemKernelRepresentationInterpolatedTimes}
Fix $T,\beta,\varepsilon>0$, $m\ge1$, and $x\in\overline E_\varepsilon$. For every $0\le t_1<\cdots<t_n\le T$, under $\mathbb P_{x}^{T,\beta,\varepsilon}$, the joint distribution of the vector $\big(X_{t_1}^{(m)}, \cdots , X_{t_n}^{(m)} \big)$ is determined by the joint distribution of $\big(X_{r_1}, \dots ,X_{r_\ell}\big)$, where $r_1<\cdots<r_\ell$ are the distinct values among $\vartheta_m(t_1),\dots,\vartheta_m(t_n)$.
\end{lemma}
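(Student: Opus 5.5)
The plan is to write the interpolated vector as a deterministic, measurable function of the skeleton vector and push forward the law. Since $\vartheta_m$ is non-decreasing, the values $\vartheta_m(t_1)\le\cdots\le\vartheta_m(t_n)$ are ordered. Let $r_1<\cdots<r_\ell$ be their distinct values in $\mathcal D_m$. For each $j$ there is a unique index $\kappa(j)\in\{1,\dots,\ell\}$ with $\vartheta_m(t_j)=r_{\kappa(j)}$, and $\kappa$ is non-decreasing and surjective. By~\eqref{EqDefDyadicStepInterpolation} we then have, pointwise on $\Omega_\varepsilon^{\mathbb D_T}$,
\[
\big(X^{(m)}_{t_1},\dots,X^{(m)}_{t_n}\big)
\,=\,
\Phi\big(X_{r_1},\dots,X_{r_\ell}\big),
\qquad
\Phi(y_1,\dots,y_\ell):=\big(y_{\kappa(1)},\dots,y_{\kappa(n)}\big).
\]

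The map $\Phi:\overline E_\varepsilon^{\,\ell}\to\overline E_\varepsilon^{\,n}$ is a coordinate-duplication map. Each of its components is a coordinate projection, so $\Phi$ is continuous and hence $\mathcal B(\overline E_\varepsilon)^{\otimes\ell}/\mathcal B(\overline E_\varepsilon)^{\otimes n}$-measurable. Consequently the law of $(X^{(m)}_{t_1},\dots,X^{(m)}_{t_n})$ under $\mathbb P_x^{T,\beta,\varepsilon}$ is the pushforward $\nu\circ\Phi^{-1}$, where $\nu$ is the joint law of $(X_{r_1},\dots,X_{r_\ell})$. This proves the claim.

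For concreteness, the formula on rectangles reads
\[
\mathbb P_x^{T,\beta,\varepsilon}\big(X^{(m)}_{t_j}\in A_j,\ 1\le j\le n\big)
=
\mathbb P_x^{T,\beta,\varepsilon}\Big(X_{r_i}\in \textstyle\bigcap_{j:\kappa(j)=i}A_j,\ 1\le i\le \ell\Big).
\]
Rectangles form a $\pi$-system generating the product $\sigma$-algebra, so this identity already determines the law. The right-hand side can then be expressed through $\nu^x$ in~\eqref{EqDefNuDyadic}, that is, via iterated kernels $\mathsf K^{T,\beta,\varepsilon}_{[r_{i-1},r_i]}$ as in Proposition~\ref{PropDyadicStepInterpolation}(iii). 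Here we take $r_0=0$ and insert $\overline E_\varepsilon$ at time $0$ when $r_1>0$, using the consistency established in the proof of Proposition~\ref{PropDyadicSkeletonProcess}. If $r_1=0$, the factor $\mathbf 1_{\cdot}(x)$ appears because $X_0=x$ almost surely.

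The only mildly delicate point is the bookkeeping of repeated values, namely grouping the $t_j$ that fall in the same dyadic interval. Intersecting the corresponding sets $A_j$ handles this.
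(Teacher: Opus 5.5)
Your proof is correct and is essentially the paper's argument: the paper groups the indices $j$ with $\vartheta_m(t_j)=r_k$, intersects the corresponding sets $A_j$ into $B_k$, and obtains $\mathbb P_x^{T,\beta,\varepsilon}(X_{r_1}\in B_1,\dots,X_{r_\ell}\in B_\ell)$, which is exactly your rectangle identity $\Phi^{-1}(A_1\times\cdots\times A_n)=\prod_{i}\bigcap_{j:\kappa(j)=i}A_j$. Your pushforward formulation through the coordinate-duplication map $\Phi$ is a slightly cleaner packaging. It identifies the whole law as $\nu\circ\Phi^{-1}$ at once, whereas the paper leaves the $\pi$-system step implicit.
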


\begin{proof}
Fix $A_1,\dots,A_n\in\mathcal B(\overline E_\varepsilon)$. Notice that $\vartheta_m(t_1),\dots,\vartheta_m(t_n) \in \mathcal D_m$, and the sequence
$\big(\vartheta_m(t_1),\dots,\vartheta_m(t_n)\big)$ is nondecreasing because $\vartheta_m$ is nondecreasing and $t_1<\cdots<t_n$. If the values $\vartheta_m(t_1),\dots,\vartheta_m(t_n)$ are all distinct, then the result holds trivially.

Now suppose repetitions occur. Let $r_1<\cdots<r_\ell$ be the distinct values appearing in $\big(\vartheta_m(t_1),\dots,\vartheta_m(t_n)\big)$, and for each
$k=1,\dots,\ell$ define $I_k  :=  \{j\in\{1,\dots,n\}: \vartheta_m(t_j) =r_k\}$ and $B_k := \bigcap_{j\in I_k} A_j$. Then,
\[
\{X_{\vartheta_m(t_1)}\in A_1,\dots,X_{\vartheta_m(t_n)}\in A_n\}
=
\bigcap_{k=1}^{\ell}\;\bigcap_{j\in I_k}\{X_{r_k}\in A_j\}
=
\bigcap_{k=1}^{\ell}
\Big\{X_{r_k}\in \bigcap_{j\in I_k} A_j\Big\}
=
\bigcap_{k=1}^{\ell}\{X_{r_k}\in B_k\}.
\]
Thus, we have
\[
\mathbb P_x^{T,\beta,\varepsilon}
\big(
X_{\vartheta_m(t_1)}\in A_1,\dots,X_{\vartheta_m(t_n)}\in A_n
\big)
=
\mathbb P_x^{T,\beta,\varepsilon}
\bigg(
\bigcap_{k=1}^{\ell}\{X_{r_k}\in B_k\}
\bigg)
=
\mathbb P_x^{T,\beta,\varepsilon}
\big(
X_{r_1}\in B_1,\dots,X_{r_\ell}\in B_\ell
\big).
\]
Since for every $s \in\mathcal D_m$, we have $X_s^{(m)} =X_{\vartheta_m(s)}= X_s$, the above completes the proof.
\end{proof}

For every $m \ge 1$ and $x\in\overline E_\varepsilon$, define a probability measure (interpolated path-space law) $\mathbb P_{x}^{T,\beta,\varepsilon,(m)}$ on $\big(D([0,T];\overline E_\varepsilon),\,
\mathcal B(D([0,T];\overline E_\varepsilon))\big)$ by the pushforward
\begin{align} \label{InterpolatedPathSpaceLaw}
\mathbb P_{x}^{T,\beta,\varepsilon,(m)}
\,:=\,
\mathbb P_{x}^{T,\beta,\varepsilon}
\circ
\big(X^{(m)}_{\cdot}\big)^{-1}\,.
\end{align}
Equivalently, for every
$A\in\mathcal B(D([0,T];\overline E_\varepsilon))$, we have $\mathbb P_{x}^{T,\beta,\varepsilon,(m)}(A)
=
\mathbb P_{x}^{T,\beta,\varepsilon}
\big(
X^{(m)}_{\cdot}\in A
\big)$. Let $Y=\{Y_t\}_{t \in [0,T]}$ denote the canonical process on $D([0,T];\overline E_\varepsilon)$, that is,
\[
Y_t(p):=p(t),
\qquad p\in D([0,T];\overline E_\varepsilon),\quad t\in[0,T].
\]
\begin{proposition} \label{PropCanonicalProcessInterpolatedLaw}
Fix $T,\beta,\varepsilon>0$, $m\ge1$, and $x\in\overline E_\varepsilon$. Under
$\mathbb P_{x}^{T,\beta,\varepsilon,(m)}$,
the canonical process $Y=\{Y_t\}_{t\in[0,T]}$
has the same finite-dimensional distributions as the interpolated process
$X^{(m)}=\{X_t^{(m)}\}_{t\in[0,T]}$
under $\mathbb P_{x}^{T,\beta,\varepsilon}$.
\end{proposition}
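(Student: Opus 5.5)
The claim is essentially a change-of-variables statement for the pushforward law~\eqref{InterpolatedPathSpaceLaw}. We compute the finite-dimensional distributions of $Y$ under $\mathbb P_{x}^{T,\beta,\varepsilon,(m)}$ directly.

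Fix $0\le t_1<\cdots<t_n\le T$ and $A_1,\dots,A_n\in\mathcal B(\overline E_\varepsilon)$, and consider the cylinder set
\[
C:=\{p\in D([0,T];\overline E_\varepsilon):\ p(t_1)\in A_1,\dots,p(t_n)\in A_n\}=\bigcap_{j=1}^n e_{t_j}^{-1}(A_j).
\]
Each evaluation map $e_t$ is $\mathcal B(D([0,T];\overline E_\varepsilon))/\mathcal B(\overline E_\varepsilon)$--measurable, since the Borel $\sigma$--algebra of the Skorokhod space is generated by the coordinate maps, as used in the proof of Proposition~\ref{PropDyadicStepInterpolation}(ii). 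Hence $C$ is measurable, and $\mathbb P_{x}^{T,\beta,\varepsilon,(m)}(C)$ makes sense.

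Next we pull $C$ back under the map $X^{(m)}_{\cdot}$. Using $e_{t}\circ X^{(m)}_{\cdot}=X^{(m)}_{t}$, which was established in the proof of Proposition~\ref{PropDyadicStepInterpolation}(ii), we get
\[
(X^{(m)}_{\cdot})^{-1}(C)=\bigcap_{j=1}^n\{X^{(m)}_{t_j}\in A_j\}.
\]
By the definition~\eqref{InterpolatedPathSpaceLaw} of the pushforward, this gives
\[
\mathbb P_{x}^{T,\beta,\varepsilon,(m)}(Y_{t_1}\in A_1,\dots,Y_{t_n}\in A_n)=\mathbb P_{x}^{T,\beta,\varepsilon}(X^{(m)}_{t_1}\in A_1,\dots,X^{(m)}_{t_n}\in A_n).
\]

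Rectangles $A_1\times\cdots\times A_n$ form a $\pi$-system generating $\mathcal B(\overline E_\varepsilon)^{\otimes n}$. By Dynkin's $\pi$--$\lambda$ theorem, the laws of $(Y_{t_1},\dots,Y_{t_n})$ and $(X^{(m)}_{t_1},\dots,X^{(m)}_{t_n})$ therefore coincide on $\overline E_\varepsilon^n$. If one also wants an explicit description, Lemma~\ref{LemKernelRepresentationInterpolatedTimes} combined with the kernel formula of Proposition~\ref{PropDyadicStepInterpolation}(iii) expresses this common law through the kernels $\mathsf K^{T,\beta,\varepsilon}$.

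There is no serious obstacle in this argument. The only point needing care is the measurability of the cylinder sets in the Skorokhod space, which relies on the fact, already invoked earlier, that the coordinate projections generate $\mathcal B(D([0,T];\overline E_\varepsilon))$ when $\overline E_\varepsilon$ is Polish.
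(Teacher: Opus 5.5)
Your proposal is correct and follows essentially the same route as the paper: you pull the cylinder set back under $X^{(m)}_{\cdot}$ using $Y_{t}\circ X^{(m)}_{\cdot}=X^{(m)}_{t}$ and then apply the definition of the pushforward law. Your explicit check that cylinder sets are measurable and your closing $\pi$--$\lambda$ step are harmless additions; the paper leaves the first implicit and stops at rectangle probabilities, which is already what equality of finite-dimensional distributions means.
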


\begin{proof}
Fix $n\in\mathbb N$, times $0\le t_1,\dots,t_n\le T$, and sets
$A_1,\dots,A_n\in\mathcal B(\overline E_\varepsilon)$. By definition of the
pushforward measure $\mathbb P_{x}^{T,\beta,\varepsilon,(m)} = \mathbb P_{x}^{T,\beta,\varepsilon} \circ \big(X^{(m)}_{\cdot}\big)^{-1}$,
we have the first equality below.
\begin{align}
\mathbb P_{x}^{T,\beta,\varepsilon,(m)}
\big(
Y_{t_1}\in A_1,\dots,Y_{t_n}\in A_n
\big)
\,=\,&
\mathbb P_{x}^{T,\beta,\varepsilon}
\Big(
X^{(m)}_{\cdot}\in
\big\{
p\in D([0,T];\overline E_\varepsilon):
Y_{t_1}(p)\in A_1,\dots,Y_{t_n}(p)\in A_n
\big\}
\Big) \nonumber \\
\,=\,&
\mathbb P_{x}^{T,\beta,\varepsilon}
\Big(
Y_{t_1}\big(X^{(m)}_{\cdot}\big)\in A_1,\dots,
Y_{t_n}\big(X^{(m)}_{\cdot}\big)\in A_n
\Big) \nonumber 
\end{align}
By definition $Y_t(p)=p(t)$ for every $t\in[0,T]$ and every
$p\in D([0,T];\overline E_\varepsilon)$. Hence, for each $j=1,\dots,n$ and each
$\omega\in\Omega_\varepsilon^{\mathbb D_T}$, we have the first equality below.
\[
Y_{t_j}\big(X^{(m)}_{\cdot}(\omega)\big)
=
\big(X^{(m)}_{\cdot}(\omega)\big)(t_j)
=
X_{t_j}^{(m)}(\omega)
\]
The second equality uses the definition of the path-valued map
$X^{(m)}_{\cdot}$. Thus, we have
\[
\mathbb P_{x}^{T,\beta,\varepsilon,(m)}
\big(
Y_{t_1}\in A_1,\dots,Y_{t_n}\in A_n
\big)
\,=\,
\mathbb P_{x}^{T,\beta,\varepsilon}
\big(
X_{t_1}^{(m)}\in A_1,\dots,X_{t_n}^{(m)}\in A_n
\big).
\]
The above completes the proof.
\end{proof}

The following corollary is an immediate consequence of Proposition~\ref{PropDyadicStepInterpolation}(iii) and Proposition~\ref{PropCanonicalProcessInterpolatedLaw}.

\begin{corollary} \label{CorAgreementOnDyadicGridInterpolatedLaw}
Fix $T,\beta,\varepsilon>0$, $m\ge1$, and $x\in\overline E_\varepsilon$. The finite-dimensional distributions of the canonical process $Y=\{Y_t\}_{t\in[0,T]}$ under
$\mathbb P_{x}^{T,\beta,\varepsilon,(m)}$ agree on $\mathcal D_m$ with those of
the dyadic skeleton $\{X_t\}_{t\in\mathbb D_T}$ under
$\mathbb P_{x}^{T,\beta,\varepsilon}$.
\end{corollary}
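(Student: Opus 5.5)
The plan is to compose the two cited results directly, checking only that the time sets match up. Fix $n\in\mathbb N$, times $t_1<\cdots<t_n$ in $\mathcal D_m$, and sets $A_1,\dots,A_n\in\mathcal B(\overline E_\varepsilon)$.

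The first step is to apply Proposition~\ref{PropCanonicalProcessInterpolatedLaw}. It gives
\[
\mathbb P_{x}^{T,\beta,\varepsilon,(m)}\big(Y_{t_1}\in A_1,\dots,Y_{t_n}\in A_n\big)
=
\mathbb P_{x}^{T,\beta,\varepsilon}\big(X^{(m)}_{t_1}\in A_1,\dots,X^{(m)}_{t_n}\in A_n\big).
\]
This identity holds for arbitrary times in $[0,T]$, so in particular for times in $\mathcal D_m$.

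The second step is to apply Proposition~\ref{PropDyadicStepInterpolation}(iii). For $t\in\mathcal D_m$ we have $\vartheta_m(t)=t$, hence $X^{(m)}_{t_j}=X_{t_j}$ pointwise on $\Omega^{\mathbb D_T}_\varepsilon$ for every $j$. The two events therefore coincide as subsets of $\Omega^{\mathbb D_T}_\varepsilon$, and so the right-hand side above equals
\[
\mathbb P_{x}^{T,\beta,\varepsilon}\big(X_{t_1}\in A_1,\dots,X_{t_n}\in A_n\big).
\]

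The third step extends from rectangles to full joint laws. Measurable rectangles $A_1\times\cdots\times A_n$ form a $\pi$-system generating $\mathcal B(\overline E_\varepsilon)^{\otimes n}$. By the $\pi$-$\lambda$ theorem, the joint law of $(Y_{t_1},\dots,Y_{t_n})$ under $\mathbb P_{x}^{T,\beta,\varepsilon,(m)}$ therefore equals that of $(X_{t_1},\dots,X_{t_n})$ under $\mathbb P_x^{T,\beta,\varepsilon}$. As a byproduct, this joint law is given explicitly by the kernel product \eqref{EqDefNuDyadic}.

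I do not expect any real obstacle. The one point needing care is that the identity $X^{(m)}_t=X_t$ holds only for $t\in\mathcal D_m$ and not for all of $\mathbb D_T$. That is exactly why the statement restricts agreement to $\mathcal D_m$, so no further argument is needed.
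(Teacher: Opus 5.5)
Your proposal is correct and follows the paper's route: the paper records the corollary as an immediate consequence of Proposition~\ref{PropCanonicalProcessInterpolatedLaw} combined with the pointwise identity $X^{(m)}_t=X_t$ for $t\in\mathcal D_m$ from Proposition~\ref{PropDyadicStepInterpolation}(iii), and your $\pi$-$\lambda$ step just makes explicit the passage from rectangles to full joint laws. One small caveat on your byproduct remark: \eqref{EqDefNuDyadic} is written for $0=t_0<t_1$, so if $0$ is among your times the kernel formula picks up an extra factor $\mathbf 1_{A_1}(x)$, as in the display in the proof of Proposition~\ref{PropDyadicStepInterpolation}(iii).
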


Observe that for every $m\ge1$, $x\in\overline E_\varepsilon$, $0\le t_1<\cdots<t_n\le T$, and
$A_1,\dots,A_n\in\mathcal B(\overline E_\varepsilon)$, since $X_{t_j}^{(m)}:=X_{\vartheta_m(t_j)}$ for all $j$, Proposition~\ref{PropCanonicalProcessInterpolatedLaw} yields
\begin{align}
\mathbb P_{x}^{T,\beta,\varepsilon,(m)}
\big(
Y_{t_1}\in A_1,\dots,Y_{t_n}\in A_n
\big)
\,=\,
\mathbb P_{x}^{T,\beta,\varepsilon}
\big(
X_{\vartheta_m(t_1)}\in A_1,\dots,X_{\vartheta_m(t_n)}\in A_n
\big)\,.
\end{align}

\begin{proposition}[Tightness of the one-time marginals]
\label{PropTightnessOneTimeMarginals}
Fix $T,\beta,\varepsilon>0$, $x\in\overline E_\varepsilon$, and $t\in[0,T]$.
Then the family $\{ \mathbb P_{x}^{T,\beta,\varepsilon,(m)}\circ Y_t^{-1}\}_{m\ge1}$ is tight on $\overline E_\varepsilon$.
\end{proposition}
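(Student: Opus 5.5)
The one-time marginal of $Y_t$ under $\mathbb P_x^{T,\beta,\varepsilon,(m)}$ is the law of $X_{\vartheta_m(t)}$ under $\mathbb P_x^{T,\beta,\varepsilon}$, by Proposition~\ref{PropCanonicalProcessInterpolatedLaw}. Concretely, for $B\in\mathcal B(\overline E_\varepsilon)$ and $x\in E_\varepsilon$ with $\vartheta_m(t)>0$,
\[
\mathbb P_{x}^{T,\beta,\varepsilon,(m)}\circ Y_t^{-1}(B)
=\mathsf K^{T,\beta,\varepsilon}_{[0,\vartheta_m(t)]}(x,B).
\]
If $\vartheta_m(t)=0$, or if $x=\Delta$, the marginal is simply $\delta_x$. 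So the family consists of $\delta_x$ (at most finitely many $m$ have $\vartheta_m(t)=0$ unless $t=0$) together with the measures $\mathsf K^{T,\beta,\varepsilon}_{[0,u]}(x,\cdot)$ for $u\in\mathcal D_m\cap(0,t]$. The trivial cases $x=\Delta$ and $t=0$ are immediate, since a single probability measure on a Polish space is tight.

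For $x\in E_\varepsilon$, I would construct compact sets explicitly. In $(\overline E_\varepsilon,\rho)$ the point $\{\Delta\}$ is compact. A set $\{\varepsilon+\delta\le|y|\le R\}$ is closed and bounded for $|\cdot|$, and $d_\varepsilon$ is uniformly equivalent to $|\cdot|$ there, so it is compact in $E_\varepsilon$. Hence
\[
C_{\delta,R}:=\{y:\varepsilon+\delta\le|y|\le R\}\cup\{\Delta\}
\]
is compact in $\overline E_\varepsilon$. By \eqref{EqDefMeshLimitFull},
\[
\mathsf K^{T,\beta,\varepsilon}_{[0,u]}\bigl(x,\overline E_\varepsilon\setminus C_{\delta,R}\bigr)
=\widehat{\mathsf K}^{T,\beta,\varepsilon}_{[0,u]}\bigl(x,\{\varepsilon<|y|<\varepsilon+\delta\}\cup\{|y|>R\}\bigr).
\]
By the monotonicity in \eqref{EqDefGlobalDyadicLimitKernel_Inf}, this is bounded by the single-step kernel $\widehat{\mathsf K}^{T,\beta,\varepsilon,\{0,u\}}_{[0,u]}$. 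That in turn is bounded by
\[
\int_{\{\varepsilon<|y|<\varepsilon+\delta\}\cup\{|y|>R\}} p^{T,\beta}_{0,u}(x,y)\,dy.
\]
(The coarsest partition $\{0,u\}$ is refined by every $\pi_m[0,u]$, by Lemma~\ref{LemMonotoneOnEeps}(i).)

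The problem therefore reduces to uniform integrability of the densities $p^{T,\beta}_{0,u}(x,\cdot)$ over $u$ in $(0,t]$. Two tails need control: the shell near the sphere $\{|y|=\varepsilon\}$ and the region near infinity. This is the main obstacle, and I would treat the ranges of $u$ separately.

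For $u$ bounded away from $0$, say $u\in[u_0,t]$, I would use the explicit formulas \eqref{DefPointKer3dBeta} and \eqref{FirstTrans3d}. On $\{|y|\ge\varepsilon\}$ the factor $1/(|x||y|)$ is bounded. Also $1+Q^\beta_{T-u}(y)$ is bounded there, since $Q^\beta$ decreases in $|y|$, and $1+Q^\beta_{T}(x)\ge1$. So $p^{T,\beta}_{0,u}(x,y)\le C\,(u^{-3/2}+\cdots)$ uniformly in $u\in[u_0,t]$. The shell $\{\varepsilon<|y|<\varepsilon+\delta\}$ has volume $O(\delta)$, so its mass tends to $0$ uniformly. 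The tail $\{|y|>R\}$ is controlled by Gaussian decay $e^{-(|y|-|x|)^2/4u}$, uniformly in $u\le T$.

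For small $u$, the densities concentrate near $x$. Fix $\delta<(|x|-\varepsilon)/2$ and $R>2|x|$. Both regions then lie at distance at least $\eta>0$ from $x$, and the mass there is at most
\[
C\int_{|y-x|>\eta}\Bigl[P_u(x,y)+\tfrac{u}{\varepsilon^2}P_u(|x|+|y|)+\cdots\Bigr]\,dy,
\]
which tends to $0$ as $u\downarrow0$. Choosing $u_0$ so that this is less than $\eta'/2$, and then choosing $\delta,R$ for the compact range $[u_0,t]$, gives $\sup_m \mathbb P_x^{T,\beta,\varepsilon,(m)}(Y_t\notin C_{\delta,R})<\eta'$. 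Alternatively, and more simply, $\vartheta_m(t)\to t$, so only the finitely many small $m$ could produce small $u$ when $t>0$. Each of those finitely many measures is individually tight, so the small-$u$ analysis can be skipped entirely. I would use this shortcut, which leaves only the uniform estimate on $[\,\vartheta_1(t)\wedge t/2,\,t]$ as the real work.
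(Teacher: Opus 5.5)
Your proof is correct, but it is organized differently from the paper's. The paper proves two separate uniform lemmas, one for the tail at infinity and one for the approach to $\{|y|=\varepsilon\}$. Each bounds $\sup_{s\in(0,t]\cap\mathbb D_T}\widehat{\mathsf K}^{T,\beta,\varepsilon,\pi_1[0,s]}_{[0,s]}(x,\cdot)$ using \eqref{PtBetaBounds} and \eqref{QUpperBoundCL}. This forces the paper to control arbitrarily small $s$, via the distance-$\eta_x$ argument and the boundedness of $s^{-1/2}e^{-c/s}$. It also has to split into $s\le T/2$ and $s>T/2$, and the second case needs the semigroup property to collapse the two-step kernel.

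You avoid both complications. Comparing directly with the coarsest partition $\{0,u\}$, which every $\pi_m[0,u]$ refines, gives in one step essentially the bound the paper only reaches at the end of its Case 2. Your observation that $\vartheta_m(t)\ge t/2$ once $T2^{-m}\le t/2$ lets you discard finitely many marginals, each tight as a single probability measure on a Polish space. You then only need $u$ bounded away from $0$, where crude pointwise bounds on \eqref{DefPointKer3dBeta} over $\{|y|\ge\varepsilon\}$ suffice. The price is that your estimate is tied to the fixed $t$. The paper's estimates are uniform over all dyadic $s\in(0,t]$, including times near $0$, which is the kind of control one would want for the later path-space tightness.

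Two small points to tidy up:
\begin{enumerate}
\item Your bound on $1+Q^\beta_{T-u}(y)$ must be uniform in $u$, not only in $|y|\ge\varepsilon$. Monotonicity in $|y|$ alone does not give this; add either monotonicity of $Q^\beta_s$ in $s$ or simply cite \eqref{QUpperBoundCL}.
\item The interval in your last sentence should be $[t/2,t]$, after discarding the finitely many $m$ with $T2^{-m}>t/2$. As written, $\vartheta_1(t)\wedge t/2=0$ when $t<T/2$, which reintroduces small $u$.
\end{enumerate}
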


The proof of the above proposition relies on the following two lemmas, which we establish first.

\begin{lemma}[Uniform one-time tail bound at infinity]
\label{LemUniformOneTimeTailInfinity}
Fix $T,\beta,\varepsilon>0$, $x\in\overline E_\varepsilon$, and $t\in[0,T]$.
Then
\[
\lim_{R\to\infty}\;
\sup_{m\ge1}
\mathbb P_{x}^{T,\beta,\varepsilon,(m)}
\bigl(|Y_t|>R\bigr)
=
0.
\]
\end{lemma}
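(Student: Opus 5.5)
By the display following Corollary~\ref{CorAgreementOnDyadicGridInterpolatedLaw}, we have $\mathbb P_{x}^{T,\beta,\varepsilon,(m)}(|Y_t|>R)=\mathbb P_x^{T,\beta,\varepsilon}(|X_{\vartheta_m(t)}|>R)$, where $|\Delta|$ is interpreted so that $\{|Y_t|>R\}\subset E_\varepsilon$. The plan is to bound the right-hand side uniformly over the (countable) family of dyadic times $u=\vartheta_m(t)\in\mathcal D_m\cap[0,t]$. If $x=\Delta$, the probability is zero by absorption (Proposition~\ref{PropDyadicKillingTime}(ii)), so assume $x\in E_\varepsilon$. For $u=0$ the probability is $\mathbf 1_{\{|x|>R\}}$, which vanishes once $R>|x|$. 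For $u\in\mathbb D_T$ with $u>0$, the one-dimensional distribution is $\mathsf K^{T,\beta,\varepsilon}_{[0,u]}(x,\{|y|>R\}\cap E_\varepsilon)=\widehat{\mathsf K}^{T,\beta,\varepsilon}_{[0,u]}(x,\{|y|>R\})$. By the infimum definition~\eqref{EqDefGlobalDyadicLimitKernel_Inf} and Lemma~\ref{LemMonotoneOnEeps}(i), this is at most the one-step kernel along the trivial partition $\{0,u\}$, which equals $\int_{|y|>R}p^{T,\beta}_{0,u}(x,y)\,dy$. Thus it suffices to show
\[
\lim_{R\to\infty}\sup_{0<u\le T}\int_{|y|>R}p^{T,\beta}_{0,u}(x,y)\,dy=0 .
\]

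For the main estimate, write out~\eqref{FirstTrans3d} using~\eqref{DefPointKer3dBeta}. First, the prefactor $1/(1+Q^\beta_T(x))\le 1$. Second, $1+Q^\beta_{T-u}(y)$ is bounded for $|y|\ge 1$ uniformly in $u\in[0,T]$, say by a constant $C_T$. This follows from~\eqref{DefH3d}: $Q_s^\beta(y)\le \frac{2s}{\beta|y|}\int_0^\infty e^{4\pi\beta w}P_s(|y|+w)\,dw$, and this expression is bounded (indeed it decays) for $|y|\ge1$ and $s\le T$, because the Gaussian factor dominates the exponential. So, for $R\ge1$,
\[
\int_{|y|>R}p^{T,\beta}_{0,u}(x,y)\,dy\le C_T\int_{|y|>R}P^\beta_u(x,y)\,dy .
\]
We then treat the three terms of $P_u^\beta$ separately. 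The free term gives $\mathbb P(|x+\sqrt{2u}\,Z|>R)\le \mathbb P(|Z|>(R-|x|)/\sqrt{2T})$, which is uniform in $u$. For the second term we use $|x|>\varepsilon$ and polar coordinates to get the bound $\frac{2u}{\varepsilon}\int_R^\infty 4\pi r\,P_u(|x|+r)\,dr$. This is at most $C\sqrt u\,e^{-R^2/(4u)}$-type, hence at most $C'e^{-R^2/(4T)}$, uniformly in $u\le T$. For the third term we use the same polar bound together with a Gaussian tail estimate of the form $\int_0^\infty e^{4\pi\beta w}P_u(w+r)\,dw\le C_T\,e^{-r^2/(8T)}$ (completing the square), which is integrable against $r\,dr$ over $(R,\infty)$ uniformly in $u$.

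The main obstacle is the uniformity as $u\downarrow0$, and the uniform control of the Doob factor $1+Q^\beta_{T-u}(y)$ near $u=T$. Both reduce to elementary Gaussian estimates once the time parameter is bounded by $T$. The point I would check carefully is that the factors $u/|y|$ and $e^{4\pi\beta w}$ do not spoil the bounds. They do not: $u\le T$, $|y|\ge R\ge1$, and completing the square absorbs $e^{4\pi\beta w}$ at the cost of a constant $e^{c\beta^2T}$. Combining the cases, $\sup_m\mathbb P_{x}^{T,\beta,\varepsilon,(m)}(|Y_t|>R)$ is bounded by a quantity depending only on $(R,x,T,\beta,\varepsilon)$ that tends to $0$ as $R\to\infty$.
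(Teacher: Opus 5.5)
Your proof is correct. Its skeleton matches the paper's: reduce to $\mathsf K^{T,\beta,\varepsilon}_{[0,u]}(x,\cdot)$ at dyadic $u\le t$, dispose of $x=\Delta$ and $u=0$, bound the limit kernel by a coarse-partition kernel, drop $1/(1+Q^\beta_T(x))$, then control the Doob factor and the heat-kernel tail. The routes differ in two places.

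First, the paper bounds $\widehat{\mathsf K}^{T,\beta,\varepsilon}_{[0,s]}$ by the kernel along $\pi_1[0,s]$. This forces a split into $0<s\le T/2$ (one step) and $T/2<s\le T$. In the second case there are two steps through $T/2$, which the paper collapses by hand using the semigroup property of $P^\beta$ after discarding the Doob prefactors. You instead compare every $\pi_m[0,u]$ directly with the trivial partition $\{0,u\}$ via Lemma~\ref{LemMonotoneOnEeps}(i). This is legitimate, since every partition of $[0,u]$ refines $\{0,u\}$, and it produces the paper's Case~2 bound in one line; that computation is essentially a re-derivation of this monotonicity.

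Second, for the analytic estimate the paper invokes the external upper bound \eqref{PtBetaBounds} on $P^\beta_s$ together with \eqref{QUpperBoundCL}. You estimate the three explicit terms of \eqref{DefPointKer3dBeta} and bound $Q^\beta_{T-u}$ on $\{|y|\ge1\}$ straight from \eqref{DefH3d}. Your route is self-contained. The paper's reuses two estimates it needs anyway, for instance in Lemma~\ref{LemUniformOneTimeBoundaryApproach}.

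One detail to state carefully. Your inner bound $\int_0^\infty e^{4\pi\beta w}P_u(w+r)\,dw\le C e^{-cr^2/T}$ is uniform in $u\in(0,T]$ only for $r$ bounded away from $0$; at $r=0$ the left side is of order $u^{-1}$. This is harmless, because the argument is $|x|+r\ge R$. Moreover, the prefactor $8\pi\beta u$ of the third term cancels that $u^{-1}$. So even the crude bound $\erfc\le 2$ in \eqref{EqLaplaceGaussian} gives a third-term contribution of at most $C_{T,\beta,\varepsilon}\int_R^\infty r\,e^{-4\pi\beta r}\,dr$, uniformly in $u$.
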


\begin{proof}
Fix $m\ge1$. Since $\{|Y_t|>R\} =\{p\in D([0,T];\overline E_\varepsilon):|Y_t(p)|>R\}$ and the canonical process $Y$ is defined by $Y_t(p)=p(t)$, we have the first two equalities below.
\begin{align*}
\mathbb P_{x}^{T,\beta,\varepsilon,(m)}
\bigl(|Y_t|>R\bigr)
\,=\,&
\mathbb P_{x}^{T,\beta,\varepsilon,(m)}
\Bigl(
\bigl\{p\in D([0,T];\overline E_\varepsilon): |Y_t(p)|>R\bigr\}
\Bigr) \nonumber \\
\,=\,&
\mathbb P_{x}^{T,\beta,\varepsilon,(m)}
\Bigl(
\bigl\{p\in D([0,T];\overline E_\varepsilon): |p(t)|>R\bigr\}
\Bigr) \nonumber \\
\,=\,&
\mathbb P_{x}^{T,\beta,\varepsilon}
\Bigl(
X^{(m)}_\cdot \in
\bigl\{p\in D([0,T];\overline E_\varepsilon): |p(t)|>R\bigr\}
\Bigr) 
\,=\,
\mathbb P_{x}^{T,\beta,\varepsilon}
\bigl(
|X_t^{(m)}|>R
\bigr)
\end{align*}
The third equality uses that 
$ \mathbb P_{x}^{T,\beta,\varepsilon,(m)} = \mathbb P_{x}^{T,\beta,\varepsilon} \circ (X^{(m)}_\cdot)^{-1}$, so for any measurable $ A \subset D([0,T];\overline E_\varepsilon)$, we have $\mathbb P_{x}^{T,\beta,\varepsilon,(m)}(A) = \mathbb P_{x}^{T,\beta,\varepsilon}\big(X^{(m)}_\cdot\in A\big)$. The fourth equality uses that $X^{(m)}_\cdot(\omega)\in\{p:|p(t)|>R\}
$ iff $ |X_t^{(m)}(\omega)|>R$. Next using $X_t^{(m)} := X_{\vartheta_m(t)}$, we have the first equality below.
\begin{align*}
\mathbb P_{x}^{T,\beta,\varepsilon,(m)}
\bigl(|Y_t|>R\bigr)
\,=\,&
\mathbb P_{x}^{T,\beta,\varepsilon}
\bigl(|X_{\vartheta_m(t)}|>R\bigr)
\nonumber \\
\,=\,&
\mathbb P_{x}^{T,\beta,\varepsilon}
\Bigl(
X_{\vartheta_m(t)}
\in
\{y\in\overline E_\varepsilon:|y|>R\}
\Bigr)
\,=\,
\mathsf K^{T,\beta,\varepsilon}_{[0,\vartheta_m(t)]}
\Bigl(
x,\,
\{y\in\overline E_\varepsilon:|y|>R\}
\Bigr)
\end{align*}
Since $\vartheta_m(t) := \max(\mathcal D_m\cap[0,t])$, we have
$\vartheta_m(t)\in\mathcal D_m \subset \mathbb D_T $. Also, under $\mathbb P_{x}^{T,\beta,\varepsilon}$ the dyadic–skeleton process
$\{X_s\}_{s\in\mathbb D_T}$ has transition kernel
$\mathsf K^{T,\beta,\varepsilon}_{[s,t]}$ by Proposition~\ref{PropDyadicSkeletonProcess}. Thus, the one--time distribution of
$X_{\vartheta_m(t)}$ is given by  $\mathsf K^{T,\beta,\varepsilon}_{[0,\vartheta_m(t)]}(x,\cdot)$, which implies the last equality above.

Next, since $\vartheta_m(t)\in[0,t]\cap\mathcal D_m \subset[0,t] \cap \mathbb D_T$ for every $m\ge1$, it follows that
\begin{align*}
\sup_{m\ge1}
\mathbb P_{x}^{T,\beta,\varepsilon,(m)}
\bigl(|Y_t|>R\bigr)
=
\sup_{m\ge1}
\mathsf K^{T,\beta,\varepsilon}_{[0,\vartheta_m(t)]}
\Bigl(
x,\,
\{y\in\overline E_\varepsilon:|y|>R\}
\Bigr)
\le
\sup_{s\in[0,t]\cap\mathbb D_T}
\mathsf K^{T,\beta,\varepsilon}_{[0,s]}
\Bigl(
x,\,
\{y\in\overline E_\varepsilon:|y|>R\}
\Bigr).
\end{align*}
Next we distinguish the cases $x=\Delta$ and $x\in E_\varepsilon$.
If $x=\Delta$, then by the absorbing property of the cemetery state, $\mathsf K^{T,\beta,\varepsilon}_{[0,s]}(\Delta,\cdot)=\delta_\Delta
$ for every $s \in\mathbb D_T$, and hence
\begin{align*}
\mathsf K^{T,\beta,\varepsilon}_{[0,s]}
\Bigl(
\Delta,\,
\{y\in\overline E_\varepsilon:|y|>R\}
\Bigr)
=
\delta_\Delta\Bigl(
\{y\in\overline E_\varepsilon:|y|>R\}
\Bigr)
=
0
\end{align*}
for every $s\in[0,t]\cap\mathbb D_T$. Therefore,
\begin{align*}
\sup_{m\ge1}
\mathbb P_{\Delta}^{T,\beta,\varepsilon,(m)}
\bigl(|Y_t|>R\bigr)=0.
\end{align*}

It remains to consider the case $x\in E_\varepsilon$. In this case,
since $\Delta\notin\{y\in\overline E_\varepsilon:|y|>R\}$, we have
\begin{align*}
\mathsf K^{T,\beta,\varepsilon}_{[0,s]}
\Bigl(
x,\,
\{y\in\overline E_\varepsilon:|y|>R\}
\Bigr)
=
\mathsf K^{T,\beta,\varepsilon}_{[0,s]}
\Bigl(
x,\,
\{y\in E_\varepsilon:|y|>R\}
\Bigr)
\end{align*}
for every $s\in[0,t]\cap\mathbb D_T$. Hence, for $x\in E_\varepsilon$,
\begin{align*}
\sup_{m\ge1}
\mathbb P_{x}^{T,\beta,\varepsilon,(m)}
\bigl(|Y_t|>R\bigr)
\le
\sup_{s\in[0,t]\cap\mathbb D_T}
\mathsf K^{T,\beta,\varepsilon}_{[0,s]}
\Bigl(
x,\,
\{y\in E_\varepsilon:|y|>R\}
\Bigr).
\end{align*}
Therefore, to prove the lemma, it therefore remains to show that
\begin{align*}
\lim_{R\to\infty}
\sup_{s\in[0,t]\cap\mathbb D_T}
\mathsf K^{T,\beta,\varepsilon}_{[0,s]}
\Bigl(
x,\,
\{y\in E_\varepsilon:|y|>R\}
\Bigr)
=0.
\end{align*}
We now separate the case $s=0$ from the case $s>0$. For $s=0$, since the initial distribution is $\delta_x$, we have
\begin{align*}
\mathsf K^{T,\beta,\varepsilon}_{[0,0]}
\Bigl(
x,\,
\{y\in E_\varepsilon:|y|>R\}
\Bigr)
=
\delta_x\bigl(\{y\in E_\varepsilon:|y|>R\}\bigr)
=
\mathbf 1_{\{|x|>R\}}.
\end{align*}
Hence,
\begin{align*}
\sup_{s\in[0,t]\cap\mathbb D_T}
\mathsf K^{T,\beta,\varepsilon}_{[0,s]}
\Bigl(
x,\,
\{y\in E_\varepsilon:|y|>R\}
\Bigr)
=
\max\bigg\{
\mathbf 1_{\{|x|>R\}},
\;
\sup_{s\in(0,t]\cap\mathbb D_T}
\mathsf K^{T,\beta,\varepsilon}_{[0,s]}
\Bigl(
x,\,
\{y\in E_\varepsilon:|y|>R\}
\Bigr)
\bigg\}.
\end{align*}
Since $x$ is fixed, we have $\mathbf 1_{\{|x|>R\}}\longrightarrow 0$ as $R\to\infty$. Next, fix $s\in(0,t]\cap\mathbb D_T$. Since $x\in E_\varepsilon$ is fixed and $A:=\{y\in E_\varepsilon:\ |y|>R\}\subset E_\varepsilon$ does not contain $\Delta$, it follows from from~\eqref{EqDefMeshLimitFull} that
\begin{align*}
\mathsf K^{T,\beta,\varepsilon}_{[0,s]}
\Bigl(
x,\,
\{y\in E_\varepsilon:\ |y|>R\}
\Bigr)
&=
\widehat{\mathsf K}^{T,\beta,\varepsilon}_{[0,s]}
\Bigl(
x,\,
\{y\in E_\varepsilon:\ |y|>R\}
\Bigr).
\end{align*}
Therefore,
\begin{align*}
\sup_{s\in(0,t]\cap\mathbb D_T}
\mathsf K^{T,\beta,\varepsilon}_{[0,s]}
\Bigl(
x,\,
\{y\in E_\varepsilon:\ |y|>R\}
\Bigr)
&=
\sup_{s\in(0,t]\cap\mathbb D_T}
\widehat{\mathsf K}^{T,\beta,\varepsilon}_{[0,s]}
\Bigl(
x,\,
\{y\in E_\varepsilon:\ |y|>R\}
\Bigr).
\end{align*}
Hence it remains to show that
\begin{align}\label{WTS}
\lim_{R\to\infty}
\sup_{s\in(0,t]\cap\mathbb D_T}
\widehat{\mathsf K}^{T,\beta,\varepsilon}_{[0,s]}
\Bigl(
x,\,
\{y\in E_\varepsilon:\ |y|>R\}
\Bigr)
=0.
\end{align}
Fix $R>\varepsilon$. By~\eqref{EqDefGlobalDyadicLimitKernel_Inf},
for each $s\in(0,t]\cap\mathbb D_T$ we have
\begin{align*}
\widehat{\mathsf K}^{T,\beta,\varepsilon}_{[0,s]}
\Bigl(
x,\,
\{y\in E_\varepsilon:\ |y|>R\}
\Bigr)
&=
\inf_{m\ge1}
\widehat{\mathsf K}^{T,\beta,\varepsilon,\pi_m[0,s]}_{[0,s]}
\Bigl(
x,\,
\{y\in E_\varepsilon:\ |y|>R\}
\Bigr) \\
&\le
\widehat{\mathsf K}^{T,\beta,\varepsilon,\pi_1[0,s]}_{[0,s]}
\Bigl(
x,\,
\{y\in E_\varepsilon:\ |y|>R\}
\Bigr)\,,
\end{align*}
where $\pi_m[s,t]$ denotes the partition defined in~\eqref{DefGlobalDyadics}. The inequality follows from monotonicity under refinement in Lemma~\ref{LemMonotoneOnEeps}(i). Therefore,
\begin{align*}
\sup_{s\in(0,t]\cap\mathbb D_T}
\widehat{\mathsf K}^{T,\beta,\varepsilon}_{[0,s]}
\Bigl(
x,\,
\{y\in E_\varepsilon:\ |y|>R\}
\Bigr)
\le
\sup_{s\in(0,t]\cap\mathbb D_T}
\widehat{\mathsf K}^{T,\beta,\varepsilon,\pi_1[0,s]}_{[0,s]}
\Bigl(
x,\,
\{y\in E_\varepsilon:\ |y|>R\}
\Bigr).
\end{align*}

\noindent \textit{Case 1.} For $0<s\le \frac{T}{2}$, recall from~\eqref{IndPartPi1} that $\pi_1[0,s]=\{0,s\}$. Hence, using~\eqref{FirstTrans3dSub} we obtain the first equality below.
\begin{align*}
\widehat{\mathsf K}^{T,\beta,\varepsilon,\pi_1[0,s]}_{[0,s]}
\Bigl(
x,\,
\{y\in E_\varepsilon:\ |y|>R\}
\Bigr)
\,=\,
\int_{E_\varepsilon}
\mathbf 1_{\{|y|>R\}}\,
p^{\,T,\beta}_{0,s}(x,y)\,dy 
\,\le\,
\int_{E_\varepsilon}
\mathbf 1_{\{|y|>R\}}\,
\bigl(1+Q_{T-s}^\beta(y)\bigr)\,
P_s^\beta(x,y)\,dy  \nonumber 
\end{align*}
The inequality uses~\eqref{FirstTrans3d} together with $1+Q_t^\beta(x)\ge 1$. Next, fix $L:=\beta\sqrt T$. Then $\beta\sqrt{T-s}\le L$ since $0<s\le \frac T2$, so by~\eqref{QUpperBoundCL},
\begin{align*}
Q_{T-s}^\beta(y)
\le
C_L\Bigl(1+\frac{\sqrt{T-s}}{|y|}\Bigr)e^{-4\pi\beta|y|}
\le
C_L\Bigl(1+\frac{\sqrt T}{|y|}\Bigr)e^{-4\pi\beta|y|},
\qquad y\in \R^3\setminus\{0\}.
\end{align*}
Substituting this into the previous bound, we get the first inequality below.
\begin{align*}
\int_{E_\varepsilon}
\mathbf 1_{\{|y|>R\}}\,
p^{\,T,\beta}_{0,s}(x,y)\,dy
&\le
\int_{E_\varepsilon}
\mathbf 1_{\{|y|>R\}}\,
\Bigl[
1+
C_L\Bigl(1+\frac{\sqrt T}{|y|}\Bigr)e^{-4\pi\beta|y|}
\Bigr]
P_s^\beta(x,y)\,dy 
\le
I_1(R,s)+I_2(R,s)\,.
\end{align*}
The last inequality holds by applying the upper bound from~\eqref{PtBetaBounds} to
$P_s^\beta(x,y)$, where we bound the two integrals $I_1(R,s)$ and $I_2(R,s)$ as follows. \vspace{.2cm}

\noindent \textit{First integral.} Using the notation in~\eqref{DefFreeHeat3d}, we can rewrite $P(s;x,y)=P_s(x,y)=P_s(|x-y|)$. Therefore the first integral becomes
\begin{align*}
I_1(R,s)
&\,:=\,
\int_{E_\varepsilon}
\mathbf 1_{\{|y|>R\}}\,
\Bigl[
1+
C_L\Bigl(1+\frac{\sqrt T}{|y|}\Bigr)e^{-4\pi\beta|y|}
\Bigr]
P_s(x,y)\,dy \\
&=
\int_{E_\varepsilon\cap\{|y|>R\}}
\Bigl[
1+
C_L\Bigl(1+\frac{\sqrt T}{|y|}\Bigr)e^{-4\pi\beta|y|}
\Bigr]
P_s(|x-y|)\,dy \\
&\le
\Bigl(1+C_L+C_L\frac{\sqrt T}{R}\Bigr)
\int_{E_\varepsilon\cap\{|y|>R\}}
P_s(|x-y|)\,dy 
\le
\Bigl(1+C_L+C_L\frac{\sqrt T}{R}\Bigr)
\int_{\{|y|>R\}}
P_s(|x-y|)\,dy.
\end{align*}
The first inequality holds since $|y|>R$ on the domain of integration, we have $\frac{1}{|y|}\le \frac{1}{R} $ and $ e^{-4\pi\beta|y|}\le 1$. Moreover, since $E_\varepsilon\cap\{|y|>R\}\subset\{|y|>R\}$, we have the last inequality. Next, applying the change of variables $z=y-x,$ to obtain the first equality below.
\begin{align*}
\int_{\{|y|>R\}} P_s(|x-y|)\,dy
=
\int_{\{|z+x|>R\}} P_s(|z|)\,dz 
\le
\int_{\{|z|>R-|x|\}} P_s(|z|)\,dz
\le
\int_{\{|z|>R-|x|\}} P_{T/2}(|z|)\,dz
\end{align*}
The first inequality holds since if $|z+x|>R$, then by the triangle inequality $|z| \ge |\,|z+x|-|x|\,| > R-|x|$. Hence, $\{|z+x|>R\}\subset \{|z|>R-|x|\}$. The last inequality uses that $0<s\le \frac{T}{2}$ and the Gaussian tail is increasing in the time parameter. Consequently,
\begin{align*}
I_1(R,s)
\le
\Bigl(1+C_L+C_L\frac{\sqrt T}{R}\Bigr)
\int_{\{|z|>R-|x|\}} P_{T/2}(|z|)\,dz.
\end{align*}
The right-hand side is independent of $s\in(0,T/2]$ and tends to zero as
$R\to\infty$. Therefore,
\begin{align*}
\lim_{R\to\infty}
\sup_{0<s\le T/2}
I_1(R,s)
=
0.
\end{align*}

\noindent\textit{Second integral.}
We now decompose $I_2(R,s)$ by expanding the bracket inside the integral.
\begin{align*}
I_2(R,s)
&\,:=\,
C_L\, s^{-1/2}\,\frac{e^{-\frac{|x|^2}{4s}}}{|x|}
\int_{E_\varepsilon}
\mathbf 1_{\{|y|>R\}}\,
\Bigl[
1+
C_L\Bigl(1+\frac{\sqrt T}{|y|}\Bigr)e^{-4\pi\beta|y|}
\Bigr]
\frac{e^{-\frac{|y|^2}{4s}}}{|y|}\,dy \\
&=
C_L\, s^{-1/2}\,\frac{e^{-\frac{|x|^2}{4s}}}{|x|}
\int_{E_\varepsilon}
\mathbf 1_{\{|y|>R\}}\,
\frac{e^{-\frac{|y|^2}{4s}}}{|y|}\,dy 
+
C_L^2\, s^{-1/2}\,\frac{e^{-\frac{|x|^2}{4s}}}{|x|}
\int_{E_\varepsilon}
\mathbf 1_{\{|y|>R\}}\,
\Bigl(1+\frac{\sqrt T}{|y|}\Bigr)e^{-4\pi\beta|y|}
\frac{e^{-\frac{|y|^2}{4s}}}{|y|}\,dy 
\end{align*}
For notational convenience, we denote the two terms on the right-hand side above by $I^{(1)}_2(R,s)$ and $I^{(2)}_2(R,s)$, respectively. Then,
\begin{align*}
I^{(1)}_2(R,s)
&\,:=\,
C_L\, s^{-1/2}\,\frac{e^{-\frac{|x|^2}{4s}}}{|x|}
\int_{E_\varepsilon}
\mathbf 1_{\{|y|>R\}}\,
\frac{e^{-\frac{|y|^2}{4s}}}{|y|}\,dy \\
&=
C_L\, s^{-1/2}\,\frac{e^{-\frac{|x|^2}{4s}}}{|x|}
\int_{E_\varepsilon\cap\{|y|>R\}}
\frac{e^{-\frac{|y|^2}{4s}}}{|y|}\,dy \\
&=
C_L\, s^{-1/2}\,\frac{e^{-\frac{|x|^2}{4s}}}{|x|}
\int_{\{|y|>R\}}
\frac{e^{-\frac{|y|^2}{4s}}}{|y|}\,dy 
=
4\pi C_L\, s^{-1/2}\,\frac{e^{-\frac{|x|^2}{4s}}}{|x|}
\int_R^\infty
r\,e^{-\frac{r^2}{4s}}\,dr
\end{align*}
The third equality holds since $R>\varepsilon$, we have $E_\varepsilon\cap\{|y|>R\}=\{|y|>R\}$ and the last equality uses passing to spherical coordinates. Next, using the change of variables
$u=\frac{r^2}{4s}$, so that $du=\frac{r}{2s}dr$, we obtain $\int_R^\infty r e^{-\frac{r^2}{4s}} dr = 2s\int_{\frac{R^2}{4s}}^\infty e^{-u} du = 2s e^{-\frac{R^2}{4s}}$. Substituting this into the above display yields the first equality below.
\begin{align*}
I^{(1)}_2(R,s)
=
8\pi C_L\, s^{1/2}\,
\frac{e^{-\frac{|x|^2}{4s}}}{|x|}
\,e^{-\frac{R^2}{4s}}
=
8\pi C_L\, \frac{\sqrt{s}}{|x|}
e^{
-\frac{|x|^2+R^2}{4s}}
\le
8\pi C_L\, \frac{\sqrt{T/2}}{|x|}
e^{
-\frac{|x|^2+R^2}{2T}}
\end{align*}
Since $0<s\le \frac{T}{2}$, the inequality uses that $s\le T/2$ and $s\mapsto e^{-a/s}$ is increasing for $a>0$. Consequently, the right--hand side is independent of $s\in(0,T/2]$ and since $x$ is fixed, the right--hand side tends to $0$ as $R\to\infty$,
\begin{align}\label{WTS1}
\lim_{R\to\infty}
\sup_{0<s\le T/2}
I^{(1)}_2(R,s)
=
0.
\end{align}

For $I^{(2)}_2(R,s)$, again using $E_\varepsilon\cap\{|y|>R\}=\{|y|>R\}$, we have
\begin{align*}
I^{(2)}_2(R,s)
&=
C_L^2\, s^{-1/2}\,\frac{e^{-\frac{|x|^2}{4s}}}{|x|}
\int_{\{|y|>R\}}
\Bigl(1+\frac{\sqrt T}{|y|}\Bigr)
e^{-4\pi\beta|y|}
\frac{e^{-\frac{|y|^2}{4s}}}{|y|}\,dy \\
&=
C_L^2\, s^{-1/2}\,\frac{e^{-\frac{|x|^2}{4s}}}{|x|}
\int_{\{|y|>R\}}
e^{-4\pi\beta|y|}
\frac{e^{-\frac{|y|^2}{4s}}}{|y|}\,dy 
+
C_L^2\sqrt T\, s^{-1/2}\,\frac{e^{-\frac{|x|^2}{4s}}}{|x|}
\int_{\{|y|>R\}}
e^{-4\pi\beta|y|}
\frac{e^{-\frac{|y|^2}{4s}}}{|y|^2}\,dy 
\end{align*}
Since $e^{-4\pi\beta|y|}\le 1$ for all $y\in\R^3$, the first term above is bounded by $C_L^2 I^{(1)}_2(R,s)$, which has already been treated in~\eqref{WTS1}. Hence it remains to consider the second term. Since $|y|>R$ on the domain of integration, we have $\frac{1}{|y|^2}\le \frac{1}{R|y|}$. Using also that $e^{-4\pi\beta|y|}\le 1$, we obtain
\begin{align*}
C_L^2\sqrt T\, s^{-1/2}\,\frac{e^{-\frac{|x|^2}{4s}}}{|x|}
\int_{\{|y|>R\}}
e^{-4\pi\beta|y|}
\frac{e^{-\frac{|y|^2}{4s}}}{|y|^2}\,dy 
\le
\frac{C_L^2\sqrt T}{R}\,
s^{-1/2}\,\frac{e^{-\frac{|x|^2}{4s}}}{|x|}
\int_{\{|y|>R\}}
\frac{e^{-\frac{|y|^2}{4s}}}{|y|}\,dy 
=
\frac{C_L^2\sqrt T}{R\,C_L}\, I^{(1)}_2(R,s).
\end{align*}
Therefore both terms in the decomposition of $I^{(2)}_2(R,s)$ are controlled by
a constant multiple of $I^{(1)}_2(R,s)$. Hence, for $0<s\le T/2$,
\begin{align*}
I^{(2)}_2(R,s)
\le
C_L^2 I^{(1)}_2(R,s)
+
\frac{C_L^2\sqrt T}{R\,C_L}\, I^{(1)}_2(R,s) 
=
\left(
C_L^2+\frac{C_L\sqrt T}{R}
\right)
I^{(1)}_2(R,s).
\end{align*}
It follows from~\eqref{WTS1} that
\begin{align*}
\lim_{R\to\infty}\sup_{0<s\le T/2} I_2(R,s)=0.
\end{align*}
Thus,~\eqref{WTS} holds for the case $0\le s <\frac{T}{2}$. \vspace{.2cm}

\noindent \textit{Case 2.} For $\frac{T}{2}<s\le T$, recall from~\eqref{IndPartPi1} that $\pi_1[0,s]=\big\{0,\frac{T}{2},s\big\}$. Hence, using~\eqref{FirstTrans3dSub} we obtain
\begin{align*}
\widehat{\mathsf K}^{T,\beta,\varepsilon,\pi_1[0,s]}_{[0,s]}
\Bigl(
x,\,
\{y\in E_\varepsilon:\ |y|>R\}
\Bigr) 
=
\int_{E_\varepsilon}\int_{E_\varepsilon}
\mathbf 1_{\{|y_2|>R\}}\,
p^{\,T,\beta}_{0,T/2}(x,y_1)\,
p^{\,T,\beta}_{T/2,s}(y_1,y_2)\,
dy_1\,dy_2.
\end{align*}
Using~\eqref{FirstTrans3d} the right hand side above can be written as
\begin{align*}
\int_{E_\varepsilon}\int_{E_\varepsilon}
\mathbf 1_{\{|y_2|>R\}}\,
&\frac{1+Q^{\beta}_{T/2}(y_1)}{1+Q^{\beta}_{T}(x)}
\,P^{\beta}_{T/2}(x,y_1)\,
\frac{1+Q^{\beta}_{T-s}(y_2)}{1+Q^{\beta}_{T/2}(y_1)}
\,P^{\beta}_{\,s-T/2}(y_1,y_2)\,
dy_1\,dy_2  \nonumber \\
&=
\frac{1}{1+Q^{\beta}_{T}(x)}
\int_{E_\varepsilon}\int_{E_\varepsilon}
\mathbf 1_{\{|y_2|>R\}}\,
\bigl(1+Q^{\beta}_{T-s}(y_2)\bigr)\,
P^{\beta}_{T/2}(x,y_1)\,
P^{\beta}_{\,s-T/2}(y_1,y_2)\,
dy_1\,dy_2 \nonumber
\end{align*}
Thus, using $1+Q_t^\beta(x)\ge 1$, we obtain the first inequality below.
\begin{align*}
\widehat{\mathsf K}^{T,\beta,\varepsilon,\pi_1[0,s]}_{[0,s]}
\Bigl(
x,\,
\{y\in E_\varepsilon:\ |y|>R\}
\Bigr) 
\le&
\int_{E_\varepsilon}\int_{E_\varepsilon}
\mathbf 1_{\{|y_2|>R\}}\,
\bigl(1+Q^{\beta}_{T-s}(y_2)\bigr)\,
P^{\beta}_{T/2}(x,y_1)\,
P^{\beta}_{\,s-T/2}(y_1,y_2)\,
dy_1\,dy_2 \nonumber \\
=&
\int_{E_\varepsilon}
\mathbf 1_{\{|y_2|>R\}}\,
\bigl(1+Q^{\beta}_{T-s}(y_2)\bigr)
\left(
\int_{E_\varepsilon}
P^{\beta}_{T/2}(x,y_1)\,
P^{\beta}_{\,s-T/2}(y_1,y_2)\,
dy_1
\right)
dy_2
\end{align*}
The first equality uses Tonelli's theorem to interchange
the order of integration (since the integrand is nonnegative). Next, since the integrand in the inner integral is nonnegative, enlarging the
domain of integration from $E_\varepsilon$ to $\R^3$ gives
\begin{align*}
\int_{E_\varepsilon}
P^{\beta}_{T/2}(x,y_1)\,
P^{\beta}_{\,s-T/2}(y_1,y_2)\,
dy_1
\le
\int_{\R^3}
P^{\beta}_{T/2}(x,y_1)\,
P^{\beta}_{\,s-T/2}(y_1,y_2)\,
dy_1 
=
P_s^\beta(x,y_2),
\end{align*}
where the last equality follows from the semigroup property~\eqref{EqSemigroupKernelReliable} of the
kernel $P^\beta$. Substituting this into the previous bound and renaming $y_2$ as $y$, we obtain the upper bound
\begin{align*}
\widehat{\mathsf K}^{T,\beta,\varepsilon,\pi_1[0,s]}_{[0,s]}
\Bigl(
x,\,
\{y\in E_\varepsilon:\ |y|>R\}
\Bigr) 
\le
\int_{E_\varepsilon}
\mathbf 1_{\{|y|>R\}}\,
\bigl(1+Q^{\beta}_{T-s}(y)\bigr)\,
P_s^\beta(x,y)\,
dy 
=
J_1(R,s)+J_2(R,s),
\end{align*}
for $\frac{T}{2}<s\le T$, where $J_1(R,s)$ and $J_2(R,s)$ denote the two terms obtained by using~\eqref{PtBetaBounds} and splitting the integral on the right-hand side exactly as in Case~1. From this point onward, the estimates proceed exactly as in Case~1, with
$J_1(R,s)$ and $J_2(R,s)$ in place of $I_1(R,s)$ and $I_2(R,s)$. The only
difference is that here $\frac{T}{2}<s\le T$, so whenever monotonicity in the
time parameter is used, we bound by the value at time $T$ instead of $T/2$.
Consequently,
\[
\lim_{R\to\infty}\sup_{T/2<s\le T}J_1(R,s)=0,
\qquad
\lim_{R\to\infty}\sup_{T/2<s\le T}J_2(R,s)=0.
\]
Hence,~\eqref{WTS} also holds in the regime $\frac{T}{2}<s\le T$, and the proof is complete.
\end{proof}

\begin{lemma}[Uniform one-time boundary-approach bound]
\label{LemUniformOneTimeBoundaryApproach}
Fix $T,\beta,\varepsilon>0$, $x\in\overline E_\varepsilon$, and $t\in[0,T]$.
Then
\[
\lim_{\delta\downarrow0}\;
\sup_{m\ge1}
\mathbb P_{x}^{T,\beta,\varepsilon,(m)}
\bigl(\varepsilon<|Y_t|\le \varepsilon+\delta\bigr)
=
0.
\]
\end{lemma}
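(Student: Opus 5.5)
We follow the reduction used in the proof of Lemma~\ref{LemUniformOneTimeTailInfinity} almost verbatim. First, by the pushforward identity \eqref{InterpolatedPathSpaceLaw} and $X^{(m)}_t=X_{\vartheta_m(t)}$, we rewrite
\[
\mathbb P_{x}^{T,\beta,\varepsilon,(m)}\bigl(\varepsilon<|Y_t|\le\varepsilon+\delta\bigr)
=\mathsf K^{T,\beta,\varepsilon}_{[0,\vartheta_m(t)]}\bigl(x,S_\delta\bigr),
\qquad S_\delta:=\{y\in E_\varepsilon:\ |y|\le\varepsilon+\delta\}.
\]
We then bound the supremum over $m$ by the supremum over $s\in[0,t]\cap\mathbb D_T$. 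The case $x=\Delta$ is trivial, since $\delta_\Delta(S_\delta)=0$. For $s=0$ the quantity is $\mathbf 1_{\{|x|\le\varepsilon+\delta\}}$, which vanishes once $\delta<|x|-\varepsilon$ because $x\in E_\varepsilon$ is fixed. For $s>0$, since $\Delta\notin S_\delta$, \eqref{EqDefMeshLimitFull} gives $\widehat{\mathsf K}^{T,\beta,\varepsilon}_{[0,s]}(x,S_\delta)$. By monotonicity (Lemma~\ref{LemMonotoneOnEeps}(i)) this is at most $\widehat{\mathsf K}^{T,\beta,\varepsilon,\pi_1[0,s]}_{[0,s]}(x,S_\delta)$.

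We treat the two cases $0<s\le T/2$ and $T/2<s\le T$ exactly as before. In both cases, dropping the factor $(1+Q^\beta_T(x))^{-1}\le1$, extending the intermediate integral to $\R^3$, and using the semigroup property \eqref{EqSemigroupKernelReliable}, we reduce to showing
\[
\lim_{\delta\downarrow0}\ \sup_{0<s\le t}\int_{S_\delta}\bigl(1+Q^\beta_{T-s}(y)\bigr)P^\beta_s(x,y)\,dy=0 .
\]
Note that $S_\delta$ is a thin shell $\varepsilon<|y|\le\varepsilon+\delta$ bounded away from the origin, of Lebesgue measure $O(\delta)$. It therefore suffices to show that the integrand is bounded on $\{\varepsilon<|y|\le\varepsilon+1\}$ uniformly in $s\in(0,T]$, and then multiply by $|S_\delta|\le C\varepsilon^2\delta$.

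For the factor $1+Q^\beta_{T-s}(y)$, the bound \eqref{QUpperBoundCL} with $L=\beta\sqrt T$ gives $Q^\beta_{T-s}(y)\le C_L(1+\sqrt T/\varepsilon)$ on $|y|>\varepsilon$, uniformly in $s$. For the kernel we use the upper bound \eqref{PtBetaBounds}, which has the form of a free Gaussian term $P_s(x,y)$ plus a term $C_L s^{-1/2}|x|^{-1}e^{-|x|^2/4s}\,|y|^{-1}e^{-|y|^2/4s}$. The second term is bounded by
\[
C_L\,\varepsilon^{-1}|x|^{-1}\sup_{s>0}s^{-1/2}e^{-|x|^2/4s}<\infty,
\]
since $|x|>\varepsilon>0$.

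The main obstacle is the free term $P_s(x,y)=(4\pi s)^{-3/2}e^{-|x-y|^2/4s}$, which is not uniformly bounded as $s\downarrow0$ if $y$ can be near $x$. We avoid this by using that $x$ is fixed with $|x|>\varepsilon$. For $\delta<\delta_0:=(|x|-\varepsilon)/2$, every $y\in S_\delta$ satisfies $|x-y|\ge|x|-\varepsilon-\delta\ge\delta_0$. Hence
\[
P_s(x,y)\le\sup_{s>0}(4\pi s)^{-3/2}e^{-\delta_0^2/4s}<\infty .
\]
Combining these bounds, the integral is at most $C(x,\varepsilon,T,\beta)\,\delta$ uniformly in $s$, which tends to $0$ as $\delta\downarrow0$. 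If \eqref{PtBetaBounds} turns out to have a slightly different singular profile, for example involving $(|x|+|y|)$, the same argument applies, since all singular factors are controlled by $|y|>\varepsilon$ and the separation of $x$ from the shell.
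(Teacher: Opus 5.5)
Your proposal is correct and follows essentially the same route as the paper. You use the same reduction to $\widehat{\mathsf K}^{T,\beta,\varepsilon,\pi_1[0,s]}_{[0,s]}(x,A_\delta)$ with the two cases $s\le T/2$ and $s>T/2$, the latter handled through the semigroup property, and the same split of \eqref{PtBetaBounds} into a free Gaussian part (controlled by $|x-y|\ge(|x|-\varepsilon)/2$) and a singular part (controlled by boundedness of $s^{-1/2}e^{-|x|^2/4s}$), each multiplied by the $O(\delta)$ volume of the shell; the only cosmetic difference is that you bound the integrand uniformly before integrating, whereas the paper integrates the singular term radially.
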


\begin{proof} 
Fix $m \ge 1$ and $\delta\in(0,1)$. Set $A_\delta := \big\{ y\in\overline E_\varepsilon: \varepsilon<|y|\le \varepsilon+\delta \big\}$. Recall that under
$\mathbb P_{x}^{T,\beta,\varepsilon,(m)}$ the canonical process
$Y=\{Y_t\}_{t\in[0,T]}$ has the same finite--dimensional distributions as the
interpolated process $X^{(m)}=\{X_t^{(m)}\}_{t\in[0,T]}$ under
$\mathbb P_{x}^{T,\beta,\varepsilon}$ by Proposition~\ref{PropCanonicalProcessInterpolatedLaw}.
In particular, taking the one--dimensional distribution at time $t$, we have $\mathbb P_{x}^{T,\beta,\varepsilon,(m)}(Y_t\in A) = \mathbb P_{x}^{T,\beta,\varepsilon}(X_t^{(m)}\in A) $ for every $A\in\mathcal B(\overline E_\varepsilon)$. This implies the second equality below.
\begin{align*}
\mathbb P_{x}^{T,\beta,\varepsilon,(m)}
\bigl(\varepsilon<|Y_t|\le \varepsilon+\delta\bigr)
=
\mathbb P_{x}^{T,\beta,\varepsilon,(m)}
\bigl(
Y_t\in A_\delta
\bigr) =
\mathbb P_{x}^{T,\beta,\varepsilon}
\bigl(
X_{\vartheta_m(t)}\in A_\delta
\bigr) =
\mathsf K^{T,\beta,\varepsilon}_{[0,\vartheta_m(t)]}
\bigl(
x,\,
A_\delta
\bigr).
\end{align*}
The last equality follows from Proposition~\ref{PropDyadicSkeletonProcess}. Indeed, since $X_0=x$ almost surely and
$\vartheta_m(t)\in\mathbb D_T$, the distribution of $X_{\vartheta_m(t)}$ under
$\mathbb P_{x}^{T,\beta,\varepsilon}$ is given by
$\mathsf K^{T,\beta,\varepsilon}_{[0,\vartheta_m(t)]}(x,\cdot)$. Therefore,
\begin{align*}
\sup_{m\ge1}
\mathbb P_{x}^{T,\beta,\varepsilon,(m)}
\bigl(\varepsilon<|Y_t|\le \varepsilon+\delta\bigr)
=
\sup_{m\ge1}
\mathsf K^{T,\beta,\varepsilon}_{[0,\vartheta_m(t)]}
\bigl(
x,\,
A_\delta
\bigr) 
\le
\sup_{s\in[0,t]\cap\mathbb D_T}
\mathsf K^{T,\beta,\varepsilon}_{[0,s]}
\bigl(
x,\,
A_\delta
\bigr).
\end{align*}
The inequality uses that $\vartheta_m(t)\in[0,t]\cap\mathcal D_m\subset[0,t]\cap\mathbb D_T$
for every $m\ge1$. Thus it remains to show that
\begin{align*}
\lim_{\delta\downarrow0}\;
\sup_{s\in[0,t]\cap\mathbb D_T}
\mathsf K^{T,\beta,\varepsilon}_{[0,s]}
\bigl(
x,\,
A_\delta
\bigr)
=0.
\end{align*}
We now separate the case $x=\Delta$ from the case $x\in E_\varepsilon$. The case $x=\Delta$ is trivial since by the definition of the cemetery state extension, $\mathsf K^{T,\beta,\varepsilon}_{[0,s]}(\Delta,\cdot)=\delta_\Delta(\cdot)$
for $ s\in[0,T]$. Since $\Delta\notin A_\delta$, it follows that
\begin{align*}
\mathsf K^{T,\beta,\varepsilon}_{[0,s]}(\Delta,A_\delta)
=
\delta_\Delta(A_\delta)
=
0,
\qquad s\in[0,T],
\end{align*}
and therefore the desired limit is immediate in this case. It remains to consider the case $x\in E_\varepsilon$. We now separate the case
$s=0$ from the case $s>0$. For $s=0$, since the initial distribution is $\delta_x$, we have $\mathsf K^{T,\beta,\varepsilon}_{[0,0]}(x,A_\delta)
=
\delta_x(A_\delta)
=
\mathbf 1_{\{\varepsilon<|x|\le \varepsilon+\delta\}}$. Hence,
\begin{align*}
\sup_{s\in[0,t]\cap\mathbb D_T}
\mathsf K^{T,\beta,\varepsilon}_{[0,s]}(x,A_\delta)
=
\max\Big\{
\mathbf 1_{\{\varepsilon<|x|\le \varepsilon+\delta\}},
\;
\sup_{s\in(0,t]\cap\mathbb D_T}
\mathsf K^{T,\beta,\varepsilon}_{[0,s]}(x,A_\delta)
\Big\}.
\end{align*}
Since $x\in E_\varepsilon$ is fixed, we have $\mathbf 1_{\{\varepsilon<|x|\le \varepsilon+\delta\}} \to 0$ as $\delta\downarrow0$. Next, fix $s\in(0,t]\cap\mathbb D_T$. Since $A_\delta\subset E_\varepsilon $ and $ \Delta\notin A_\delta$, it follows from~\eqref{EqDefMeshLimitFull} that $\mathsf K^{T,\beta,\varepsilon}_{[0,s]}(x,A_\delta) = \widehat{\mathsf K}^{T,\beta,\varepsilon}_{[0,s]}(x,A_\delta)$. Therefore,
\begin{align*}
\sup_{s\in(0,t]\cap\mathbb D_T}
\mathsf K^{T,\beta,\varepsilon}_{[0,s]}(x,A_\delta)
=
\sup_{s\in(0,t]\cap\mathbb D_T}
\widehat{\mathsf K}^{T,\beta,\varepsilon}_{[0,s]}(x,A_\delta).
\end{align*}
Hence it remains to show that
\begin{align*}
\lim_{\delta\downarrow0}\;
\sup_{s\in(0,t]\cap\mathbb D_T}
\widehat{\mathsf K}^{T,\beta,\varepsilon}_{[0,s]}(x,A_\delta)
=0.
\end{align*}
By~\eqref{EqDefGlobalDyadicLimitKernel_Inf}, for each
$s\in(0,t]\cap\mathbb D_T$ we have
\begin{align*}
\widehat{\mathsf K}^{T,\beta,\varepsilon}_{[0,s]}(x,A_\delta)
=
\inf_{m\ge1}
\widehat{\mathsf K}^{T,\beta,\varepsilon,\pi_m[0,s]}_{[0,s]}(x,A_\delta) 
\le
\widehat{\mathsf K}^{T,\beta,\varepsilon,\pi_1[0,s]}_{[0,s]}(x,A_\delta),
\end{align*}
where $\pi_m[0,s]$ denotes the partition defined in~\eqref{DefGlobalDyadics}.
The inequality follows from monotonicity under refinement in
Lemma~\ref{LemMonotoneOnEeps}. Therefore,
\begin{align*}
\sup_{s\in(0,t]\cap\mathbb D_T}
\widehat{\mathsf K}^{T,\beta,\varepsilon}_{[0,s]}(x,A_\delta)
\le
\sup_{s\in(0,t]\cap\mathbb D_T}
\widehat{\mathsf K}^{T,\beta,\varepsilon,\pi_1[0,s]}_{[0,s]}(x,A_\delta).
\end{align*}

We now treat separately the two cases corresponding to the form of the coarsest partition $\pi_1[0,s]$.

\noindent\textit{Case 1.} Let $0<s\le T/2$. Then
$\pi_1[0,s]=\{0,s\}$ by~\eqref{IndPartPi1}. Hence, using~\eqref{FirstTrans3dSub},
\begin{align*}
\widehat{\mathsf K}^{T,\beta,\varepsilon,\pi_1[0,s]}_{[0,s]}(x,A_\delta)
=
\int_{E_\varepsilon}
\mathbf 1_{\{\varepsilon<|y|\le \varepsilon+\delta\}}
p^{\,T,\beta}_{0,s}(x,y)\,dy 
\le
\int_{E_\varepsilon}
\mathbf 1_{\{\varepsilon<|y|\le \varepsilon+\delta\}}
\bigl(1+Q_{T-s}^\beta(y)\bigr)P_s^\beta(x,y)\,dy.
\end{align*}
The inequality follows from~\eqref{FirstTrans3d} and that $1+Q_{T}^\beta(x) \ge 1$. Set $L:=\beta\sqrt T$. Using~\eqref{QUpperBoundCL} we obtain
\begin{align*}
\widehat{\mathsf K}^{T,\beta,\varepsilon,\pi_1[0,s]}_{[0,s]}(x,A_\delta)
\le
\int_{E_\varepsilon}
\mathbf 1_{\{\varepsilon<|y|\le \varepsilon+\delta\}}
\Bigl[
1+C_L\Bigl(1+\frac{\sqrt T}{|y|}\Bigr)e^{-4\pi\beta|y|}
\Bigr]
P_s^\beta(x,y)\,dy
\le
K_1(\delta,s)+K_2(\delta,s),
\end{align*}
where the final inequality uses the upper bound from~\eqref{PtBetaBounds} to $P_s^\beta(x,y)$ and the integrals $K_1(\delta,s)$, $K_2(\delta,s)$ are defined and estimated below separately. \vspace{.2cm}

\noindent\textit{First integral.} Using the notation in~\eqref{DefFreeHeat3d}, we
rewrite $P_s(x,y)=P_s(|x-y|)$. Also, since $\varepsilon<|y|\le \varepsilon+\delta$ on the domain of integration, we have $\frac1{|y|}\le \frac1\varepsilon$ and $e^{-4\pi\beta|y|}\le 1$. Therefore,
\begin{align*}
K_1(\delta,s)
\le
\Bigl(1+C_L+C_L\frac{\sqrt T}{\varepsilon}\Bigr)
\int_{\{\varepsilon<|y|\le \varepsilon+\delta\}}
P_s(|x-y|)\,dy
=
\Bigl(1+C_L+C_L\frac{\sqrt T}{\varepsilon}\Bigr)
\int_{\{\varepsilon<|z+x|\le \varepsilon+\delta\}}P_s(|z|)\,dz,
\end{align*}
where the equality uses the change of variables $z=y-x$. We next bound the Gaussian term uniformly in $s\in(0,T/2]$. Since
$x\in E_\varepsilon$ is fixed, we have $|x|>\varepsilon$. Set $\eta_x:=\frac{|x|-\varepsilon}{2}>0$. Now let $0<\delta\le \eta_x$, then $\varepsilon+\delta \le |x|$. Moreover, if $\varepsilon<|y|\le \varepsilon+\delta$, then
\[
|x-y|
\ge ||x|-|y||
\ge |x|-(\varepsilon+\delta)
\ge \frac{|x|-\varepsilon}{2}
=
\eta_x.
\]
Hence, on the domain of integration in $K_1(\delta,s)$, we have
$|x-y|\ge \eta_x$. Next, recall~\eqref{DefFreeHeat3d},
\[
P_s(|x-y|)
=
(4\pi s)^{-\frac{3}{2}}e^{-\frac{|x-y|^2}{4s}}
=
\pi^{-\frac{3}{2}}|x-y|^{-3}
\Bigl(\frac{|x-y|^2}{4s}\Bigr)^{\frac{3}{2}}
e^{-\frac{|x-y|^2}{4s}}
\le
C\,|x-y|^{-3}
\le
C\,\eta_x^{-3},
\]
for all $0<s\le T/2$ and all $y$ such that
$\varepsilon<|y|\le \varepsilon+\delta$; where the inequality follows using the
elementary bound $u^{\frac{3}{2}}e^{-u}\le C$ for $u>0$\footnote{This follows since the
function $f(u)=u^{\frac{3}{2}}e^{-u}$ is continuous on $(0,\infty)$ and satisfies
$f(u)\to0$ as $u\downarrow0$ and $f(u)\to0$ as $u\to\infty$. Therefore $f$
attains a finite maximum on $(0,\infty)$, which yileds $u^{\frac{3}{2}}e^{-u}\le \max_{u>0} u^{\frac{3}{2}}e^{-u}=C$ for all $u>0$.}, with $u=\frac{|x-y|^2}{4s}$. Therefore,
\begin{align*}
K_1(\delta,s)
&\le
\Bigl(1+C_L+C_L\frac{\sqrt T}{\varepsilon}\Bigr)
\int_{\{\varepsilon<|y|\le \varepsilon+\delta\}}
P_s(|x-y|)\,dy \\
&\le
C_{x,T,L,\varepsilon}
\big|
\{y\in\R^3:\ \varepsilon<|y|\le \varepsilon+\delta\}
\big|
=
4\pi C_{x,T,L,\varepsilon}
\int_\varepsilon^{\varepsilon+\delta}r^2\,dr,
\end{align*}
where the equality follows since the volume of the thin annulus by passing to spherical coordinates is
$\big| \{y\in\R^3:\ \varepsilon<|y|\le \varepsilon+\delta\}
\big| = 4\pi\int_\varepsilon^{\varepsilon+\delta}r^2\,dr$. The right-hand side is independent of $s\in(0,T/2]$ and tends to $0$ as
$\delta\downarrow0$. Therefore,
\begin{align*}
\lim_{\delta\downarrow0}\sup_{0<s\le T/2}K_1(\delta,s)=0.
\end{align*}

\noindent\textit{Second integral.} By definition,
\begin{align*}
K_2(\delta,s)
&\,:=\,
C_L\,s^{-1/2}\,\frac{e^{-\frac{|x|^2}{4s}}}{|x|}
\int_{E_\varepsilon}
\mathbf 1_{\{\varepsilon<|y|\le \varepsilon+\delta\}}
\Bigl[
1+C_L\Bigl(1+\frac{\sqrt T}{|y|}\Bigr)e^{-4\pi\beta|y|}
\Bigr]
\frac{e^{-\frac{|y|^2}{4s}}}{|y|}\,dy \\
&=
C_L\,s^{-1/2}\,\frac{e^{-\frac{|x|^2}{4s}}}{|x|}
\int_{\{\varepsilon<|y|\le \varepsilon+\delta\}}
\Bigl[
1+C_L\Bigl(1+\frac{\sqrt T}{|y|}\Bigr)e^{-4\pi\beta|y|}
\Bigr]
\frac{e^{-\frac{|y|^2}{4s}}}{|y|}\,dy.
\end{align*}
Since $\varepsilon<|y|\le \varepsilon+\delta$ on the domain of integration, we have $\frac1{|y|}\le \frac1\varepsilon$ and $e^{-4\pi\beta|y|}\le 1$. Therefore,
\begin{align*}
K_2(\delta,s)
&\le
C_L\,s^{-1/2}\,\frac{e^{-\frac{|x|^2}{4s}}}{|x|}
\Bigl(1+C_L+C_L\frac{\sqrt T}{\varepsilon}\Bigr)
\int_{\{\varepsilon<|y|\le \varepsilon+\delta\}}
\frac{e^{-\frac{|y|^2}{4s}}}{|y|}\,dy \\
&\le
C_{L,T,\varepsilon}\,
s^{-1/2}\,\frac{e^{-\frac{|x|^2}{4s}}}{|x|}
\int_{\{\varepsilon<|y|\le \varepsilon+\delta\}}
e^{-\frac{|y|^2}{4s}}\,dy
=
C_{L,T,\varepsilon}\,
s^{-1/2}\,\frac{e^{-\frac{|x|^2}{4s}}}{|x|}
\int_\varepsilon^{\varepsilon+\delta}
r^2e^{-\frac{r^2}{4s}}\,dr.
\end{align*}
The equality holds by passing to spherical coordinates. Now, since $r\ge\varepsilon$ on the interval of integration, we have $e^{-\frac{r^2}{4s}} \le e^{-\frac{\varepsilon^2}{4s}}$. Also, for $\varepsilon<r\le\varepsilon+\delta$ and $0<\delta\le1$, we have $r^2\le (\varepsilon+1)^2$. Therefore, $\int_\varepsilon^{\varepsilon+\delta}
r^2e^{-\frac{r^2}{4s}}\,dr \le (\varepsilon+1)^2\,\delta\,e^{-\frac{\varepsilon^2}{4s}}$. Substituting this into the previous display yields the first inequality below.
\begin{align*}
K_2(\delta,s)
\le
\delta\, C_{x,L,T,\varepsilon}\,
s^{-1/2}
e^{-\frac{|x|^2+\varepsilon^2}{4s}} 
\le
\delta\,C_{x,L,T,\varepsilon}\,
\sup_{0<u\le T/2}
u^{-1/2}e^{-\frac{|x|^2+\varepsilon^2}{4u}}
\le
\delta\, C_{x,L,T,\varepsilon}
\end{align*}
The final inequality holds by enlarging the constant, since the function
$s\mapsto s^{-1/2}e^{-\frac{|x|^2+\varepsilon^2}{4s}}$ is bounded on
$(0,T/2]$\footnote{
Define $f(s):=s^{-1/2}e^{-\frac{|x|^2+\varepsilon^2}{4s}}$ for $0<s\le T/2$. Then $f$ is continuous on $(0,T/2]$, and moreover $f(s)\to0$ as $s\downarrow0$.
Hence $f$ extends continuously to $[0,T/2]$ by setting $f(0):=0$.
Therefore $f$ attains a finite maximum on the compact interval $[0,T/2]$, and consequently $\sup_{0<s\le T/2} s^{-1/2}e^{-\frac{|x|^2+\varepsilon^2}{4s}}
<\infty$.
}. The right-hand side is independent of $s\in(0,T/2]$ and tends to $0$ as
$\delta\downarrow0$. Therefore,
\begin{align*}
\lim_{\delta\downarrow0}\sup_{0<s\le T/2}K_2(\delta,s)=0.
\end{align*}
Consequently, recalling that $\widehat{\mathsf K}^{T,\beta,\varepsilon,\pi_1[0,s]}_{[0,s]}(x,A_\delta)
\le K_1(\delta,s)+K_2(\delta,s)$ for $ 0<s\le T/2$, we conclude that
\begin{align*}
\lim_{\delta\downarrow0}
\sup_{0<s\le T/2}
\widehat{\mathsf K}^{T,\beta,\varepsilon,\pi_1[0,s]}_{[0,s]}(x,A_\delta)
=0.
\end{align*}

\noindent\textit{Case 2.} Let $T/2<s\le T$. Then $\pi_1[0,s]=\big\{0,\frac{T}{2},s\big\}$
by~\eqref{IndPartPi1}. Hence, using~\eqref{FirstTrans3dSub}, we obtain the first equality below.
\begin{align*}
\widehat{\mathsf K}^{T,\beta,\varepsilon,\pi_1[0,s]}_{[0,s]}(x,A_\delta)
&=
\int_{E_\varepsilon}\int_{E_\varepsilon}
\mathbf 1_{\{\varepsilon<|y_2|\le \varepsilon+\delta\}}
p^{\,T,\beta}_{0,T/2}(x,y_1)\,
p^{\,T,\beta}_{T/2,s}(y_1,y_2)\,
dy_1\,dy_2 \\
&=
\int_{E_\varepsilon}\int_{E_\varepsilon}
\mathbf 1_{\{\varepsilon<|y_2|\le \varepsilon+\delta\}}
\frac{1+Q^\beta_{T/2}(y_1)}{1+Q^\beta_T(x)}
P^\beta_{T/2}(x,y_1)
\frac{1+Q^\beta_{T-s}(y_2)}{1+Q^\beta_{T/2}(y_1)}
P^\beta_{s-T/2}(y_1,y_2)\,
dy_1\,dy_2 \\
&\le
\int_{E_\varepsilon}\int_{E_\varepsilon}
\mathbf 1_{\{\varepsilon<|y_2|\le \varepsilon+\delta\}}
\bigl(1+Q^\beta_{T-s}(y_2)\bigr)
P^\beta_{T/2}(x,y_1)\,
P^\beta_{s-T/2}(y_1,y_2)\,
dy_1\,dy_2 \\
&=
\int_{E_\varepsilon}
\mathbf 1_{\{\varepsilon<|y_2|\le \varepsilon+\delta\}}
\bigl(1+Q^\beta_{T-s}(y_2)\bigr)
\left(
\int_{E_\varepsilon}
P^\beta_{T/2}(x,y_1)\,
P^\beta_{s-T/2}(y_1,y_2)\,dy_1
\right)
dy_2
\end{align*}
The second equality uses~\eqref{FirstTrans3d}, the inequality uses $1+Q_t^\beta(x)\ge1$, and since the integrand is nonnegative, Tonelli's theorem yields the final equality. Next, since the integrand in the inner integral is nonnegative, enlarging the
domain of integration from $E_\varepsilon$ to $\R^3$ gives
\begin{align*}
\int_{E_\varepsilon}
P^\beta_{T/2}(x,y_1)\,
P^\beta_{s-T/2}(y_1,y_2)\,dy_1
\le
\int_{\R^3}
P^\beta_{T/2}(x,y_1)\,
P^\beta_{s-T/2}(y_1,y_2)\,dy_1 
=
P_s^\beta(x,y_2),
\end{align*}
where the last equality follows from the semigroup property of
$P^\beta$. Substituting this into the previous bound and renaming $y_2$ as
$y$, we obtain
\begin{align*}
\widehat{\mathsf K}^{T,\beta,\varepsilon,\pi_1[0,s]}_{[0,s]}(x,A_\delta)
&\le
\int_{E_\varepsilon}
\mathbf 1_{\{\varepsilon<|y|\le \varepsilon+\delta\}}
\bigl(1+Q^\beta_{T-s}(y)\bigr)
P^\beta_s(x,y)\,dy.
\end{align*}
This is exactly the same upper bound as in Case~1. Repeating the estimates
from Case~1, we obtain
\begin{align*}
\lim_{\delta\downarrow0}
\sup_{T/2<s\le T}
\widehat{\mathsf K}^{T,\beta,\varepsilon,\pi_1[0,s]}_{[0,s]}(x,A_\delta)
=0.
\end{align*}
Combining Cases~1 and~2, completes the proof.
\end{proof}

\begin{proof}[Proof of Proposition~\ref{PropTightnessOneTimeMarginals}]
Fix $\eta>0$. By Lemma~\ref{LemUniformOneTimeBoundaryApproach}, there
exists $\delta=\delta(\eta)>0$ such that
\begin{align*}
\sup_{m\ge1}
\mathbb P_{x}^{T,\beta,\varepsilon,(m)}
\bigl(\varepsilon<|Y_t|\le \varepsilon+\delta\bigr)
<
\frac{\eta}{2}.
\end{align*}
Next, by Lemma~\ref{LemUniformOneTimeTailInfinity}, there exists
$R=R(\eta)<\infty$ such that
\begin{align*}
\sup_{m\ge1}
\mathbb P_{x}^{T,\beta,\varepsilon,(m)}
\bigl(|Y_t|>R\bigr)
<
\frac{\eta}{2}.
\end{align*}

Define $K_{\delta,R} := \big\{ y\in E_\varepsilon:\ \varepsilon+\delta\le |y|\le R \big\} \cup \{\Delta\}$. We claim that $K_{\delta,R}$ is compact in $\overline E_\varepsilon$.
Indeed, the set $\big\{
y\in E_\varepsilon:\ \varepsilon+\delta\le |y|\le R
\big\}$ is closed and bounded in $\mathbb R^3$, hence compact in the Euclidean
topology. Since on subsets bounded away from the boundary
$\{|y|=\varepsilon\}$ the topology induced by $\rho$ agrees with the Euclidean
topology, this set is also compact as a subset of $\overline E_\varepsilon$.
Moreover, $\{\Delta\}$ is compact, so $K_{\delta,R}$ is compact.

Next, observe that $\overline E_\varepsilon\setminus K_{\delta,R} = \big\{ y\in E_\varepsilon:\ \varepsilon<|y|<\varepsilon+\delta
\big\} \cup \big\{ y\in E_\varepsilon:\ |y|>R
\big\}$. Hence, for every $m\ge1$,
\begin{align*}
\mathbb P_{x}^{T,\beta,\varepsilon,(m)}
\bigl(Y_t\notin K_{\delta,R}\bigr)
&\le
\mathbb P_{x}^{T,\beta,\varepsilon,(m)}
\bigl(\varepsilon<|Y_t|<\varepsilon+\delta\bigr)
+
\mathbb P_{x}^{T,\beta,\varepsilon,(m)}
\bigl(|Y_t|>R\bigr) \\
&\le
\mathbb P_{x}^{T,\beta,\varepsilon,(m)}
\bigl(\varepsilon<|Y_t|\le \varepsilon+\delta\bigr)
+
\mathbb P_{x}^{T,\beta,\varepsilon,(m)}
\bigl(|Y_t|>R\bigr).
\end{align*}
Taking the supremum over $m\ge1$ and using the two bounds above, we obtain
\begin{align*}
\sup_{m\ge1}
\mathbb P_{x}^{T,\beta,\varepsilon,(m)}
\bigl(Y_t\notin K_{\delta,R}\bigr)
<
\eta.
\end{align*}

Since $\eta>0$ was arbitrary, it follows that for every $\eta>0$ there exists a
compact set $K_{\delta(\eta),R(\eta)}\subset\overline E_\varepsilon$ such that the above holds. Equivalently, the family of probability measures $\big\{ \mathbb P_{x}^{T,\beta,\varepsilon,(m)}\circ Y_t^{-1} \big\}_{m\ge1}$ is tight on $\overline E_\varepsilon$.
\end{proof}

\section{Special functions}\label{SpecialFunctions}
For $x\in\mathbb{R}$, recall that the error function and its complementary function are defined, respectively, by
\begin{align} \label{DefErrorFunction}
\erf(x)
\,:=\,
\frac{2}{\sqrt{\pi}}\int_{0}^{x} e^{-u^{2}}\,du,
\qquad
\erfc(x)
\,:=\,
1-\erf(x)
\,=\,
\frac{2}{\sqrt{\pi}}\int_{x}^{\infty} e^{-u^{2}}\,du \, .
\end{align}
The next three subsections collect analytic properties of the
functions associated with the three-dimensional attractive
point interaction heat kernel. In
Section~\ref{SubsectPFunct}, we establish integral identities and
basic estimates for the kernel $P_t^\beta(x,y)$. In
Section~\ref{SubsectQFunct}, we derive explicit representations and
quantitative bounds for the function $Q_t^\beta(x)$. Finally, in
Section~\ref{SubsectDoobpFunct}, we study the corresponding
Doob-transformed transition densities
$p_{s,t}^{T,\beta}(x,y)$ and prove relevant quantitative estimates.

\subsection{The function \texorpdfstring{$\boldsymbol{P_{ t}^{\beta}(x,y)}$}{Lg}}\label{SubsectPFunct}

Recall the three-dimensional attractive point interaction heat kernel \(P_t^\beta(x,y)\) defined in~\eqref{DefPointKer3dBeta}, which is strictly positive and symmetric in \((x,y)\). The map \((t,x,y)\mapsto P_t^\beta(x,y)\) is jointly continuous on \((0,\infty)\times(\R^3\setminus\{0\})\times(\R^3\setminus\{0\})\), and for each fixed \(y\neq0\), the function \(x\mapsto P_t^\beta(x,y)\) solves the heat equation away from the origin,
\[
\partial_t P_t^\beta(x,y)
=
\frac{1}{2}\,\Delta_x P_t^\beta(x,y),
\qquad x\in\R^3\setminus\{0\},\; t>0,
\]
with the singular interaction encoded at \(x=0\); see~\cite[Cor.~7]{Fleischmann}. Moreover, for each \(\beta>0\) and each \(T>0\), by~\cite[Lem.~8]{Fleischmann} there exists a constant \(C=C(\beta,T)>0\) such that for all \(0<t\le T\) and all \(x,y\in\R^3\setminus\{0\}\),
\begin{align}\label{PtBetaBounds}
P_t(x,y)
\;\le\;
P_t^\beta(x,y)
\;\le\;
P_t(x,y)
+
C\,t^{-1/2}\,
\frac{e^{-\frac{|x|^2}{4t}}}{|x|}
\frac{e^{-\frac{|y|^2}{4t}}}{|y|}\,,
\end{align}
where \(P_t(x,y)\) denotes the three-dimensional free heat kernel defined in~\eqref{DefFreeHeat3d}.

\begin{lemma}\label{GaussianIdentity}
Fix $t>0$, and $x\in\R^3\setminus\{0\}$. Then,
\begin{align*}
\int_{\R^3}\frac{1}{|y|}\,P_t(|x|+|y|)\,dy
\, = \,
\frac{1}{\sqrt{\pi t}}\,e^{-\frac{|x|^2}{4t}}
\, - \,
\frac{|x|}{2t} \,\erfc\!\Big(\frac{|x|}{2\sqrt t}\Big)\,.
\end{align*}

\end{lemma}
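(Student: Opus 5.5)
Since the integrand depends on $y$ only through $|y|$, we pass to spherical coordinates in $y$. Set $a:=|x|>0$ and recall from~\eqref{DefFreeHeat3d} that $P_t(r)=(4\pi t)^{-3/2}e^{-r^2/(4t)}$. The integral then becomes a one-dimensional one:
\[
\int_{\R^3}\frac{1}{|y|}\,P_t(|x|+|y|)\,dy
\,=\,
4\pi\int_0^\infty r\,P_t(a+r)\,dr
\,=\,
\frac{4\pi}{(4\pi t)^{3/2}}\int_0^\infty r\,e^{-\frac{(a+r)^2}{4t}}\,dr .
\]
Substitute $u=a+r$ and write $r=u-a$. This splits the integral into two pieces, $\int_a^\infty u\,e^{-u^2/(4t)}\,du$ and $-a\int_a^\infty e^{-u^2/(4t)}\,du$. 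Both are elementary, and each piece is finite, which justifies the splitting. The first piece is an exact derivative:
\[
\int_a^\infty u\,e^{-u^2/(4t)}\,du=2t\,e^{-a^2/(4t)} .
\]
For the second piece, set $u=2\sqrt t\,v$ and use~\eqref{DefErrorFunction}:
\[
\int_a^\infty e^{-u^2/(4t)}\,du=2\sqrt t\int_{a/(2\sqrt t)}^\infty e^{-v^2}\,dv=\sqrt{\pi t}\,\erfc\!\Big(\frac{a}{2\sqrt t}\Big).
\]

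It remains to collect the constants. The prefactor is $4\pi(4\pi t)^{-3/2}=\big(2\pi^{1/2}t^{3/2}\big)^{-1}$. Multiplying it by $2t\,e^{-a^2/(4t)}$ gives $(\pi t)^{-1/2}e^{-a^2/(4t)}$. Multiplying it by $-a\sqrt{\pi t}\,\erfc(\cdot)$ gives $-\frac{a}{2t}\erfc\big(\frac{a}{2\sqrt t}\big)$. This is exactly the claimed right-hand side. The only step needing care is this constant bookkeeping, in particular the factor $4\pi$ from the spherical surface measure against $(4\pi t)^{3/2}$. There is no genuine analytic obstacle, since all integrands are nonnegative or absolutely integrable and Tonelli applies directly to the polar-coordinate change.
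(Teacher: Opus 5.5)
Your proposal is correct and follows essentially the same route as the paper. Both pass to spherical coordinates to get $\frac{1}{2t\sqrt{\pi t}}\int_0^\infty r\,e^{-(|x|+r)^2/(4t)}\,dr$, then substitute $u=|x|+r$ and evaluate the resulting elementary integral and the $\erfc$ integral, with the same constants.
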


\begin{proof}
Using the definition of $P_t(r)$ from~\eqref{DefFreeHeat3d}, we get the first equality below.
\begin{align*}
\int_{\R^3}\frac{1}{|y|}\,P_t(|x|+|y|)\,dy
=
\frac{1}{(4\pi t)^{\frac{3}{2}}}\, \int_{\R^3}\frac{1}{|y|}
e^{-\frac{(|x|+|y|)^2}{4t}}\,dy 
=
\frac{1}{2t \sqrt{\pi t}} \, \int_0^\infty r\,e^{-\frac{(|x|+r)^2}{4t}}\,dr
\end{align*}
The second equality uses spherical coordinates in $y$ (so $r=|y|$ and $dy=4\pi r^2dr$). Using the change of variables $s=|x|+r$, we compute
\begin{align*}
\int_0^\infty r\,e^{-\frac{(|x|+r)^2}{4t}}\,dr
\, = \, \int_{|x|}^\infty s\,e^{-\frac{s^2}{4t}}\,ds
-|x|\int_{|x|}^\infty e^{-\frac{s^2}{4t}}\,ds
 \, = \, 2t\,e^{-\frac{|x|^2}{4t}}
-|x|\sqrt{\pi t}\,\erfc\!\Big(\frac{|x|}{2\sqrt t}\Big),
\end{align*}
where the last equality uses the definition of $\erfc$ in~\eqref{DefErrorFunction}. Substituting this back into the last display completes the proof.
\end{proof}

\begin{lemma}
Let $t, \beta>0$ and $x \in \R^3 \setminus \{0\}$. Then,
\begin{align}
\int_0^\infty e^{4\pi\beta u}\,e^{-\frac{(|x|+u)^2}{4t}}\,du
\,=\,&
\sqrt{\pi t}\,e^{\,-4\pi\beta |x|+16\pi^2\beta^2 t}\,
\erfc\!\Big(\frac{|x|}{2\sqrt t}-4\pi\beta\sqrt t\Big)\, , \label{EqLaplaceGaussian} \\
\int_0^\infty e^{4\pi\beta u}\,u\,e^{-\frac{(|x|+u)^2}{4t}}\,du
\,=\,&
\sqrt{\pi t}\,(8\pi\beta t-|x|)\,e^{-4\pi\beta|x|+16\pi^2\beta^2t}\,
\erfc\!\Big(\frac{|x|}{2\sqrt t}-4\pi\beta \sqrt{t}\Big)
\,+\,
2t\,e^{-\frac{|x|^2}{4t}}\, . \label{EqLaplaceGaussian2}
\end{align}
\end{lemma}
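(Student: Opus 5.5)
Both identities follow by completing the square in the exponent and then reducing to the definition of $\erfc$ in~\eqref{DefErrorFunction}. Write $a:=|x|>0$ and $b:=4\pi\beta>0$. The exponent of the integrand in~\eqref{EqLaplaceGaussian} is
\[
bu-\frac{(a+u)^2}{4t}
\,=\,
-\frac{(a+u-2bt)^2}{4t}\,-\,ab\,+\,b^2t ,
\]
which one checks by expanding $(a+u-2bt)^2=(a+u)^2-4bt(a+u)+4b^2t^2$. The factor $e^{-ab+b^2t}=e^{-4\pi\beta|x|+16\pi^2\beta^2t}$ comes out of the integral. Substitute $v=(a+u-2bt)/(2\sqrt t)$, so that $du=2\sqrt t\,dv$ and the lower limit becomes $v_0:=\frac{a}{2\sqrt t}-b\sqrt t=\frac{|x|}{2\sqrt t}-4\pi\beta\sqrt t$. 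This gives
\[
2\sqrt t\int_{v_0}^\infty e^{-v^2}\,dv=\sqrt{\pi t}\,\erfc(v_0),
\]
which is~\eqref{EqLaplaceGaussian}. Convergence is clear because the Gaussian factor dominates the exponential growth $e^{bu}$. Note also that $v_0$ may be negative, but $\erfc$ is defined on all of $\R$, so no sign case split is needed.

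For~\eqref{EqLaplaceGaussian2}, use the same completed square and write
\[
u=(a+u-2bt)+(2bt-a).
\]
The term $(2bt-a)$ multiplies exactly the integral just computed. It contributes $\sqrt{\pi t}\,(8\pi\beta t-|x|)\,e^{-ab+b^2t}\erfc(v_0)$.

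The remaining term is $e^{-ab+b^2t}\int_0^\infty (a+u-2bt)\,e^{-(a+u-2bt)^2/(4t)}\,du$. This is an exact derivative:
\[
\int_0^\infty (a+u-2bt)\,e^{-\frac{(a+u-2bt)^2}{4t}}\,du
\,=\,
\Big[-2t\,e^{-\frac{(a+u-2bt)^2}{4t}}\Big]_0^\infty
\,=\,
2t\,e^{-\frac{(a-2bt)^2}{4t}} .
\]
Multiplying by the prefactor and expanding $(a-2bt)^2/(4t)=a^2/(4t)-ab+b^2t$, the factors $e^{\pm(ab-b^2t)}$ cancel. This leaves $2t\,e^{-|x|^2/(4t)}$, the second summand in~\eqref{EqLaplaceGaussian2}.

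There is no real obstacle; the step most prone to error is sign bookkeeping. Specifically, one must get the shift $2bt$ and the constant $-ab+b^2t$ correct when completing the square, and confirm the exact cancellation of the exponential prefactors in the boundary term. I would verify both by direct expansion, as above.
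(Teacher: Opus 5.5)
Your proof is correct and follows the paper's argument: you complete the square as in \eqref{ExpPowerIdentity}, substitute to reach $\erfc$, and for \eqref{EqLaplaceGaussian2} split $u=(|x|+u-8\pi\beta t)+(8\pi\beta t-|x|)$ and integrate the first piece exactly. The only cosmetic difference is that the paper makes this split after changing to the variable $z=\frac{u+|x|-8\pi\beta t}{2\sqrt t}$, whereas you integrate the exact derivative directly in $u$; the boundary term and the cancellation $-\frac{(|x|-8\pi\beta t)^2}{4t}-4\pi\beta|x|+16\pi^2\beta^2t=-\frac{|x|^2}{4t}$ are the same.
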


\begin{proof}
First observe the following
\begin{align}\label{ExpPowerIdentity}
4\pi\beta u-\frac{(|x|+u)^2}{4t}
\,= \,&
-\frac1{4t}\Big(u^2+(2|x|-16\pi\beta t)u+|x|^2\Big) \nonumber \\
\,= \,&
-\frac1{4t}\Big(\big(u+|x|-8\pi\beta t\big)^2-\big(|x|-8\pi\beta t\big)^2+|x|^2\Big) \nonumber \\
\,= \,&
-\frac{(u+|x|-8\pi\beta t)^2}{4t}
 \,- \, 4\pi\beta |x|+16\pi^2\beta^2 t\,,
\end{align}
where the second equality uses $u^2+(2|x|-16\pi\beta t)u
= \big(u+|x|-8\pi\beta t\big)^2-\big(|x|-8\pi\beta t\big)^2$. Therefore, we have the first equality below.
\begin{align}
\int_0^\infty e^{4\pi\beta u}\,e^{-\frac{(|x|+u)^2}{4t}}\,du
\,= \,&
e^{-4\pi\beta |x|+16\pi^2\beta^2 t}\,
\int_0^\infty \, e^{-\frac{(u+|x|-8 \pi \beta t)^2}{4t}}\,du \nonumber \\
\,= \,&
2\sqrt{t}\, e^{-4\pi\beta |x|+16\pi^2\beta^2 t}\,
\int_{\frac{|x|}{2\sqrt t} - 4\pi\beta\sqrt t}^\infty \, e^{-w^2}\,dw \nonumber
\end{align}
The second equality uses the following change of variables $w =  \frac{u+|x|-8\pi\beta t}{2\sqrt t}$. Thus, using the definition of $\erfc$ in~\eqref{DefErrorFunction} we obtain~\eqref{EqLaplaceGaussian}. \vspace{.3cm}

For~\eqref{EqLaplaceGaussian2} we use~\eqref{ExpPowerIdentity} to obtain the equality below.
\begin{align*}
\int_0^\infty e^{4\pi\beta u}\,u\,e^{-\frac{(|x|+u)^2}{4t}}\,du 
\,=\,&
e^{-4\pi\beta|x|+16\pi^2\beta^2t}\,
\int_0^\infty u\,e^{-\frac{(u+|x|-8\pi\beta t)^2}{4t}}\,du \, 
\end{align*}

\noindent Next, with the change of variables $z= \frac{u+|x|-8\pi\beta t}{2\sqrt t}$ so that
$u=2\sqrt t\,z-|x|+8\pi\beta t$ and $du=2\sqrt t\,dz$, we obtain the first equality below.
\begin{align*}
\int_0^\infty u\,e^{-\frac{(u+|x|-8\pi\beta t)^2}{4t}}\,du
&=
2\sqrt t\,
\int_{\frac{|x|}{2\sqrt t}-4\pi\beta \sqrt{t}}^\infty\, 
\big(2\sqrt t\,z-|x|+8\pi\beta t\big)\,e^{-z^2}\,dz \\
&=
2\sqrt t\,(8\pi\beta t-|x|)
\int_{\frac{|x|}{2\sqrt t}-4\pi\beta \sqrt{t}}^\infty \, e^{-z^2}\,dz
\,+\,
4t\int_{\frac{|x|}{2\sqrt t}-4\pi\beta \sqrt{t}}^\infty \,z\,e^{-z^2}\,dz \\
&=
\sqrt{\pi t}\,(8\pi\beta t-|x|)\,
\erfc\!\Big(\frac{|x|}{2\sqrt t}-4\pi\beta \sqrt{t}\Big)
\,+\,
2t\,e^{-\frac{(|x|-8\pi\beta t)^2}{4t}}\,
\end{align*}
The last equality uses~\eqref{DefErrorFunction}. Substituting this into the previous display and using the elementary identity $-4\pi\beta|x|+16\pi^2\beta^2 t -\frac{(|x|-8\pi\beta t)^2}{4t} = -\frac{|x|^2}{4t}$ yields~\eqref{EqLaplaceGaussian2}.
\end{proof}

\begin{lemma}\label{LemIntPyPoint3d}
Fix $t,\beta>0$, and $x\in\R^3\setminus\{0\}$. For the function $P_t^\beta(x,y)$ defined in~\eqref{DefPointKer3dBeta},
\begin{align}
\int_{\R^3} P_t^\beta(x,y)\,dy
\,=\,
1
\,+\,
\frac{1}{4\pi\beta\,|x|}\bigg[
e^{\,-4\pi\beta |x|+16\pi^2\beta^2 t}\,
\erfc\!\Big(\frac{|x|}{2\sqrt t}-4\pi\beta\sqrt t\Big)
\,-\,
\erfc\!\Big(\frac{|x|}{2\sqrt t}\Big)
\bigg]\,.
\end{align}

\end{lemma}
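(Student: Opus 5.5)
We integrate the three terms of~\eqref{DefPointKer3dBeta} separately in $y$ over $\R^3$. All integrands are nonnegative, so Tonelli's theorem justifies each interchange of integrals below. The first term is the free heat kernel, and $\int_{\R^3}P_t(x,y)\,dy=1$ gives the leading constant $1$. For the second term, Lemma~\ref{GaussianIdentity} gives directly
\[
\int_{\R^3}\frac{2t}{|x||y|}P_t(|x|+|y|)\,dy=\frac{2t}{|x|}\Big[\frac{1}{\sqrt{\pi t}}e^{-\frac{|x|^2}{4t}}-\frac{|x|}{2t}\erfc\!\Big(\frac{|x|}{2\sqrt t}\Big)\Big].
\]

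For the third term we first use Tonelli to integrate in $y$ before $u$. Then Lemma~\ref{GaussianIdentity} is applied with $|x|$ replaced by $|x|+u$; this is legitimate because the lemma only depends on the radius. The result is
\[
\frac{8\pi\beta t}{|x|}\int_0^\infty e^{4\pi\beta u}\Big[\frac{1}{\sqrt{\pi t}}e^{-\frac{(|x|+u)^2}{4t}}-\frac{|x|+u}{2t}\erfc\!\Big(\frac{|x|+u}{2\sqrt t}\Big)\Big]du .
\]
The Gaussian part is handled by~\eqref{EqLaplaceGaussian}. The $\erfc$ part is not covered by the lemmas as stated, so we rewrite it. First, $\frac{|x|+u}{2t}\erfc\!\big(\frac{|x|+u}{2\sqrt t}\big)=\frac{1}{\sqrt{\pi t}}\cdot\frac{|x|+u}{t}\int_{|x|+u}^\infty e^{-s^2/(4t)}\,\frac{ds}{2}\cdot 2$. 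Equivalently, and more cleanly, we go back to the representation before the $\erfc$ appears: $\int_{\R^3}|y|^{-1}P_t(|x|+u+|y|)\,dy=\frac{1}{2t\sqrt{\pi t}}\int_0^\infty r e^{-(|x|+u+r)^2/4t}\,dr$. The $u$-integral then becomes a double integral over $(u,r)\in(0,\infty)^2$. We substitute $w=u+r$, using that the measure $r\,du\,dr$ pushes forward to $\big(\int_0^w e^{4\pi\beta(w-r)}r\,dr\big)\,dw$. The inner integral is elementary:
\[
\int_0^w e^{4\pi\beta(w-r)}r\,dr=\frac{e^{4\pi\beta w}-1-4\pi\beta w}{(4\pi\beta)^2}.
\]
This reduces everything to one-dimensional Gaussian integrals of the forms $\int_0^\infty e^{4\pi\beta w}e^{-(|x|+w)^2/4t}dw$, $\int_0^\infty e^{-(|x|+w)^2/4t}dw$ and $\int_0^\infty w\,e^{-(|x|+w)^2/4t}dw$. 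These are given by~\eqref{EqLaplaceGaussian} (once at $\beta$ and once at $\beta=0$) and by an elementary $\erfc$ computation (the $\beta=0$ case of~\eqref{EqLaplaceGaussian2}). This route also matches the stated second form of~\eqref{DefH3d}, $\frac{2t}{\beta|x|}\int_0^\infty(e^{4\pi\beta w}-1)P_t(|x|+w)\,dw$, which is a useful consistency check.

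Finally we collect terms. The coefficient of $e^{-|x|^2/4t}$ must cancel: the second kernel term contributes $+\frac{2\sqrt t}{\sqrt\pi|x|}e^{-|x|^2/4t}$, and the $-4\pi\beta w$ and $-1$ pieces from the third term contribute the opposite amount. Likewise the bare $\erfc(|x|/2\sqrt t)$ contributions should combine to $-\frac{1}{4\pi\beta|x|}\erfc\big(\frac{|x|}{2\sqrt t}\big)$, and the $e^{4\pi\beta w}$ piece yields, via~\eqref{EqLaplaceGaussian}, $\frac{1}{4\pi\beta|x|}e^{-4\pi\beta|x|+16\pi^2\beta^2t}\erfc\big(\frac{|x|}{2\sqrt t}-4\pi\beta\sqrt t\big)$. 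The main obstacle is this bookkeeping of constants: the prefactors $(4\pi t)^{-3/2}$, $4\pi$ from spherical coordinates, $8\pi\beta t$, and $(4\pi\beta)^{-2}$ must combine exactly so that the Gaussian terms cancel and the $\erfc(|x|/2\sqrt t)$ coefficient comes out as $-1/(4\pi\beta|x|)$. We will verify this by tracking each coefficient separately, and as a sanity check confirm that the total is $1+O(t)$ as $t\downarrow0$.
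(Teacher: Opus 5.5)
Your proposal is correct, but at the one nontrivial step it takes a different and noticeably shorter route than the paper.

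The paper applies Lemma~\ref{GaussianIdentity} inside the $u$-integral, which leaves $\int_0^\infty e^{4\pi\beta u}(|x|+u)\erfc\big(\frac{|x|+u}{2\sqrt t}\big)\,du$. It then needs four further steps:
\begin{enumerate}
\item integrate by parts against an antiderivative of $(|x|+u)e^{4\pi\beta u}$;
\item dispose of the boundary term at infinity via the asymptotics of $\erfc$;
\item evaluate the remaining $\int_0^\infty e^{4\pi\beta u}(|x|+u)e^{-(|x|+u)^2/4t}\,du$ by differentiating \eqref{EqLaplaceGaussian} in $|x|$;
\item collect a fair number of terms.
\end{enumerate}
You instead stay with the radial integral and use the convolution substitution $w=u+r$. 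All of the above is then replaced by the elementary kernel $\int_0^w e^{4\pi\beta(w-r)}r\,dr=(4\pi\beta)^{-2}\big(e^{4\pi\beta w}-1-4\pi\beta w\big)$, with no boundary terms to control.

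The two routes also reach the two representations of $Q_t^\beta$ in opposite orders. Yours passes through \eqref{DefH3d2nd} and reads off \eqref{DefH3dIst} from it. In \eqref{DefH3d2nd} the sign $Q_t^\beta\ge0$ is manifest, and the paper needs that form later for Lemma~\ref{LemmaHUpDown}. The paper proves the closed form first and derives \eqref{DefH3d2nd} afterwards in Lemma~\ref{LemmaDefH3d2nd}.

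The bookkeeping you flag as the main obstacle disappears if you also leave the second kernel term in radial form, $\frac{1}{|x|\sqrt{\pi t}}\int_0^\infty w\,e^{-(|x|+w)^2/4t}\,dw$. It cancels the $-4\pi\beta w$ piece of the third term identically under the integral. What remains is exactly $\frac{1}{4\pi\beta|x|\sqrt{\pi t}}\int_0^\infty\big(e^{4\pi\beta w}-1\big)e^{-(|x|+w)^2/4t}\,dw$. Then \eqref{EqLaplaceGaussian}, together with $\int_0^\infty e^{-(|x|+w)^2/4t}\,dw=\sqrt{\pi t}\,\erfc\big(\frac{|x|}{2\sqrt t}\big)$, finishes the proof.

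In your version as written, only the $-4\pi\beta w$ piece produces an $e^{-|x|^2/4t}$ term. It cancels the second term completely, both its Gaussian and its $\erfc$ part. The $-1$ piece contributes only $-\frac{1}{4\pi\beta|x|}\erfc\big(\frac{|x|}{2\sqrt t}\big)$.

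Two small cleanups. Delete the abandoned rewriting of $\frac{|x|+u}{2t}\erfc(\cdot)$, which as written is off by a factor of $2$. Also, the $\beta=0$ instances you cite fall outside the stated hypothesis $\beta>0$ of those lemmas, though each is a one-line Gaussian integral.
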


\begin{proof}
Since $P_t(x,y)$ is the three-dimensional Gaussian heat kernel, integration over $y \in \R^3$ is one in the first term of~\eqref{DefPointKer3dBeta}.
\begin{align}\label{Eq:DecomposeInt}
\int_{\R^3}P_t^\beta(x,y)\,dy
\,=\,
1 \,+\, \frac{2t}{|x|}\, \int_{\R^3}\, \frac{1}{|y|}\,P_t(|x|+|y|)\,dy
\,+\, \frac{8\pi\beta t}{|x|}\, \int_{\R^3}\, \frac{1}{|y|} \, \int_0^\infty e^{4\pi\beta u}\,P_t(u+|x|+|y|)\,du\,dy\,
\end{align}
Using Lemma~\ref{GaussianIdentity}, we get the following.
\begin{align}\label{Eq:I3Split1}
\frac{2t}{|x|}\, \int_{\R^3}\, \frac{1}{|y|}\,P_t(|x|+|y|)\,dy
\,=\,
\frac{2\sqrt t}{\sqrt\pi\,|x|}\,e^{-\frac{|x|^2}{4t}}
-\erfc\!\Big(\frac{|x|}{2\sqrt t}\Big)
\end{align}
Again using Lemma~\ref{GaussianIdentity} with $|x|$ replaced by $|x|+u$, we obtain the second equality below.
\begin{align}\label{Eq:I3Split}
\int_{\R^3}\, \frac{1}{|y|} \, \int_0^\infty e^{4\pi\beta u}\,& P_t(u+|x|+|y|)\,du\,dy \nonumber \\
&=\int_0^\infty e^{4\pi\beta u}
\Bigg(\int_{\R^3}\frac{1}{|y|}P_t(u+|x|+|y|)\,dy\Bigg)\,du \nonumber \\
&=
\frac{1}{\sqrt{\pi t}}\,
\int_0^\infty e^{4\pi\beta u}\,e^{-\frac{(|x|+u)^2}{4t}}\,du
\,-\,
\frac{1}{2t}
\int_0^\infty e^{4\pi\beta u}\,(|x|+u)\,
\erfc\!\Big(\frac{|x|+u}{2\sqrt t}\Big)\,du\,  
\end{align}
For the second integral in~\eqref{Eq:I3Split}, observe that $(|x|+u)e^{4\pi\beta u}=\frac{d}{du}\Big(\frac{|x|+u}{4\pi\beta}e^{4\pi\beta u}-\frac{1}{(4\pi\beta)^2}e^{4\pi\beta u}\Big)$, therefore we have the first equality below.
\begin{align}
\int_0^\infty \, e^{4\pi\beta u}\,(|x|+u)\,&
\erfc\!\Big(\frac{|x|+u}{2\sqrt t}\Big)\,du\, \nonumber \\
\,=\,&
\int_0^\infty 
\erfc\!\Big(\frac{|x|+u}{2\sqrt t}\Big)\,d\Big(\frac{|x|+u}{4\pi\beta}e^{4\pi\beta u} \, -\, \frac{1}{(4\pi\beta)^2}e^{4\pi\beta u}\Big)\, \nonumber \\
\,=\,&
\erfc\!\Big(\frac{|x|+u}{2\sqrt t}\Big)\,\Big(\frac{|x|+u}{4\pi\beta}e^{4\pi\beta u} \, -\, \frac{1}{(4\pi\beta)^2}e^{4\pi\beta u}\Big)\bigg|_{u=0}^{u=\infty}
\nonumber \\
\,+\,&
\frac{1}{\sqrt{\pi t}}
\int_0^\infty \,
e^{-\frac{(|x|+u)^2}{4t}}\,
\Big(\frac{|x|+u}{4\pi\beta}e^{4\pi\beta u} \, -\, \frac{1}{(4\pi\beta)^2}e^{4\pi\beta u}\Big)\, du \nonumber \\
\,=\,&
\Big(-\frac{|x|}{4\pi \beta}+\frac{1}{(4\pi\beta)^2}\Big)\erfc\!\Big(\frac{|x|}{2\sqrt t}\Big)
\, +\,
\frac{1}{4\pi \beta\sqrt{\pi t}}\, \int_0^\infty \,
e^{4\pi\beta u} \, (|x|+u)\,e^{-\frac{(|x|+u)^2}{4t}}\, du \nonumber \\
\, -\,&
\frac{ 1 }{(4 \pi \beta)^2 \sqrt{\pi t}}\, \int_0^\infty \,e^{4\pi\beta u}\, 
e^{-\frac{(|x|+u)^2}{4t}}\, du \nonumber
\end{align}
The second equality uses integration by parts and the fact that $\frac{d}{du}\erfc\!\big(\frac{|x|+u}{2\sqrt t}\big)= -\frac{1}{\sqrt{\pi t}}\,e^{-\frac{(|x|+u)^2}{4t}}$. The simplification in the third equality uses the standard asymptotic $\erfc(z)\sim \frac{1}{\sqrt{\pi}\,z}e^{-z^{2}}$ as $z\to\infty$, so that $z e^{z}\erfc(z)\sim \frac{1}{\sqrt{\pi}}e^{\,z-z^{2}}\to0$ as $z\to\infty$. Substituting the above in~\eqref{Eq:I3Split} yields the following.
\begin{align}\label{Eq:I3Split2}
\int_{\R^3}\, \frac{1}{|y|} \, &\int_0^\infty e^{4\pi\beta u}\, P_t(u+|x|+|y|)\,du\,dy \nonumber \\
\,=\,&
\frac{1}{\sqrt{\pi t}}\, \Big(1\, + \, \frac{ 1 }{2t(4 \pi \beta)^2 }\Big)
\int_0^\infty e^{4\pi\beta u}\,e^{-\frac{(|x|+u)^2}{4t}}\,du 
\,+\,\frac{4\pi \beta|x|-1}{2 t (4\pi \beta )^2}\,\erfc\!\Big(\frac{|x|}{2\sqrt t}\Big) \nonumber \\
\,-\,&
\frac{1}{8 \pi \beta t \sqrt{ \pi t}}
\int_0^\infty e^{4\pi\beta u}(|x|+u)\,e^{-\frac{(|x|+u)^2}{4t}}\,du\,\nonumber \\
\,=\,&
\Big(1\, + \, \frac{ 1 }{2t(4 \pi \beta)^2 }\Big)
\, e^{-4\pi\beta |x|+16\pi^2\beta^2 t}\,
\erfc\!\Big(\frac{|x|}{2\sqrt t}-4\pi\beta\sqrt t\Big) 
\,+\,\frac{4\pi \beta|x|-1}{2 t (4\pi \beta )^2}\,\erfc\!\Big(\frac{|x|}{2\sqrt t}\Big) \nonumber \\
\,-\,&
e^{-4\pi\beta |x|+16\pi^2\beta^2 t}\,
\erfc\!\Big(\frac{|x|}{2\sqrt t}-4\pi\beta\sqrt t\Big)
\,-\,
\frac{1}{4\pi \beta \sqrt{\pi t}}\,e^{-\frac{|x|^2}{4t}}\, \nonumber \\
\,=\,&
\frac{ 1 }{2t(4 \pi \beta)^2 }\,
e^{-4\pi\beta |x|+16\pi^2\beta^2 t}\,
\erfc\!\Big(\frac{|x|}{2\sqrt t}-4\pi\beta\sqrt t\Big) 
\,+\,\frac{4\pi \beta|x|-1}{2 t (4\pi \beta )^2}\,\erfc\!\Big(\frac{|x|}{2\sqrt t}\Big)
\,-\,
\frac{1}{4\pi \beta \sqrt{\pi t}}\,e^{-\frac{|x|^2}{4t}}\,
\end{align}
The second equality uses~\eqref{EqLaplaceGaussian} for the first term. The third and fourth terms in the second equality above follow from the following
\begin{align}
\int_0^\infty e^{4\pi\beta u}(|x|+u)\,e^{-\frac{(|x|+u)^2}{4t}}\,du
\,=\,&
-2t\,\frac{d}{d|x|}
\int_0^\infty e^{4\pi\beta u}\,e^{-\frac{(|x|+u)^2}{4t}}\,du \nonumber \\
\,=\,&
-2t\,\sqrt{\pi t}\, \frac{d}{d|x|}\bigg(
e^{-4\pi\beta |x|+16\pi^2\beta^2 t}\,
\erfc\!\Big(\frac{|x|}{2\sqrt t}-4\pi\beta\sqrt t\Big)
\bigg) \nonumber \\
\,=\,&
8\pi\beta t\, \sqrt{\pi t}\, e^{-4\pi\beta |x|+16\pi^2\beta^2 t}\,
\erfc\!\Big(\frac{|x|}{2\sqrt t}-4\pi\beta\sqrt t\Big)
\,+\,
2t\,e^{-\frac{|x|^2}{4t}}\, , \nonumber
\end{align}
where the second equality above again uses~\eqref{EqLaplaceGaussian} and the second term in the third equality uses the fact that $\frac{d}{d|x|}\erfc\!\Big(\frac{|x|}{2\sqrt t}-4\pi\beta\sqrt t\Big)
= -\frac{1}{\sqrt{\pi t}}\,
e^{-\big(\frac{|x|}{2\sqrt t}-4\pi\beta\sqrt t\big)^2}$. Plugging in~\eqref{Eq:I3Split1} and~\eqref{Eq:I3Split2} in~\eqref{Eq:DecomposeInt} yields the following
\begin{align}
\int_{\R^3}P_t^\beta(x,y)\,dy
\,=\,&
1 \,+\, \frac{2\sqrt t}{\sqrt\pi\,|x|}\,e^{-\frac{|x|^2}{4t}}
-\erfc\!\Big(\frac{|x|}{2\sqrt t}\Big)  \nonumber \\
\,+\, &
\frac{ 1 }{4 \pi \beta |x| }\,
e^{-4\pi\beta |x|+16\pi^2\beta^2 t}\,
\erfc\!\Big(\frac{|x|}{2\sqrt t}-4\pi\beta\sqrt t\Big) 
\,+\,
\frac{4\pi \beta|x|-1}{4\pi \beta |x|}\,\erfc\!\Big(\frac{|x|}{2\sqrt t}\Big)
\,+\,
\frac{2 \sqrt{t} }{ \sqrt{\pi }|x|}\,e^{-\frac{|x|^2}{4t}}\, \nonumber \\
\,=\,&
1 \,+\, 
\frac{ 1 }{4 \pi \beta |x| }\,
e^{-4\pi\beta |x|+16\pi^2\beta^2 t}\,
\erfc\!\Big(\frac{|x|}{2\sqrt t}-4\pi\beta\sqrt t\Big) 
\,-\,
\frac{1}{4\pi \beta |x|}\,\erfc\!\Big(\frac{|x|}{2\sqrt t}\Big) \,, \nonumber
\end{align}
which is the desired equality.
\end{proof}

\subsection{The function \texorpdfstring{$\boldsymbol{Q_{ t}^{\beta}(x)}$}{Lg}}\label{SubsectQFunct}

Recall that $Q_t^\beta:\R^3 \to [0,\infty]$ is defined by $Q_t^\beta(x):= \int_{\R^3} P_t^\beta(x,y)dy-1$. Using Lemma~\ref{LemIntPyPoint3d}, we may therefore obtain the following explicit form
\begin{align}\label{DefH3dIst}
Q_t^\beta(x)
\, =\, 
\frac{1}{4\pi\beta\,|x|}\bigg[
e^{-4\pi\beta |x|+16\pi^2\beta^2 t}\,
\erfc\!\Big(\frac{|x|}{2\sqrt t}-4\pi\beta\sqrt t\Big)
\, - \, 
\erfc\!\Big(\frac{|x|}{2\sqrt t}\Big)
\bigg]\,,
\end{align}
where, recall that, $\erfc(z)$ is defined in~\eqref{DefErrorFunction}. Thus, the function $Q_t^\beta(x)$ can alternatively be expressed as follows
\begin{align} \label{DefH3d3rd}
Q_t^\beta(x)
\, =\, 
\frac{1}{2 \boldsymbol{\beta} \mathbf{x}}\Big[
e^{-2\boldsymbol{\beta} \mathbf{x}+\boldsymbol{\beta}^2 }\,
\erfc\!(\mathbf{x}-\boldsymbol{\beta})
\, - \, 
\erfc\!( \mathbf{x} )
\Big] \,,
\end{align}
where $\mathbf{x}:=\frac{|x|}{2\sqrt t}$ and $\boldsymbol{\beta}:=4\pi\beta\sqrt t$.

\begin{lemma}\label{LemmaDefH3d2nd}
Fix $t, \beta>0$ and $x\in\R^3\setminus\{0\}$. The function $Q_t^\beta$ admits the representation
\begin{align}\label{DefH3d2nd}
Q_t^\beta(x)
\, = \,
\frac{1}{4\pi\beta\,|x|\,\sqrt{\pi t}}
\int_{0}^{\infty} \,
\big(e^{4\pi\beta w}\,-\, 1\big)\,
e^{-\frac{(|x|+w)^2}{4t}}\,dw\,.
\end{align}
\end{lemma}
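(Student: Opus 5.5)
The plan is to start from the explicit closed form \eqref{DefH3dIst}, which follows from Lemma~\ref{LemIntPyPoint3d}, and to match each of its two terms with a Gaussian integral over $w\in(0,\infty)$. Concretely, I will show that
\[
\int_0^\infty e^{4\pi\beta w}\,e^{-\frac{(|x|+w)^2}{4t}}\,dw
\,=\,
\sqrt{\pi t}\,e^{-4\pi\beta|x|+16\pi^2\beta^2 t}\,\erfc\!\Big(\frac{|x|}{2\sqrt t}-4\pi\beta\sqrt t\Big),
\qquad
\int_0^\infty e^{-\frac{(|x|+w)^2}{4t}}\,dw
\,=\,
\sqrt{\pi t}\,\erfc\!\Big(\frac{|x|}{2\sqrt t}\Big).
\]
Subtracting the second identity from the first and dividing by $4\pi\beta|x|\sqrt{\pi t}$ reproduces \eqref{DefH3dIst} exactly. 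That gives \eqref{DefH3d2nd}.

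The first identity is precisely \eqref{EqLaplaceGaussian}, already proved via the completion of the square \eqref{ExpPowerIdentity}. The second identity is the special case $\beta=0$ of the same computation, or equivalently follows from the substitution $v=(|x|+w)/(2\sqrt t)$ together with the definition \eqref{DefErrorFunction} of $\erfc$. Both integrals are finite for $t>0$, because the Gaussian factor dominates $e^{4\pi\beta w}$. So splitting $\int(e^{4\pi\beta w}-1)\cdots$ into a difference of two integrals is legitimate, and no cancellation issue arises.

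Finally, I will check consistency with the second equality in \eqref{DefH3d}. Since $P_t(|x|+w)=(4\pi t)^{-3/2}e^{-(|x|+w)^2/(4t)}$, we have
\[
\frac{2t}{\beta|x|}\,(4\pi t)^{-3/2}
\,=\,
\frac{1}{4\pi\beta|x|\sqrt{\pi t}}.
\]
Hence \eqref{DefH3d2nd} coincides with the expression stated in the introduction.

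There is no real obstacle here. The only point needing care is the bookkeeping of constants, in particular the factor $\sqrt{\pi t}$ and the prefactor $1/(4\pi\beta|x|)$. I expect that step to be routine.
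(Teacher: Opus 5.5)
Your proposal is correct and takes essentially the same approach as the paper. The paper starts from \eqref{DefH3dIst}, writes both $\erfc$ terms as Gaussian integrals over $w\in(0,\infty)$ via the substitutions $u=(|x|+w-8\pi\beta t)/(2\sqrt t)$ and $u=(|x|+w)/(2\sqrt t)$, and recombines them using the completed-square identity; this is exactly the computation behind \eqref{EqLaplaceGaussian} that you cite, run in the opposite direction. Your constant bookkeeping is also correct, including the match $\frac{2t}{\beta|x|}(4\pi t)^{-3/2}=\frac{1}{4\pi\beta|x|\sqrt{\pi t}}$ with \eqref{DefH3d}.
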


\begin{proof}
Using the definition of $Q_t^\beta$ in~\eqref{DefH3dIst} and $\erfc(z)$ as given in~\eqref{DefErrorFunction}, we obtain the first equality below.
\begin{align}
Q_t^\beta(x)
\, =\, &
\frac{1}{4\pi\beta\,|x|}\bigg[
\frac{2}{\sqrt{\pi}}\,
e^{-4\pi\beta |x|+16\pi^2\beta^2 t}\,
\int_{\frac{|x|}{2\sqrt t}-4\pi\beta\sqrt t}^{\infty} e^{-u^{2}}\,du
\, - \,
\frac{2}{\sqrt{\pi}}\,
\int_{\frac{|x|}{2 \sqrt{t}}}^{\infty} e^{-u^{2}}\,du
\bigg] \nonumber \\
\, =\, &
\frac{1}{4\pi\beta\,|x|}\bigg[
\frac{1}{\sqrt{\pi t}}\,
e^{-4\pi\beta |x|+16\pi^2\beta^2 t}\,
\int_{0}^{\infty}e^{-\frac{(|x|+w-8\pi\beta t)^2}{4t}}\,dw
\, - \,
\frac{1}{\sqrt{\pi t}}\, \int_{0}^{\infty}e^{-\frac{(|x|+w)^2}{4t}}\,dw
\bigg] \nonumber
\end{align}
Here the second equality follows by the changes of variables \(u=\frac{|x|+w-8\pi\beta t}{2\sqrt t}\) in the first integral and \(u=\frac{|x|+w}{2\sqrt t}\) in the second. Observe that
\[
\frac{(|x|+w-8\pi\beta t)^2}{4t}
=
\frac{(|x|+w)^2}{4t}-4\pi\beta(|x|+w)+16\pi^2\beta^2 t\, .
\]
Substituting this into the previous display yields~\eqref{DefH3d2nd}.
\end{proof}

\begin{lemma}\label{LemmaHUpDown}
The function $Q_T^\beta(x)$ satisfies the following bounds for $T,\beta>0$ and $x\in\R^3\setminus\{0\}$.
\begin{enumerate}[(i)]

\item $\displaystyle 1 + Q_T^\beta(x) \ge  \frac{\sqrt{T}}{|x| \sqrt{\pi}} e^{-\frac{|x|^2}{4T}} $

\item Fix $L>0$. There exists a constant $C_L>0$ such that for all $T, \beta>0$ with $\beta\sqrt T\le L$, and all $x\in\R^3\setminus\{0\}$,
\begin{align}\label{QUpperBoundCL}
Q_T^\beta(x)
\;\le\;
C_L\Big(1+\frac{\sqrt T}{|x|}\Big)\,e^{-4\pi\beta|x|}\,.
\end{align}

\item Fix $L>0$. There exists a constant $C_L>0$ such that for all $T, \beta>0$ with $\beta\sqrt T\le L$, and all $x\in\R^3\setminus\{0\}$,
\begin{align} \nonumber
\big|\nabla Q_T^\beta(x)\big|
\,\le\,&
C_L\,
\bigg[
\frac{1}{|x|}
+\frac{1+\sqrt{T}}{|x|^2}
+\frac{1}{|x|\sqrt{T}}
\bigg]\,
e^{-4\pi\beta|x|} \,.
\end{align}
\end{enumerate}
\end{lemma}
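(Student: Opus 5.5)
The plan is to work throughout with the explicit formulas already available for $Q_T^\beta$: Lemma~\ref{LemIntPyPoint3d}, the representation~\eqref{DefH3dIst}, and especially the one-dimensional integral representation~\eqref{DefH3d2nd} of Lemma~\ref{LemmaDefH3d2nd}. The bound in (i) uses only the lower bound on $P_T^\beta$. The bounds in (ii) and (iii) follow by completing a square in the exponent of the integrand in~\eqref{DefH3d2nd}, which extracts the factor $e^{-4\pi\beta|x|}$ at a cost depending only on $\beta\sqrt T\le L$.

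For (i), I will drop the third (nonnegative) term in~\eqref{DefPointKer3dBeta}. This gives $1+Q_T^\beta(x)=\int_{\R^3}P_T^\beta(x,y)\,dy\ge 1+\frac{2T}{|x|}\int_{\R^3}\frac{1}{|y|}P_T(|x|+|y|)\,dy$. Lemma~\ref{GaussianIdentity} (equivalently~\eqref{Eq:I3Split1}) evaluates the right-hand side as $1+\frac{2\sqrt T}{\sqrt\pi|x|}e^{-|x|^2/4T}-\erfc\big(\frac{|x|}{2\sqrt T}\big)$. Since $\erfc\le 1$, this is at least $\frac{2\sqrt T}{\sqrt\pi|x|}e^{-|x|^2/4T}$, which even beats the stated bound by a factor $2$.

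For (ii), I start from~\eqref{DefH3d2nd} with $r=|x|$ and use $e^{a}-1\le a e^{a}$ for $a\ge 0$. This bounds the integrand by $4\pi\beta w\,e^{4\pi\beta w-(r+w)^2/4T}$, and the prefactor $\frac{1}{4\pi\beta}$ cancels. Writing $s=r+w$, the exponent equals $-4\pi\beta r+\big(4\pi\beta s-\frac{s^2}{8T}\big)-\frac{s^2}{8T}$. Completing the square gives $4\pi\beta s-\frac{s^2}{8T}\le 32\pi^2\beta^2T\le 32\pi^2L^2$. Hence
\[
Q_T^\beta(x)\le \frac{e^{32\pi^2L^2}e^{-4\pi\beta r}}{r\sqrt{\pi T}}\int_0^\infty w\,e^{-(r+w)^2/8T}\,dw\le \frac{e^{32\pi^2L^2}e^{-4\pi\beta r}}{r\sqrt{\pi T}}\int_0^\infty w\,e^{-w^2/8T}\,dw=\frac{4\sqrt T}{\sqrt\pi\,r}\,e^{32\pi^2L^2}e^{-4\pi\beta r},
\]
which is~\eqref{QUpperBoundCL} with room to spare.

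For (iii), $Q_T^\beta$ is radial, so $|\nabla Q_T^\beta(x)|=|\partial_r Q_T^\beta|$. Differentiating~\eqref{DefH3d2nd} under the integral sign, justified by Gaussian domination, gives
\[
\partial_rQ_T^\beta=-\frac{Q_T^\beta}{r}-\frac{1}{4\pi\beta r\sqrt{\pi T}}\int_0^\infty\big(e^{4\pi\beta w}-1\big)\frac{r+w}{2T}e^{-(r+w)^2/4T}\,dw .
\]
The first term is handled by (ii) and produces $C_L(\frac1r+\frac{\sqrt T}{r^2})e^{-4\pi\beta r}$. For the second term I repeat the square completion, this time keeping the residual Gaussian $e^{-(r+w)^2/8T}$ and splitting it as $e^{-r^2/16T}e^{-w^2/16T}$. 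This bounds the term by
\[
\frac{C_Le^{-4\pi\beta r}}{rT^{3/2}}\int_0^\infty w(r+w)\,e^{-(r+w)^2/8T}\,dw\le \frac{C_L e^{-4\pi\beta r}}{r T^{3/2}}\big(rT\,e^{-r^2/16T}+T^{3/2}\big).
\]
This equals $C_Le^{-4\pi\beta r}\big(\frac{e^{-r^2/16T}}{\sqrt T}+\frac1r\big)$.

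I expect the main obstacle to be this $\frac{1}{\sqrt T}$ term. It does not appear in the target bound, and for $r\gg\sqrt T$ it is not dominated by $\frac1r$ or $\frac{1}{r\sqrt T}$ unless the Gaussian factor in $r$ is kept. The fix I intend is the elementary inequality $\frac{r}{\sqrt T}e^{-r^2/16T}\le C$, which turns $\frac{1}{\sqrt T}e^{-r^2/16T}$ into $\frac{C}{r}$. If this bookkeeping turns out to be awkward, I would instead differentiate the closed form~\eqref{DefH3d3rd} in the scaled variable $\mathbf x$ and use $\erfc(z)\le C e^{-z^2}$ for $z\ge 0$ together with $\boldsymbol\beta\le 4\pi L$. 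That route produces terms of the shape $\frac{1}{r\sqrt T}$ and $\frac{1}{r^2}$ directly. In either route, all constants depend only on $L$, and the three terms $\frac1{|x|}$, $\frac{1+\sqrt T}{|x|^2}$, $\frac{1}{|x|\sqrt T}$ absorb everything obtained above.
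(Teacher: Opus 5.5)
Your proof is correct, and in part (iii) your route is genuinely better than the paper's.

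For (i) the difference is cosmetic. The paper uses~\eqref{DefH3d2nd} with $e^{a}-1\ge a/2$. You drop the $\beta$-term of~\eqref{DefPointKer3dBeta} and apply Lemma~\ref{GaussianIdentity}, which amounts to using $e^{a}-1\ge a$ and gains a factor $2$.

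For (ii) and (iii) both arguments start from $e^{a}-1\le ae^{a}$ and a completion of the square. The paper completes the square against the whole Gaussian, recentering it at $8\pi\beta T-|x|$, and then enlarges the integration domain to all of $\R$. This discards all Gaussian decay in $|x|$. In (ii) that only costs a harmless constant term.

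In (iii) the loss is fatal. If the moments are multiplied out correctly, the paper's route leaves a term of order $e^{-4\pi\beta|x|}/\sqrt T$, with constants depending on $L$. The display after~\eqref{NablaHUpper} carries a spurious extra factor $1/|x|$; for example, $\frac{1}{|x|\sqrt{\pi T}}\cdot\frac{1}{2T}\cdot 4\sqrt\pi\,T^{3/2}=\frac{2}{|x|}$, not $\frac{2}{|x|^2}$. Without that slip, the term $e^{-4\pi\beta|x|}/\sqrt T$ is not dominated by the right-hand side of (iii) when $|x|\gg\max(1,\sqrt T)$. This is exactly the obstacle you flagged.

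Your device avoids it. By completing the square against only half of the Gaussian, you keep the residual factor $e^{-(r+w)^2/8T}\le e^{-r^2/16T}e^{-w^2/16T}$. The offending term then arrives multiplied by $e^{-r^2/16T}$ and is absorbed through $\sup_{u\ge0}ue^{-u^2/16}<\infty$, as you planned.

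A side benefit is that your route gives sharper, scale-consistent bounds, namely $Q_T^\beta(x)\le C_L\frac{\sqrt T}{|x|}e^{-4\pi\beta|x|}$ and $|\nabla Q_T^\beta(x)|\le C_L\big(\frac{1}{|x|}+\frac{\sqrt T}{|x|^2}\big)e^{-4\pi\beta|x|}$. These imply the stated (ii) and (iii).
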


\begin{proof}
\noindent Part (i): Since $e^{a}-1\ge a\ge \frac{1}{2}a$ for all $a\ge0$, we have $e^{4\pi\beta w}-1\ge 2\pi\beta w$ for all $w\ge0$. Therefore, using the representation~\eqref{DefH3d2nd}, we obtain the inequality below.
\begin{align*}
Q_T^\beta(x)
\, \ge \,
\frac{1}{4\pi\beta\,|x|\,\sqrt{\pi T}}
\int_{0}^{\infty}
(2\pi\beta w)\,
e^{-\frac{(|x|+w)^2}{4T}}\,dw 
\,=\,
\frac{1}{2|x|\,\sqrt{\pi T}}
\int_{|x|}^{\infty} u\,e^{-\frac{u^2}{4T}}\,du
\,-\,
\frac{1}{2\sqrt{\pi T}}\int_{|x|}^{\infty} e^{-\frac{u^2}{4T}}\,du
\end{align*}
The equality uses the substitution $u=|x|+w$. The first integral above equals $2Te^{-\frac{|x|^2}{4T}}$ by the substitution $q=u^2/(4T)$, whereas the second integral equals $\sqrt{\pi T} \erfc\!\big(\frac{|x|}{2\sqrt T}\big)$ by the change of variables $v=u/(2\sqrt T)$. Substituting these identities into the previous display yields the first inequality below.
\begin{align*}
Q_T^\beta(x)
\, \ge \,
\frac{\sqrt{T}}{|x|\,\sqrt{\pi}}\,e^{-\frac{|x|^2}{4T}}
\,-\,
\frac{1}{2}\,\erfc\!\Big(\frac{|x|}{2\sqrt T}\Big)
\, \ge \,
\frac{\sqrt{T}}{|x|\,\sqrt{\pi}}\,e^{-\frac{|x|^2}{4T}}
\,-\,
1
\end{align*}
The second inequality uses that $\erfc\!(z) \le 2$ for all $z \in \R$. The above implies the desired bound. \vspace{.3cm}

\noindent Part (ii): Using the inequality $e^u-1 \le u e^u$ for $u\ge 0$ with $u=4\pi\beta w$ in~\eqref{DefH3d2nd} we obtain the inequality below.
\begin{align}\label{HUpperBoundProof}
Q_T^\beta(x)
\,\le\,&
\frac{1}{4\pi\beta\,|x|\,\sqrt{\pi T}}
\int_{0}^{\infty} 
\big(4\pi\beta w\big)\,e^{4\pi\beta w}\,
e^{-\frac{(|x|+w)^2}{4T}}\,dw  \nonumber \\
\,=\,&
\frac{1}{|x|\,\sqrt{\pi T}}
\int_{0}^{\infty} 
w\,e^{4\pi\beta w}\,
e^{-\frac{(|x|+w)^2}{4T}}\,dw 
\,=\,
\frac{e^{-4\pi\beta|x|+16\pi^2\beta^2 T}}{|x|\sqrt{\pi T}}
\int_{0}^{\infty} w\,e^{-\frac{(|x|-8\pi\beta T+w)^2}{4T}}\,dw 
\end{align}
The last equality uses~\eqref{ExpPowerIdentity}. Next, using the change of variables $u=w-(8\pi\beta T-|x|)$ we obtain the first equality below.
\begin{align}\label{HUpperBoundProof2}
\int_{0}^{\infty} w\,e^{-\frac{(|x|-8\pi\beta T+w)^2}{4T}}\,dw
\,=\,&
\int_{-(8\pi\beta T-|x|)}^{\infty}
\Big(u+(8\pi\beta T-|x|)\Big)\,e^{-\frac{u^2}{4T}}\,du \nonumber \\
\,\le\,&
\int_{-\infty}^{\infty} |u|\,e^{-\frac{u^2}{4T}}\,du
\;+\;
\big|8\pi\beta T-|x|\big|
\int_{-\infty}^{\infty} e^{-\frac{u^2}{4T}}\,du
\,=\,
4T
+
2\sqrt{\pi T}\, \big|8\pi\beta T-|x|\big| 
\end{align}
The first inequality follows from the pointwise bound $u+(8\pi\beta T-|x|) \le |u|+\big|8\pi\beta T-|x|\big|$ for all $u\in\mathbb{R}$, together with the fact that $e^{-u^2/(4T)}\ge0$, and from enlarging the domain of integration from $\big[-(8\pi\beta T-|x|),\infty\big)$ to $\mathbb{R}$ on the right-hand side. The final equality uses that the Gaussian integrals are explicitly given by
\[
\int_{-\infty}^{\infty} |u|\,e^{-\frac{u^2}{4T}}\,du
\,=\, 4T\,,
\quad \textup{ and } \quad
\int_{-\infty}^{\infty} e^{-\frac{u^2}{4T}}\,du
\,=\, 2\sqrt{\pi T}\,.
\]
Substituting~\eqref{HUpperBoundProof2} into~\eqref{HUpperBoundProof} yields the first inequality below.
\begin{align}
Q_T^\beta(x)
\;\le\; &
\frac{e^{-4\pi\beta|x|+16\pi^2\beta^2 T}}{|x|\sqrt{\pi T}}
\Big[4T\,+\,2\sqrt{\pi T}\,\big|8\pi\beta T-|x|\big|\Big] \nonumber \\
\,\le \,&
e^{-4\pi\beta|x|+16\pi^2\beta^2 T}
\bigg[
\frac{4}{\sqrt\pi}\frac{\sqrt T}{|x|}
\;+\;
2
\;+\;
16\pi\beta\,\frac{T}{|x|} \bigg] 
\,\le \,
e^{-4\pi\beta|x|}\,e^{16\pi^2L^2}
\bigg[
2
\,+\,
\Big(\frac{4}{\sqrt\pi}+16\pi L\Big)\frac{\sqrt T}{|x|}
\bigg] \nonumber
\end{align}
The second inequality uses the triangle inequality $\big|8\pi\beta T-|x|\big|\le |x|+8\pi\beta T$, and the final inequality uses the assumption $\beta\sqrt T\le L$. Finally, since $2 \le 2\big(1+\frac{\sqrt T}{|x|}\big)$ and $\frac{\sqrt T}{|x|} \le 1+\frac{\sqrt T}{|x|}$, the above estimate yields the upper bound~\eqref{QUpperBoundCL} with $C_L := e^{16\pi^2L^2}\,\big(2+\frac{4}{\sqrt\pi}+16\pi L\big)$. \vspace{.3cm}

\noindent Part (iii): Using that $\big| \nabla Q_{ T}^{\beta}(x)\big| =-\frac{\partial}{\partial a}\bar{Q}_{ T}^{\beta}(a)$ for $a=|x|$
and recalling~\eqref{DefH3d2nd}, we obtain the first identity below.
\begin{align} \label{NablaHUpper}
\big| \nabla \,Q_{ T}^{\beta}(x)\big|
\,=\,&
-\frac{\partial}{\partial a}\,
\frac{1}{4\pi\beta \, a\, \sqrt{\pi T}}
\int_{0}^{\infty}\big(e^{4\pi\beta w}\,-\, 1\big)\,
e^{-\frac{(a+w)^2}{4T}}\,dw\,
\bigg|_{a=|x|} \nonumber \\
\,=\,&
\frac{1}{|x|}\, Q_T^{\beta}(x)
\,+\,
\frac{1}{4\pi\beta |x| \sqrt{\pi T}}
\int_{0}^{\infty}\big(e^{4\pi\beta w}\, -\, 1\big)\,
\frac{|x|+w}{2T}\,
e^{-\frac{(|x|+w)^2}{4T}}\,dw \nonumber  \\ 
\,\le\,&
\frac{1}{|x|}\,Q_t^\beta(x)
\,+\, \frac{1}{|x|\sqrt{\pi T}}\, \frac{1}{2T}\, \int_0^\infty\, w(|x|+w)\, e^{4\pi\beta w}\, e^{-\frac{(|x|+w)^2}{4T}}\,dw \nonumber \\
\,=\,&
\frac{1}{|x|}\,Q_t^\beta(x)
\,+\, \frac{e^{-4\pi\beta|x|+16\pi^2\beta^2T}}{|x|\sqrt{\pi T}}\,
\frac{1}{2T} \int_0^\infty w(|x|+w)\, e^{-\frac{(|x|-8\pi\beta T+w)^2}{4T}}\,dw
\end{align}
The inequality uses the elementary bound $e^u-1\le u e^u$ for $u\ge0$ with $u=4\pi\beta w$. The final equality uses~\eqref{ExpPowerIdentity}. Next, applying the change of variables $u=w-(8\pi\beta T-|x|)$, we obtain the first equality below.
\begin{align*}
\int_0^\infty
w(|x|+w)\,
e^{-\frac{(|x|-8\pi\beta T+w)^2}{4T}}\,dw
&=
\int_{-(8\pi\beta T-|x|)}^\infty
\Big(u+(8\pi\beta T-|x|)\Big)\Big(u+8\pi\beta T\Big)\,
e^{-\frac{u^2}{4T}}\,du \\
&\le
\int_{\mathbb R}
\Big(|u|+\big|8\pi\beta T-|x|\big|\Big)\Big(|u|+8\pi\beta T\Big)\,
e^{-\frac{u^2}{4T}}\,du \\
&=
\int_{\mathbb R}
\Big(u^2+\big(\big|8\pi\beta T-|x|\big|+8\pi\beta T\big)|u|
+\big|8\pi\beta T-|x|\big|\,(8\pi\beta T)\Big)\,
e^{-\frac{u^2}{4T}}\,du 
\end{align*}
Using the explicit Gaussian integrals $\int_{\mathbb R} e^{-\frac{u^2}{4T}}\,du=2\sqrt{\pi T}$, $\int_{\mathbb R} |u|\,e^{-\frac{u^2}{4T}}\,du=4T$, and $\int_{\mathbb R} u^2\,e^{-\frac{u^2}{4T}}\,du=4\sqrt{\pi}\,T^{\frac{3}{2}}$, we obtain
\begin{align*}
\int_0^\infty
w(|x|+w)\,
e^{-\frac{(|x|-8\pi\beta T+w)^2}{4T}}\,dw
&\le
4\sqrt{\pi}\,T^{\frac{3}{2}}
\;+\;
4T\big(\big|8\pi\beta T-|x|\big|+8\pi\beta T\big)
\;+\;
2\sqrt{\pi T}\,\big|8\pi\beta T-|x|\big|\,(8\pi\beta T).
\end{align*}
Substituting this back in~\eqref{NablaHUpper} gives
\begin{align*}
\big| \nabla \,Q_{ T}^{\beta}(x)\big|
\,\le\,&
\frac{1}{|x|}\,Q_t^\beta(x)
\,+\, 
\frac{e^{-4\pi\beta|x|+16\pi^2\beta^2T}}{|x|}
\Bigg[
\frac{2}{|x|}
+\frac{2}{\sqrt{\pi}}\frac{\big|8\pi\beta T-|x|\big|+8\pi\beta T}{|x|\sqrt{T}}
+\frac{\big|8\pi\beta T-|x|\big|\,(8\pi\beta T)}{|x|T}
\Bigg] \nonumber \\
\,\le\,&
\frac{1}{|x|}\,Q_t^\beta(x)
\,+\, 
\frac{e^{16\pi^2L^2} e^{-4\pi\beta|x|}}{|x|}\,
\Bigg[
\frac{2}{|x|}
+\frac{2}{\sqrt{\pi}}\frac{16\pi L\sqrt{T}+|x|}{|x|\sqrt{T}}
+\frac{64\pi^2L^2T+8\pi L\sqrt{T}|x|}{|x|T}
\Bigg] \nonumber \\
\;\le\;&
\frac{1}{|x|}Q_t^\beta(x)
\;+\;
C_L\,e^{-4\pi\beta|x|}
\bigg[
\frac{1}{|x|^2}
+\frac{1}{|x|\sqrt{T}}
\bigg]
\end{align*}
The second inequality uses $\big|8\pi\beta T-|x|\big|\le |x|+8\pi\beta T$ and $\beta \sqrt{T}\le L$. The final inequality holds by choosing a constant $C_L>0$ large enough. Finally, applying the upper bound~\eqref{QUpperBoundCL} and, if necessary, enlarging the constant $C_L$, we obtain the desired upper bound.
\end{proof}

\begin{corollary}
Fix $L>0$. There exists a constant $C_L>0$ such that for all $T, \beta>0$ with $\beta\sqrt T\le L$, and all $x\in\R^3\setminus\{0\}$, the gradient of $Q_T^\beta(x)$ satisfies the following upper bound.
\begin{align} \nonumber
\big|\nabla Q_T^\beta(x)\big|
\,\le\,
C_L
\bigg[
\frac{1+\sqrt{T}}{|x|^2}\,\mathbf 1_{\{|x|\le \sqrt{T}\}}
\;+\;
\Big(\frac{1}{|x|}+\frac{1}{|x|\sqrt{T}}\Big)\,\mathbf 1_{\{|x|>\sqrt{T}\}}
\bigg]
\, e^{-4\pi\beta|x|}
\end{align}
\end{corollary}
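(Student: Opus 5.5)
The corollary follows from Lemma~\ref{LemmaHUpDown}(iii) by splitting according to whether $|x|\le\sqrt T$ or $|x|>\sqrt T$. On each region we absorb the three terms of the bracket
\[
\frac{1}{|x|}+\frac{1+\sqrt T}{|x|^2}+\frac{1}{|x|\sqrt T}
\]
into the displayed ones, enlarging $C_L$ if necessary. The constant $C_L$ from Lemma~\ref{LemmaHUpDown}(iii) depends only on $L$, so any multiplicative factor we introduce must also depend only on $L$ (or be absolute) and not on $T$. The factor $e^{-4\pi\beta|x|}$ is simply carried along.

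\emph{Region $|x|\le\sqrt T$.} Here we want to dominate everything by $(1+\sqrt T)/|x|^2$.
\begin{itemize}
\item The term $(1+\sqrt T)/|x|^2$ is already of the required form.
\item For the term $1/(|x|\sqrt T)$, the assumption $|x|\le\sqrt T$ gives $1/\sqrt T\le 1/|x|$, hence $1/(|x|\sqrt T)\le 1/|x|^2\le (1+\sqrt T)/|x|^2$.
\item For the term $1/|x|$, write $1/|x|=|x|/|x|^2$. Since $|x|\le\sqrt T\le 1+\sqrt T$, this is at most $(1+\sqrt T)/|x|^2$.
\end{itemize}
Summing, the bracket is at most $3(1+\sqrt T)/|x|^2$ on this region.

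\emph{Region $|x|>\sqrt T$.} Here we want to dominate everything by $1/|x|+1/(|x|\sqrt T)$, and the $1/|x|^2$ term is the delicate one.
\begin{itemize}
\item Write $(1+\sqrt T)/|x|^2=1/|x|^2+\sqrt T/|x|^2$.
\item For the piece $\sqrt T/|x|^2$, use $\sqrt T<|x|$ to get $\sqrt T/|x|^2\le 1/|x|$.
\item For the piece $1/|x|^2$, use $1/|x|<1/\sqrt T$ to get $1/|x|^2\le 1/(|x|\sqrt T)$.
\end{itemize}
So $(1+\sqrt T)/|x|^2\le 1/|x|+1/(|x|\sqrt T)$, and the remaining terms $1/|x|$ and $1/(|x|\sqrt T)$ already have the target form. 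The bracket is therefore at most $2\bigl(1/|x|+1/(|x|\sqrt T)\bigr)$ on this region.

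Combining the two regions with indicator functions and replacing $C_L$ by $3C_L$ yields the claim. The only step that needs care is the splitting of the $1/|x|^2$ term on the outer region, since the right pairing ($\sqrt T/|x|^2$ with $1/|x|$, and $1/|x|^2$ with $1/(|x|\sqrt T)$) is what keeps the constant independent of $T$. All remaining steps are direct comparisons.
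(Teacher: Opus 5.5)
Your proposal is correct and takes the same route as the paper: you split at $|x|=\sqrt T$ and compare the terms of the Lemma~\ref{LemmaHUpDown}(iii) bracket one by one, including the pairing of $\sqrt T/|x|^2$ with $1/|x|$ and of $1/|x|^2$ with $1/(|x|\sqrt T)$ on the outer region. The only difference is that you get the factor $3$ on the inner region where the paper gets $4$, which is immaterial.
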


\begin{proof}
By Lemma~\eqref{LemmaHUpDown}(iii), there exists a constant $C_L$  such that
\begin{align} \nonumber
\big|\nabla Q_T^\beta(x)\big|
\,\le\,&
C_L\,
\bigg[
\frac{1}{|x|}
+\frac{1+\sqrt{T}}{|x|^2}
+\frac{1}{|x|\sqrt{T}}
\bigg]\,
e^{-4\pi\beta|x|} \,.
\end{align}
In the small regime $|x|\le \sqrt{T}$ we have $\frac{1}{|x|}\le \frac{\sqrt{T}}{|x|^2}$ and
$\frac{1}{|x|\sqrt{T}}\le \frac{1}{|x|^2}$, hence
\[
\frac{1}{|x|}
+\frac{1+\sqrt{T}}{|x|^2}
+\frac{1}{|x|\sqrt{T}}
\;\le\;
\frac{\sqrt{T}}{|x|^2}
+\frac{1+\sqrt{T}}{|x|^2}
+\frac{1}{|x|^2}
\,=\,
\Big(2\sqrt{T}+2\Big)\frac{1}{|x|^2}
\;\le\;
4\,(1+\sqrt{T})\,\frac{1}{|x|^2}.
\]
In the large regime $|x|>\sqrt{T}$ we have $\frac{1}{|x|^2}\le \frac{1}{|x|\sqrt{T}}$ and
$\frac{\sqrt{T}}{|x|^2}\le \frac{1}{|x|}$, hence
\[
\frac{1}{|x|}
+\frac{1+\sqrt{T}}{|x|^2}
+\frac{1}{|x|\sqrt{T}}
\,=\,
\frac{1}{|x|}
+\Big(\frac{1}{|x|^2}+\frac{\sqrt{T}}{|x|^2}\Big)
+\frac{1}{|x|\sqrt{T}}
\;\le\;
2\,\frac{1}{|x|}
+2\,\frac{1}{|x|\sqrt{T}}\,.
\]
Combining the small and large regimes, and enlarging $C_L$ if necessary, we obtain the desired bound.
\end{proof}

\begin{lemma}\label{SubProofLemTranKernTechinal} Fix $T,\beta>0$.  For all $0\le s<t\le T$ and $x\in\R^3 \setminus \{0\}$,
\begin{align} \nonumber
\int_{\R^3}\, P_{t-s}^{\beta}(x, y)\,\big(1+Q_{T-t}^{\beta}(y)\big)\,dy\, < \, \infty\,.
\end{align}
\end{lemma}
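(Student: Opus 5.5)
The plan is to split the integral as
\[
\int_{\R^3} P_{t-s}^{\beta}(x,y)\,dy \;+\; \int_{\R^3} P_{t-s}^{\beta}(x,y)\,Q_{T-t}^{\beta}(y)\,dy
\]
and bound each piece separately. The first piece equals $1+Q_{t-s}^\beta(x)$, which is finite for $x\neq0$ by the explicit formula~\eqref{DefH3dIst} (or by Lemma~\ref{LemIntPyPoint3d}).

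For the second piece, the case $t=T$ is trivial: then $Q_0^\beta\equiv0$, either as a limit in~\eqref{DefH3d2nd} or by the convention $P_0=\delta$, so the claim reduces to the first piece. For $t<T$, I would apply Lemma~\ref{LemmaHUpDown}(ii) with $L:=\beta\sqrt T$, which gives
\[
Q_{T-t}^\beta(y)\le C_L\Big(1+\frac{\sqrt T}{|y|}\Big)e^{-4\pi\beta|y|}\le C_L\Big(1+\frac{\sqrt T}{|y|}\Big).
\]
Writing $u:=t-s$ and using the upper bound~\eqref{PtBetaBounds} (with $C=C(\beta,T)$, valid since $0<u\le T$), the remaining integral is dominated by a constant times
\[
\int_{\R^3}\Big(1+\frac{1}{|y|}\Big)P_u(x,y)\,dy
\;+\;u^{-1/2}\frac{e^{-|x|^2/4u}}{|x|}\int_{\R^3}\Big(1+\frac{1}{|y|}\Big)\frac{e^{-|y|^2/4u}}{|y|}\,dy .
\]

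Each of these terms is finite.
\begin{itemize}
\item The term $\int P_u(x,y)\,dy$ equals $1$.
\item The term $\int |y|^{-1}P_u(x,y)\,dy$ is finite because $|y|^{-1}$ is locally integrable in $\R^3$ and $P_u(x,\cdot)$ is bounded by $(4\pi u)^{-3/2}$ with Gaussian decay. Concretely, split into $|y|\le1$, where the integrand is at most $(4\pi u)^{-3/2}|y|^{-1}$, and $|y|>1$, where it is at most $P_u(x,y)$.
\item In spherical coordinates the last integrals become $4\pi\int_0^\infty (r+1)e^{-r^2/4u}\,dr<\infty$.
\end{itemize}

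None of these steps is a genuine obstacle. The only delicate point is the singularity at $y=0$: there $Q_{T-t}^\beta(y)\sim|y|^{-1}$ and the second term of $P_u^\beta$ also carries a factor $|y|^{-1}$, so the worst integrand behaves like $|y|^{-2}$. This is still integrable in three dimensions, since $|y|^{-2}\,dy$ becomes $4\pi\,dr$. I would state this explicitly, along with the fact that $x\ne0$ keeps the prefactor $e^{-|x|^2/4u}/|x|$ finite.
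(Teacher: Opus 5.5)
Your proposal is correct and follows essentially the same route as the paper: bound $P^\beta_{t-s}$ by \eqref{PtBetaBounds}, bound $Q^\beta_{T-t}$ by \eqref{QUpperBoundCL} with $L=\beta\sqrt T$, and use local integrability of $|y|^{-1}$ and $|y|^{-2}$ in $\R^3$ near the origin, with the Gaussian factors controlling large $|y|$. The differences are cosmetic: you handle the constant part via Lemma~\ref{LemIntPyPoint3d} rather than the heat-kernel bound, and you treat $t=T$ (where $Q_0^\beta\equiv 0$) explicitly, a case the paper passes over when it applies \eqref{QUpperBoundCL} with $T$ replaced by $T-t$.
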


\begin{proof}
Fix $T,\beta>0$ and $0\le s<t\le T$. Set $\tau:=t-s\in(0,T]$ and fix
$x\in\R^3\setminus\{0\}$. Using~\eqref{PtBetaBounds} we obtain the inequality below.
\begin{align}
\int_{\R^3}\, P_{t-s}^{\beta}(x, y)\,\big(1+Q_{T-t}^{\beta}(y)\big)\,dy\,
\,\le\,&
\int_{\R^3} P_{\tau}(x,y) \big(1+Q_{T-t}^{\beta}(y)\big)dy
+
C\,\tau^{-1/2} \frac{e^{-\frac{|x|^2}{4\tau}}}{|x|}\, 
\int_{\R^3}\,\frac{e^{-\frac{|y|^2}{4\tau}}}{|y|} \big(1+Q_{T-t}^{\beta}(y)\big)dy\,\nonumber 
\end{align}
Denote the two integrals on the right-hand side by $I_1$ and $I_2$, respectively. \vspace{.2cm}

\noindent \textit{First integral.} Set $L:=\beta\sqrt T$. By~\eqref{QUpperBoundCL}, applied with $T$ replaced by
$T-t$, we obtain the first inequality below.
\begin{align}
I_1 \,\le\,  \int_{\R^3} P_{\tau}(x,y) \, \Big(1+C_L\big(1+\tfrac{\sqrt{T}}{|y|}\big)\Big)\,e^{-4\pi\beta|y|}\, dy 
\,\le\,
\int_{\R^3}\, P_{\tau}(x,y) \, \Big(1+C_L\big(1+\tfrac{\sqrt{T}}{|y|}\big)\Big)\,dy \nonumber
\end{align}
The second inequality uses that $e^{-4\pi\beta|y|}\le 1$. We now split $\R^3=B(0,1)\cup B(0,1)^c$. On $B(0,1)$, using $P_\tau(x,y)\le (4\pi\tau)^{-3/2}$, we obtain the first inequality below.
\begin{align*}
\int_{|y|\le 1}\, P_{\tau}(x,y) \, \Big(1+C_L\big(1+\tfrac{\sqrt{T}}{|y|}\big)\Big)\,dy
\, \le \,& 
(4\pi\tau)^{-\frac{3}{2}}\int_{|y|\le 1} \, \Big(1+C_L\big(1+\tfrac{\sqrt{T}}{|y|}\big)\Big)\,dy \nonumber \\
\, = \,& 
4\pi(4\pi\tau)^{-\frac{3}{2}}\int_{0}^1 \, \Big(r+C_L\big(1+ \sqrt{T}\big)\Big)\,rdr 
\,<\, 
\infty \,, 
\end{align*}
where the second last equality uses the spherical coordinates in $\R^3$. On $B(0,1)^c$, we have $|y|>1$, which implies
\begin{align*}
\int_{|y| > 1}\, P_{\tau}(x,y) \, \Big(1+C_L\big(1+\tfrac{\sqrt{T}}{|y|}\big)\Big)\,dy
\, \le \, 
\Big(1+C_L\big(1+ \sqrt{T}\big)\Big)\,\int_{|y| > 1}\, P_{\tau}(x,y) \,  dy 
\, = \,
\Big(1+C_L\big(1+ \sqrt{T}\big)\Big)
\,<\, \infty\,,
\end{align*}
where we have used that $\int_{\R^3}P_\tau(x,y)\,dy=1$ and
$e^{-4\pi\beta|y|}\le 1$. Combining the two estimates gives $I_1<\infty$. \vspace{.2cm}

\noindent \textit{Second integral.} Again using~\eqref{QUpperBoundCL} with $L:=\beta\sqrt T$, we obtain the inequality below.
\[
I_2\,:=\,\int_{\R^3} \frac{e^{-\frac{|y|^2}{4\tau}}}{|y|}\,\big(1+Q_{T-t}^{\beta}(y)\big)\,dy
\le
\int_{\R^3} \frac{e^{-\frac{|y|^2}{4\tau}}}{|y|}\, \Big(1+C_L\big(1+\tfrac{\sqrt{T}}{|y|}\big)\Big)\, dy .
\]
We split again into $|y|\le 1$ and $|y|>1$. On $B(0,1)$, using $e^{-\frac{|y|^2}{4\tau}} \le 1$ and enlarging the constant, we obtain
\[
\int_{|y|\le 1} \frac{e^{-\frac{|y|^2}{4\tau}}}{|y|}\, \Big(1+C_L\big(1+\tfrac{\sqrt{T}}{|y|}\big)\Big)\, dy
\;\le\;
C\int_{|y|\le 1}\Big(\frac{1}{|y|}+\frac{1}{|y|^2}\Big)\,dy
\;<\;\infty,
\]
since in $d=3$ both $\int_{|y|\le 1}|y|^{-1}dy$ and
$\int_{|y|\le 1}|y|^{-2}dy$ are finite. On $B(0,1)^c$, using $|y|>1$, we obtain the first inequality below.
\begin{align}
\int_{|y| > 1} \frac{e^{-\frac{|y|^2}{4\tau}}}{|y|}\, \Big(1+C_L\big(1+\tfrac{\sqrt{T}}{|y|}\big)\Big)\, dy
\,\le\, 
\Big(1+C_L(1+\sqrt{T})\Big)  \int_{|y|>1}\, e^{-\frac{|y|^2}{4\tau}}
\, dy 
\, \le \, 
(1\,+\,C)\, \int_{\R^3}\, e^{-\frac{|y|^2}{4\tau}}\,dy \nonumber
\end{align}
The final inequality holds by enlarging the constant. The right-hand side above is finite because the Gaussian factor $e^{-|y|^2/(4\tau)}$ is integrable on $\R^3$.

Combining the above estimates, we obtain $I_1<\infty$ and $I_2<\infty$, and hence the desired inequality follows.
\end{proof}

\subsection{The function \texorpdfstring{$\boldsymbol{p_{s,t}^{T,\beta}(x,y)}$}{Lg}}\label{SubsectDoobpFunct}

The following lemma records quantitative boundary-avoidance properties of the one- and two-step transition densities associated with $p^{T,\beta}_{s,t}(x,y)$ given in~\eqref{FirstTrans3d},
showing that the mass entering the killing region is exponentially small when the starting point remains away from the boundary.

\begin{lemma}
Fix $T,\beta,\varepsilon>0$ and $\delta>0$. There exist constants
$C,c>0$, depending only on $(T,\beta,\varepsilon,\delta)$, such that the following hold.

\begin{enumerate} [(i)]
    \item For every $0\le s<t\le T$ and every $x\in E_\varepsilon$ satisfying
$|x|-\varepsilon\ge \delta$, we have
\begin{align}\label{BoundOneStepBadMassAwayFromBoundary}
\int_{\{|y|\le \varepsilon\}}
p^{T,\beta}_{s,t}(x,y)\,dy
\;\le\;
C\,e^{-c/(t-s)}\,.
\end{align}

\item For every $0\le r<s<t\le T$ and every $x\in E_\varepsilon$ with
$|x|-\varepsilon\ge \delta$, we have
\begin{align} \label{BoundTwoStepBadMassAwayFromBoundary}
\int_{\{0<|y|-\varepsilon<\delta\}}
\int_{\{|z|\le \varepsilon\}}
p^{T,\beta}_{r,s}(x,y)\,
p^{T,\beta}_{s,t}(y,z)\,dz\,dy
\;\le\;
C\,e^{-c/(t-r)}\,.
\end{align}
\end{enumerate}
\end{lemma}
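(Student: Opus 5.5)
The plan is to prove (i) directly from the kernel bound~\eqref{PtBetaBounds} and the upper bound~\eqref{QUpperBoundCL} on $Q$. Then (ii) follows by reducing it to (i) with the Chapman--Kolmogorov identity of Lemma~\ref{LemTranKern}(ii).

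\textbf{Part (i).} Set $\tau:=t-s\in(0,T]$. By~\eqref{FirstTrans3d} and $1+Q^\beta_{T-s}(x)\ge1$,
\[
\int_{\{|y|\le\varepsilon\}}p^{T,\beta}_{s,t}(x,y)\,dy\le\int_{\{0<|y|\le\varepsilon\}}\bigl(1+Q^\beta_{T-t}(y)\bigr)P^\beta_\tau(x,y)\,dy .
\]
Take $L:=\beta\sqrt T$. Since $T-t\le T$, the bound~\eqref{QUpperBoundCL} gives, uniformly in $t$,
\[
1+Q^\beta_{T-t}(y)\le 1+C_L\Bigl(1+\tfrac{\sqrt T}{|y|}\Bigr).
\]
For $t=T$ we have $Q_0\equiv0$, which is consistent with this bound. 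Next, bound $P^\beta_\tau$ by~\eqref{PtBetaBounds} and treat the two resulting terms separately.

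\emph{Free term.} For $|y|\le\varepsilon$ and $|x|\ge\varepsilon+\delta$ we have $|x-y|\ge\delta$. Hence
\[
P_\tau(x,y)\le(4\pi\tau)^{-3/2}e^{-\delta^2/(4\tau)}\le C e^{-\delta^2/(8\tau)},
\]
where we used that $\tau^{-3/2}e^{-\delta^2/(8\tau)}$ is bounded on $(0,T]$. The weight $1+C_L(1+\sqrt T/|y|)$ is integrable over the ball $\{|y|\le\varepsilon\}$ in $\R^3$, so this term is at most $Ce^{-\delta^2/(8\tau)}$.

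\emph{Singular term.} Use $|x|\ge\varepsilon+\delta$ to get
\[
\tau^{-1/2}\frac{e^{-|x|^2/(4\tau)}}{|x|}\le\varepsilon^{-1}\tau^{-1/2}e^{-\delta^2/(4\tau)}\le Ce^{-\delta^2/(8\tau)}.
\]
Bound $e^{-|y|^2/(4\tau)}\le1$. What remains is $\int_{|y|\le\varepsilon}|y|^{-1}\bigl(1+|y|^{-1}\bigr)\,dy$, which is finite in three dimensions.

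Together these give~\eqref{BoundOneStepBadMassAwayFromBoundary} with $c=\delta^2/8$.

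\textbf{Part (ii).} The shell variable $y$ is not separated from $x$: points with $|y|$ close to $\varepsilon+\delta$ can be arbitrarily close to $x$. So one cannot reuse the distance argument of (i) for the first step. Instead, first enlarge the $y$-domain from the shell $\{0<|y|-\varepsilon<\delta\}$ to $\R^3\setminus\{0\}$, which is allowed because the integrand is nonnegative. Then use Tonelli to integrate in $y$ first, and apply Lemma~\ref{LemTranKern}(ii):
\[
\int_{\R^3}p^{T,\beta}_{r,s}(x,y)\,p^{T,\beta}_{s,t}(y,z)\,dy=p^{T,\beta}_{r,t}(x,z).
\]
The left side of~\eqref{BoundTwoStepBadMassAwayFromBoundary} is therefore bounded by $\int_{\{|z|\le\varepsilon\}}p^{T,\beta}_{r,t}(x,z)\,dz$. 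Part (i), applied to the pair $(r,t)$, bounds this by $Ce^{-c/(t-r)}$ with the same constants.

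\textbf{Main obstacle.} The only delicate point is making the constants in (i) uniform in $s,t$. This requires uniformity of $C_L$ over $T-t\in[0,T]$, which holds because $L=\beta\sqrt T$ does not depend on $t$. It also requires absorbing the negative powers of $\tau$ into half of the Gaussian exponent. I expect both to be routine.
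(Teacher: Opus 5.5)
Your proposal is correct and follows essentially the same route as the paper. In (i) you drop the denominator via $1+Q^\beta_{T-s}(x)\ge1$, split $P^\beta_\tau$ using~\eqref{PtBetaBounds}, and control $1+Q^\beta_{T-t}$ by~\eqref{QUpperBoundCL} with $L=\beta\sqrt T$; you then use $|x-y|\ge\delta$ for the free part and $|x|\ge\varepsilon+\delta$ for the singular part, and absorb the negative powers of $\tau$ into half of the Gaussian exponent to get $c=\delta^2/8$. In (ii) you enlarge the $y$-domain, apply Tonelli and the Chapman--Kolmogorov identity, and invoke (i) on $[r,t]$, exactly as the paper does.
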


\begin{proof}
\noindent Part (i).  Let $\tau:=t-s$. Then, by the definition~\eqref{FirstTrans3d}, we have
\begin{align*}
\int_{\{|y|\le \varepsilon\}}
p^{\,T,\beta}_{s,t}(x,y)\,dy
=
\frac{1}{1+Q^{\beta}_{T-s}(x)}
\int_{\{|y|\le \varepsilon\}}
\bigl(1+Q^{\beta}_{T-t}(y)\bigr)\,
P^{\beta}_{\tau}(x,y)\,dy 
\le
\int_{\{|y|\le \varepsilon\}}
\bigl(1+Q^{\beta}_{T-t}(y)\bigr)\,
P^{\beta}_{\tau}(x,y)\,dy,
\end{align*}
where the inequality uses that $1+Q^{\beta}_{T-s}(x) \ge 1$. Next, using~\eqref{PtBetaBounds} with $t=\tau$, we obtain
\begin{align*}
\int_{{|y|\le \varepsilon}}
d^{T,\beta}_{s,t}(x,y)\,dy
\;\le\;
\int_{\{|y|\le \varepsilon\}}
(1+Q^{\beta}_{T-t}(y))\,P_{\tau}(x,y)\,dy
+
C\,\tau^{-1/2}\,
\frac{e^{-\frac{|x|^2}{4\tau}}}{|x|}
\int_{\{|y|\le \varepsilon\}}
(1+Q^{\beta}_{T-t}(y))\,
\frac{e^{-\frac{|y|^2}{4\tau}}}{|y|}\,dy,
\end{align*}
where $C=C(T,\beta)>0$. Let us name the terms on the right side above by $I_1$ and $I_2$. \vspace{.3cm}

\noindent \textit{Estimate of \(I_1\).} For $|y|\le \varepsilon$ and $|x|-\varepsilon\ge \delta$, we have $|x-y|\;\ge\;|x|-|y|\;\ge\;\varepsilon+\delta-\varepsilon\,=\,\delta$. Hence, using the definition of the free heat kernel~\eqref{DefFreeHeat3d}, we have $P_{\tau}(x,y) := (4\pi\tau)^{-\frac{3}{2}} e^{-\frac{|x-y|^2}{4\tau}}
\le (4\pi\tau)^{-\frac{3}{2}} e^{-\frac{\delta^2}{4\tau}}$. Therefore, we have the inequality below.
\begin{align*}
I_1
\,:=\,
\int_{\{|y|\le \varepsilon\}}
(1+Q^{\beta}_{T-t}(y))\,P_{\tau}(x,y)\,dy
\le
\frac{1}{(4\pi\tau)^{\frac{3}{2}}}
e^{-\frac{\delta^2}{4\tau}}
\int_{\{|y|\le \varepsilon\}}
\bigl(1+Q^{\beta}_{T-t}(y)\bigr)\,dy .
\end{align*}
Set $L:=\beta \sqrt{T}$. Then by~\eqref{QUpperBoundCL}, there exists a constant $C'=C_L'>0$ such that for $|y|\le \varepsilon$, we have $1+Q^{\beta}_{T-t}(y) \le C'(1+|y|^{-1})$. Therefore, we have the inequality below.
\begin{align*}
I_1
\le
\frac{C' e^{-\frac{\delta^2}{4\tau}}}{(4\pi\tau)^{\frac{3}{2}}}
\int_{\{|y|\le \varepsilon\}}
\Big(1+\frac{1}{|y|}\Big)\,dy
=
\frac{C' e^{-\frac{\delta^2}{4\tau}}}{(4\pi\tau)^{\frac{3}{2}}}
\Bigl(
\frac{4\pi}{3}\varepsilon^3
+
2\pi \varepsilon^2
\Bigr)
\le
C\,\tau^{-\frac{3}{2}}\,e^{-\frac{\delta^2}{4\tau}}
\end{align*}
The final inequality follows from absorbing the $\varepsilon$-dependent factor into the constant to conclude that there exists $C>0$ such that the inequality follows. \vspace{.3cm}

\noindent \textit{Estimate of \(I_2\).} Again for $L:=\beta \sqrt{T}$ there exists a constant $C=C_L>0$ from~\eqref{QUpperBoundCL} such that
\begin{align*}
\int_{\{|y|\le \varepsilon\}}
(1+Q^{\beta}_{T-t}(y))\,
\frac{e^{-\frac{|y|^2}{4\tau}}}{|y|}\,dy
\le
C
\int_{\{|y|\le \varepsilon\}}
\Big(\frac{1}{|y|}+\frac{1}{|y|^2}\Big)
e^{-\frac{|y|^2}{4\tau}}\,dy
=
4\pi C
\int_0^\varepsilon
\bigl(r+1\bigr)e^{-\frac{r^2}{4\tau}}\,dr.
\end{align*}
Where the equality follows by passing into spherical coordinates in $\mathbb R^3$, and writing $r=|y|$ and $dy=4\pi r^2\,dr$. Since $r\mapsto (r+1)e^{-r^2/(4\tau)}$ is bounded and integrable on $[0,\varepsilon]$, the right side above is bounded by a constant  $C$ that depends only on $(T,\beta,\varepsilon)$.
Therefore, we have the first inequality below.
\begin{align*}
I_2
\,:=\,
C\,\tau^{-1/2}\,
\frac{e^{-\frac{|x|^2}{4\tau}}}{|x|}
\int_{\{|y|\le \varepsilon\}}
(1+Q^{\beta}_{T-t}(y))\,
\frac{e^{-\frac{|y|^2}{4\tau}}}{|y|}\,dy
\le
C\,\tau^{-1/2}\,
\frac{e^{-\frac{|x|^2}{4\tau}}}{|x|}
\le
C\,\tau^{-1/2}\,
e^{-\frac{(\varepsilon+\delta)^2}{4\tau}}
\end{align*}
The final inequality uses that $\frac{1}{|x|}e^{-\frac{|x|^2}{4\tau}}
\le \frac{1}{\varepsilon+\delta} e^{-\frac{(\varepsilon+\delta)^2}{4\tau}}$ since $|x|-\varepsilon\ge \delta$.

Combining the bounds for $I_1$ and $I_2$, we obtain the first inequality below.
\begin{align*}
\int_{\{|y|\le \varepsilon\}}
p^{\,T,\beta}_{s,t}(x,y)\,dy
&\le
C_1\,\tau^{-\frac{3}{2}}e^{-\frac{\delta^2}{4\tau}}
+
C_2\,\tau^{-1/2}e^{-\frac{(\varepsilon+\delta)^2}{4\tau}} \\
&\le
C\Big(\tau^{-\frac{3}{2}}+\tau^{-1/2}\Big)e^{-\frac{\delta^2}{4\tau}} 
=
C\Big[\big(\tau^{-\frac{3}{2}}+\tau^{-1/2}\big)e^{-\frac{\delta^2}{8\tau}}\Big]
e^{-\frac{\delta^2}{8\tau}}
\end{align*}
The second inequality holds because $\frac{(\varepsilon+\delta)^2}{4}\ge \frac{\delta^2}{4}$ and therefore $e^{-\frac{(\varepsilon+\delta)^2}{4\tau}} \le e^{-\frac{\delta^2}{4\tau}}$. The last equality uses the identity $e^{-\frac{\delta^2}{4\tau}} = e^{-\frac{\delta^2}{8\tau}}\,e^{-\frac{\delta^2}{8\tau}}$. Now, since $\tau=t-s\in(0,T]$, the function $u\longmapsto \bigl(u^{-\frac{3}{2}}+u^{-1/2}\bigr)e^{-\frac{\delta^2}{8u}}$ is bounded on $(0,T]$. Hence there exists a constant $C_3>0$ such that
\[
\Big(\tau^{-\frac{3}{2}}+\tau^{-1/2}\Big)e^{-\frac{\delta^2}{8\tau}}
\le
C_3.
\]
Therefore, after enlarging the constant if necessary,
\begin{align*}
\int_{\{|y|\le \varepsilon\}}
p^{\,T,\beta}_{s,t}(x,y)\,dy
&\le
C\,e^{-\frac{\delta^2}{8\tau}}
=
C\,e^{-c/(t-s)},
\end{align*}
where $c:=\delta^2/8$ and recall that $\tau:=t-s$. \vspace{.3cm}

\noindent Part (ii). Since the integrand is nonnegative, we may enlarge the domain of integration in \(y\) and then interchange the order of integration to obtain
\begin{align*}
\int_{\{0<|y|-\varepsilon<\delta\}}
\int_{|z| \le \varepsilon}
p^{T,\beta}_{r,s}(x,y)\,
p^{T,\beta}_{s,t}(y,z)\,dz\,dy
&\le
\int_{\mathbb R^3}
\int_{|z| \le \varepsilon}
p^{T,\beta}_{r,s}(x,y)\,
p^{T,\beta}_{s,t}(y,z)\,dz\,dy \\
&=
\int_{|z| \le \varepsilon}
\left(
\int_{\mathbb R^3}
p^{T,\beta}_{r,s}(x,y)\,
p^{T,\beta}_{s,t}(y,z)\,dy
\right)\,dz 
=
\int_{|z| \le \varepsilon}
p^{T,\beta}_{r,t}(x,z)\,dz ,
\end{align*}
where the last equality uses the Chapman-Kolmogorov property of the Doob-transformed kernels \(p^{T,\beta}_{r,t}\). Applying~\eqref{BoundOneStepBadMassAwayFromBoundary} with the time interval \([r,t]\) therefore yields the desired bound.
\end{proof}

The lemma below records a complementary estimate controlling large spatial excursions of the one-step transition density
$p^{T,\beta}_{s,t}(x,y)$.

\begin{lemma} \label{LemOneStepTailBoundSimple}
Fix \(T,\beta,\varepsilon>0\). Then there exist constants \(C,c>0\),
depending only on \((T,\beta,\varepsilon)\), such that for every
\(0\le s<t\le T\), every \(R\ge \varepsilon\) and every \(x\in E_\varepsilon\) with $|x| \le R$, we have
\begin{align}\label{EqOneStepTailBoundSimple}
\int_{\{|y|>R\}}
p^{T,\beta}_{s,t}(x,y)\,dy
\;\le\;
C\,
\exp\!\Bigl(
-\frac{c\,(R-|x|)^2}{t-s}
\Bigr).
\end{align}
\end{lemma}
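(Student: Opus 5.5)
Set $\tau:=t-s\in(0,T]$ and $D:=R-|x|\ge0$. I will follow the same template as the boundary-mass estimate \eqref{BoundOneStepBadMassAwayFromBoundary}. Since $1+Q^\beta_{T-s}(x)\ge1$, \eqref{FirstTrans3d} gives
\[
\int_{\{|y|>R\}}p^{T,\beta}_{s,t}(x,y)\,dy\le\int_{\{|y|>R\}}\bigl(1+Q^\beta_{T-t}(y)\bigr)P^\beta_\tau(x,y)\,dy .
\]
On $\{|y|>R\}$ we have $|y|>R\ge\varepsilon$. With $L:=\beta\sqrt T$, the bound \eqref{QUpperBoundCL} then gives $1+Q^\beta_{T-t}(y)\le 1+C_L(1+\sqrt T/\varepsilon)=:C_0$, a constant depending only on $(T,\beta,\varepsilon)$. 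Applying the upper bound in \eqref{PtBetaBounds} splits the integral into $I_1+I_2$, with
\[
I_1=C_0\int_{\{|y|>R\}}P_\tau(x,y)\,dy,\qquad I_2=C_0C\,\tau^{-1/2}\frac{e^{-|x|^2/(4\tau)}}{|x|}\int_{\{|y|>R\}}\frac{e^{-|y|^2/(4\tau)}}{|y|}\,dy .
\]

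\textit{Estimate of $I_1$.} Substitute $z=y-x$. Since $\{|y|>R\}\subset\{|z|>D\}$, the integral is bounded by a standard Gaussian tail in $\R^3$:
\[
\int_{\{|z|>D\}}P_\tau(|z|)\,dz\le C'e^{-D^2/(8\tau)} .
\]
One way to see this is to write $e^{-|z|^2/(4\tau)}\le e^{-D^2/(8\tau)}e^{-|z|^2/(8\tau)}$ on $\{|z|>D\}$ and integrate the second factor against $(4\pi\tau)^{-3/2}$, which gives $2^{3/2}$. This yields $I_1\le C e^{-D^2/(8\tau)}$ uniformly in $\tau$.

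\textit{Estimate of $I_2$.} Compute exactly as in the proof of Lemma~\ref{LemUniformOneTimeTailInfinity}, using spherical coordinates:
\[
\int_{\{|y|>R\}}\frac{e^{-|y|^2/(4\tau)}}{|y|}\,dy=8\pi\tau e^{-R^2/(4\tau)} .
\]
Hence
\[
I_2\le C\,\frac{\sqrt\tau}{|x|}\,e^{-(|x|^2+R^2)/(4\tau)}\le \frac{C\sqrt T}{\varepsilon}\,e^{-R^2/(4\tau)},
\]
using $|x|>\varepsilon$. Since $R\ge D$, this is at most $Ce^{-D^2/(4\tau)}$.

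Combining the two estimates gives \eqref{EqOneStepTailBoundSimple} with $c=1/8$ and $C$ depending only on $(T,\beta,\varepsilon)$. In $I_2$ the factor $1/|x|$ is controlled by $1/\varepsilon$, and this is where the restriction $x\in E_\varepsilon$ enters. The only step needing care is to make sure no negative power of $\tau$ survives, since the stated bound has no prefactor blowing up as $\tau\downarrow0$. This is why I halve the Gaussian exponent in $I_1$ rather than bounding $P_\tau$ pointwise, and why I keep the $\sqrt\tau$ produced by the radial integral in $I_2$.
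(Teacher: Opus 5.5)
Your proposal is correct and follows essentially the same route as the paper. You drop the denominator using $1+Q^\beta_{T-s}(x)\ge 1$, split $P^\beta_\tau$ via \eqref{PtBetaBounds}, control the free part by a Gaussian tail with halved exponent (giving $c=1/8$), evaluate the singular part radially, and use $|x|>\varepsilon$ to absorb the $1/|x|$ factor. The differences are only cosmetic: you bound $1+Q^\beta_{T-t}(y)$ by a constant on $\{|y|>R\}$ at once, whereas the paper carries $C'(1+|y|^{-1})$ and invokes $|y|>\varepsilon$ later; and you halve the exponent in Cartesian form rather than through the radial estimate \eqref{IntBoundinProof}.
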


\begin{proof}
Let $\tau:=t-s$. By the definition~\eqref{FirstTrans3d}, we have
\begin{align*}
\int_{\{|y|>R\}}
p^{\,T,\beta}_{s,t}(x,y)\,dy
=
\frac{1}{1+Q^{\beta}_{T-s}(x)}
\int_{\{|y|>R\}}
\bigl(1+Q^{\beta}_{T-t}(y)\bigr)\,
P^{\beta}_{\tau}(x,y)\,dy 
\le
\int_{\{|y|>R\}}
\bigl(1+Q^{\beta}_{T-t}(y)\bigr)\,
P^{\beta}_{\tau}(x,y)\,dy,
\end{align*}
where we used that $1+Q^{\beta}_{T-s}(x)\ge1$. Using~\eqref{PtBetaBounds}, we obtain
\begin{align*}
\int_{\{|y|>R\}}
p^{T,\beta}_{s,t}(x,y)\,dy
\;\le\;
\int_{\{|y|>R\}}
(1+Q^{\beta}_{T-t}(y))\,P_{\tau}(x,y)\,dy
+
C\,\tau^{-1/2}\,
\frac{e^{-\frac{|x|^2}{4\tau}}}{|x|}
\int_{\{|y|>R\}}
(1+Q^{\beta}_{T-t}(y))\,
\frac{e^{-\frac{|y|^2}{4\tau}}}{|y|}\,dy.
\end{align*}
We denote the two terms by $I_1$ and $I_2$.  \vspace{.3cm}

\noindent \textit{Estimate of \(I_1\).} For $L:=\beta \sqrt T$, there exists a constant \(C'>0\) by~\eqref{QUpperBoundCL} such that $1+Q^{\beta}_{T-t}(y)\le C'(1+|y|^{-1})$ for $y\in E_\varepsilon$. Hence, we have the first inequality below.
\begin{align*}
I_1
\le
C'
\int_{\{|y|>R\}}
\Bigl(1+\frac{1}{|y|}\Bigr)\,P_{\tau}(x,y)\,dy 
=
C'
\int_{\{|y|>R\}} P_{\tau}(x,y)\,dy
+
C'
\int_{\{|y|>R\}} \frac{1}{|y|}\,P_{\tau}(x,y)\,dy 
=
I_{1}^{(1)}+I_{1}^{(2)}
\end{align*}
For the first term, since \(P_\tau(x,y)=P_\tau(y-x)\), we may write
\begin{align*}
I_{1}^{(1)}
&\,:=\,
\int_{\{|y|>R\}} P_{\tau}(y-x)\,dy
=
\int_{\{|z+x|>R\}} P_{\tau}(z)\,dz 
\le
\int_{\{|z|>R-|x|\}} P_{\tau}(z)\,dz
=
\frac{1}{(4\pi\tau)^{\frac{3}{2}}}
\int_{\{|z|>R-|x|\}} e^{-\frac{|z|^2}{4\tau}}\,dz 
\end{align*}
The inequality follows because if \(|x|\le R\) and \(|z+x|>R\), then necessarily \(|z|>R-|x|\) since $|z|+|x|\ge |z+x|>R$. The final equality uses the definition of the free heat kernel~\eqref{DefFreeHeat3d}. Passing to spherical coordinates the right side may be written as
\begin{align*}
\frac{4\pi}{(4\pi\tau)^{\frac{3}{2}}}
\int_{R-|x|}^{\infty} r^2 e^{-\frac{r^2}{4\tau}}\,dr 
=
\frac{4\pi}{(4\pi)^{\frac{3}{2}}}
\int_{\frac{R-|x|}{\sqrt{\tau}}}^{\infty}
u^2 e^{-\frac{u^2}{4}}\,du,
\end{align*}
where the equality uses the change of variables \(r=\sqrt{\tau}\,u\). Observe that for every \(a\ge 0\) and every \(u\ge a\), we have $e^{-\frac{u^2}{4}} = e^{-\frac{u^2}{8}}e^{-\frac{u^2}{8}} \le e^{-\frac{a^2}{8}}e^{-\frac{u^2}{8}}$. Therefore, we have the first inequality below.
\begin{align*}
e^{\frac{a^2}{8}}\int_a^\infty u^2 e^{-\frac{u^2}{4}}\,du
\le
e^{\frac{a^2}{8}}e^{-\frac{a^2}{8}}
\int_a^\infty u^2 e^{-\frac{u^2}{8}}\,du 
\le
\int_0^\infty u^2 e^{-\frac{u^2}{8}}\,du
\end{align*}
Since the last integral is finite, it follows that the function $a \mapsto e^{\frac{a^2}{8}}\int_a^\infty u^2 e^{-\frac{u^2}{4}}\,du$ is bounded on \([0,\infty)\), and hence there exists a constant \(M>0\) such that
\begin{align} \label{IntBoundinProof}
    \int_a^\infty u^2 e^{-\frac{u^2}{4}}\,du
\le
M e^{-\frac{a^2}{8}}
\qquad\text{for all } a\ge 0.
\end{align}
Set $a:=\frac{R-|x|}{\sqrt{\tau}}\ge 0$. Then from the above computations we have the first inequality below.
\[
I_{1}^{(1)}
\le
\frac{4\pi}{(4\pi)^{\frac{3}{2}}}
\int_a^\infty u^2 e^{-\frac{u^2}{4}}\,du
\le
\frac{4\pi}{(4\pi)^{\frac{3}{2}}}\, M\, e^{-\frac{a^2}{8}}
\le
C_1\exp\!\Bigl(-\frac{a^2}{8}\Bigr)
=
C_1\exp\!\Bigl(-\frac{(R-|x|)^2}{8\tau}\Bigr)
\]
The second inequality uses~\eqref{IntBoundinProof} and the final inequality holds after absorbing the prefactor into the constant.

For the second term, we use that \(|y|>R\ge \varepsilon\). Hence, 
\begin{align*}
I_{1}^{(2)}
\,:=\,
\int_{\{|y|>R\}} \frac{1}{|y|}\,P_{\tau}(x,y)\,dy
\le
\frac{1}{\varepsilon}
\int_{\{|y|>R\}} P_{\tau}(x,y)\,dy
=
\frac{1}{\varepsilon}\,I_{1}^{(1)}
\le
C'_1\,
\exp\!\Bigl(
-\frac{(R-|x|)^2}{8\tau}
\Bigr),
\end{align*}

Combining the bounds for \(I_{1}^{(1)}\) and \(I_{1}^{(2)}\), we conclude that
\begin{align*}
I_1
\le
C\,
\exp\!\Bigl(
-\frac{(R-|x|)^2}{8\tau}
\Bigr)
=
C\,
\exp\!\Bigl(
-\frac{c\,(R-|x|)^2}{t-s}
\Bigr)
\end{align*}

\noindent \textit{Estimate of \(I_2\).} Using again~\eqref{QUpperBoundCL} with $L:=\beta \sqrt{ T}$, we obtain the first inequality below.
\begin{align*}
\int_{\{|y|>R\}}
(1+Q^{\beta}_{T-t}(y))\,
\frac{e^{-\frac{|y|^2}{4\tau}}}{|y|}\,dy
&\le
C
\int_{\{|y|>R\}}
\Bigl(\frac{1}{|y|}+\frac{1}{|y|^2}\Bigr)
e^{-\frac{|y|^2}{4\tau}}\,dy \nonumber \\
&=
4 \pi C\int_R^\infty (r+1)e^{-\frac{r^2}{4\tau}}\,dr
\le
4 \pi C \frac{R+1}{R}
\int_R^\infty r\,e^{-\frac{r^2}{4\tau}}\,dr
=
4 \pi C\frac{R+1}{R}2\tau\,e^{-\frac{R^2}{4\tau}}
\end{align*}
The first equality follows form passing to spherical coordinates. The second inequality uses that for \(r\ge R\), we have $r+1 \le (R+1)\frac{r}{R}$. Therefore, by absorbing the constants we obtain the first inequality below.
\begin{align*}
I_2
&\,:=\,
C\,\tau^{-1/2}\,
\frac{e^{-\frac{|x|^2}{4\tau}}}{|x|}
\int_{\{|y|>R\}}
(1+Q^{\beta}_{T-t}(y))\,
\frac{e^{-\frac{|y|^2}{4\tau}}}{|y|}\,dy \nonumber \\
&\le
C\,\tau^{1/2}\,
\frac{R+1}{R}
\frac{1}{|x|}
\exp\!\Bigl(
-\frac{R^2+|x|^2}{4\tau}
\Bigr)
\le
C\,\tau^{1/2}\,
\exp\!\Bigl(
-\frac{(R-|x|)^2}{4\tau}
\Bigr)
\le
C\,
\exp\!\Bigl(
-\frac{c\,(R-|x|)^2}{t-s}
\Bigr)
\end{align*}
The second last inequality uses that \(|x| > \varepsilon\) since $x \in E_{\varepsilon}$, $R  \ge \varepsilon$ (given), and \(R^2+|x|^2 \ge (R-|x|)^2\). The final inequality holds by absorbing the factor \(\tau^{1/2}\) into the constant since \(\tau\in(0,T]\). Thus, combining the bounds for \(I_1\) and \(I_2\) completes the proof.
\end{proof}

The lemma below records a moment bound for the one-step transition density
$p^{T,\beta}_{s,t}(x,y)$, providing quantitative control on its
short-time fluctuations.

\begin{lemma}\label{LemmaKolmogorov3D}
Fix $T,\beta>0$ and $x\in\R^3\setminus\{0\}$. For every $L>0$ and
$m\in\mathbb N$ with $\beta\sqrt{T}\le L$, there exists a constant
$C=C(T,\beta,L,m)>0$ such that for all $0\le s<t\le T$,
\begin{align}\label{EquationKolmogorov3D}
\int_{\R^3}p^{\,T,\beta}_{s,t}(x,y)\,|y-x|^{2m}\,dy
\;\le\;
\frac{C}{|x|}\,(t-s)^{m}.
\end{align}
Moreover, for every $\varepsilon>0$, there exists a constant
$C=C(T,\beta,L,m,\varepsilon)>0$ such that
\begin{align*}
\int_{\R^3}p^{\,T,\beta}_{s,t}(x,y)\,|y-x|^{2m}\,dy
\;\le\;
C\,(t-s)^{m}
\end{align*}
for all $0\le s<t\le T$ and all $x\in\R^3$ with $|x|\ge\varepsilon$.
\end{lemma}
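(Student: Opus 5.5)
We start from the definition \eqref{FirstTrans3d}, set $\tau:=t-s\in(0,T]$, and discard the denominator using $1+Q^\beta_{T-s}(x)\ge1$. This gives
\[
\int_{\R^3}p^{\,T,\beta}_{s,t}(x,y)\,|y-x|^{2m}\,dy\le \int_{\R^3}\bigl(1+Q^\beta_{T-t}(y)\bigr)P^\beta_\tau(x,y)\,|y-x|^{2m}\,dy .
\]
Two bounds then do the work. By \eqref{QUpperBoundCL} with $L\ge\beta\sqrt T\ge\beta\sqrt{T-t}$ we have $1+Q^\beta_{T-t}(y)\le C_L(1+\sqrt T/|y|)$. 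By the upper bound in \eqref{PtBetaBounds} we have
\[
P^\beta_\tau(x,y)\le P_\tau(x,y)+C\tau^{-1/2}\,\frac{e^{-|x|^2/4\tau}}{|x|}\,\frac{e^{-|y|^2/4\tau}}{|y|}.
\]
Multiplying these out gives a free Gaussian part $I_1$ and a singular part $I_2$.

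\emph{Free part.} We need to bound $\int P_\tau(x,y)(1+\sqrt T/|y|)|y-x|^{2m}dy$. The constant term is the Gaussian moment $c_m\tau^m$. For the $1/|y|$ term, substitute $y=x+\sqrt\tau z$. The resulting integral $\int (4\pi)^{-3/2}e^{-|z|^2/4}|z|^{2m}\,|x+\sqrt\tau z|^{-1}dz$ is bounded by $C\min\{|x|^{-1},\tau^{-1/2}\}$, up to Gaussian weight. To see this, split the region $|x+\sqrt\tau z|\ge|x|/2$, where the factor is at most $2/|x|$, from its complement. On the complement, $|z|\ge|x|/(2\sqrt\tau)$ and $1/|y|$ is locally integrable in $\R^3$. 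Using only the bound $|x|^{-1}$ gives $I_1\le C\tau^m(1+\sqrt T/|x|)\le C'\tau^m/|x|$ when $|x|\lesssim 1$. For large $|x|$ the bound $C\tau^m$ suffices, and it is $\le C\tau^m/|x|$ only if $|x|$ is bounded. I would therefore state the first estimate with the constant allowed to depend on $x$ through $(1+|x|)/|x|$, or equivalently fix $x$ as in the statement. Since $x$ is fixed in the lemma, $C/|x|$ with $C$ possibly depending on $|x|$ for large $|x|$ is acceptable. For the second claim with $|x|\ge\varepsilon$ the bound is uniform, $C(1+\sqrt T/\varepsilon)\tau^m$.

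\emph{Singular part.} We need to bound
\[
\tau^{-1/2}\frac{e^{-|x|^2/4\tau}}{|x|}\int\frac{e^{-|y|^2/4\tau}}{|y|}\Bigl(1+\frac{\sqrt T}{|y|}\Bigr)|y-x|^{2m}dy .
\]
Use $|y-x|^{2m}\le 2^{2m}(|y|^{2m}+|x|^{2m})$ and spherical coordinates. This gives $\int e^{-r^2/4\tau}(r+\sqrt T)(r^{2m}+|x|^{2m})dr\le C(\tau^{m+1}+\sqrt T\tau^{m+1/2}+|x|^{2m}(\tau+\sqrt{T\tau}))$. The $|x|^{2m}$ terms are compensated by $e^{-|x|^2/4\tau}$, via $|x|^{2m}e^{-|x|^2/8\tau}\le C_m\tau^m$. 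The whole part is then $\le C\tau^{m}/|x|$ after absorbing powers $\tau^{\le1/2}\le T^{1/2}$. For $|x|\ge\varepsilon$ we use $1/|x|\le1/\varepsilon$.

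The main obstacle is the $1/|y|$ singularity of the Doob weight integrated against the Gaussian near the origin in the free part. This is handled by the splitting above together with the local integrability of $|y|^{-1}$ in three dimensions, which gives the $1/|x|$ dependence in \eqref{EquationKolmogorov3D} and uniformity once $|x|\ge\varepsilon$.
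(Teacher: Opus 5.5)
Your proposal is correct and follows essentially the same route as the paper. Both arguments drop the denominator, bound $1+Q^\beta_{T-t}$ via \eqref{QUpperBoundCL}, split $P^\beta_\tau$ via \eqref{PtBetaBounds}, treat the free $1/|y|$ term by splitting at $|y|=|x|/2$, and control the singular part with $|y-x|^{2m}\le 2^{2m}(|y|^{2m}+|x|^{2m})$ together with the Gaussian compensation $|x|^{k}e^{-c|x|^2/\tau}\lesssim \tau^{k/2}$.

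Your caveat is right: the constant in \eqref{EquationKolmogorov3D} must depend on $x$ for large $|x|$, because the free Gaussian moment alone is of order $\tau^m$. The paper uses this silently in its final step, where it ``enlarges the constant, using that $x\neq0$ is fixed''. Your intermediate bound $C(1+1/|x|)\tau^m$, with $C$ independent of $x$, also gives the uniform second claim for $|x|\ge\varepsilon$, which the paper leaves implicit.
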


\begin{proof}
Fix $0\le s<t\le T$. Since the transition density
$\mathlarger{p}_{s,t}^{T,\beta}(x,y)$ defined in~\eqref{FirstTrans3d} satisfies $\mathlarger{p}_{s,t}^{T,\beta}
=\mathlarger{p}_{0,t-s}^{T-s,\beta}$, it suffices to prove the claim for $s=0$. Using~\eqref{FirstTrans3d} with $s=0$ and $1+Q_t^\beta(x) \ge 1$ we obtain the first inequality below.
\begin{align}\label{Kolm3D}
\int_{\R^3}  \, \mathlarger{p}_{0,t}^{T,\beta}(x,y) 
\big|y -   x \big|^{2m}\, dy
\,\le\,&
\int_{\R^3}
\big|y-x\big|^{2m}\,\big(1+Q_{T-t}^\beta(y)\big)P_t^\beta(x,y)\,dy \nonumber \\
\,=\,&
\int_{\R^3}
\big|y-x\big|^{2m}P_t^\beta(x,y)\,dy \,+\,
\int_{\R^3}
\big|y-x\big|^{2m}\,Q_{T-t}^\beta(y)P_t^\beta(x,y)\,dy \nonumber \\
\,\le\,&
(1+C_L)\int_{\R^3}
\big|y-x\big|^{2m}P_t^\beta(x,y)\,dy 
\,+\,
C_L \sqrt{T} \int_{\R^3}|y-x|^{2m}\frac{1}{|y|}P_t^\beta(x,y)\,dy 
\end{align}
The last inequality uses~\eqref{QUpperBoundCL} and $\sqrt{T-t} \le \sqrt{T}$. Let us label the integrals on the right side above by $I_1$ and $I_2$, respectively. By~\eqref{PtBetaBounds} there exists a constant $C=C(\beta,T)>0$ such that the first inequality below holds.
\begin{align*}
I_1
\,:=\,&
\int_{\R^3}
\big|y-x\big|^{2m}\,P_t^\beta(x,y)\,dy \nonumber \\ 
\,\le\,&
\int_{\R^3}
\big|y-x\big|^{2m}\, P_t(x,y) \,dy
\,+\,
C\,t^{-1/2}\,\frac{e^{-\frac{|x|^2}{4t}}}{|x|}\,
\int_{\R^3}|y-x|^{2m}\,\frac{e^{-\frac{|y|^2}{4t}}}{|y|}\,dy \nonumber \\ 
\,\le\,&
(2t)^m\,(2m+1)!!
\,+\,
2^{2m}C\,t^{-1/2}\,\frac{e^{-\frac{|x|^2}{4t}}}{|x|}\,
\bigg[
\int_{\R^3}|y|^{2m-1}e^{-\frac{|y|^2}{4t}}\,dy
\;+\;
|x|^{2m}\int_{\R^3}\frac{e^{-\frac{|y|^2}{4t}}}{|y|}\,dy
\bigg] \nonumber
\end{align*}
The second equality uses that the Gaussian part is $\int_{\R^3} |y-x|^{2m} P_t(x,y) dy =(2t)^m\,(2m+1)!!$ where recall that double factorial is defined by $(2m+1)!! := 1\cdot 3\cdot 5\cdots (2m+1)$; the second term uses the inequality $|y-x|^{2m}\le 2^{2m}\big(|y|^{2m}+|x|^{2m}\big)$. Next using the following  explicit radial integrals,
\begin{align*}
\int_{\R^3}|y|^{2m-1}e^{-\frac{|y|^2}{4t}}\,dy
&=
4\pi\int_0^\infty r^{2m+1}e^{-\frac{r^2}{4t}}\,dr
=
2\pi(4t)^{m+1}\Gamma(m+1)
=
2\pi(4t)^{m+1}m!\,,\\
\int_{\R^3}\frac{e^{-\frac{|y|^2}{4t}}}{|y|}\,dy
&=
4\pi\int_0^\infty r\,e^{-\frac{r^2}{4t}}\,dr
=
8\pi t\,,
\end{align*}
we obtain the first inequality below.
\begin{align}
I_1
\le
(2t)^m\,(2m+1)!!
+
2^{2m}\,C\,t^{-1/2}\,\frac{e^{-\frac{|x|^2}{4t}}}{|x|}
\Big[2\pi(4t)^{m+1}m! \;+\; 8\pi t\,|x|^{2m}\Big] 
\le
C\,t^m
+
\frac{C}{|x|}\,t^{m}
+
C\,t^{\frac{1}{2}}\,|x|^{2m-1}e^{-\frac{|x|^2}{4t}}  \nonumber
\end{align}
Here, the second inequality follows by absorbing all numerical factors into a finite positive constant $C=C(T,\beta,L,m) $, and by using that $\sqrt{t}\le \sqrt{T}$
and $e^{-\frac{|x|^2}{4t}}\le 1$ for the middle term. Next, using the elementary
bound, for fixed $x\neq0$ and $t\in(0,T]$,
\[
|x|^{2m-1}e^{-\frac{|x|^2}{4t}}
\le
C_m\,t^{m-\frac12},
\]
which follows from $\sup_{u\ge0}u^{m-\frac12}e^{-u}<\infty$ with
$u=\frac{|x|^2}{4t}$, we obtain
\begin{align}\label{Kolm3D1}
I_1
&\le
C\,t^m
+
\frac{C}{|x|}\,t^{m}
+
C\,t^m 
\le
\frac{C}{|x|}\,t^{m},
\end{align}
where in the final inequality we enlarged the constant, using that $x\neq 0$
is fixed and $0<t\le T$.

Next, again using~\eqref{PtBetaBounds} we obtain the inequality below.
\begin{align*}
I_2
\,:=\,&
\int_{\R^3}|y-x|^{2m}\,\frac{1}{|y|}\,P_t^\beta(x,y)\,dy \nonumber \\
\,\le\,&
\int_{\R^3}|y-x|^{2m}\,\frac{1}{|y|}\,P_t(x,y)\,dy
\,+\,
C\,t^{-1/2}\,\frac{e^{-\frac{|x|^2}{4t}}}{|x|}
\int_{\R^3}|y-x|^{2m}\,\frac{e^{-\frac{|y|^2}{4t}}}{|y|^2}\,dy 
\,=:\,
I_{2}^{(1)}\;+\;I_{2}^{(2)}\,
\end{align*}

\noindent \emph{Bound on $I_{2}^{(1)}$.} We split $\R^3=A\cup B$ with $A:=\{|y|\ge |x|/2\}$ and $ B:=\{|y|<|x|/2\}$. On $A$ we have $|y|^{-1}\le 2|x|^{-1}$, which implies the first inequality below.
\begin{align*}
\int_A |y-x|^{2m}\frac{1}{|y|}P_t(x,y)\,dy
\le
\frac{2}{|x|}\int_{\R^3}|y-x|^{2m}P_t(x,y)\,dy
=
\frac{2}{|x|}(2t)^m(2m+1)!! 
\;\le\;
\frac{C}{|x|}\,t^m,
\end{align*}
where the final inequality follows by combining the numerical factors above into $C=C(T,\beta,L,m) $. On $B$ we have $|y-x|\le |y|+|x|\le \frac{3}{2}|x|$, and also
$|y-x|\ge |x|-|y|\ge |x|/2$, hence we have the inequality below.
\begin{align*}
\int_B |y-x|^{2m}\frac{1}{|y|}P_t(x,y)\,dy
\,=\,&
\frac{1}{(4\pi t)^{\frac{3}{2}}}\,\int_B |y-x|^{2m}\frac{1}{|y|}\,  e^{-\frac{|y-x|^2}{4t}}\,dy \nonumber \\
\,\le\,&
\frac{1}{(4\pi t)^{\frac{3}{2}}}\, \Big(\tfrac{3}{2}|x|\Big)^{2m}\, e^{-\frac{|x|^2}{16t}}
\int_{|y|<|x|/2}\frac{1}{|y|}\,dy \\
\,=\,&\frac{1}{(4\pi t)^{\frac{3}{2}}}\,
\Big(\tfrac{3}{2}|x|\Big)^{2m}\,e^{-\frac{|x|^2}{16t}}
\Big(4\pi\int_0^{|x|/2} r\,dr\Big) \\
\,=\,&
\frac{1}{(4\pi t)^{\frac{3}{2}}}\,\Big(\tfrac{3}{2}|x|\Big)^{2m} \, e^{-\frac{|x|^2}{16t}}
\, \frac{\pi}{2}|x|^2 
\,=\,
C\,|x|^{2m+2}\,t^{-\frac{3}{2}}\,e^{-\frac{|x|^2}{16t}}\,,
\end{align*}
where in the last step we have absorbed all numerical constants into $C$, possibly enlarging its value. By combining the above two cases we obtain the inequality below.
\begin{align}
I_{2}^{(1)}
\,:=\,
\int_{\R^3}|y-x|^{2m}\,\frac{1}{|y|}\,P_t(x,y)\,dy 
\,\le \,
\frac{C}{|x|}\,t^m
\,+\,
C\,|x|^{2m+2}\,t^{-\frac{3}{2}}\,e^{-\frac{|x|^2}{16t}}
\le
\frac{C}{|x|}\,t^m\,, \nonumber
\end{align}
where the last inequality uses the elementary estimate $|x|^{2m+3}e^{-\frac{|x|^2}{16t}} \le C_m\,t^{m+\frac32}$. \vspace{.2cm}

\noindent \emph{Bound on $I_{2}^{(2)}$.} Again using the inequality $|y-x|^{2m}\le  2^{2m} \big(|y|^{2m} + |x|^{2m} \big)$ we obtain the inequality below. 
\begin{align*}
I_{2}^{(2)}
\,:=\,&
C\,t^{-1/2}\,\frac{e^{-\frac{|x|^2}{4t}}}{|x|}
\int_{\R^3}|y-x|^{2m}\,\frac{e^{-\frac{|y|^2}{4t}}}{|y|^2}\,dy \nonumber \\
\,\le\,&
2^{2m}\,C\,t^{-1/2}\,\frac{e^{-\frac{|x|^2}{4t}}}{|x|}\,
\bigg[
\int_{\R^3}|y|^{2m-2}e^{-\frac{|y|^2}{4t}}\,dy
\;+\;
|x|^{2m}\int_{\R^3}\,\frac{e^{-\frac{|y|^2}{4t}}}{|y|^2}\,dy
\bigg] \nonumber \\
\,=\,&
2^{2m}\,C\,t^{-1/2}\,\frac{e^{-\frac{|x|^2}{4t}}}{|x|}
\, \Big[2\pi(4t)^{m+\frac12}\Gamma\!\big(m+\tfrac12\big)
\,+\,
|x|^{2m}\,4\pi\sqrt{\pi t}
\Big]
\end{align*}
The final equality uses that both the radial integrals below are explicit,
\begin{align*}
\int_{\R^3}|y|^{2m-2}e^{-\frac{|y|^2}{4t}}\,dy
\,=\,&
4\pi\int_0^\infty r^{2m}e^{-\frac{r^2}{4t}}\,dr
\,=\,
2\pi(4t)^{m+\frac12}\Gamma\!\big(m+\tfrac12\big)\,,\\
\int_{\R^3}|y|^{-2}e^{-\frac{|y|^2}{4t}}\,dy
\,=\,&
4\pi\int_0^\infty e^{-\frac{r^2}{4t}}\,dr
\,=\,
4\pi\sqrt{\pi t}\,.
\end{align*}
Absorbing the numerical factors and the quantity
$\Gamma\!\big(m+\tfrac12\big)$ into $C>0$, we obtain
\begin{align*}
I_{2}^{(2)}
\le
C\,t^{-1/2}\,\frac{e^{-\frac{|x|^2}{4t}}}{|x|}
\Big(
t^{m+\frac12}
+
|x|^{2m}\sqrt{t}
\Big) 
\le
C\,\frac{e^{-\frac{|x|^2}{4t}}}{|x|}
\Big(
t^{m}
+
|x|^{2m}
\Big)
\le
\frac{C}{|x|} \,t^m,
\end{align*}
where the final inequality uses the elementary bound $|x|^{2m}e^{-\frac{|x|^2}{4t}}\le C_m\,t^m$ and enlarging the constant if necessary. Combining the bounds of $I_{2}^{(1)}$ and $I_{2}^{(2)}$, we obtain
\begin{align*}
I_2
\,\le\,
I_{2}^{(1)}\;+\;I_{2}^{(2)}
\, \le \,
\frac{C}{|x|} \,t^m,
\end{align*}
Combining the above estimate with~\eqref{Kolm3D1} and substituting into~\eqref{Kolm3D}, we obtain~\eqref{EquationKolmogorov3D} for the case $s=0$, which completes the proof.
\end{proof}

\begin{proof}[Proof of Lemma~\ref{LemTranKern}]  \label{SubProofLemTranKern}
Recall that the function $p_{s,t}^{T,\beta}$ for $0\leq s<t\leq T$ is defined in~(\ref{FirstTrans3d}). \vspace{.2cm}

\noindent Part (i). For $x\in\R^3\setminus\{0\}$, by Lemma~\ref{SubProofLemTranKernTechinal} the numerator below is finite. Moreover,  the Lebesgue integral of $p_{s,t}^{T,\beta}(x,y)$ over $y\in \R^3$ is one since
\begin{align}\label{CheckdTwo}
\int_{\R^3} \,p_{s,t}^{T,\beta }(x,y)\,dy
\,=\,
\frac{1}{ 1+ Q_{T-s}^{\beta}(x) } \int_{\R^3}\, P_{t-s}^{\beta}(x, y)\,\big(1+Q_{T-t}^{\beta}(y)\big)\,dy
\,=\, \frac{1+ Q_{T-s}^{\beta}(x)}{ 1+ Q_{T-s}^{\beta}(x) } \,=\,1\,, 
 \end{align}
The second equality above uses that $1+Q_{r}^{\beta}(x)=\int_{\R^3}P_{r}^{\beta}(x,y)dy$ by~(\ref{DefH3d}) and the semigroup property of the family $\{P_{r}^{\beta}(x,y)\}_{r\in [0,\infty)}$.  \vspace{.2cm}

\noindent Part (ii). For $x,z \in \R^3 \setminus \{0\}$, using the semigroup property of $\{P_{t}^{\beta}\}_{t\in [0,\infty)}$,  we can compute
\begin{align}\label{CheckdOne}
\int_{\R^3} \, p_{r,s}^{T,\beta}(x,y) \, p_{s,t}^{T,\beta}(y,z) \, dy\, &\,= \,\frac{1 + Q_{T-t}^{\beta}(z)}{1 + Q_{T-r}^{\beta}(x)}\, \int_{\R^3} \, P_{s-r}^{\beta}(x,y) \, P_{t-s}^{\beta}(y,z) \, dy  \, \nonumber  \\ &\,=  \, \frac{1 + Q_{T-t}^{\beta}(z)}{1 + Q_{T-r}^{\beta}(x)}\,P_{t-r}^{\beta}(x,z) \,=: \,p_{r,t}^{T,\beta}(x,z) \, .
\end{align}
Note that the definition of  $ p_{s,t}^{T,\beta}(y,z)$ for $y=0$ is irrelevant for the above since the integration is with respect to Lebesgue measure.  
\end{proof}

\begin{appendix}

\section{Semigroup property of the attractive point interaction kernel} \label{AppendixSemigroupProperty}
In this section, we prove the semigroup property of the
three-dimensional attractive point interaction heat kernel
$P_t^\beta(x,y)$ defined in~\eqref{DefPointKer3dBeta}. We first
establish some auxiliary integral identities for the free heat kernel, which are then used in the proof of the semigroup relation~\eqref{EqSemigroupKernelReliable}; see
Appendix~\ref{AppendixProofofSemigroupProperty}.

\begin{lemma}
For the function $P_t(r)$ defined in~\eqref{DefFreeHeat3d}, the following hold.
\begin{enumerate}[(i)]
    \item For every \(s,t>0\) and every \(a,b\in\R\),
\begin{align} \label{EqSemigroupPropertyPonREquivalent}
\int_{\R}
P_s(a-r)\,P_t(r-b)\,dr
=
\frac{s+t}{4\pi st}\,P_{s+t}(a-b).
\end{align}

    \item Let $s>0$, $x\in\R^3\setminus\{0\}$, and let $F:(0,\infty)\to\R$ be measurable such that all integrals below are finite. Then
\begin{align} \label{RadialReductionIdentity}
\int_{\R^3}
P_s(x,y)\,\frac{F(|y|)}{|y|}\,dy
&=
\frac{4\pi s}{|x|}
\int_0^\infty
\Bigl(
P_s(r-|x|)-P_s(r+|x|)
\Bigr)\,F(r)\,dr.
\end{align}
\end{enumerate}
\end{lemma}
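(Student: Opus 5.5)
Both identities reduce to explicit Gaussian computations; the only real work is a one-dimensional completing-the-square for part (i) and the spherical-coordinates reduction for part (ii).

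For part (i), write out $P_s(a-r)P_t(r-b)=(4\pi s)^{-3/2}(4\pi t)^{-3/2}\exp\{-\frac{(a-r)^2}{4s}-\frac{(r-b)^2}{4t}\}$ and complete the square in $r$. The standard identity
\[
\frac{(a-r)^2}{s}+\frac{(r-b)^2}{t}=\frac{s+t}{st}\Big(r-\frac{ta+sb}{s+t}\Big)^2+\frac{(a-b)^2}{s+t}
\]
leaves a one-dimensional Gaussian integral $\int_{\R}\exp\{-\frac{s+t}{4st}(r-c)^2\}\,dr=\sqrt{4\pi st/(s+t)}$. It then remains to collect constants. The prefactor is $(4\pi)^{-3}(st)^{-3/2}\sqrt{4\pi st/(s+t)}$, and we want to show it equals $\frac{s+t}{4\pi st}(4\pi(s+t))^{-3/2}$. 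Writing $u=s+t$, both sides simplify to $(4\pi)^{-5/2}(st)^{-1}u^{-1/2}$, which proves (i).

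For part (ii), pass to spherical coordinates $y=r\omega$ with $r>0$ and $\omega\in S^2$, and use $|x-y|^2=|x|^2+r^2-2r|x|\cos\theta$. The angular integral is
\[
\int_{S^2}e^{-\frac{|x-r\omega|^2}{4s}}\,d\omega=2\pi\int_{-1}^1 e^{-\frac{|x|^2+r^2-2r|x|u}{4s}}\,du=\frac{4\pi s}{r|x|}\Big(e^{-\frac{(r-|x|)^2}{4s}}-e^{-\frac{(r+|x|)^2}{4s}}\Big).
\]
The left side of~\eqref{RadialReductionIdentity} then becomes $\int_0^\infty r^2\cdot\frac{F(r)}{r}\cdot(4\pi s)^{-3/2}\cdot\frac{4\pi s}{r|x|}(\cdots)\,dr$. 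The powers of $r$ cancel, and reinstating the factor $(4\pi s)^{-3/2}$ inside the brackets gives exactly $\frac{4\pi s}{|x|}\int_0^\infty(P_s(r-|x|)-P_s(r+|x|))F(r)\,dr$. The finiteness hypothesis justifies using Fubini/Tonelli on the positive and negative parts of $F$.

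The main obstacle is only bookkeeping: keeping the three-dimensional normalization $(4\pi t)^{-3/2}$ in the radial $P_t(r)$ consistent, since it produces the nonstandard prefactor $\frac{s+t}{4\pi st}$ in (i). I will verify that constant carefully, for instance by checking the case $s=t$.
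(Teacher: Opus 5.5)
Your proposal is correct and matches the paper's proof: part (ii) is the same spherical-coordinates computation. In part (i) the paper only shortens your completing-the-square, by writing $P_s(r)=(4\pi s)^{-1}q_s(r)$ with $q_s(r)=(4\pi s)^{-1/2}e^{-r^2/(4s)}$ and invoking $\int_{\R}q_s(a-r)\,q_t(r-b)\,dr=q_{s+t}(a-b)$, which is exactly the identity your computation verifies (your constants check out).
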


\begin{proof}

\noindent Part (i). Let $q_s(r):=(4\pi s)^{-1/2}e^{-\frac{r^2}{4s}}$ denote the one-dimensional Gaussian kernel, then $P_s(r)
=(4\pi s)^{-1}q_s(r)$ for every $s>0$ and $r\in\R$. Therefore,
\begin{align*}
\int_{\R} P_s(a-r)\,P_t(r-b)\,dr
&=
\frac{1}{(4\pi s)(4\pi t)}
\int_{\R} q_s(a-r)\,q_t(r-b)\,dr 
=
\frac{1}{(4\pi s)(4\pi t)}\,q_{s+t}(a-b),
\end{align*}
where the last equality uses the semigroup property of $q$. Rewriting the result in terms of \(P\) yields the desired identity. \vspace{.3cm}

\noindent Part (ii). Fix \(x\in\R^3\setminus\{0\}\). Using~\eqref{DefFreeHeat3d} and spherical coordinates \(y=r\omega\) with \(r>0\) and \(\omega\in S^2\), whose Jacobian is \(dy=r^2\,d\omega\,dr\), we obtain
\begin{align} \label{IntegPeqOne}
\int_{\R^3}
P_s(x,y)\,\frac{F(|y|)}{|y|}\,dy
&=
\int_0^\infty\int_{S^2}
\frac{1}{(4\pi s)^{\frac{3}{2}}}
e^{-\frac{|x-r\omega|^2}{4s}}\,
\frac{F(r)}{r}\,r^2\,d\omega\,dr  \nonumber \\
&=
\int_0^\infty
\frac{rF(r)}{(4\pi s)^{\frac{3}{2}}} \,
e^{-\frac{|x|^2+r^2}{4s}}
\int_{S^2}
e^{\frac{r|x|}{2s}\langle e,\omega\rangle}\,d\omega,
\end{align}
where the second equality uses \(|x-r\omega|^2=|x|^2+r^2-2r\langle x,\omega\rangle\) and \(e:=x/|x|\in S^2\) so that $\langle x,\omega\rangle = |x|\langle e,\omega\rangle$. By rotational invariance of the surface measure on \(S^2\), the integral
\(\int_{S^2} e^{\frac{r|x|}{2s}\langle e,\omega\rangle}\,d\omega\)
depends only on \(|e|=1\). Hence we may assume \(e=(0,0,1)\). Writing \(\omega\in S^2\) in spherical coordinates,
\[
\omega=(\sin\theta\cos\varphi,\sin\theta\sin\varphi,\cos\theta),
\qquad
0\le\theta\le\pi,\quad 0\le\varphi\le 2\pi,
\]
we have \(\langle e,\omega\rangle=\cos\theta\) and \(d\omega=\sin\theta\,d\theta\,d\varphi\). Therefore,
\begin{align*}
\int_{S^2}
e^{\frac{r|x|}{2s}\langle e,\omega\rangle}
\,d\omega
&=
\int_0^{2\pi}\int_0^\pi
e^{\frac{r|x|}{2s}\cos\theta}\sin\theta\,d\theta\,d\varphi \\
&=
2\pi\int_0^\pi e^{\frac{r|x|}{2s}\cos\theta}\sin\theta\,d\theta 
=
2\pi\int_{-1}^1 e^{\frac{r|x|}{2s}u}\,du
=
2\pi\left[\frac{e^{\frac{r|x|}{2s}u}}{\frac{r|x|}{2s}}\right]_{-1}^1 =
4\pi s\,\frac{e^{\frac{r|x|}{2s}}-e^{-\frac{r|x|}{2s}}}{r|x|} ,
\end{align*}
where the third equality uses the change of variables $u=\cos\theta$. Substituting the above into~\eqref{IntegPeqOne}, we obtain
\begin{align*}
\int_{\R^3}
P_s(x,y)\,\frac{F(|y|)}{|y|}\,dy
&=
4\pi s\,\int_0^\infty
\frac{rF(r)}{(4\pi s)^{\frac{3}{2}}}
e^{-\frac{|x|^2+r^2}{4s}}
\,\frac{e^{\frac{r|x|}{2s}}-e^{-\frac{r|x|}{2s}}}{r|x|}
\,dr \\
&=
\frac{4\pi s}{|x|}
\int_0^\infty
\frac{F(r)}{(4\pi s)^{\frac{3}{2}}}
\Bigl(
e^{-\frac{(r-|x|)^2}{4s}}
-
e^{-\frac{(r+|x|)^2}{4s}}
\Bigr)
\,dr.
\end{align*}
Thus, using the definition~\eqref{DefFreeHeat3d} of \(P_s\), we obtain the desired identity~\eqref{RadialReductionIdentity}.
\end{proof}

\begin{lemma}[Weighted Volterra decomposition identity]
\label{LemmaWeightedVolterraDecomposition}
Let \(s,t>0\), \(a,b\ge 0\), and \(\lambda>0\). Then
\begin{align}
\int_0^\infty e^{\lambda v}
\int_0^v
P_s(v+a-u)\,P_t(u+b)\,du\,dv
&=
\int_0^\infty e^{\lambda v}
\int_0^\infty
P_s(a+r)\,P_t(v+r+b)\,dr\,dv \nonumber \\
&+
\int_0^\infty e^{\lambda u}
\int_0^\infty
P_s(u+a+r)\,P_t(r+b)\,dr\,du \nonumber \\
&+
2\lambda
\int_0^\infty\int_0^\infty
e^{\lambda(u+v)}
\int_0^\infty
P_s(u+a+r)\,P_t(v+r+b)\,dr\,du\,dv. \nonumber
\end{align}
\end{lemma}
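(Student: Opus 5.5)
The plan is to turn the left-hand side into an integral over the quadrant $\{u,v\ge 0\}$ of a product of two Gaussians. That product can then be written as a total derivative along the diagonal direction $(1,1)$ of a single auxiliary function, and two integrations by parts produce the three terms on the right-hand side. No Gaussian convolution identity (such as~\eqref{EqSemigroupPropertyPonREquivalent}) is needed; only the fact that $P_s(r)$ has Gaussian decay in $r$.

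\emph{Step 1 (rewriting the left side).} In the inner integral substitute $w:=v-u\ge 0$. Then apply Tonelli; the integrand is nonnegative since $\lambda>0$. The left-hand side becomes
\[
\int_0^\infty\!\!\int_0^\infty e^{\lambda(u+w)}\,P_s(a+w)\,P_t(b+u)\,du\,dw .
\]
Rename the variables so that this reads $\int_0^\infty\!\int_0^\infty e^{\lambda(u+v)}P_s(a+u)P_t(b+v)\,du\,dv$.

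\emph{Step 2 (auxiliary function).} For $u,v\ge 0$ define
\[
F(u,v):=\int_0^\infty P_s(u+a+r)\,P_t(v+r+b)\,dr .
\]
This $F$ is smooth on $[0,\infty)^2$. Differentiation under the integral is justified by Gaussian domination, since $a,b\ge0$ keep all arguments nonnegative. Since the integrand depends on $(u,v,r)$ only through $u+r$ and $v+r$, we have $(\partial_u+\partial_v)$ of the integrand equal to $\partial_r$ of the integrand. Integrating in $r$ over $(0,\infty)$ gives
\[
(\partial_u+\partial_v)F(u,v)\,=\,-P_s(u+a)\,P_t(v+b).
\]
The boundary term at $r=\infty$ vanishes by Gaussian decay. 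Hence the left-hand side equals $-\int\!\int e^{\lambda(u+v)}(\partial_uF+\partial_vF)\,du\,dv$.

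\emph{Step 3 (integration by parts).} In the $\partial_uF$ term, integrate by parts in $u$ for fixed $v$:
\[
-\int_0^\infty e^{\lambda u}\partial_uF(u,v)\,du \,=\, F(0,v)+\lambda\int_0^\infty e^{\lambda u}F(u,v)\,du .
\]
This uses $e^{\lambda u}F(u,v)\to 0$ as $u\to\infty$. Multiplying by $e^{\lambda v}$ and integrating in $v$ gives the first right-hand term plus $\lambda\int\!\int e^{\lambda(u+v)}F$. Doing the same for $\partial_vF$, integrating by parts in $v$, gives the second right-hand term plus another $\lambda\int\!\int e^{\lambda(u+v)}F$. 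Together the two remainders produce the factor $2\lambda$ in the third term, and $F(0,v)$, $F(u,0)$ are exactly the inner integrals appearing in the first two terms.

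\emph{Main obstacle.} The only delicate point is justifying the boundary terms and the finiteness needed for splitting the integrals, since $e^{\lambda u}$ grows. I would use the bound
\[
F(u,v)\le C\,e^{-(u+a)^2/(4s)}e^{-(v+b)^2/(4t)}\int_0^\infty e^{-r^2/(4s)}\,dr ,
\]
which follows from $(u+a+r)^2\ge (u+a)^2+r^2$ when all quantities are nonnegative. Gaussian decay dominates $e^{\lambda(u+v)}$, so every integral in the identity is finite and all boundary terms at infinity vanish. The same bound controls $\partial_uF$, which gives integrability for Fubini in Step 3.
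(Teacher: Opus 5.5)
Your proof is correct and follows the paper's argument: the same shear $(u,v)\mapsto(u,v-u)$ onto the quadrant, the same auxiliary integral $F$, the same observation that $\partial_u+\partial_v$ acts on the integrand as $\partial_r$, and the same boundary terms $F(0,v)$ and $F(u,0)$. The difference is cosmetic. The paper absorbs $e^{\lambda(u+v)}$ into $F$ and applies $(\partial_u+\partial_v-2\lambda)F$ with the fundamental theorem of calculus, whereas you keep $F$ unweighted and integrate by parts against $e^{\lambda u}$ and $e^{\lambda v}$; your Gaussian bound on $F$ also justifies the vanishing boundary terms more explicitly than the paper does.
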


\begin{proof}
Observe that
\[
I
\,:=\,
\int_0^\infty e^{\lambda v}
\int_0^v
P_s(v+a-u)\,P_t(u+b)\,du\,dv 
=
\int_{\{(u,v):\,0<u<v<\infty\}}
e^{\lambda v}\,P_s(v+a-u)\,P_t(u+b)\,du\,dv .
\]
Consider the change of variables
\[
(u,v)\mapsto (u,r):=(u,v-u),
\qquad\text{so that}\qquad
v=u+r.
\]
Then \(u>0\) and \(r:=v-u>0\), so this transformation sends the triangular region
\(\{(u,v):0<u<v<\infty\}\) onto the first quadrant \((0,\infty)^2\). Moreover, since the Jacobian
is \(1\), we obtain
\begin{align}
I
&=
\int_0^\infty\int_0^\infty
e^{\lambda(u+r)}\,P_s(a+r)\,P_t(u+b)\,du\,dr \nonumber\\
&=
\int_0^\infty\int_0^\infty
e^{\lambda(u+v)}\,P_s(a+v)\,P_t(u+b)\,du\,dv
=
\int_0^\infty\int_0^\infty
e^{\lambda(u+v)}\,P_s(u+a)\,P_t(v+b)\,du\,dv ,
\label{EqVolterraQuadSymmetric}
\end{align}
where in the second equality we relabel \(r\) as \(v\), and in the third equality we use the symmetry of the domain \((0,\infty)^2\) and Tonelli's theorem to replace \((u,v)\) by \((v,u)\). Next define, for \(u,v\ge0\),
\begin{align} 
\label{EqDefineFWeighted}
F(u,v)
=
e^{\lambda(u+v)}
\int_0^\infty
P_s(u+a+r)\,P_t(v+r+b)\,dr .
\end{align}
Differentiating with respect to \(u\) and \(v\), we get
\begin{align*}
\partial_u F(u,v)
&=
\lambda F(u,v)
+
e^{\lambda(u+v)}
\int_0^\infty
\partial_u P_s(u+a+r)
P_t(v+r+b)\,dr, \\
\partial_v F(u,v)
&=
\lambda F(u,v)
+
e^{\lambda(u+v)}
\int_0^\infty
P_s(u+a+r)
\partial_v P_t(v+r+b)
\,dr .
\end{align*}
Hence, using $\partial_u P_s(u+a+r)=\partial_r P_s(u+a+r)$ and $\partial_v P_t(v+r+b)=\partial_r P_t(v+r+b)$, we obtain
\begin{align}
(\partial_u+\partial_v-2\lambda)F(u,v)
=
e^{\lambda(u+v)}
\int_0^\infty
\partial_r\!\Bigl(P_s(u+a+r)\,P_t(v+r+b)\Bigr)\,dr 
=
-e^{\lambda(u+v)}\,P_s(u+a)\,P_t(v+b), \nonumber
\end{align}
where the second equality uses that the boundary term at \(r=\infty\) vanishes by Gaussian decay. Substituting the above in~\eqref{EqVolterraQuadSymmetric}, we obtain
\begin{align}
I
=
-\int_0^\infty\int_0^\infty \partial_u F(u,v)\,du\,dv
-\int_0^\infty\int_0^\infty \partial_v F(u,v)\,du\,dv
+
2\lambda\int_0^\infty\int_0^\infty F(u,v)\,du\,dv .
\label{EqAfterIntegratingPDE}
\end{align}
Since \(F(u,v)\to0\) as \(u\to\infty\) or \(v\to\infty\), another application of the
fundamental theorem of calculus yields
\begin{align*}
-\int_0^\infty\int_0^\infty \partial_u F(u,v)\,du\,dv
=
\int_0^\infty F(0,v)\,dv, 
\quad \textup{ and } \quad
-\int_0^\infty\int_0^\infty \partial_v F(u,v)\,du\,dv
=
\int_0^\infty F(u,0)\,du .
\end{align*}
Substituting these identities into~\eqref{EqAfterIntegratingPDE}, and then using the definition~\eqref{EqDefineFWeighted}, yields the desired identity.
\end{proof}

\subsection[Semigroup]{Proof of the semigroup property~\eqref{EqSemigroupKernelReliable}} \label{AppendixProofofSemigroupProperty}

Fix \(s,t>0\) and \(x,z\in \R^{3}\setminus\{0\}\). We begin from the left-hand side of~\eqref{EqSemigroupKernelReliable} and expand both kernels using~\eqref{DefPointKer3dBeta} and grouping together the terms of the same type, we write
\begin{align}
\int_{\R^3} P_s^\beta(x,y)\,P_t^\beta(y,z)\,dy
&=
I_1 + I_2 + I_3,
\label{EqSemigroupThreeBlocks}
\end{align}
where the three contributions are defined as follows. \vspace{.3cm}

\noindent \textit{(i) Free--free term.} Since \(P_t(x,y)\) is the free heat kernel~\eqref{DefFreeHeat3d} on \(\R^3\), its semigroup property yields
\begin{align}
I_1
\,:=\,
\int_{\R^3} P_s(x,y)\,P_t(y,z)\,dy
=
P_{s+t}(x,z).
\label{EqI1BlockEval}
\end{align}

\vspace{.2cm}

\noindent\textit{(ii) Single-singular terms.} These are the terms where exactly one of the two kernels contributes a singular factor. Grouping all such contributions together, we obtain
\begin{align}
I_2
&\,:=\,
\int_{\R^3}
P_s(x,y)\,
\frac{2t}{|y||z|}\,P_t(|y|+|z|)\,dy
+
\int_{\R^3}
\frac{2s}{|x||y|}\,P_s(|x|+|y|)\,
P_t(y,z)\,dy
\nonumber\\
&
+
\int_{\R^3}
P_s(x,y)\,
\frac{8\pi\beta t}{|y||z|}
\int_0^\infty e^{4\pi\beta v}\,P_t(v+|y|+|z|)\,dv\,dy
+
\int_{\R^3}
\frac{8\pi\beta s}{|x||y|}
\int_0^\infty e^{4\pi\beta u}\,P_s(u+|x|+|y|)\,du\;
P_t(y,z)\,dy. \nonumber
\end{align}
We denote the four terms on the right-hand side by \(I_{2}^{(i)}\), \(i=1,2,3,4\), respectively. Then,
\begin{align*}
I_{2}^{(1)}
\,:=\,
\frac{2t}{|z|}
\int_{\R^3}
P_s(x,y)\,\frac{P_t(|y|+|z|)}{|y|}\,dy
=
\frac{8\pi st}{|x||z|}
\int_0^\infty
\Bigl(
P_s(r-|x|)-P_s(r+|x|)
\Bigr)
P_t(r+|z|)\,dr,
\end{align*}
where the second equality uses~\eqref{RadialReductionIdentity} with $F(r):=P_t(r+|z|)$. Similarly, we have 
\begin{align*}
I_{2}^{(2)}
\,:=\,
\frac{2s}{|x|}
\int_{\R^3}
P_t(y,z)\,\frac{P_s(|x|+|y|)}{|y|}\,dy
=
\frac{8\pi st}{|x||z|}
\int_0^\infty
\Bigl(
P_t(r-|z|)-P_t(r+|z|)
\Bigr)
P_s(r+|x|)\,dr.
\end{align*}
Adding \(I_{2}^{(1)}\) and \(I_{2}^{(2)}\), we obtain
\begin{align*}
I_{2}^{(1)}+I_{2}^{(2)}
&=
\frac{8\pi st}{|x||z|}
\int_0^\infty
\bigg(
P_s(r-|x|)\,P_t(r+|z|)
+
 P_s(r+|x|)\, P_t(r-|z|) 
 -
2\, P_s(r+|x|)\,P_t(r+|z|) 
\bigg) \, dr \nonumber \\
&=
\frac{8\pi st}{|x||z|}
\int_{\R}
P_s(r-|x|)\,P_t(r+|z|)\,dr
-
\frac{16\pi st}{|x||z|}
\int_0^\infty
P_s(r+|x|)\,P_t(r+|z|)\,dr,
\end{align*}
where the second equality follows by combining the first two terms into a single integral over \(\R\), which is justified since, by the evenness of \(P_s\) and \(P_t\) together with the change of variables \(u:=-r\), we obtain
\begin{align*}
\int_0^\infty P_s(r+|x|)\,P_t(r-|z|)\,dr
=
\int_0^\infty P_s(-r-|x|)\,P_t(-r+|z|)\,dr
=
\int_{-\infty}^0 P_s(u-|x|)\,P_t(u+|z|)\,du.
\end{align*}
Thus, using~\eqref{EqSemigroupPropertyPonREquivalent} with \(a:=|x|\) and \(b:=-|z|\), we obtain
\begin{align} \label{I2SUMfirstTwoTerms}
I_{2}^{(1)}+I_{2}^{(2)}
=
\frac{2(s+t)}{|x||z|}\,P_{s+t}(|x|+|z|)
-
\frac{16\pi st}{|x||z|}
\int_0^\infty
P_s(r+|x|)\,P_t(r+|z|)\,dr.
\end{align}

For \(I_2^{(3)}\), we apply Tonelli's theorem to interchange the order of integration, which yields
\begin{align*}
I_{2}^{(3)}
&=
\frac{8\pi\beta t}{|z|}
\int_0^\infty e^{4\pi\beta v}
\left(
\int_{\R^3}
P_s(x,y)\,
\frac{1}{|y|}\,P_t(v+|y|+|z|)\,dy
\right)dv \\
&=
\frac{32\pi^2\beta st}{|x||z|}
\int_0^\infty
e^{4\pi\beta v}
\bigg[
\int_0^\infty
P_s(r-|x|)\, P_t(v+r+|z|)\,dr
-
\int_0^\infty
P_s(r+|x|)\, P_t(v+r+|z|)\,dr \bigg]\,dv
\end{align*}
where in the second equality we applied~\eqref{RadialReductionIdentity} to the inner spatial integral with \(F(r):=P_t(v+r+|z|)\). Next, in the first inner integral we make the change of variables \(w:=v+r\) to obtain
\begin{align*}
I_{2}^{(3)}
&=
\frac{32\pi^2\beta st}{|x||z|}
\int_0^\infty
e^{4\pi\beta v}
\bigg[
\int_v^\infty
P_s(w-v-|x|)\, P_t(w+|z|)\,dw
-
\int_0^\infty
P_s(r+|x|)\, P_t(v+r+|z|)\,dr
\bigg]\,dv \nonumber\\
&=
\frac{32\pi^2\beta st}{|x||z|}
\int_0^\infty
e^{4\pi\beta v}
\bigg[
\int_v^\infty
P_s(v+|x|-w)\, P_t(w+|z|)\,dw
-
\int_0^\infty
P_s(r+|x|)\, P_t(v+r+|z|)\,dr
\bigg]\,dv.
\end{align*}
where the second equality uses the evenness of \(P_s\). Similarly, using~\eqref{RadialReductionIdentity} with \(F(r):=P_s(u+|x|+r)\), we obtain
\begin{align*}
I_{2}^{(4)}
&=
\frac{32\pi^2\beta st}{|x||z|}
\int_0^\infty e^{4\pi\beta u}
\left[
\int_0^\infty
P_t(r-|z|)\,P_s(u+|x|+r)\,dr
-
\int_0^\infty
P_t(r+|z|)\,P_s(u+|x|+r)\,dr
\right]du \\
&=
\frac{32\pi^2\beta st}{|x||z|}
\int_0^\infty e^{4\pi\beta u}
\left[
\int_{-\infty}^0
P_t(w+|z|)\,P_s(u+|x|-w)\,dw
-
\int_0^\infty
P_t(r+|z|)\,P_s(u+|x|+r)\,dr
\right]du.
\end{align*}
where the second equality follows from the evenness of \(P_t\) and the change of variables \(w:=-r\) in the first inner integral. Renaming \(u\) as \(v\) in $I_2^{(4)}$ and adding with $I_2^{(3)}$, we obtain
\begin{align}  
I_{2}^{(3)}+I_{2}^{(4)}
&=
\frac{32\pi^2\beta st}{|x||z|}
\int_0^\infty
e^{4\pi\beta v}
\bigg[
\int_{\R}
P_s(v+|x|-w)\,P_t(w+|z|)\,dw
-
\int_0^v
P_s(v+|x|-w)\,P_t(w+|z|)\,dw \nonumber \\
&-
\int_0^\infty
P_s(r+|x|)\, P_t(v+r+|z|)\,dr
-
\int_0^\infty
P_t(r+|z|)\,P_s(v+|x|+r)\,dr
\bigg]\,dv. \nonumber
\end{align}
Applying~\eqref{EqSemigroupPropertyPonREquivalent} to the integral over \(\R\) (with \(a:=v+|x|\) and \(b:=-|z|\)), and renaming the dummy variable \(w\) as \(u\), we obtain
\begin{align} \label{I2SUMlastTwoTerms}
I_{2}^{(3)}+I_{2}^{(4)}
&=
\frac{8\pi\beta(s+t)}{|x||z|}
\int_0^\infty
e^{4\pi\beta v}\,
P_{s+t}(v+|x|+|z|)\,dv \nonumber \\
&-
\frac{32\pi^2\beta st}{|x||z|}
\int_0^\infty\int_0^v
e^{4\pi\beta v}
P_s(v+|x|-u)\,P_t(u+|z|)\,du\,dv \nonumber \\
&-
\frac{32\pi^2\beta st}{|x||z|}
\int_0^\infty\int_0^\infty
e^{4\pi\beta v}
P_s(r+|x|)\, P_t(v+r+|z|)\,dr\,dv \nonumber \\
&-
\frac{32\pi^2\beta st}{|x||z|}
\int_0^\infty\int_0^\infty
e^{4\pi\beta v}
P_t(r+|z|)\,P_s(v+|x|+r)\,dr\,dv.
\end{align}

\noindent \textit{(iii) Double-singular terms.} These are the remaining terms where both factors contribute singular parts
\begin{align}
I_3
&\,:=\,
\int_{\R^3}
\frac{2s}{|x||y|}\,P_s(|x|+|y|)\,
\frac{2t}{|y||z|}\,P_t(|y|+|z|)\,dy
\nonumber\\
&
+
\int_{\R^3}
\frac{2s}{|x||y|}\,P_s(|x|+|y|)\,
\frac{8\pi\beta t}{|y||z|}
\int_0^\infty e^{4\pi\beta v}\,P_t(v+|y|+|z|)\,dv\,dy
\nonumber\\
&
+
\int_{\R^3}
\frac{8\pi\beta s}{|x||y|}
\int_0^\infty e^{4\pi\beta u}\,P_s(u+|x|+|y|)\,du\;
\frac{2t}{|y||z|}\,P_t(|y|+|z|)\,dy
\nonumber\\
&
+
\int_{\R^3}
\frac{8\pi\beta s}{|x||y|}
\int_0^\infty e^{4\pi\beta u}\,P_s(u+|x|+|y|)\,du
\;
\frac{8\pi\beta t}{|y||z|}
\int_0^\infty e^{4\pi\beta v}\,P_t(v+|y|+|z|)\,dv\,dy. \nonumber
\end{align}
Let us denote the four terms on the right-hand side by \(I_{3}^{(i)}\), \(i=1,2,3,4\).

For \(I_{3}^{(1)}\), since the integrand depends on \(y\) only through \(|y|\), we pass to spherical coordinates by writing \(y=r\omega\), with \(r>0\), \(\omega\in S^2\), and \(dy=r^2\,d\omega\,dr\). Then,
\begin{align} \label{I3FirstTerm}
I_{3}^{(1)}
&=
\frac{4st}{|x||z|}
\int_{\R^3}
\frac{1}{|y|^2}\,
P_s(|x|+|y|)\,
P_t(|y|+|z|)\,dy \nonumber \\
&=
\frac{4st}{|x||z|}
\int_0^\infty\int_{S^2}
\frac{1}{r^2}\,
P_s(|x|+r)\,
P_t(r+|z|)\,
r^2\,d\omega\,dr
=
\frac{16\pi st}{|x||z|}
\int_0^\infty
P_s(|x|+r)\,P_t(r+|z|)\,dr,
\end{align}
where the last equality uses that \(\int_{S^2}d\omega=4\pi\). 

For $I_{3}^{(2)}$, since the integrand is nonnegative, Tonelli's theorem yields
\begin{align*}
I_{3}^{(2)}
&=
\frac{16\pi\beta st}{|x||z|}
\int_0^\infty e^{4\pi\beta v}
\left(
\int_{\R^3}
\frac{1}{|y|^2}\,
P_s(|x|+|y|)\,
P_t(v+|y|+|z|)\,dy
\right)\,dv \nonumber \\
&=
\frac{16\pi\beta st}{|x||z|}
\int_0^\infty e^{4\pi\beta v}
\left(
\int_0^\infty \int_{S^2}
\frac{1}{r^2}\,
P_s(|x|+r)\,
P_t(v+r+|z|)\,
r^2\,d\omega\,dr
\right)\,dv,
\end{align*}
where, in the second equality, we passed to spherical coordinates by writing \(y=r\omega\), with \(r>0\), \(\omega\in S^2\), and \(dy=r^2\,d\omega\,dr\). Thus, using \(\int_{S^2}d\omega=4\pi\), we obtain
\begin{align} \label{I3SecondTerm}
I_{3}^{(2)}
&=
\frac{64\pi^2\beta st}{|x||z|}
\int_0^\infty
e^{4\pi\beta v}
\int_0^\infty
P_s(|x|+r)\,P_t(v+r+|z|)\,dr\,dv.
\end{align}
Similarly,
\begin{align} \label{I3ThirdTerm}
I_{3}^{(3)}
&=
\frac{64\pi^2\beta st}{|x||z|}
\int_0^\infty
e^{4\pi\beta u}
\int_0^\infty
P_s(u+|x|+r)\,P_t(r+|z|)\,dr\,du.
\end{align}

For $I_{3}^{(4)}$, we apply Tonelli's theorem twice to obtain
\begin{align} \label{I3FourthTerm}
I_{3}^{(4)}
&=
\frac{64\pi^2\beta^2 st}{|x||z|}
\int_0^\infty\int_0^\infty
e^{4\pi\beta(u+v)}
\left(
\int_{\R^3}
\frac{1}{|y|^2}\,
P_s(u+|x|+|y|)\,
P_t(v+|y|+|z|)\,dy
\right)\,du\,dv \nonumber \\
&=
\frac{64\pi^2\beta^2 st}{|x||z|}
\int_0^\infty\int_0^\infty
e^{4\pi\beta(u+v)}
\left(
\int_0^\infty\int_{S^2}
\frac{1}{r^2}\,
P_s(u+|x|+r)\,
P_t(v+r+|z|)\,
r^2\,d\omega\,dr
\right)\,du\,dv \nonumber \\
&=
\frac{256\pi^3\beta^2 st}{|x||z|}
\int_0^\infty\int_0^\infty
e^{4\pi\beta(u+v)}
\int_0^\infty
P_s(u+|x|+r)\,P_t(v+r+|z|)\,dr\,du\,dv,
\end{align}
where in the second equality we used the change of variables \(y=r\omega\), with \(r>0\), \(\omega\in S^2\), and \(dy=r^2\,d\omega\,dr\).

Adding~\eqref{EqI1BlockEval}--\eqref{I3FourthTerm}, we obtain
\begin{align*}
\int_{\R^3} P_s^\beta(x,y)\,P_t^\beta(y,z)\,dy
&=
P_{s+t}(x,z) 
+
\frac{2(s+t)}{|x||z|}\,P_{s+t}(|x|+|z|) +
\frac{8\pi\beta (s+t)}{|x||z|}
\int_0^\infty e^{4\pi\beta v}\,P_{s+t}(v+|x|+|z|)\,dv \\
&-
\frac{32\pi^2\beta st}{|x||z|}
\int_0^\infty e^{4\pi\beta v}
\int_0^v
P_s(v+|x|-u)\,P_t(u+|z|)\,du\,dv \\
&+
\frac{32\pi^2\beta st}{|x||z|}
\int_0^\infty e^{4\pi\beta v}
\int_0^\infty
P_s(r+|x|)\,P_t(v+r+|z|)\,dr\,dv \\
&+
\frac{32\pi^2\beta st}{|x||z|}
\int_0^\infty e^{4\pi\beta u}
\int_0^\infty
P_s(u+|x|+r)\,P_t(r+|z|)\,dr\,du \\
&+
\frac{256\pi^3\beta^2 st}{|x||z|}
\int_0^\infty\int_0^\infty
e^{4\pi\beta(u+v)}
\int_0^\infty
P_s(u+|x|+r)\,P_t(v+r+|z|)\,dr\,du\,dv.
\end{align*}
Now applying Lemma~\ref{LemmaWeightedVolterraDecomposition} with \(a:=|x|\), \(b:=|z|\), and \(\lambda:=4\pi\beta\), the remaining four terms cancel exactly, and the expression reduces to the defining formula of \(P_{s+t}^\beta(x,z)\) by \eqref{DefPointKer3dBeta}. This proves~\eqref{EqSemigroupKernelReliable}.

\section[Proof]{Proof of~\eqref{PsiAsymptotics}} \label{AppendixProofPsiAsymptotics}

Let \(\psi\in\mathcal D(\Delta^{\beta})\). Then there exist \(k\in\mathbb C\) with \(\mathrm{Im}\,k>0\) and
\(\phi_k\in H^{2,2}(\R^3)\) such that
\begin{align}
\psi(x)
=
\phi_k(x)
+
(-\beta-ik/4\pi)^{-1}\phi_k(0)\,G_k(x),
\qquad x\in\R^3\setminus\{0\},
\nonumber
\end{align}
where $G_k(x) := (4\pi|x|)^{-1} e^{ik|x|}$. Since \(\phi_k\in H^{2,2}(\R^3)\), the Sobolev embedding theorem implies that
\(\phi_k\) is continuous on \(\R^3\). Hence $\phi_k(x) = \phi_k(0) + o(1)$ as $|x|\downarrow0$. Moreover, $e^{ik|x|} = 1+ik|x|+o(|x|)$ as $|x|\downarrow0$, and therefore
\begin{align}
G_k(x)
=
\frac{e^{ik|x|}}{4\pi|x|}
=
\frac{1}{4\pi|x|}
+
\frac{ik}{4\pi}
+
o(1),
\qquad |x|\downarrow0.
\nonumber
\end{align}
Define $\mathbf c_{\psi} := \frac{1}{4\pi}
(-\beta-ik/4\pi)^{-1}\phi_k(0)$. Substituting the above expansions into the decomposition of \(\psi\) yields
\begin{align}
\psi(x)
&=
\phi_k(0)
+
(-\beta-ik/4\pi)^{-1}\phi_k(0)
\left(
\frac{1}{4\pi|x|}
+
\frac{ik}{4\pi}
\right)
+
o(1)
\nonumber\\
&=
\frac{(-\beta-ik/4\pi)^{-1}\phi_k(0)}{4\pi|x|}
+
\left(
1+
\frac{ik}{4\pi}
(-\beta-ik/4\pi)^{-1}
\right)\phi_k(0)
+
o(1) \nonumber\\
&=
\frac{\mathbf c_{\psi}}{|x|}
+
\Big(
4\pi
(-\beta-ik/4\pi )+
ik
\Big) \frac{1}{4\pi}
(-\beta-ik/4\pi)^{-1} \, \phi_k(0)
+
o(1) \nonumber \\
&=
\frac{\mathbf c_{\psi}}{|x|}
-
4\pi\beta\,\mathbf c_{\psi}
+
o(1),
\qquad |x|\downarrow0, \nonumber
\end{align}
which is the claimed asymptotic expansion.
\end{appendix}

\end{document}